\newcommand{\hcirc}{\accentset{\circ}{h}}
\newcommand{\hbarcirc}{\accentset{\circ}{\bar{h}}}
\newcommand{\htildecirc}{\accentset{\circ}{\tilde{h}}}
\newtheorem{prop}{Proposition}
\newtheorem{thm}[prop]{Theorem}
\newtheorem{lem}[prop]{Lemma}
\newtheorem{coro}[prop]{Corollary}
\newtheorem{rema}[prop]{Remark}
\title[Huisken-Yau for area-constrained Willmore spheres]{Huisken-Yau-type uniqueness for area-constrained Willmore spheres}
\author[Eichmair]{Michael Eichmair}
\address{
	\textnormal{Michael Eichmair \newline  \indent
		University of Vienna \newline \indent
		Faculty of Mathematics  \newline \indent
		Oskar-Morgenstern-Platz 1 \newline \indent
		1090 Vienna, 	Austria  \newline\indent 
		\href{https://orcid.org/0000-0001-7993-9536}{https://orcid.org/0000-0001-7993-9536} \newline\indent	
		\href{mailto:michael.eichmair@univie.ac.at}{michael.eichmair@univie.ac.at}}
}
\author[Koerber]{Thomas Koerber}
\address{
	\textnormal{Thomas Koerber \newline  \indent
		University of Vienna \newline \indent
		Faculty of Mathematics  \newline \indent
		Oskar-Morgenstern-Platz 1 \newline \indent
		1090 Vienna, 	Austria  \newline\indent 
		\href{https://orcid.org/0000-0003-1676-0824}{0000-0003-1676-0824}} \newline\indent	
		\href{mailto:thomas.koerber@univie.ac.at}{thomas.koerber@univie.ac.at}}
\author[Metzger]{Jan Metzger}
\address{
	\textnormal{Jan Metzger \newline  \indent
		 University of Potsdam\newline \indent
		Institute of Mathematics  \newline \indent
		Karl-Liebknecht-Straße 24-25 \newline \indent
			14476 Potsdam,  		Germany  \newline\indent 
		\href{https://orcid.org/0000-0001-5632-7916}{0000-0001-5632-7916}}\newline\indent	
	\href{mailto:jan.metzger@uni-potsdam.de}{jan.metzger@uni-potsdam.de}}
\author[Schulze]{Felix Schulze}
\address{
	\textnormal{Felix Schulze \newline  \indent
		University of Warwick\newline \indent
		Mathematics Institute  \newline \indent
		Coventry CV4 7AL,
		United Kingdom \newline\indent 
		\href{https://orcid.org/0000-0002-7011-2126}{0000-0002-7011-2126}}\newline\indent	
	\href{mailto:felix.schulze@warwick.ac.uk}{felix.schulze@warwick.ac.uk}}
\begin{document}

\date{\today}
\onehalfspacing
\begin{abstract} Let $(M,g)$ be a Riemannian three-manifold that is asymptotic to Schwarzschild. 
		We study the existence of large area-constrained Willmore spheres $\Sigma \subset M$ with non-negative Hawking mass and inner radius $\rho$ dominated by the area radius $\lambda$. If the scalar curvature of $(M,g)$ is non-negative, we show that no such surfaces with $\log \lambda \ll \rho$ exist. This answers a question of G.~Huisken.
	
\end{abstract}
\maketitle
\section{Introduction}
Let $(M,g)$ be a connected, complete Riemannian three-manifold. Let $\Sigma\subset M$ be a closed, two-sided surface with  area element $\mathrm{d}\mu,$ outward normal $\nu$,  and mean curvature $H$ with respect to $\nu$. The Hawking mass of $\Sigma$ is 
$$
m_H(\Sigma)=\sqrt{\frac{|\Sigma|}{16\,\pi}}\left(1-\frac{1}{16\,\pi}\int_{\Sigma} H^2\,\mathrm{d}\mu\right).
$$
In the case where $(M,g)$ arises as maximal  initial data for the Einstein field equations, the Hawking mass  has been proposed as a quasi-local measure for the strength of the gravitational field; see \cite{hawking1968gravitational}. 
 \\ \indent
 Recall that time-symmetric initial data for a Schwarzschild black hole with mass $m>0$ are given by 
  \begin{align}
 \label{schwarzschild} \bigg(\bigg\{x\in\mathbb{R}^3:|x|>\frac{m}{2}\bigg\}, \left(1+\frac{m}{2\,|x|}\right)^4\bar g\bigg)
 \end{align}
where $$\bar g=\sum_{i=1}^3dx^i\otimes d x^i$$ is  the Euclidean metric on $\mathbb{R}^3$ and $|x|$ denotes the Euclidean length of $x\in\mathbb{R}^3$.   A special class of general initial data consists of those asymptotic to Schwarzschild.  
 Given a non-negative integer $k$, we say  that $(M,g)$ is $C^k$-asymptotic to Schwarzschild with mass $m>0$ if there is a non-empty compact set $K\subset M$ such that the end $M\setminus K$ is diffeomorphic to $\{x\in\mathbb{R}^3\,:\,|x|>1\}$ and, in this special chart, there holds, as $x\to\infty$,
\[
g=\bigg(1+\frac{m}{2\,|x|}\bigg)^4\,\bar g+\sigma\qquad\text{where} \qquad \partial_J \sigma=O(|x|^{-2-|J|})
\]
for every multi-index $J$ with $|J|\leq k$.  Given $r>1$, we define $B_r\subset M$ to be the compact domain whose boundary corresponds to $S_r(0)$ in this chart. Moreover, given a closed, two-sided surface $\Sigma\subset M$, we define the area-radius $\lambda(\Sigma)>0$ and inner radius $\rho(\Sigma)$ of $\Sigma$ by  \begin{align} \label{inner and outer radius} 4\,\pi\, \lambda(\Sigma)^2=|\Sigma|\qquad \text{and}
\qquad
\rho(\Sigma)=\sup\{r>1\,:\, B_{r}\cap\Sigma=\emptyset\}.\end{align}
  \indent The Hawking mass $m_H(\Sigma)$ provides useful information on the strength of the gravitational field  provided the surface $\Sigma$ is either a  stable constant mean curvature sphere with large enclosed volume or, alternatively, an area-constrained Willmore sphere with large area; see also \cite[p.~2348]{bartniklocal}.  Stable constant mean curvature surfaces are stable critical points of the area functional under a volume constraint and therefore candidates to have least perimeter among surfaces of the same enclosed volume. S.-T.~Yau and D.~Christodoulou \cite{christodoulou71some} have observed that the Hawking mass of  stable constant mean curvature spheres is non-negative if $(M,g)$ has non-negative scalar curvature. Note that, in this context, the scalar curvature provides a lower bound for the energy density of the initial data set.    Stable constant mean curvature spheres have since been studied extensively in the context of mathematical relativity; see for example the recent overview given in \cite{cmc}.\\ \indent Recall that 
    $\Sigma\subset M$ is  an area-constrained Willmore surface if there is a number $\kappa\in\mathbb{R}$ such that
\begin{align}
\Delta H+(|\hcirc|^2+\operatorname{Ric}(\nu,\nu)+\kappa)\,H=0. \label{constrained Willmore equation}
\end{align}
Here, $\Delta$ is the non-positive Laplace-Beltrami operator on $\Sigma$ with respect to the induced metric, $\hcirc$  the traceless part of the second fundamental form $h$, and $\operatorname{Ric}$ the Ricci curvature of $(M,g)$. Note that \eqref{constrained Willmore equation} is the Euler-Lagrange equation of the Willmore energy
\begin{align} \label{Willmore energy intro} 
\int_{\Sigma} H^2\,\mathrm{d}\mu 
\end{align} 
with respect to an area constraint and $\kappa$ the corresponding Lagrange parameter. Area-constrained Willmore surfaces are therefore candidates to have largest Hawking mass among all closed surfaces of the same area.  As observed in \cite[p.~487]{acws2}, large area-constrained Willmore spheres capture  information on the asymptotic distribution of scalar curvature  that large   stable constant mean curvature spheres are impervious to.  
\subsection*{Existence and uniqueness of large area-constrained Willmore surfaces}
In the  recent papers \cite{acws,acws2}, the first-named author and the second-named author have studied the existence, uniqueness, and physical properties of large area-constrained Willmore spheres.  We recall the following result; see Figure \ref{acws thm figure}.
\begin{thm}[{\cite[Theorem 6]{acws2}}] \label{acws theorem}
	Let $(M,g)$ be $C^4$-asymptotic to Schwarzschild with mass $m>0$ and  suppose that
	\begin{align}
\sum_{i=1}^3	x^i\,\partial_i(|x|^2\,R)\leq 0 \label{growth condition}
	\end{align}
	outside a compact set. There exists $\kappa_0>0$ and a family \begin{align} \label{foliation} \{\Sigma(\kappa)\,:\,\kappa\in(0,\kappa_0)\} \end{align} 
	 of   spheres $\Sigma(\kappa)\subset M$ where $\Sigma(\kappa)$ satisfies \eqref{constrained Willmore equation} with parameter $\kappa$. The family \eqref{foliation} sweeps out the complement of a compact set in $M$ and there holds $m_H(\Sigma(\kappa))\geq 0$ for each $\kappa\in(0,\kappa_0)$.  \\ \indent
	Moreover, given $\delta>0$, there exists $\lambda>1$ and a compact set $K\subset M$  with the following property. If $\Sigma\subset M\setminus K$ is an area-constrained Willmore sphere with $m_H(\Sigma)\geq 0$ and $|\Sigma|>4\,\pi\,\lambda^2$, then either $\Sigma=\Sigma(\kappa)$ for some $\kappa\in(0,\kappa_0)$ or $\rho(\Sigma)<\delta\,\lambda(\Sigma)$. 
\end{thm}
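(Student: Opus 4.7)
The plan is to establish both parts of the theorem by reduction to coordinate spheres in the asymptotic chart. For existence, I would perturb large round spheres centered in the end, using an implicit function theorem in which the three-dimensional translation kernel of the linearized operator is eliminated by a balancing condition on the center. For uniqueness, I would show that the hypotheses $m_H(\Sigma)\geq 0$ and $\rho(\Sigma)\geq \delta\,\lambda(\Sigma)$ force $\Sigma$ to be a small normal graph over some coordinate sphere, so that it lies in the range of the perturbation construction and is therefore one of the $\Sigma(\kappa)$.

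For the construction of the foliation, I would parametrize candidate surfaces as normal graphs of a small function $u$ over the coordinate sphere $S_r(a)$ of large radius $r$ centered at $a\in\mathbb{R}^3$ in the asymptotic chart. Substituting into \eqref{constrained Willmore equation} and expanding in powers of $r^{-1}$, the leading linearization at $u=0$ is a Jacobi-type operator on the round sphere whose six-dimensional approximate kernel consists of constants, absorbed into the Lagrange parameter $\kappa$, and first spherical harmonics corresponding to translations of the center. Working in the orthogonal complement of these modes, the implicit function theorem provides $u=u(r,a)$ of size $O(r^{-1})$, uniformly for $a$ in compact subsets of $\mathbb{R}^3$. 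The residual equation, obtained by projecting \eqref{constrained Willmore equation} onto the translation modes, is a three-dimensional system $F(r,a)=0$ that determines the center. The structural content of the growth condition \eqref{growth condition}, namely that $|x|^2\,R$ is radially non-increasing at infinity, translates into a monotonicity property of $F$ in $a$; this should make $\partial_a F$ invertible so that $a=a(r)$ is unique and smooth, producing the family $\{\Sigma(\kappa)\}$ by the bijection $r\leftrightarrow \kappa$ determined by the Lagrange parameter.

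Non-negativity of $m_H(\Sigma(\kappa))$ would follow from an asymptotic expansion of the form $m_H(\Sigma(\kappa))=m+O(r^{-1})$, so that $m_H(\Sigma(\kappa))\geq 0$ for $r$ large, equivalently, for $\kappa\in(0,\kappa_0)$ upon choosing $\kappa_0$ sufficiently small. With the center fixed by the balancing condition, the next-order correction carries the sign dictated by \eqref{growth condition}, which can be exploited after integrating the gradient term against the Schwarzschild background.

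For the uniqueness statement, the main obstacle is to extract sufficient a priori information from the Willmore equation alone. Given an area-constrained Willmore sphere $\Sigma$ with $|\Sigma|$ large, $m_H(\Sigma)\geq 0$, and $\rho(\Sigma)\geq\delta\,\lambda(\Sigma)$, the Hawking mass lower bound yields smallness of $\int_\Sigma |\hcirc|^2\,\mathrm{d}\mu$. Via \eqref{constrained Willmore equation} and elliptic regularity on the rescaled surface, this integral estimate should upgrade to pointwise curvature bounds; combined with $\rho(\Sigma)\geq\delta\,\lambda(\Sigma)$, which keeps $\Sigma$ well within the asymptotic regime and prevents collapse, this forces $\Sigma$ to be a small $C^{2,\alpha}$ normal graph over some coordinate sphere $S_{r_0}(a_0)$ with $r_0$ comparable to $\lambda(\Sigma)$ and $|a_0|$ controlled in terms of $\delta$. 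The local uniqueness from the implicit function theorem construction then identifies $\Sigma$ with some $\Sigma(\kappa)$. The delicate part is the passage from the $L^2$ estimate to pointwise closeness in the right scale-invariant norm, together with the rigidity needed to locate $a_0$ on the trajectory $a=a(r)$; this is where the asymptotic Schwarzschild structure and \eqref{growth condition} are used in full strength.
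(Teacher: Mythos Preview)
This theorem is not proved in the present paper. It is quoted verbatim as \cite[Theorem 6]{acws2} and serves only as background for the new results (Theorem~\ref{main result} and Corollary~\ref{uniqueness}); the paper contains no proof of it to compare your proposal against.

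Your outline is broadly in the spirit of how such foliation results are typically obtained (perturbation of large coordinate spheres via an implicit function argument, with a balancing condition to fix the translational kernel, followed by a priori estimates for the uniqueness part), and indeed the earlier work \cite{lamm2011foliations} of Lamm--Metzger--Schulze proceeds along these lines in the small-perturbation regime. Whether your sketch matches the actual argument of \cite{acws2} cannot be assessed from this paper; you would need to consult \cite{acws,acws2} directly. One point to be cautious about: your claim that condition~\eqref{growth condition} makes $\partial_a F$ invertible is asserted rather than argued, and in the general asymptotically-Schwarzschild setting (as opposed to small perturbations) the reduction to a finite-dimensional problem and the analysis of the reduced map are the substantive part of the proof, not a routine step.
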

\begin{rema}\text{ }
	\begin{enumerate}[i]
		\item[i)] Note that \eqref{growth condition} implies that $R\geq 0$ at infinity.
		\item[ii)]  T.~Lamm, the third-named author, and the fourth-named author have previously proved the existence of an asymptotic foliation by  large-area constrained Willmore spheres if $(M,g)$ is a so-called small perturbation of Schwarzschild; see \cite[Theorem 1 and Theorem 2]{lamm2011foliations}.
		\item[iii)] In $\mathbb{R}^3$, round spheres are Willmore surfaces and the only closed surfaces with non-negative Hawking mass; see \cite[(3)]{Willmore}.
		\item[iv)] S.~Brendle \cite{brendle2013constant} has shown that the spheres of symmetry are the only closed, embedded constant mean curvature surfaces in spatial Schwarzschild \eqref{schwarzschild}. It is not known if these are also the only area-constrained Willmore spheres; see also \cite[Remark 1.5 and Theorem 1.6]{Mondino}.
	\end{enumerate}
	
\end{rema}
	\begin{figure}\centering
	\includegraphics[width=0.3\linewidth]{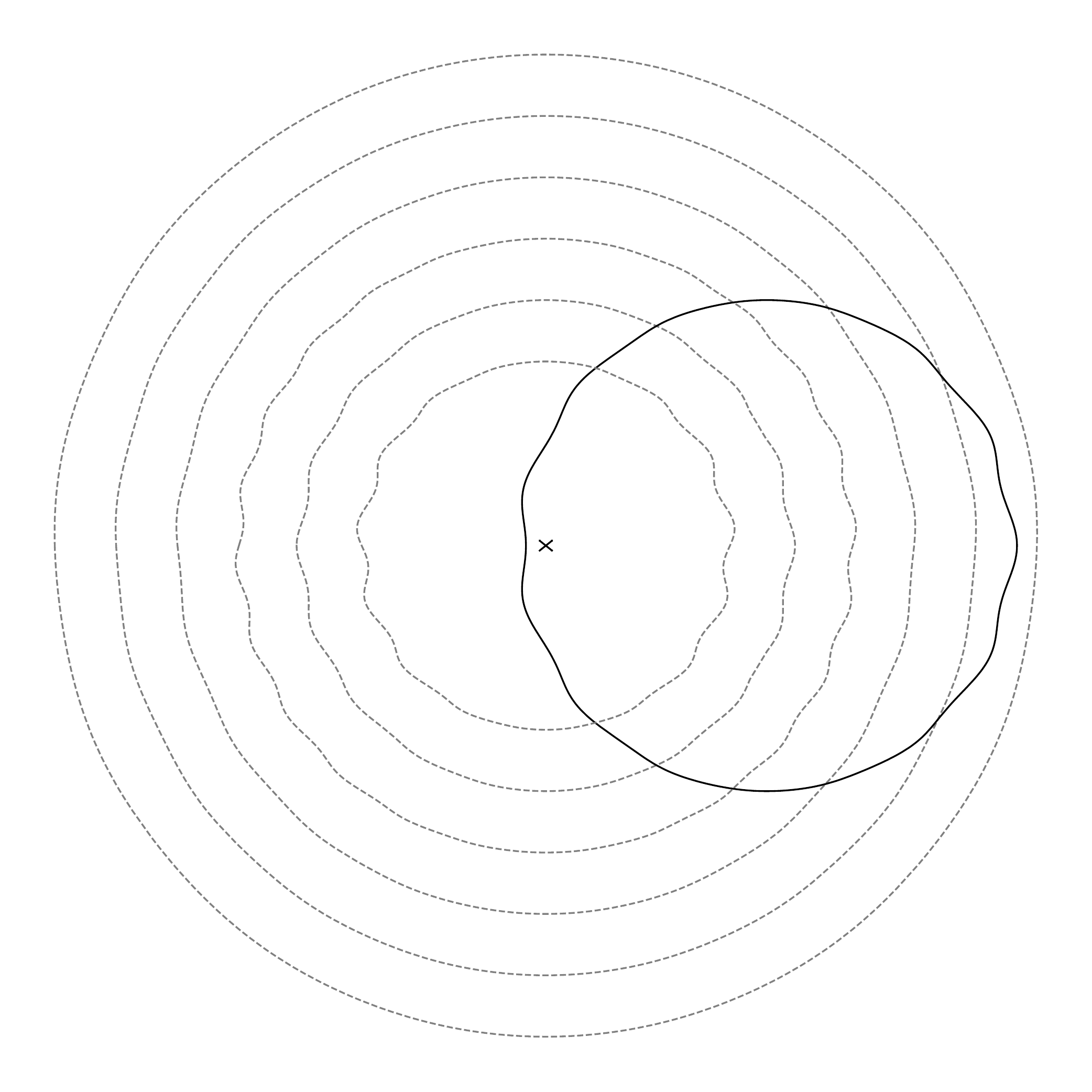}
	\caption{An illustration of the asymptotic family \eqref{foliation} by area-constrained Willmore spheres. The cross marks the origin in the asymptotically flat chart. The solid black line indicates a potential area-constrained Willmore sphere $\Sigma\subset M$ with $m_H(\Sigma)\geq0$ and $\rho(\Sigma)<\delta\,\lambda(\Sigma)$.  }
	\label{acws thm figure}
\end{figure}
As discussed for example in the introduction of \cite{acws}, the assumptions that the surfaces in consideration have non-negative Hawking mass, be large, and be disjoint from a certain bounded set appear to be essential for a characterization result such as Theorem \ref{acws theorem} to hold. By contrast, as we explain below, we conjecture that the alternative  $\rho(\Sigma)<\delta\,\lambda(\Sigma)$ in the conclusion of Theorem \ref{acws theorem} does not  actually arise. In fact, in this paper, we improve the uniqueness result in Theorem \ref{acws theorem} by ruling out the existence of certain large area-constrained Willmore spheres whose respective inner radius is small compared to their area radius.
\begin{thm}
	\label{main result} Let $(M,g)$ be $C^4$-asymptotic to Schwarzschild and suppose that, as $x\to\infty$ \begin{align}
	R\geq -o(|x|^{-4}). \label{scalar curvature lower bound}
	\end{align}
	There are $\delta>0$ and $\lambda>1$ with the following property. \\ \indent   There is no area-constrained Willmore sphere $\Sigma\subset M$ with 
	\begin{itemize}
\item[$\circ$] $m_H(\Sigma)\geq 0$,
\item[$\circ$] $|\Sigma|>4\,\pi\,\lambda^2$,
\item[$\circ$] $\rho(\Sigma)<\delta\,\lambda(\Sigma)$,
\item[$\circ$]  $\log\lambda(\Sigma)<\delta\,\rho(\Sigma)$.
	\end{itemize}

\end{thm}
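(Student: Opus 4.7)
The plan is to argue by contradiction. Suppose there is a sequence $\delta_k \to 0^+$ and area-constrained Willmore spheres $\Sigma_k \subset M$ satisfying the four bulleted hypotheses, with $\lambda_k := \lambda(\Sigma_k) \to \infty$ and $\rho_k := \rho(\Sigma_k)$. The condition $\log\lambda_k < \delta_k \rho_k$ forces $\rho_k \to \infty$, so $\Sigma_k$ lies in the asymptotic region where perturbative expansions in the Schwarzschild background are valid with small, controlled errors.

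Combining the Gauss equation with the Gauss-Bonnet theorem and $m_H(\Sigma_k) \geq 0$, one deduces the integral inequality
\[
\int_{\Sigma_k} |\hcirc|^2 \, d\mu + 2\int_{\Sigma_k} \operatorname{Ric}(\nu,\nu)\, d\mu \leq \int_{\Sigma_k} R \, d\mu.
\]
The scalar curvature assumption $R \geq -o(|x|^{-4})$ together with the $C^4$-asymptotic bound $|R| = O(|x|^{-4})$ and the estimate $\int_{\Sigma_k} |x|^{-4}\, d\mu = O(\rho_k^{-2})$ show that the right-hand side tends to zero as $k \to \infty$. To derive a contradiction one therefore has to prove that the left-hand side is strictly positive in the limit, for $\rho_k / \lambda_k$ sufficiently small.

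Using the a priori estimates developed in \cite{acws, acws2} together with the area-constrained Willmore equation \eqref{constrained Willmore equation}, describe $\Sigma_k$ as a small perturbation of a coordinate sphere of area-radius $\lambda_k$ centered at some point $p_k$ with $|p_k| + \rho_k \approx \lambda_k$. Using the expansion $\operatorname{Ric} = -m\,\nabla^2(|x|^{-1}) + O(|x|^{-4})$ in the Schwarzschild background, and the fact that Euclidean coordinate spheres in Schwarzschild are umbilical (so $\hcirc$ vanishes at leading order), one finds that the leading-order value of the left-hand side is \emph{independent} of the position $p_k$ -- a manifestation of the harmonicity of the Schwarzschild conformal factor. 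The position dependence only appears at subleading order, through the $O(|x|^{-4})$ corrections to $\operatorname{Ric}$, the perturbation $\sigma$ of $g$ from Schwarzschild, and the shape corrections imposed by \eqref{constrained Willmore equation}. A careful analysis of these corrections should yield, as $\rho_k/\lambda_k \to 0$, a strictly positive lower bound for the left-hand side, contradicting the estimate from the previous step.

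The main technical obstacle is precisely this leading-order cancellation: a naive computation does not see the obstruction, which is a subleading effect. Identifying and estimating the relevant position-dependent subleading corrections requires a careful Taylor expansion of the Schwarzschild metric, precise use of the area-constrained Willmore equation to constrain the shape of $\Sigma_k$ away from being exactly umbilical, and delicate control of the near-origin portion of $\Sigma_k$ where $|x| \approx \rho_k$ and the metric expansions have their largest errors. The lower bound $\rho_k > \log\lambda_k/\delta_k$ is used here in an essential way to ensure that the expansion errors from the conformal factor $u$ and the perturbation $\sigma$ remain strictly smaller than the subleading contributions being tracked.
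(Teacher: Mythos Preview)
Your approach has a genuine gap: the scalar Gauss--Bonnet/Hawking-mass inequality you propose is too coarse to produce the contradiction, and is not what the paper does. The inequality
\[
\int_{\Sigma_k}|\hcirc|^2\,\mathrm{d}\mu + 2\int_{\Sigma_k}\operatorname{Ric}(\nu,\nu)\,\mathrm{d}\mu \le \int_{\Sigma_k} R\,\mathrm{d}\mu
\]
holds for \emph{every} sphere with $m_H\ge 0$, so it cannot by itself detect the off-centering. You are right that the leading part of $2\int\operatorname{Ric}(\nu,\nu)\,\mathrm{d}\mu$ over a coordinate sphere $S_\lambda(\lambda\xi)$ equals $-32\pi/\lambda$ independently of $\xi$; but this is a large \emph{negative} term, and the Hawking-mass bound only gives $\int|\hcirc|^2\le 32\pi/\lambda+O(\rho^{-2})$ in the enclosing case (this is exactly the content of Lemma~\ref{hcirc int etsimate}). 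So the left-hand side is at best $O(\rho^{-2})$, not strictly positive. More fatally, the right-hand side $\int R$ cannot be bounded from above in any useful way: the hypothesis $R\ge -o(|x|^{-4})$ gives no upper bound, and since $|R|=O(|x|^{-4})$ the integral can be as large as $C\rho^{-2}$ for an arbitrary metric-dependent constant $C>0$. Thus the inequality is compatible with the existence of the slowly divergent sequence and gives no contradiction, no matter how carefully you track subleading corrections.

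The paper instead tests the Willmore equation against the translation field $\xi_i$, computing
\[
0=-\int_{\Sigma_i} g(\xi_i,\nu)\big[\Delta H+(|\hcirc|^2+\operatorname{Ric}(\nu,\nu)+\kappa)H\big]\,\mathrm{d}\mu .
\]
After a substantial preparatory analysis (pointwise curvature estimates, showing $\Sigma_i$ is a graph over $S_{\lambda_i}(\lambda_i\xi_i)$, and a sharp expansion $H=2\lambda_i^{-1}-4\lambda_i^{-1}|x|^{-1}+o(\lambda_i^{-1}\rho_i^{-1})$ obtained via the Schwarzschild potential function and Green's-function estimates for $\bar\Delta$ on the sphere), this identity expands to
\[
0=8\pi\,\lambda_i^{-1}\rho_i^{-2}-\lambda_i^{-1}\int_{S_i}\bar g(\xi_i,\bar\nu)\,R\,\mathrm{d}\bar\mu+o(\lambda_i^{-1}\rho_i^{-2}).
\]
The crucial difference from your approach is that the scalar curvature now appears \emph{weighted by} $\bar g(\xi_i,\bar\nu)$, which is negative on the near side (where $|x|\sim\rho_i$) and positive only on the far side (where $|x|\sim\lambda_i$). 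The lower bound $R\ge -o(|x|^{-4})$ then controls the near-side contribution, while the far-side contribution is $O(\lambda_i^{-2})=o(\rho_i^{-2})$ by decay alone, yielding $0\ge 8\pi\lambda_i^{-1}\rho_i^{-2}-o(\lambda_i^{-1}\rho_i^{-2})$, a contradiction. The moral is that the obstruction is a first-moment (translation) phenomenon, invisible to any purely scalar balance law; the Willmore equation must be used directly, not merely to ``constrain the shape.''
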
 
\begin{rema} \text{ } \label{sharp remark}
		\begin{enumerate}
		\item[i)] 	The conclusion of Theorem \ref{main result} can fail if the assumption \eqref{scalar curvature lower bound} is dropped; see \cite[Theorem 11]{acws2}.
		\item[ii)] Note that, for every $s>1$, $\log\lambda(\Sigma)<\lambda(\Sigma)^{1/s}$ provided that $\lambda(\Sigma)$ is sufficiently large.   Analytically,	the result established by G.~Huisken and S.-T.~Yau in \cite[Theorem 5.1]{HuiskenYau} on the uniqueness of large stable constant mean curvature spheres corresponds to the case where $1<s<2$. 
		\item[iii)]  The assumption  $\log\lambda(\Sigma)<\delta\,\rho(\Sigma)$ is essential to obtain \eqref{proj 0} from \eqref{optimal} and seems to be optimal for the method employed in this paper. Specifically, note that \eqref{optimal} is the best possible estimate based on  Lemma \ref{hcirc int etsimate}.
		\item[iv)]The assumptions of Theorem \ref{main result} imply that $\Sigma\cap B_2=\emptyset$. Note that, unlike in \cite{HuiskenYau} and \cite{QingTian}, we do not assume that $\Sigma$ encloses $B_2$.
	\end{enumerate}

\end{rema}

Combining Theorem \ref{main result} with Theorem \ref{acws theorem}, we obtain the following corollary.
\begin{coro} \label{uniqueness}
	Let $(M,g)$ be $C^4$-asymptotic to Schwarzschild. Suppose that
	$$
	\sum_{i=1}^3x^i\,\partial_i(|x|^2\,R)\leq 0
	$$ outside a compact set.
	Let $\{\Sigma_i\}_{i=1}^\infty$ be a sequence of area-constrained Willmore spheres $\Sigma_i\subset M$  with
	\begin{itemize}
		\item[$\circ$] $m_H(\Sigma)\geq 0$,
		\item[$\circ$] $\lim_{i\to\infty}\rho(\Sigma_i)=\infty$,
		\item[$\circ$] $\lim_{i\to\infty}\lambda(\Sigma_i)=\infty$,
		\item[$\circ$] $\Sigma_i$ is not part of the foliation \eqref{foliation}.
	\end{itemize}
There holds 
	$
	\rho(\Sigma_i)=O(\log\lambda(\Sigma_i)).
	$
\end{coro}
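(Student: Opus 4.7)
The plan is to deduce the corollary directly by combining Theorem \ref{acws theorem} and Theorem \ref{main result}, with only minor bookkeeping of constants. The first step is to verify that the hypotheses of both theorems are in force. The assumption $\sum_{i=1}^3 x^i\,\partial_i(|x|^2\,R)\leq 0$ outside a compact set is exactly the growth condition \eqref{growth condition} required by Theorem \ref{acws theorem}. Moreover, as pointed out in Remark i following Theorem \ref{acws theorem}, this condition forces $R\geq 0$ at infinity, which in particular implies the one-sided lower bound $R\geq -o(|x|^{-4})$ demanded by Theorem \ref{main result}.

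Next, I will choose constants in the correct order. Let $\delta>0$ and $\lambda_\ast>1$ be the parameters furnished by Theorem \ref{main result}. Feed this same $\delta$ into the second part of Theorem \ref{acws theorem} to obtain a corresponding area threshold $\lambda'>1$ and a compact exceptional set $K\subset M$. Because $K$ is compact, it is contained in $B_{r_0}$ for some $r_0>1$, and since $\rho(\Sigma_i)\to\infty$ we have $\Sigma_i\subset M\setminus K$ for all sufficiently large $i$. Because also $\lambda(\Sigma_i)\to\infty$, for all sufficiently large $i$ we have $|\Sigma_i|>4\,\pi\max(\lambda_\ast^2,\lambda'^2)$.

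The key reduction is now the following. Each $\Sigma_i$ is an area-constrained Willmore sphere with $m_H(\Sigma_i)\geq 0$ and, by assumption, is not a member of the foliation \eqref{foliation}. Applying Theorem \ref{acws theorem}, the only surviving alternative forces $\rho(\Sigma_i)<\delta\,\lambda(\Sigma_i)$ for all large~$i$. Thus three of the four hypotheses of Theorem \ref{main result} are already satisfied for $\Sigma_i$. Theorem \ref{main result} then rules out the remaining fourth condition, $\log\lambda(\Sigma_i)<\delta\,\rho(\Sigma_i)$. Consequently $\log\lambda(\Sigma_i)\geq \delta\,\rho(\Sigma_i)$ for all large $i$, i.e.\ $\rho(\Sigma_i)\leq \delta^{-1}\log\lambda(\Sigma_i)$; the finitely many remaining indices are absorbed into the implicit constant, yielding $\rho(\Sigma_i)=O(\log\lambda(\Sigma_i))$.

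Since the corollary is an immediate consequence of two already established theorems, there is no substantive obstacle beyond matching the constants. The one subtlety worth emphasizing is that the $\delta$ appearing in Theorem \ref{acws theorem} is a free parameter (with $\lambda'$ and $K$ depending on it), while the $\delta$ in Theorem \ref{main result} is fixed; hence the order of selection is to first read off $\delta$ from Theorem \ref{main result} and then apply Theorem \ref{acws theorem} with that prescribed $\delta$, rather than the other way around.
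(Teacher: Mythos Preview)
Your proof is correct and matches the paper's approach exactly: the paper simply states that the corollary follows by combining Theorem~\ref{acws theorem} with Theorem~\ref{main result}, and you have carried out precisely that combination, including the correct order of choosing $\delta$ first from Theorem~\ref{main result} and then feeding it into Theorem~\ref{acws theorem}.
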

For Riemannian three-manifolds  asymptotic to Schwarzschild and satisfying \eqref{growth condition}, Corollary \ref{uniqueness} provides evidence that all area-constrained Willmore spheres  with large inner radius, large area radius, and non-negative Hawking mass belong to the family \eqref{foliation}. In addition, it stands to reason that, for each $\kappa\in(0,\kappa_0)$, the Hawking mass of $\Sigma(\kappa)$ is maximal among all  spheres $\Sigma\subset M$ with $|\Sigma|=|\Sigma(\kappa)|$ provided that $\rho(\Sigma)$ is sufficiently large. We note that both of these conjectures are open, even in the case where $(M,g)$ is the spatial Schwarzschild manifold \eqref{schwarzschild}. By contrast, it is known that, for such Riemannian three-manifolds, stable constant mean curvature spheres with large area are the unique solutions of the isoperimetric problem for the volume they enclose; see the work of O.~Chodosh and the first-named author \cite{chodosh2017global} and of O.~Chodosh, Y.~Shi, H.~Yu, and the first-named author \cite{CESH}.  
\subsection*{Outline of related results} We say that a sequence $\{\Sigma_i\}_{i=1}^\infty$ of spheres $\Sigma_i\subset M$ with
\begin{align} \label{slow divergence intro}
\lim_{i\to\infty}\rho(\Sigma_i)=\infty \qquad \text{and}\qquad \rho(\Sigma_i)=o(\lambda(\Sigma_i))
\end{align}
is slowly divergent. As with large stable constant mean curvature spheres, a substantial obstacle towards establishing the uniqueness of large area-constrained Willmore spheres with non-negative Hawking mass in  Riemannian three-manifolds asymptotic to Schwarzschild is to rule out  the possibility of a  slowly divergent sequence $\{\Sigma_i\}_{i=1}^\infty$ of area-constrained Willmore spheres $\Sigma_i\subset M$ with non-negative Hawking mass.  The main difficulty in understanding the geometry of the spheres $\Sigma_i$ owes to the fact that unrefined curvature estimates generally do not yield global analytic control. In fact, as $i\to\infty$, there holds
\begin{align} \label{slow divergence curv est intro} 
h(\Sigma_i)=O(\lambda(\Sigma_i)^{-1})+O((\lambda(\Sigma_i)^{-1/2}+\rho(\Sigma_i)^{-1})\,|x|^{-1});
\end{align}
see Proposition \ref{curv est prop}. If for example $\rho(\Sigma_i)=o(\lambda(\Sigma_i)^{1/2})$, estimate \eqref{slow divergence curv est intro} fails to bound the sequence $\{\lambda(\Sigma_i)^{-1}\,\Sigma_i\}_{i=1}^\infty$ in $C^2$. If for example $\rho(\Sigma_i)=o(\log\lambda(\Sigma_i))$, \eqref{slow divergence curv est intro} even fails to bound the sequence $\{\lambda(\Sigma_i)^{-1}\,\Sigma_i\}_{i=1}^\infty$ in $C^1$; see Figure \ref{Figure slow divergence}. 
\begin{figure}
	\centering
	\begin{subfigure}{0.33\textwidth}
		
		\includegraphics[width=1\linewidth]{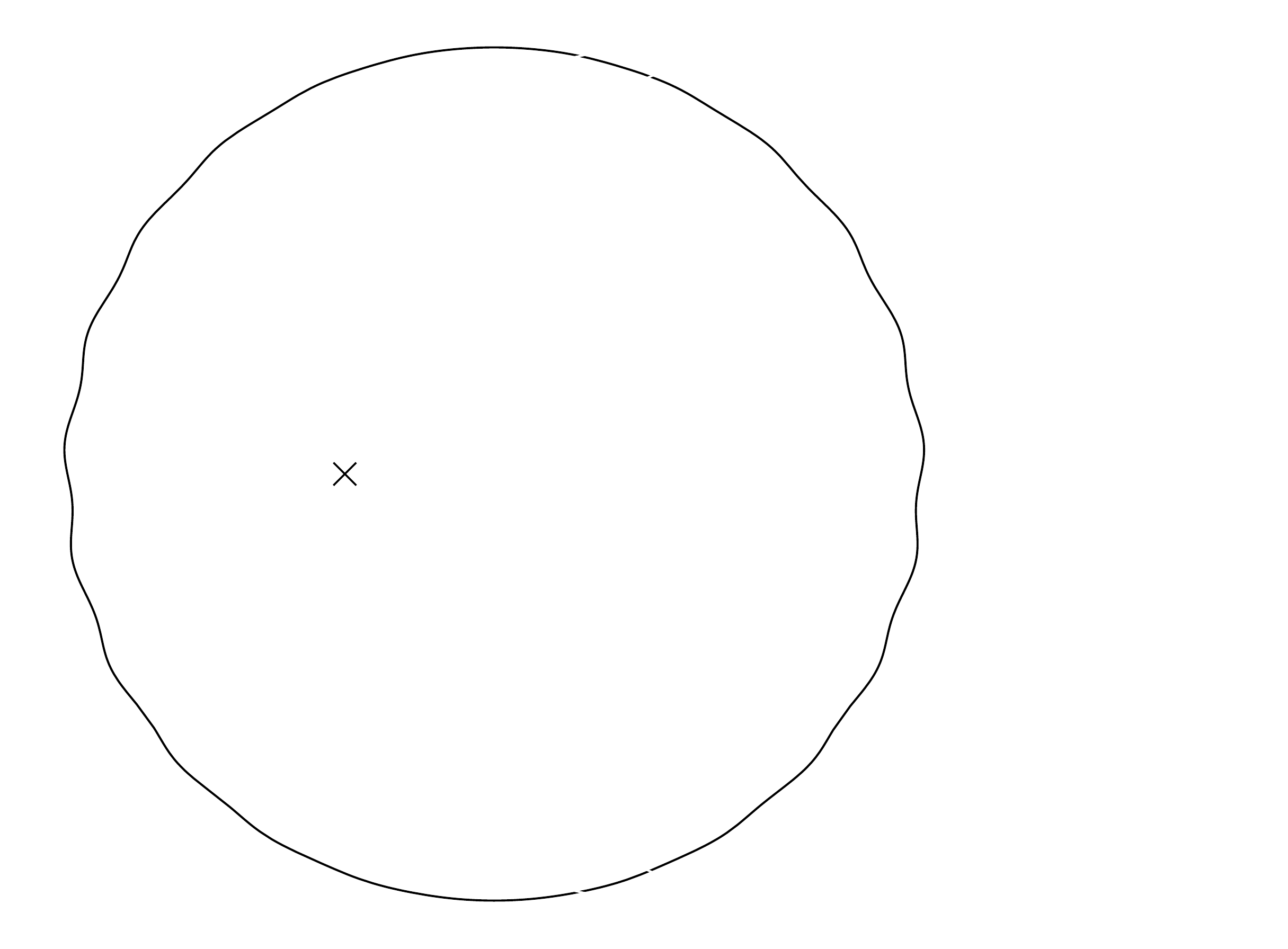}
		
	\end{subfigure}%
	\begin{subfigure}{0.33\textwidth}
		
		\includegraphics[width=1\linewidth]{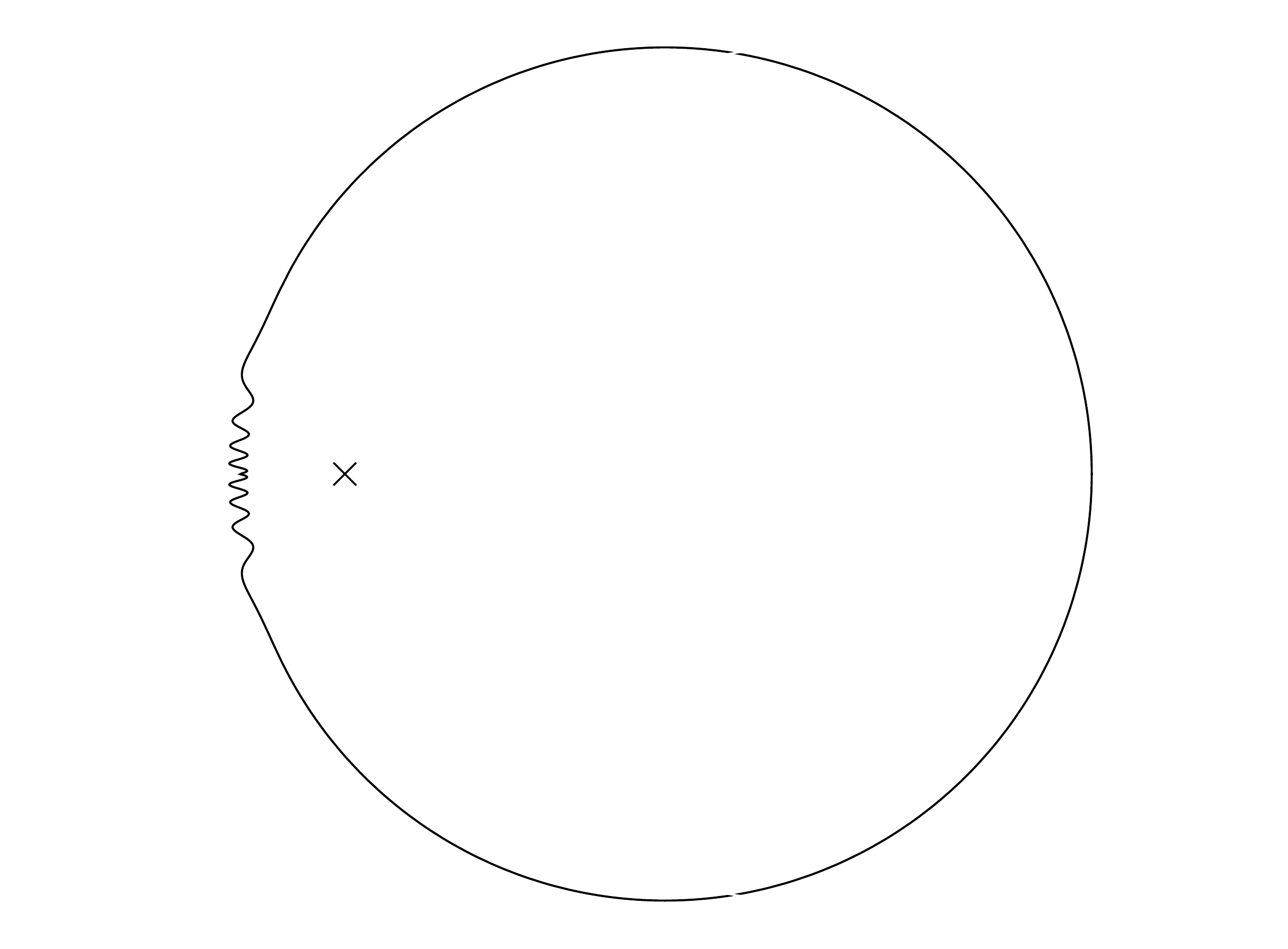}
	\end{subfigure}
	\begin{subfigure}{0.33\textwidth}
		
		\includegraphics[width=1\linewidth]{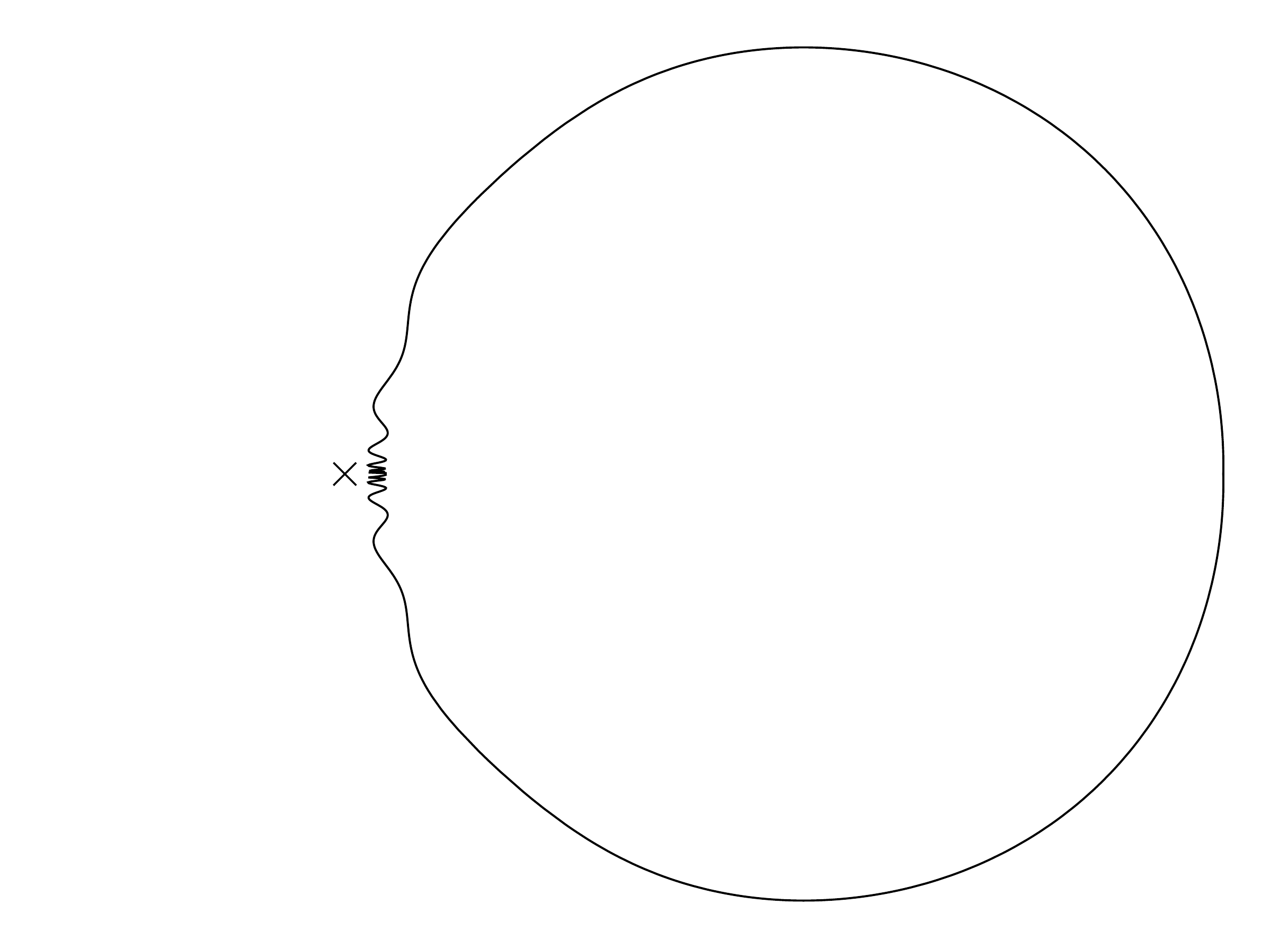}
	\end{subfigure}
	\caption{An illustration of a slowly divergent sequence $\{\Sigma_i\}_{i=1}^\infty$ of area-constrained Willmore spheres $\Sigma_i\subset M$ on the scale of the area radius $\lambda(\Sigma_i)$. The cross marks the origin in the asymptotically flat chart.  Away from the origin, the surfaces $\Sigma_i$ converge uniformly to a round sphere.}
	\label{Figure slow divergence}
\end{figure}
 \\ \indent 
 G.~Huisken and S.-T.~Yau \cite{HuiskenYau} have shown that  there are no  slowly divergent sequences $\{\Sigma_i\}_{i=1}^\infty$ of   stable constant mean curvature spheres $\Sigma_i\subset M$ that enclose $B_2$ with $\lambda(\Sigma_i)=O(\rho(\Sigma_i)^s)$ where $1<s<2$. In this case, an estimate similar to \eqref{slow divergence curv est intro} provides uniform estimates in $C^2$. These estimates are sufficient to conclude their argument  based on analyzing a certain flux integral related to the variation of the area functional with respect to a translation.  Following the same strategy, J.~Qing and G.~Tian \cite{QingTian} have shown that the assumption $\lambda(\Sigma_i)=O(\rho(\Sigma_i)^s),$ $1<s<2$, can be dropped. To overcome the potential loss of $C^1$-control, they carry out an asymptotic analysis  based on the observation that the Gauss maps $\{\nu(\Sigma_i)\}_{i=1}^\infty$ form a sequence of almost harmonic maps. Finally, O.~Chodosh and the first-named author \cite{chodosh2017global} have shown that the assumption that $\Sigma_i$ encloses $B_2$ can be dropped if the scalar curvature of $(M,g)$ is non-negative. Their method is based on an analysis of the Hawking mass of $\Sigma_i$. To obtain the required analytic control, they combine the Christodoulou-Yau estimate \cite[p.~13]{christodoulou71some}
 \begin{align} \label{CY}
 \frac23\,\int_{\Sigma}(|\hcirc|^2+R)\,\mathrm{d}\mu\leq 16\,\pi-\int_{\Sigma}H^2\,\mathrm{d}\mu,
 \end{align}
valid for every stable constant mean curvature sphere $\Sigma\subset M$, with global methods developed by G.~Huisken and T.~Ilmanen \cite{HI}. We also refer to the papers of L.-H.~Huang \cite{Huang}, of S.~Ma \cite{Ma}, and of the first-named author and second-named author \cite{cmc} on slowly divergent sequences of large stable constant mean curvature spheres in general asymptotically flat Riemannian three-manifolds.
\\ \indent 
When studying slowly divergent sequences \eqref{slow divergence intro} of area-constrained Willmore spheres with non-negative Hawking mass, additional difficulties arise. On the one hand, the absence of an estimate comparable to  \eqref{CY} renders the curvature estimates for large area-constrained Willmore spheres less powerful than those for large stable constant mean curvature spheres. Moreover, the fourth-order nature of the area-constrained Willmore equation \eqref{constrained Willmore equation} poses additional analytical challenges. On the other hand, the variation of the Willmore energy \eqref{Willmore energy intro} with respect to a translation is of a  smaller scale than that of the area functional, at least when $\Sigma$ encloses $B_2$. Consequently, more precise analytic control is needed. In fact, we are not aware of any previous positive results on the non-existence of slowly divergent sequences of  area-constrained Willmore spheres with non-negative Hawking mass. By contrast, in \cite[Theorem 11]{acws2}, the first-named author and the second-named author have shown that such sequences may exist if the scalar curvature of $(M,g)$ is allowed to change sign.

\subsection*{Outline of the proof of Theorem \ref{main result}}
By scaling, we may assume that $m=2$, that is,
$$
g=(1+|x|^{-1})^4\,\bar g +\sigma.
$$  
We use a bar to indicate that a geometric quantity has been computed with respect to the Euclidean background metric $\bar g$. Likewise, we use a tilde to indicate that the Schwarzschild metric
$$
\tilde g=(1+|x|^{-1})^4\,\bar g
$$
with mass $m=2$ has been used in the computation.
\\ \indent 
  Assume that $\{\Sigma_i\}_{i=1}^\infty$ is a sequence of area-constrained Willmore spheres $\Sigma_i\subset M$ with $m_H(\Sigma_i)\geq 0$ and
\begin{align} \label{s intro} 
\lim_{i\to\infty}\rho(\Sigma_i)=\infty,\qquad\rho(\Sigma_i)=o(\lambda(\Sigma_i)),\qquad  
\log\lambda(\Sigma_i)=o(\rho(\Sigma_i)).
\end{align}
 Let  $a\in\mathbb{R}^3$. 
To prove Theorem \ref{main result}, we expand the variation of the Willmore energy of $\Sigma_i$ with respect to a translation in direction $a$ given by
\begin{align} \label{translation sensitivity}
0=-\int_{\Sigma_i} g(a,\nu)\,\left[\Delta H+(|\hcirc|^2+\operatorname{Ric}(\nu,\nu)+\kappa)\,H\right]\mathrm{d}\mu.
\end{align} 
Contrary to the variation of the area functional with respect to a translation, we expect the right-hand side of \eqref{translation sensitivity} to be small 	independently of whether $\Sigma_i$ encloses the origin or not; see \eqref{div theorem}. Consequently, precise analytic control is needed to expand the terms on the right-hand side of \eqref{translation sensitivity} with sufficient control on the error. 
 \\ \indent To this end, we first prove preliminary pointwise curvature  estimates for large area-constrained Willmore spheres with non-negative Hawking mass; see Proposition \ref{curv est prop}. These estimates are based on an adaptation of the localized integral curvature estimates proved by E.~Kuwert and R.~Schätzle \cite{kuwertSchaetzle2} for Willmore surfaces in $\mathbb{R}^3$ to the setting of large area-constrained Willmore spheres in Riemannian three-manifolds asymptotic to Schwarzschild, see Appendix \ref{int curv est appendix}, and an $L^2$-curvature estimate that builds on an argument given in \cite[\S4]{chodosh2017global}. Combining Proposition \ref{curv est prop} with \eqref{s intro}, it follows that $\Sigma_i$ is the radial graph of a function $u_i$ over a large coordinate sphere $S_i=S_{\lambda_i}(\lambda_i\,\xi_i)$ where $\lambda_i>1$ and $\xi_i\in\mathbb{R}^3$; see  Lemma \ref{u initial estimate}. The resulting estimates in Lemma \ref{u initial estimate 3} below are still too coarse to expand \eqref{translation sensitivity}. To overcome this, we first prove explicit estimates for the Laplace operator of a round sphere based on Green's function methods; see Lemma \ref{PDE lemma 2}. Second, we observe that the quotient of $H(\Sigma_i)$ and the potential function of the spatial Schwarzschild manifold satisfies an equation slightly more useful than the area-constrained Willmore equation \eqref{condition}; see Lemma \ref{potential function}. We then use the explicit estimates for the Laplace operator to investigate this equation to obtain the sharp estimate
 \begin{align} \label{intro H estimate}
 H(\Sigma_i)=(2+o(1))\,\lambda(\Sigma_i)^{-1}-4\,\lambda(\Sigma_i)^{-1}\,|x|^{-1}+o(\lambda(\Sigma_i)^{-1}\,\rho(\Sigma_i)^{-1});
 \end{align} 
 see Lemma \ref{improved H estimate}. We note that this procedure requires assumption \eqref{s intro} in an essential way; see Remark \ref{sharp remark}.
 \\ \indent
  With the estimate \eqref{intro H estimate} at hand, we obtain that   
  \begin{align*} 
0= \,& -\int_{\Sigma_i} g(\xi_i,\nu)\,\left[\Delta H+(|\hcirc|^2+\operatorname{Ric}(\nu,\nu)+\kappa)\,H\right]\mathrm{d}\mu\\=\,&8\,\pi\,\lambda(\Sigma_i)^{-1}\,\rho(\Sigma_i)^{-2}-	\lambda(\Sigma_i)^{-1}\,\int_{S_i} \bar g(\xi_i,\bar\nu)\,R\,\mathrm{d}\bar \mu-o(\lambda(\Sigma_i)^{-1}\,\rho(\Sigma_i)^{-2});
  \end{align*} 
  see \eqref{contradiction}.  Using that $R\geq-o(|x|^{-4})$, it follows that 
  $$
  0 \geq 8\,\pi\,\lambda(\Sigma_i)^{-1}\,\rho(\Sigma_i)^{-2}-o(\lambda(\Sigma_i)^{-1}\,\rho(\Sigma_i)^{-2}).
  $$
This is a contradiction for  sufficiently large $i$.
\subsection*{Acknowledgments} We thank Gerhard Huisken for proposing the question answered in this paper and for his encouragement and support. We thank the referees for their feedback which helped improve the exposition of this paper. Michael Eichmair acknowledges the support of the START Programme Y963 of the Austrian Science Fund. Thomas Koerber acknowledges the support of the Lise-Meitner Programme M3184 of the Austrian Science Fund. Jan Metzger was supported by the DFG Project ME 3816/3-1 which is part of the
SPP2026. Part of the work was done during his participation in the workshop General
Relativity, Geometry and Analysis: beyond the first 100 years after
Einstein at Institut Mittag-Leffler.\\ \indent For the purpose of open access, the authors have applied a Creative Commons Attribution (CC-BY) license to any Author Accepted Manuscript version arising from this submission.
\section{Curvature estimates for large area-constrained Willmore spheres} \label{curv est appendix}
We assume that $g$ is a Riemannian metric on $\mathbb{R}^3$ such that, as $x\to\infty,$
$$
g=\left(1+|x|^{-1}\right)^4\,\bar g+\sigma \qquad\text{where}\qquad \partial_J\sigma+O(|x|^{-2-|J|})
$$  for every multi-index $J$ with $|J|\leq 4$. \\ \indent 
Let $\{\Sigma_i\}_{i=1}^\infty$ be a sequence of area-constrained Willmore spheres $\Sigma_i\subset \mathbb{R}^3$ with  
\begin{align} \label{appendix b assumptions}
	\int_{\Sigma_i}H^2\,\mathrm{d}\mu\leq 16\,\pi,\qquad\lim_{i\to\infty}\rho(\Sigma_i)=\infty,\qquad \rho(\Sigma_i)=O(\lambda(\Sigma_i)).
\end{align} 
\indent 
The goal of this section is to prove  curvature estimates for $\Sigma_i$. To this end, we combine the integral curvature estimates from Appendix \ref{int curv est appendix} with the integral estimate on the second fundamental form proven in Lemma \ref{hcirc int etsimate}. \\ \indent
We abbreviate $\rho_i=\rho(\Sigma_i)$ and $\lambda_i=\lambda(\Sigma_i)$. 
\begin{lem}
As $i\to\infty$, there holds: \label{large blowdown} \label{L2 curv est rema}
	\begin{align*}
		&\circ\qquad 	\int_{\Sigma_i}\bar H^2\,\mathrm{d}\bar\mu=16\,\pi+O(\rho_i^{-1})\\
		&\circ\qquad		\int_{\Sigma_i}|h|^2\,\mathrm{d}\mu=O(1) \\
		&\circ\qquad		\int_{\Sigma_i}|\bar h|_{\bar g}^2\,\mathrm{d}\bar\mu=O(1)
	\end{align*}
\end{lem}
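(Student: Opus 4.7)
The plan is to establish the three bounds in sequence: first the $L^2$-bound (2) on $|h|^2$ in the Riemannian metric, then the Euclidean Willmore deficit (1) via the conformal invariance of $|\hcirc|^2\,\mathrm{d}\mu$, and finally the Euclidean bound (3) on $|\bar h|^2$ via the conformal transformation formula for the second fundamental form. The main analytic input is Lemma~\ref{hcirc int etsimate} (an $L^2$-bound on $\hcirc$), supplemented by the integral curvature estimates of Appendix~\ref{int curv est appendix} and the Hawking-mass condition $\int H^2\,\mathrm{d}\mu\leq 16\,\pi$.

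For (2), I would combine the trace decomposition $|h|^2=|\hcirc|^2+H^2/2$ with Lemma~\ref{hcirc int etsimate} and the Hawking-mass bound. As a useful cross-check, the Gauss equation $2\,K_{\Sigma_i}=R-2\,\operatorname{Ric}(\nu,\nu)+H^2/2-|\hcirc|^2$ together with Gauss-Bonnet $\int_{\Sigma_i} K_{\Sigma_i}\,\mathrm{d}\mu=4\,\pi$ gives
\begin{equation*}
\int_{\Sigma_i}|\hcirc|^2\,\mathrm{d}\mu=\frac{1}{2}\int_{\Sigma_i}H^2\,\mathrm{d}\mu-8\,\pi+\int_{\Sigma_i}R\,\mathrm{d}\mu-2\int_{\Sigma_i}\operatorname{Ric}(\nu,\nu)\,\mathrm{d}\mu,
\end{equation*}
where the ambient curvature integrals on the right are $O(\rho_i^{-1})$ from the Schwarzschild asymptotics $|R|=O(|x|^{-4})$ and $|\operatorname{Ric}|=O(|x|^{-3})$, provided $\int_{\Sigma_i}|x|^{-k}\,\mathrm{d}\bar\mu$ can be bounded independently of $\lambda_i$ for $k=3,4$.

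For (1), I would exploit that the trace-free second fundamental form is conformally invariant, $|\htildecirc|^2_{\tilde g}\,\mathrm{d}\tilde\mu=|\hbarcirc|^2_{\bar g}\,\mathrm{d}\bar\mu$, under $\tilde g=u^2\,\bar g$ with $u=(1+|x|^{-1})^2$. Since $g=\tilde g+\sigma$ with $|\sigma|=O(\rho_i^{-2})$ on $\Sigma_i$, a perturbation estimate that uses (2) gives $\int|\hbarcirc|^2\,\mathrm{d}\bar\mu=\int|\hcirc|^2_g\,\mathrm{d}\mu+O(\rho_i^{-2})=O(\rho_i^{-1})$. The Euclidean Gauss equation and Gauss-Bonnet for the topological sphere $\Sigma_i$ then yield
\begin{equation*}
\int_{\Sigma_i}\bar H^2\,\mathrm{d}\bar\mu=16\,\pi+2\int_{\Sigma_i}|\hbarcirc|^2\,\mathrm{d}\bar\mu=16\,\pi+O(\rho_i^{-1}),
\end{equation*}
which is (1). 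For (3), I would apply the conformal transformation of $h$,
\begin{equation*}
u^2\,|\tilde h|^2_{\tilde g}=|\bar h|^2_{\bar g}-2\,\bar H\,\bar\nu(\log u)+2\,(\bar\nu(\log u))^2,
\end{equation*}
and integrate, estimating the cross term by Cauchy-Schwarz using the $L^2$-bound on $\bar H$ from (1) together with $|\bar\nu(\log u)|=O(|x|^{-2})$, and the square term directly. A final $\sigma$-perturbation step transfers $\int|\tilde h|^2\,\mathrm{d}\tilde\mu$ to $\int|h|^2\,\mathrm{d}\mu=O(1)$.

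The main obstacle, recurring in each step, is that the naive bound $\int_{\Sigma_i}|x|^{-k}\,\mathrm{d}\bar\mu\leq\rho_i^{-k}\,|\Sigma_i|=O(\rho_i^{-k}\,\lambda_i^2)$ is too weak under the assumption $\rho_i=O(\lambda_i)$, since $\lambda_i$ may be much larger than $\rho_i$. The required $\lambda_i$-independent bound $\int_{\Sigma_i}|x|^{-k}\,\mathrm{d}\bar\mu=O(\rho_i^{2-k})$ for $k\geq 3$ is obtained from a Simon-type density estimate on $\Sigma_i$, itself a consequence of the uniform Willmore bound $\int H^2\,\mathrm{d}\mu\leq 16\,\pi$, via a dyadic decomposition of $\Sigma_i$ in Euclidean annuli centered at the origin. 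Establishing this density control uniformly in $i$ is precisely what the integral curvature estimates from Appendix~\ref{int curv est appendix} provide.
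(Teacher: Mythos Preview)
Your proposal has a circularity problem. You invoke Lemma~\ref{hcirc int etsimate} as the ``main analytic input'' for the proof of Lemma~\ref{large blowdown}, but in the paper Lemma~\ref{hcirc int etsimate} is proved \emph{after} Lemma~\ref{large blowdown} and explicitly uses it (note the references to Lemma~\ref{large blowdown} and Lemma~\ref{L2 curv est rema}, which are two labels for the same lemma, in the proof of Lemma~\ref{hcirc int etsimate}). The same applies to Appendix~\ref{int curv est appendix}: its standing hypothesis \eqref{small curvature} is only verified \emph{via} Lemma~\ref{hcirc int etsimate}, so you cannot appeal to that appendix here either. In short, at this point in the paper you do not yet know that $\int_{\Sigma_i}|\hcirc|^2\,\mathrm{d}\mu$ is small.

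The decay estimate you correctly identify as the crux, namely $\int_{\Sigma_i}|x|^{-k}\,\mathrm{d}\bar\mu=O(\rho_i^{2-k})$ for $k\geq 3$ independently of $\lambda_i$, is not supplied by Appendix~\ref{int curv est appendix} but by Lemma~\ref{hy integral lemma}, which bounds $\rho_i^{q-2}\int_{\Sigma_i}|x|^{-q}\,\mathrm{d}\bar\mu$ by a multiple of $\int_{\Sigma_i}\bar H^2\,\mathrm{d}\bar\mu$. Since the right-hand side is the very quantity you want to control, the paper's argument is a bootstrap: Gauss--Bonnet plus the Hawking-mass bound give
\[
\int_{\Sigma_i}|h|^2\,\mathrm{d}\mu\leq 8\pi+O(\rho_i^{-1})\int_{\Sigma_i}\bar H^2\,\mathrm{d}\bar\mu,
\]
and the comparison $\bar H^2\,\mathrm{d}\bar\mu=H^2\,\mathrm{d}\mu+O(|x|^{-1}|h|^2)\,\mathrm{d}\mu+O(|x|^{-3})\,\mathrm{d}\bar\mu$ then yields
\[
\int_{\Sigma_i}\bar H^2\,\mathrm{d}\bar\mu\leq 16\pi+O(\rho_i^{-1})+O(\rho_i^{-1})\int_{\Sigma_i}\bar H^2\,\mathrm{d}\bar\mu,
\]
from which the last term is absorbed for large $i$. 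Your outline misses this self-referential structure; once you adopt it, the conformal-invariance and $\sigma$-perturbation steps you sketch become unnecessary for this lemma.
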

\begin{proof}
	Clearly,
	$$
	\int_{\Sigma_i}\bar H^2\,\mathrm{d}\bar\mu\geq 16\,\pi.
	$$
	Integrating the Gauss equation and using the Gauss-Bonnet theorem, we have
	$$\int_{\Sigma_i}H^2\,\mathrm{d}\mu=8\,\pi+\int_{\Sigma_i}|h|^2\,\mathrm{d}\mu +4\,\int_{\Sigma_i}\bigg(\operatorname{Rc}(\nu,\nu)-\frac12\,R\bigg) \,\mathrm{d}\mu.
	$$
	Using Lemma \ref{hy integral lemma} and \eqref{appendix b assumptions}, we obtain
	\begin{align} \label{intermediate} 
		\int_{\Sigma_i}|h|^2\,\mathrm{d}\mu\leq 8\,\pi+O(\rho_i^{-1})\,\int_{\Sigma_i}\bar H^2\,\mathrm{d}\bar\mu.
	\end{align} 
	By Lemma \ref{basic expansions},
	$$
	\bar H^2\,\mathrm{d}\bar\mu=H^2\,\mathrm{d}\mu+O(|x|^{-1}\,|h|^2)\,\mathrm{d}\mu+O(|x|^{-3})\,\mathrm{d}\bar\mu
	$$ 
	Using Lemma \ref{hy integral lemma} and \eqref{intermediate}, we obtain 
	$$
	\int_{\Sigma_i}\bar H^2\,\mathrm{d}\bar \mu\leq 16\,\pi+O(\rho_i^{-1})+ O(\rho_i^{-1})\,\int_{\Sigma_i}\bar H^2\,\mathrm{d}\bar\mu.
	$$
	\indent The assertion follows from these estimates.
\end{proof} 

\begin{lem}
	There holds \label{crude minkowski}
	$$	\int_{\Sigma_i}(\bar H-2\,\lambda_i^{-1})^2\,\mathrm{d}\bar\mu=O(1)\,\int_{\Sigma_i}|\hbarcirc|_{\bar g}^2\,\mathrm{d}\bar\mu+O(\rho_i^{-2}).
	$$
\end{lem}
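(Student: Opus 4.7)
I plan to derive the estimate by combining the Euclidean Gauss--Bonnet theorem with the classical Minkowski identity, and then reconciling the resulting identity (which naturally involves the Euclidean area radius $\bar\lambda_i$ defined by $|\Sigma_i|_{\bar g} = 4\,\pi\,\bar\lambda_i^2$) with the $g$-area radius $\lambda_i$ via the asymptotic form of $g$.

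Since $\Sigma_i$ is a topological sphere, Gauss--Bonnet in $\bar g$ together with the algebraic identity $\bar K = \bar H^2/4 - |\hbarcirc|_{\bar g}^2/2$ yields the exact relation
\[
\int_{\Sigma_i} \bar H^2\,\mathrm{d}\bar\mu = 16\,\pi + 2\,\int_{\Sigma_i}|\hbarcirc|_{\bar g}^2\,\mathrm{d}\bar\mu.
\]
Expanding
\[
\int_{\Sigma_i}(\bar H - 2\,\lambda_i^{-1})^2\,\mathrm{d}\bar\mu = \int_{\Sigma_i}\bar H^2\,\mathrm{d}\bar\mu - \frac{4}{\lambda_i}\int_{\Sigma_i}\bar H\,\mathrm{d}\bar\mu + \frac{4}{\lambda_i^2}\,|\Sigma_i|_{\bar g}
\]
and substituting the Gauss--Bonnet identity reduces the task to controlling $\int\bar H\,\mathrm{d}\bar\mu$ together with the area $|\Sigma_i|_{\bar g}$.

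The linear integral is handled through the classical Minkowski identity $\int_{\Sigma_i}\bar H\,\bar g(x-p,\bar\nu)\,\mathrm{d}\bar\mu = 2\,|\Sigma_i|_{\bar g}$, valid for every $p\in\mathbb{R}^3$. I will take $p$ to be the Euclidean center of the coordinate sphere approximating $\Sigma_i$, which can be identified via the rough $L^2$-closeness implied by Lemma \ref{large blowdown}. Writing $\bar g(x-p,\bar\nu) = \bar\lambda_i + \eta$ and isolating $\int\bar H\,\mathrm{d}\bar\mu = 8\,\pi\,\bar\lambda_i - \bar\lambda_i^{-1}\int\bar H\,\eta\,\mathrm{d}\bar\mu$, then combining with $|\Sigma_i|_{\bar g} = 4\,\pi\,\bar\lambda_i^2$, the expansion collapses to the clean identity
\[
\int_{\Sigma_i}(\bar H - 2\,\bar\lambda_i^{-1})^2\,\mathrm{d}\bar\mu = 2\,\int_{\Sigma_i}|\hbarcirc|_{\bar g}^2\,\mathrm{d}\bar\mu + \frac{4}{\bar\lambda_i^2}\int_{\Sigma_i}\bar H\,\eta\,\mathrm{d}\bar\mu.
\]

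It then remains to bound the Minkowski remainder $(4/\bar\lambda_i^2)\int\bar H\,\eta\,\mathrm{d}\bar\mu$ by $O(1)\int|\hbarcirc|_{\bar g}^2\,\mathrm{d}\bar\mu + O(\rho_i^{-2})$ and to pass from $\bar\lambda_i$ back to $\lambda_i$. For the former, Cauchy--Schwarz together with $\|\bar H\|_{L^2(\Sigma_i)} = O(1)$ from Lemma \ref{large blowdown} and a Poincaré-type $L^2$ control on $\eta$ derived from the closeness of $\Sigma_i$ to $S_{\bar\lambda_i}(p)$ will do the job. For the latter, the metric expansion $\mathrm{d}\mu = (1+4\,|x|^{-1}+O(|x|^{-2}))\,\mathrm{d}\bar\mu$ inherited from $g=(1+|x|^{-1})^4\,\bar g+\sigma$, combined with $|x|\geq\rho_i$ on $\Sigma_i$ and $|\Sigma_i|_{\bar g} = O(\lambda_i^2)$, gives a quantitative bound on $|\bar\lambda_i-\lambda_i|$ and hence on the error of replacing $2\,\bar\lambda_i^{-1}$ by $2\,\lambda_i^{-1}$, which is absorbed into $O(\rho_i^{-2})$ using $\rho_i = O(\lambda_i)$. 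The main obstacle will be establishing the correct $L^2$ bound on $\eta$: this calls for selecting $p$ carefully so that $\eta$ is (approximately) orthogonal to the kernel of the linearized mean-curvature operator on the round sphere (spanned by constants and coordinate functions), and for upgrading the rough $L^2$-closeness of Lemma \ref{large blowdown} just enough to pin down such a $p$.
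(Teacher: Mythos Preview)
Your reduction via Gauss--Bonnet and the Minkowski identity is correct up to the point where you arrive at
\[
\int_{\Sigma_i}(\bar H - 2\,\bar\lambda_i^{-1})^2\,\mathrm{d}\bar\mu = 2\,\int_{\Sigma_i}|\hbarcirc|_{\bar g}^2\,\mathrm{d}\bar\mu + 32\,\pi - \frac{4}{\bar\lambda_i}\int_{\Sigma_i}\bar H\,\mathrm{d}\bar\mu,
\]
and your treatment of the passage $\bar\lambda_i\to\lambda_i$ via $\lambda_i=(1+O(\rho_i^{-1}))\,\bar\lambda_i$ is fine and matches the paper. The genuine gap is the step you flag yourself as ``the main obstacle'': bounding the Minkowski remainder by $O(1)\int|\hbarcirc|_{\bar g}^2\,\mathrm{d}\bar\mu$. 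This is equivalent to the lower bound $\int_{\Sigma_i}\bar H\,\mathrm{d}\bar\mu\geq 8\,\pi\,\bar\lambda_i - O(\bar\lambda_i)\int|\hbarcirc|_{\bar g}^2\,\mathrm{d}\bar\mu$, and that is precisely the nontrivial content of the De~Lellis--M\"uller rigidity estimate. A ``Poincar\'e-type $L^2$ control on $\eta$'' would require a Poincar\'e constant on $\Sigma_i$, which depends on the very geometry you are trying to establish; at this stage only Lemma~\ref{large blowdown} is available, giving nothing beyond $\int|\bar h|_{\bar g}^2=O(1)$ and $\int|\hbarcirc|_{\bar g}^2=O(\rho_i^{-1})$. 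Your suggestion to ``upgrade the rough $L^2$-closeness of Lemma~\ref{large blowdown}'' is circular: the present lemma feeds into Lemma~\ref{hcirc int etsimate} and then Proposition~\ref{curv est prop}, which are what eventually give geometric closeness of $\Sigma_i$ to a round sphere.

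The paper bypasses all of this by invoking \cite[(38)]{dLM} directly: De~Lellis--M\"uller proves, for \emph{any} embedded sphere in $\mathbb{R}^3$, the sharp estimate $\int_{\Sigma_i}(\bar H-2\,\bar\lambda_i^{-1})^2\,\mathrm{d}\bar\mu\leq C\int_{\Sigma_i}|\hbarcirc|_{\bar g}^2\,\mathrm{d}\bar\mu$ with a universal constant and no further hypotheses. The remaining line is then exactly your $\bar\lambda_i\to\lambda_i$ step. In short, the structure you set up is the right one, but the black box you need to close it is De~Lellis--M\"uller itself, not a soft Poincar\'e argument.
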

\begin{proof}
	By {\cite[(38)]{dLM}}, 
	$$
	\int_{\Sigma_i}(\bar H-2\,\bar\lambda(\Sigma_i)^{-1})^2\,\mathrm{d}\bar\mu=O(1)\,\int_{\Sigma_i}|\hbarcirc|_{\bar g}^2\,\mathrm{d}\bar\mu.
	$$
	From Lemma \ref{basic expansions} we obtain 
	$
	\lambda_i=(1+O(\rho_i^{-1}))\,\bar\lambda(\Sigma_i)
	$
	from which the assertion follows. 
\end{proof}

\begin{lem}
	There holds \label{hcirc int etsimate}
	\begin{align*}
		\int_{\Sigma_i}|h-\lambda_i^{-1}\,g|_{\Sigma_i}|^2\,\mathrm{d}\mu=O((\lambda_i^{-1/2}+\rho_i^{-1})^2).
	\end{align*} 
	
\end{lem}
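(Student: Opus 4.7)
The plan is to decompose, using orthogonality of the traceless and trace parts of symmetric $2$-tensors on $\Sigma_i$,
\[
|h-\lambda_i^{-1}\,g|^2 = |\hcirc|^2+\tfrac{1}{2}(H-2\,\lambda_i^{-1})^2,
\]
and to bound the two summands separately; the target $O((\lambda_i^{-1/2}+\rho_i^{-1})^2)$ is equivalent to each integral being $O(\lambda_i^{-1}+\rho_i^{-2})$.

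For $\int_{\Sigma_i}|\hcirc|^2\,\mathrm{d}\mu$, I would combine the Gauss equation with Gauss--Bonnet as in the proof of Lemma \ref{large blowdown} to derive
\[
\int_{\Sigma_i} H^2\,\mathrm{d}\mu = 16\,\pi+2\int_{\Sigma_i}|\hcirc|^2\,\mathrm{d}\mu+8\int_{\Sigma_i}\bigl(\operatorname{Rc}(\nu,\nu)-\tfrac{1}{2}R\bigr)\,\mathrm{d}\mu,
\]
so that the Hawking mass bound $\int H^2\,\mathrm{d}\mu\leq 16\,\pi$ yields $\int|\hcirc|^2\,\mathrm{d}\mu\leq -4\int\operatorname{Rc}(\nu,\nu)\,\mathrm{d}\mu+2\int R\,\mathrm{d}\mu$. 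Since $R=O(|x|^{-4})$ and $\operatorname{Rc}-\widetilde{\operatorname{Rc}}=O(|x|^{-4})$ in the Schwarzschild asymptotic regime, Lemma \ref{hy integral lemma} contributes $O(\rho_i^{-2})$ and reduces everything to the leading Schwarzschild piece $-2\int\phi^{-1}\bar\nabla^2\phi(\bar\nu,\bar\nu)\,\mathrm{d}\bar\mu$, where $\phi=1+|x|^{-1}$. Because $\phi$ is $\bar g$-harmonic, decomposing the Euclidean Laplacian along $\Sigma_i$ and integrating gives the identity
\[
\int_{\Sigma_i}\bar\nabla^2\phi(\bar\nu,\bar\nu)\,\mathrm{d}\bar\mu = -\int_{\Sigma_i}\bar H\,\bar\nabla_{\bar\nu}\phi\,\mathrm{d}\bar\mu = \int_{\Sigma_i}\bar H\,\bar g(x,\bar\nu)\,|x|^{-3}\,\mathrm{d}\bar\mu.
\]
Splitting $\bar H=2\,\bar\lambda^{-1}+(\bar H-2\,\bar\lambda^{-1})$, the constant part contributes only $O(\lambda_i^{-1})$ since $\int\bar g(x,\bar\nu)\,|x|^{-3}\,\mathrm{d}\bar\mu$ equals $0$ or $4\,\pi$ by the divergence theorem for the harmonic field $x\,|x|^{-3}$. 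The remainder is controlled via Cauchy--Schwarz, the Willmore-type bound $\int|x|^{-4}\,\mathrm{d}\bar\mu=O(\rho_i^{-2})$ from Lemma \ref{hy integral lemma}, and Lemma \ref{crude minkowski} by
\[
C\,\rho_i^{-1}\bigl(\textstyle\int_{\Sigma_i}|\hbarcirc|_{\bar g}^2\,\mathrm{d}\bar\mu\bigr)^{1/2}+O(\rho_i^{-2}).
\]
By conformal covariance of the traceless second fundamental form, $|\hcirc|_g^2\,\mathrm{d}\mu=|\hbarcirc|_{\bar g}^2\,\mathrm{d}\bar\mu$ modulo $O(\rho_i^{-2})$-errors coming from $\sigma$; the resulting inequality for $A:=\int|\hcirc|^2\,\mathrm{d}\mu$ is self-improving, and Young's inequality $C\,\rho_i^{-1}\sqrt{A}\leq \tfrac{1}{2}A+\tfrac{1}{2}C^2\,\rho_i^{-2}$ yields $\int_{\Sigma_i}|\hcirc|^2\,\mathrm{d}\mu=O(\lambda_i^{-1}+\rho_i^{-2})$.

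For $\int_{\Sigma_i}(H-2\,\lambda_i^{-1})^2\,\mathrm{d}\mu$, I would use the conformal expansions from Lemma \ref{basic expansions} to reduce to $\int_{\Sigma_i}(\bar H-2\,\lambda_i^{-1})^2\,\mathrm{d}\bar\mu$ modulo $O(\rho_i^{-2})$, and then apply Lemma \ref{crude minkowski} together with the $|\hcirc|^2$-bound just established to obtain the same order. The hardest step is the self-improvement: the Hawking bound and Gauss--Bonnet alone only give the weaker $\int|\hcirc|^2\,\mathrm{d}\mu=O(\rho_i^{-1})$, and obtaining the sharp $O(\lambda_i^{-1}+\rho_i^{-2})$ bound requires exploiting the Hessian structure of the Schwarzschild Ricci tensor so that Lemma \ref{crude minkowski} can couple $\bar H-2\,\bar\lambda^{-1}$ back to $|\hcirc|$ for Young absorption.
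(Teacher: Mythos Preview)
Your proposal is correct and is essentially the same argument as the paper's. The only organizational difference is that the paper reaches the key cross-term $\int_{\Sigma_i}|x|^{-3}\,\bar g(x,\bar\nu)\,\bar H\,\mathrm{d}\bar\mu$ by expanding $\tilde H^2\,\mathrm{d}\tilde\mu$ directly via the conformal formula $\tilde H=(1+|x|^{-1})^{-2}\bar H-4(1+|x|^{-1})^{-3}|x|^{-3}\bar g(x,\bar\nu)$, whereas you route through the Hessian representation of the Schwarzschild Ricci term and the harmonicity of $\phi$; from that point on the two proofs coincide (split $\bar H$, divergence theorem for $|x|^{-3}x$, Lemma~\ref{crude minkowski}, and Young absorption).
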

\begin{proof}
	Using Lemma \ref{basic expansions}, Lemma \ref{hy integral lemma}, and Lemma \ref{large blowdown}, we have
	$$
	\int_{\Sigma_i}H^2\,\mathrm{d}\mu=\int_{\Sigma_i}\tilde  H^2\,\mathrm{d}\tilde \mu+O(\rho_i^{-2})
	$$
	and
	$$
	\int_{\Sigma_i}\tilde H^2\,\mathrm{d}\tilde \mu=\int_{\Sigma_i}[\bar H^2-8\,(1+|x|^{-1})^{-1}\,|x|^{-3}\,\bar g(x,\bar\nu)\,\bar H
	]\,\mathrm{d}\bar \mu+O(\rho_i^{-2}).
	$$
	By the Gauss-Bonnet theorem,
	$$
	\int_{\Sigma_i}\bar H^2\,\mathrm{d}\bar\mu=16\,\pi+2\,\int_{\Sigma}|\hbarcirc|_{\bar g}^2\,\mathrm{d}\bar\mu.$$
	Moreover, by Lemma \ref{hy integral lemma} and Lemma \ref{crude minkowski}, we have
	\begin{align*} 
			\left|\int_{\Sigma_i}(1+|x|^{-1})^{-1}\,|x|^{-3}\,\bar g(x,\bar\nu)\,\bar H\,\mathrm{d}\bar \mu\right|&= 	\left|\int_{\Sigma_i}|x|^{-3}\,\bar g(x,\bar\nu)\,\bar H\,\mathrm{d}\bar \mu\right|+O(\rho_i^{-2})\\& \leq 2\,\lambda_i^{-1}\,\left|\int_{\Sigma_i}\,|x|^{-3}\,\bar g(x,\bar\nu)\,\mathrm{d}\bar \mu\right| +\frac18\,\int_{\Sigma_i}| \hbarcirc|_{\bar g}^2\,\mathrm{d}\bar\mu+O(\rho_i^{-2}).
	\end{align*} 
	Note that $\bar{\operatorname{div}}(|x|^{-3}\,x)=0$. Using the divergence theorem, we find that
	$$
	\int_{\Sigma_i}\,|x|^{-3}\,\bar g(x,\bar\nu)\,\mathrm{d}\bar \mu=4\,\pi
	$$
	if $\Sigma_i$ encloses $B_2$ and
	$$
	\int_{\Sigma_i}\,|x|^{-3}\,\bar g(x,\bar\nu)\,\mathrm{d}\bar \mu=0
	$$
	otherwise. In conjunction with \eqref{appendix b assumptions}, these estimates imply that
	\begin{align*}  
		\int_{\Sigma_i}|\hbarcirc|_{\bar g}^2\,\mathrm{d}\bar \mu=O(\lambda_i^{-1})+O(\rho_i^{-2}).
	\end{align*} 
	Using  this, Lemma \ref{basic expansions}, Lemma \ref{hy integral lemma}, and Lemma \ref{large blowdown}, we conclude that
	\begin{align*} 
		\int_{\Sigma_i}|\hcirc|^2\,\mathrm{d}\mu=\int_{\Sigma_i}|\hbarcirc|_{\bar g}^2\,\mathrm{d}\bar\mu+O(\rho_i^{-1})	\int_{\Sigma_i}|\hbarcirc|_{\bar g}^2\,\mathrm{d}\bar\mu+O(\rho_i^{-2})
		=O(\lambda_i^{-1})+O(\rho_i^{-2}).
	\end{align*} 
	Likewise, by Lemma \ref{basic expansions}, Lemma \ref{crude minkowski}, Lemma \ref{L2 curv est rema}, and Lemma \ref{hy integral lemma}, we have
	\begin{align*} 
		\int_{\Sigma_i}(H-2\,\lambda_i^{-1})^2\,\text{d}\mu\leq\,& \int_{\Sigma_i}(\bar H-2\,\lambda_i^{-1})^2\,\text{d}\bar\mu+O(\rho_i^{-2})
		\\\leq\,&O(1)\,\int_{\Sigma_i}|\hbarcirc|^2\,\mathrm{d}\bar\mu+O(\rho_i^{-2})
		\\\leq\, &O(\lambda_i^{-1})+O(\rho_i^{-2}).
	\end{align*} 
	\indent 
	Using that
	$$
	\int_{\Sigma_i}|h-\lambda_i^{-1}\,g|_{\Sigma_i}|^2\,\mathrm{d}\mu=\int_{\Sigma_i}(H-2\,\lambda_i^{-1})^2\,\mathrm{d}\mu+\int_{\Sigma_i}|\hcirc|^2\,\mathrm{d}\mu,
	$$  
	the assertion follows.
\end{proof} 
\begin{lem} \label{diameter in terms of area radius} There holds $
		\sup_{x\in \Sigma_i} |x|=O(\lambda_i).$
\end{lem}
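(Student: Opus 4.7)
The plan is to bound the Euclidean diameter of $\Sigma_i$ via a Willmore-type diameter estimate and then combine this with the assumption $\rho_i=O(\lambda_i)$ to bound $\sup_{x\in \Sigma_i} |x|$.

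First, I would unpack the definition of $\rho_i$. Since $B_{\rho_i}\cap \Sigma_i=\emptyset$ but $B_{\rho_i+\varepsilon}\cap \Sigma_i\neq \emptyset$ for every $\varepsilon>0$, there is a point $x_0\in \Sigma_i$ with $|x_0|$ arbitrarily close to $\rho_i$. Consequently, $\inf_{x\in \Sigma_i}|x|=\rho_i$, and the assumption \eqref{appendix b assumptions} gives $\inf_{x\in \Sigma_i}|x|=O(\lambda_i)$.

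Next, I would apply Simon's/Topping's diameter estimate for closed connected surfaces in $\mathbb{R}^3$, which yields
\begin{align*}
\operatorname{diam}_{\bar g}(\Sigma_i) \leq C\int_{\Sigma_i}|\bar H|\,\mathrm{d}\bar\mu\leq C\,|\Sigma_i|_{\bar g}^{1/2}\left(\int_{\Sigma_i}\bar H^2\,\mathrm{d}\bar\mu\right)^{1/2}.
\end{align*}
By Lemma \ref{basic expansions}, $|\Sigma_i|_{\bar g}=(1+O(\rho_i^{-1}))\,|\Sigma_i|_g=O(\lambda_i^2)$, and by Lemma \ref{L2 curv est rema}, $\int_{\Sigma_i}\bar H^2\,\mathrm{d}\bar\mu=16\,\pi+O(\rho_i^{-1})=O(1)$. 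Hence the intrinsic (and therefore extrinsic) diameter of $\Sigma_i$ with respect to $\bar g$ satisfies $\operatorname{diam}_{\bar g}(\Sigma_i)=O(\lambda_i)$.

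Finally, for any $x\in\Sigma_i$, the triangle inequality in $(\mathbb{R}^3,\bar g)$ gives
\[
|x|\leq \inf_{y\in\Sigma_i}|y|+\operatorname{diam}_{\bar g}^{\text{ext}}(\Sigma_i)\leq \rho_i+\operatorname{diam}_{\bar g}(\Sigma_i)=O(\lambda_i),
\]
from which the assertion follows. The main point is choosing the right form of the Willmore-type diameter estimate; since $\int_{\Sigma_i}H^2\,\mathrm{d}\mu$ is uniformly bounded and $|\Sigma_i|$ is comparable to $\lambda_i^2$, once that estimate is invoked the rest is routine.
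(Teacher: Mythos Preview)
Your proof is correct and follows essentially the same route as the paper: the paper invokes Simon's diameter estimate \eqref{diam estimate} together with the Willmore bound from Lemma~\ref{large blowdown} and the assumption $\rho_i=O(\lambda_i)$ from \eqref{appendix b assumptions}. Your use of Topping's $\int|\bar H|$ version followed by Cauchy--Schwarz is equivalent to \eqref{diam estimate} up to constants, and the remaining steps (bounding $|\Sigma_i|_{\bar g}$ via Lemma~\ref{basic expansions}, bounding $\int\bar H^2$ via Lemma~\ref{L2 curv est rema}, and the triangle inequality with base point at distance $\rho_i$) match the paper's argument.
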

\begin{proof} The assertion follows from \eqref{diam estimate}, \eqref{appendix b assumptions}, and Lemma \ref{large blowdown}.
	\end{proof}  
For the proof of Proposition \ref{curv est prop}, note that, by  Lemma \ref{hcirc int etsimate}, the sequence $\{\Sigma_i\}_{i=1}^\infty$ satisfies assumption \eqref{small curvature} of Appendix \ref{int curv est appendix}.
\begin{prop}
 As $i\to\infty$, there holds \label{curv est prop}
	\begin{align}
	\kappa(\Sigma_i)=O((\lambda_i^{-1/2}+\rho_i^{-1})\,\lambda_i^{-2}) \label{kappa est}
\end{align}
and
	\begin{align}
		h-\lambda_i^{-1}\,g|_{\Sigma_i}=O((\lambda_i^{-1/2}+\rho_i^{-1})\,|x|^{-1}).\label{H est}
	\end{align}
In particular, $|\hcirc|^2=O((\lambda_i^{-1/2}+\rho_i^{-1})^2\,|x|^{-2})$.	
\end{prop}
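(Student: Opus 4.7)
The proof combines three ingredients: the integral curvature estimates of Appendix \ref{int curv est appendix}, the sharp $L^2$-estimate of Lemma \ref{hcirc int etsimate}, and a global argument for the Lagrange parameter $\kappa(\Sigma_i)$ obtained by integrating the area-constrained Willmore equation.

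First, I would apply the integral curvature estimates of Appendix \ref{int curv est appendix} to $\Sigma_i$. By Lemma \ref{hcirc int etsimate}, the small-curvature hypothesis \eqref{small curvature} is met with small parameter $(\lambda_i^{-1/2}+\rho_i^{-1})^2$, and by Lemma \ref{diameter in terms of area radius} the scale $|x|\leq C\,\lambda_i$ is comparable to the natural scale $\lambda_i$ of the surface. For each $p\in\Sigma_i$, applied at a ball of radius $r\sim|p|$, these estimates convert the $L^2$-control from Lemma \ref{hcirc int etsimate} into the pointwise bound \eqref{H est}, and also yield an $L^\infty$-bound $H=O(\lambda_i^{-1})$.

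Second, I would derive \eqref{kappa est} by integrating the area-constrained Willmore equation \eqref{constrained Willmore equation} over the closed surface $\Sigma_i$. Since $\int_{\Sigma_i}\Delta H\,\mathrm{d}\mu=0$ by the divergence theorem, this yields
$$
\kappa(\Sigma_i)\,\int_{\Sigma_i}H\,\mathrm{d}\mu=-\int_{\Sigma_i}(|\hcirc|^2+\operatorname{Ric}(\nu,\nu))\,H\,\mathrm{d}\mu.
$$
From Lemma \ref{crude minkowski} and Lemma \ref{large blowdown} one deduces that $\int_{\Sigma_i}H\,\mathrm{d}\mu$ is of order $\lambda_i$, which is non-degenerate. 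To estimate the right-hand side, use the $L^\infty$-bound on $H$ just derived, together with the $L^1$-bound $\int_{\Sigma_i}|\hcirc|^2\,\mathrm{d}\mu=O(\lambda_i^{-1}+\rho_i^{-2})$ extracted from the proof of Lemma \ref{hcirc int etsimate}, and the decay $\operatorname{Ric}(\nu,\nu)=O(|x|^{-3})$ coupled with $\int_{\Sigma_i}|x|^{-3}\,\mathrm{d}\mu=O(\rho_i^{-1})$ from Lemma \ref{hy integral lemma}. Dividing by $\int_{\Sigma_i}H\,\mathrm{d}\mu$ produces \eqref{kappa est}.

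Finally, the pointwise estimate on $|\hcirc|^2$ follows from the algebraic identity
$$
|h-\lambda_i^{-1}\,g|_{\Sigma_i}|^2=|\hcirc|^2+\tfrac12\,(H-2\,\lambda_i^{-1})^2.
$$
The principal difficulty lies in the first step, namely adapting the Kuwert-Sch\"atzle-type integral curvature estimates of Appendix \ref{int curv est appendix} to area-constrained Willmore surfaces in an asymptotically Schwarzschild manifold, and ensuring that they can be applied at scale $r\sim|p|$ with constants uniform in $i$ and $p$. This requires tracking how the Lagrange parameter $\kappa$ and the metric perturbation $\sigma$ enter the underlying local Sobolev-type inequalities, so that the total $L^2$-bound of Lemma \ref{hcirc int etsimate} converts into the pointwise decay rate $|x|^{-1}$ claimed in \eqref{H est} rather than the weaker rate $\lambda_i^{-1}$ that a naive application at the global scale would produce.
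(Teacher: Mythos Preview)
Your ingredients are right, but the order of the argument has a genuine circularity. The pointwise estimate of Proposition~\ref{int curv estimate} reads
\[
|h-\lambda_i^{-1}\,g|_{\Sigma_i}|^4 = O(|x|^{-4})\bigg(\int_{\Sigma_i\cap B_{|x|/4}(x)}|h-\lambda_i^{-1}\,g|_{\Sigma_i}|^2\,\mathrm{d}\mu\bigg)^2 + O(|x|^{-8}) + O(\kappa(\Sigma_i)^2)\int_{\Sigma_i\cap B_{|x|/4}(x)}|h-\lambda_i^{-1}\,g|_{\Sigma_i}|^2\,\mathrm{d}\mu,
\]
and thus already depends on $\kappa(\Sigma_i)$. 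You therefore cannot extract \eqref{H est}, nor even the bound $H=O(\lambda_i^{-1})$ that your second step needs, before controlling $\kappa$.

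The paper breaks this circularity by a bootstrap. It integrates \eqref{constrained Willmore equation} against a cutoff $\gamma_i$ supported on $\{|x|\gtrsim\lambda_i\}$. On $\operatorname{spt}(\gamma_i)$, Proposition~\ref{int curv estimate} gives $|\hcirc|^2=O((\lambda_i^{-1/2}+\rho_i^{-1})^2\,\lambda_i^{-2})+o(|\kappa(\Sigma_i)|)$, and the $o(|\kappa|)$ contribution to $\int\gamma_i\,H\,|\hcirc|^2\,\mathrm{d}\mu$ can be absorbed into the left side, since an area-comparison argument based on \eqref{area estimate} shows $\int\gamma_i\,H\,\mathrm{d}\mu\geq 4\pi\,\lambda_i$. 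This yields \eqref{kappa est}; only then does a second application of Proposition~\ref{int curv estimate} deliver \eqref{H est}. The cutoff also sidesteps the need to control $\int_{\Sigma_i}|\hcirc|^2\,|H|\,\mathrm{d}\mu$ on the inner region $|x|\ll\lambda_i$, where no $L^\infty$-bound on $H$ is yet available; the price, the commutator term $\int(\Delta\gamma_i)\,H\,\mathrm{d}\mu=\int(\Delta\gamma_i)\,(H-2\,\lambda_i^{-1})\,\mathrm{d}\mu$, is handled directly via Lemma~\ref{hcirc int etsimate}.
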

\begin{proof}
	We choose $\psi\in C^\infty(\mathbb{R})$ with
	\begin{itemize}
		\item[$\circ$] 	 $0\leq\psi\leq 1$,
		\item[$\circ$] $\psi(s)=1$ if $s\geq 1/4$,
		\item[$\circ$] $\psi(s)=0$ if $s\leq 1/8$.
	\end{itemize}
	We define
	$\gamma_i\in C^\infty(\mathbb{R}^3)$ by
	$$
	\gamma_i(x)=\psi(\lambda_i^{-1}\,|x|).
	$$
	Using Lemma \ref{hcirc int etsimate}, we have
	$$
	\int_{\Sigma_i}\gamma_i\,H\,\mathrm{d}\mu=2\,\lambda_i^{-1}\,\int_{\Sigma_i} \gamma_i\,\mathrm{d}\mu +o(\lambda_i).
	$$
	\indent If $\rho_i\geq1/4\,\lambda_i$, then $\gamma_i(x)=1$ for all $x\in \Sigma_i$. Consequently,
	$$
	\int_{\Sigma_i}\gamma_i\,\mathrm{d}\mu=4\,\pi\,\lambda_i^2.
	$$
	\indent If $\rho_i\leq1/4\,\lambda_i$, then we choose $x_i\in\Sigma_i$ with $|x_i|=\rho_i$ and apply \eqref{area estimate} with $r=\lambda_i/4+\rho_i$. Using Lemma \ref{large blowdown} and Lemma \ref{basic expansions}, we obtain
	$$
	\int_{\Sigma_i} \gamma_i\,\mathrm{d}\mu\geq 4\,\pi\,\lambda_i^2-(16\,\pi+o(1))\,\frac{3+2\,\sqrt{2}}{16}\,(\lambda_i/4+\rho_i)^2\geq \frac{9\,\pi}{4}\,\lambda_i^2
	$$
	for all $i$ sufficiently large.  \\ \indent 
	Either way, it follows that
	\begin{align} \label{mc integrals}
		4\,\pi\,\lambda_i\leq \int_{\Sigma_i}\gamma_i\,H\,\mathrm{d}\mu\leq \int_{\Sigma_i}|H|\,\mathrm{d}\mu\leq 16\,\pi \,\lambda_i
	\end{align}
	for all  sufficiently large $i$. Using  \eqref{constrained Willmore equation}, we have
	\begin{align*}
			-\kappa(\Sigma_i)\,\int_{\Sigma_i}\gamma_i\,H\,\mathrm{d}\mu=\int_{\Sigma_i}(\Delta\gamma_i)\,H\,\mathrm{d}\mu+\int_{\Sigma_i}\gamma_i\,H\,|\hcirc|^2\,\mathrm{d}\mu+\int_{\Sigma_i}\gamma_i\, \operatorname{Rc}(\nu,\nu)\,H\,\mathrm{d}\mu.
	\end{align*}
	Using \eqref{mc integrals}, we have
	\begin{align*}
		|\kappa(\Sigma_i)|\,\int_{\Sigma_i}\gamma_i\,H\,\mathrm{d}\mu\geq4\,\pi\,\lambda_i\,|\kappa(\Sigma_i)|.
	\end{align*}  
	Note that
	\begin{align*}
		\nabla^2\gamma_i=O(\lambda_i^{-1}\,|h|)+O(\lambda_i^{-2}).
	\end{align*} 
	In conjunction with Lemma \ref{hcirc int etsimate} and Lemma \ref{L2 curv est rema}, we obtain that
	\begin{align*}
		\int_{\Sigma_i}(\Delta\gamma_i)\,H\,\mathrm{d}\mu=\int_{\Sigma_i}(\Delta\gamma_i)\,(H-2\,\lambda_i^{-1})\,\mathrm{d}\mu=O((\lambda_i^{-1/2}+\rho_i^{-1})\,\lambda_i^{-1}).
	\end{align*}
	By Proposition \ref{int curv estimate} and Lemma \ref{hcirc int etsimate},  we have
	$$
	|\hcirc|^2=O((\lambda_i^{-1/2}+\rho_i^{-1})^2\,\lambda_i^{-2})+o(\kappa(\Sigma_i))
	$$
	on $\Sigma_i\cap\operatorname{spt}(\gamma_i)$. In conjunction with \eqref{mc integrals}, we obtain
	$$
	\int_{\Sigma_i}\gamma_i \,H\,|\hcirc|^2\,\mathrm{d}\mu=O((\lambda_i^{-1/2}+\rho_i^{-1})^2\,\lambda_i^{-1})+o(\kappa(\Sigma_i)\,\lambda_i).
	$$
	Likewise, \eqref{mc integrals} gives
	$$
	\int_{\Sigma_i}\gamma_i\,\operatorname{Rc}(\nu,\nu)\,H\,\mathrm{d}\mu=O(\lambda_i^{-2}).
	$$
	\eqref{kappa est} follows from these estimates. \\ \indent 
	Using \eqref{kappa est} and Proposition \ref{int curv estimate}, we see that 
	\begin{align*}
		\bigg(\int_{\Sigma_i}|h-\lambda_i^{-1}\,g|_{\Sigma_i}|^2\,\mathrm{d}\mu\bigg)^2=O((\lambda_i^{-1/2}+\rho_i^{-1})^4)
	\end{align*}  
	and
	\begin{align*}
		\kappa(\Sigma_i)^2\,	  	\int_{\Sigma_i}|h-\lambda_i^{-1}\,g|_{\Sigma_i}|^2\,\mathrm{d}\mu=O((\lambda_i^{-1/2}+\rho_i^{-1})^4\,\lambda_i^{-4}).
	\end{align*}  
		   Now, \eqref{H est}  follows from Proposition \ref{int curv estimate} using Lemma \ref{diameter in terms of area radius} and that $|x|\geq \rho_i$ for every $x\in \Sigma_i$. 
\end{proof}
 
\begin{prop} \label{large smooth blowodwn}
	A subsequence of $\{\lambda_i^{-1}\,\Sigma_i\}_{i=1}^\infty$ converges to a round sphere in $C^2$ locally  in $\mathbb{R}^3\setminus\{0\}$. 
\end{prop}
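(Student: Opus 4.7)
The plan is to rescale by $\lambda_i^{-1}$ and deduce the claim from the pointwise curvature estimate of Proposition \ref{curv est prop}, which at this scale expresses that the rescaled surfaces are asymptotically totally umbilic with mean curvature $2$. Setting $\tilde\Sigma_i := \lambda_i^{-1}\,\Sigma_i$ and $\tilde g_i := \lambda_i^{-2}\,g$ in $y = x/\lambda_i$ coordinates, the asymptotic expansion of $g$ yields $\tilde g_i \to \bar g$ in $C^4_{\mathrm{loc}}(\mathbb{R}^3 \setminus \{0\})$, while Proposition \ref{curv est prop} becomes
$$|\tilde h - \tilde g_i|_{\tilde\Sigma_i}|_{\tilde g_i} = O\bigl((\lambda_i^{-1/2} + \rho_i^{-1})\,|y|^{-1}\bigr),$$
which tends to zero uniformly on compact subsets of $\mathbb{R}^3 \setminus \{0\}$. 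In particular, $|\tilde h|_{\tilde g_i}$ is uniformly bounded on any such subset.

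Next, using local graphical parameterizations with uniform $C^{1,1}$ bounds coming from the bound on $|\tilde h|$, a subsequence converges in $C^{1,\alpha}_{\mathrm{loc}}(\mathbb{R}^3 \setminus \{0\})$ to a $C^{1,1}$ surface $\Sigma_\infty$. The $C^0_{\mathrm{loc}}$ convergence $\tilde h \to \bar g|_{\Sigma_\infty}$ forces $\Sigma_\infty$ to be totally umbilic with $H \equiv 2$ in Euclidean space, so by the classical umbilic classification each connected component of $\Sigma_\infty$ is an open subset of a unit sphere. The same $C^0_{\mathrm{loc}}$ convergence of $\tilde h$ upgrades the convergence of the second derivatives of the local graph parameterizations to $C^0_{\mathrm{loc}}$, giving $C^2_{\mathrm{loc}}$ convergence of $\tilde\Sigma_i$ to $\Sigma_\infty$.

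Finally, to pin down $\Sigma_\infty$ as a single unit sphere, I would combine $|\tilde\Sigma_i|_{\tilde g_i} = 4\,\pi + o(1)$ from Lemma \ref{large blowdown} with the bounded diameter from Lemma \ref{diameter in terms of area radius} and the Li--Yau / Simon monotonicity formula for surfaces with bounded $\int \tilde H^2\,\mathrm{d}\tilde\mu \leq 16\,\pi$. The monotonicity formula rules out area concentration of $\tilde\Sigma_i$ at the origin, so no area is lost in the limit and $|\Sigma_\infty| = 4\,\pi$; the umbilicity classification together with this area value then forces $\Sigma_\infty$ to be a single unit sphere, possibly punctured at the origin if the sphere passes through $0$. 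I expect the main obstacle to be this last step: ruling out degenerate limits such as pinching into multiple spheres near the origin requires both the monotonicity formula for surfaces of bounded Willmore energy and the sharp area upper bound, since topological connectedness of the $\tilde\Sigma_i$ need not survive passing to the local smooth limit on $\mathbb{R}^3 \setminus \{0\}$.
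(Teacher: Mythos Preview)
Your proposal is correct and follows essentially the same strategy as the paper: rescale, use Proposition~\ref{curv est prop} to obtain local $C^2$ convergence to pieces of unit spheres via the umbilic classification, and then rule out multiple limit components by an integral counting argument controlled through Simon's monotonicity inequality \eqref{area estimate}.

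The only noteworthy difference is in the counting step. The paper tracks the farthest point $x_i\in\hat\Sigma_i$, uses \eqref{area estimate} together with Lemma~\ref{large blowdown} to show its limit is nonzero, and then observes that the connected component of $\hat\Sigma_i\setminus B_\delta(0)$ through $x_i$ already carries Willmore energy close to $16\pi$; since the total is at most $16\pi+o(1)$ by Lemma~\ref{large blowdown}, a second component is excluded. You instead count area: no area concentrates at the origin by \eqref{area estimate}, the total rescaled area is $4\pi+o(1)$, and each limit component, being closed in $\mathbb{R}^3\setminus\{0\}$ and open in a unit sphere, has area $4\pi$. Both arguments work; the paper's version has the mild advantage of directly identifying the limit sphere $S_1((1-|x|^{-1})x)$ and of avoiding the extra (though routine) step of verifying that $\Sigma_\infty$ has no boundary in $\mathbb{R}^3\setminus\{0\}$, which your area argument implicitly needs so that each component is a full sphere rather than a cap.
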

\begin{proof}
	Let $\hat\Sigma_i=\lambda_i^{-1}\,\Sigma_i$. By Proposition \ref{curv est prop} and Lemma \ref{basic expansions}, we have	
	\begin{align} \label{smooth large blowdown curv est} 
		\bar h(\hat\Sigma_i)-\bar g|_{\hat\Sigma_i}
		=o(1)
	\end{align} 
	locally uniformly in $\mathbb{R}^3\setminus\{0\}$. Let $x_i\in \hat\Sigma_i$ with
	$$
	|x_i|=\sup\{|y|:y\in \hat\Sigma_i\}.
	$$
	By Lemma \ref{diameter in terms of area radius}, there is $x\in\mathbb{R}^3$ such that, passing to a subsequence,
	$$
	\operatorname{lim}_{i\to\infty}x_i=x.
	$$ 
	By \eqref{area estimate} and Lemma \ref{large blowdown}, $x\neq 0$. Given $\delta\in(0,1/2)$, let $\hat \Sigma^\delta_i$ be the connected component of $\hat\Sigma_i\setminus B_{\delta}(0)$ containing $x_i$.  
	Using \eqref{smooth large blowdown curv est}, it follows that $\hat \Sigma_i^\delta$ converges to $S_{1}((1-|x|^{-1})\,x)\setminus B_{\delta}(0)$ in $C^2$.
	In particular,
	$$
	\int_{\hat \Sigma^i_\delta}\bar H^2\,\mathrm{d}\bar\mu\geq 16\,\pi-4\,\pi\,\delta^2-o(1).
	$$
	If $\hat\Sigma_i\setminus B_{2\,\delta}(0)$ has more than one component for infinitely many $i$, we may apply the same argument to the second component to conclude that
	$$
	\liminf_{i\to\infty}\int_{\hat\Sigma_i\setminus B_\delta(0)}\bar H^2\,\mathrm{d}\bar\mu\geq 32\,\pi-8\,\pi\,\delta^2.
	$$ 
	This estimate is incompatible with Lemma \ref{large blowdown}. \\ \indent 
	The assertion now follows from  taking a suitable diagonal subsequence. 
\end{proof}
\begin{prop}  \label{small smooth blowodwn}
	Suppose that $\rho_i=o(\lambda_i)$. A subsequence of $\{\rho_i^{-1}\,\Sigma_i\}_{i=1}^\infty$ converges to a flat plane with unit distance to the origin in $C^2$ locally  in $\mathbb{R}^3$ as $i\to\infty$.	
\end{prop}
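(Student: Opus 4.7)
Set $\hat\Sigma_i = \rho_i^{-1}\,\Sigma_i$ and $\epsilon_i = \rho_i\,\lambda_i^{-1}$. By the definition of $\rho_i$, we have $\hat\Sigma_i\subset \{|\hat x|\geq 1\}$ and $\hat\Sigma_i \cap S_1 \neq \emptyset$, while the assumption $\rho_i = o(\lambda_i)$ forces $\epsilon_i \to 0$. My first step is to rescale the pointwise estimate \eqref{H est} of Proposition \ref{curv est prop}. Combined with Lemma \ref{basic expansions}, this yields
\[
\bar h(\hat\Sigma_i) - \epsilon_i\,\bar g|_{\hat\Sigma_i} = O\bigl((\lambda_i^{-1/2}+\rho_i^{-1})\,|\hat x|^{-1}\bigr),
\]
which, since $|\hat x|\geq 1$ on $\hat\Sigma_i$, is uniformly $o(1)$. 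In particular, $\bar h(\hat\Sigma_i) \to 0$ uniformly on $\hat\Sigma_i$.

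Next, I would pick $\hat x_i \in \hat\Sigma_i \cap S_1$ and, after passing to a subsequence, arrange $\hat x_i \to \hat x_\infty$ with $|\hat x_\infty|=1$. Let $P = \{\hat x \in \mathbb{R}^3 : \bar g(\hat x, \hat x_\infty)=1\}$ denote the tangent plane of $S_1$ at $\hat x_\infty$. Since $\hat\Sigma_i \subset \{|\hat x|\geq 1\}$ touches $S_1$ at $\hat x_i$, the tangent plane $T_{\hat x_i}\hat\Sigma_i$ must coincide with $T_{\hat x_i}S_1$ and hence converges to $P$. Following the scheme of the proof of Proposition \ref{large smooth blowodwn}, for each $R>0$ I would let $\hat\Sigma_i^R$ be the connected component of $\hat\Sigma_i \cap \overline{B}_R(\hat x_\infty)$ containing $\hat x_i$. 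The uniform bound $|\bar h(\hat\Sigma_i)|=o(1)$ permits the standard graph-extension argument and writes $\hat\Sigma_i^R$ as a $C^2$-small graph over $T_{\hat x_i}\hat\Sigma_i$. Since the Hessian of the graph function is controlled by $|\bar h|$ and the reference tangent plane converges to $P$, the graph functions tend to zero in $C^2$, and consequently $\hat\Sigma_i^R \to P\cap \overline{B}_R(\hat x_\infty)$ in $C^2$.

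It remains to exclude additional connected components of $\hat\Sigma_i \cap \overline{B}_R(\hat x_\infty)$. Unlike in Proposition \ref{large smooth blowodwn}, the Willmore-energy pigeonhole is unavailable here since planar pieces carry no Willmore energy. Instead, I would exploit the near-umbilic character of $\hat\Sigma_i$: the scale-invariant bound
\[
\int_{\hat\Sigma_i}|\hbarcirc|_{\bar g}^2\,\mathrm{d}\bar\mu = O(\lambda_i^{-1})+O(\rho_i^{-2}) \to 0
\]
derived in the proof of Lemma \ref{hcirc int etsimate}, together with the pointwise identity $\bar h(\hat\Sigma_i)=\epsilon_i\,\bar g|_{\hat\Sigma_i}+o(1)$, forces $\hat\Sigma_i$ to be Hausdorff-close on bounded sets to a Euclidean sphere of radius $1/\epsilon_i$ passing through $\hat x_i$ tangent to $S_1$ there. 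Because $1/\epsilon_i \to \infty$, such a sphere intersects $\overline{B}_R(\hat x_\infty)$ in a single nearly-planar disc, so for $i$ sufficiently large there is only one component of $\hat\Sigma_i \cap \overline B_R(\hat x_\infty)$. A diagonal extraction over $R\to\infty$ then delivers the claimed $C^2_{\text{loc}}(\mathbb{R}^3)$ convergence to $P$. This final step, requiring global rather than merely local geometric control on $\hat\Sigma_i$, is the main obstacle.
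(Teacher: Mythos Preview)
Your first two steps are correct and essentially match the paper's argument: after rescaling, the curvature estimate from Proposition~\ref{curv est prop} together with Lemma~\ref{basic expansions} gives $\bar h(\hat\Sigma_i)\to 0$ (in fact uniformly, as you note, since $|\hat x|\geq 1$), and the touching argument at $\hat x_i\in\hat\Sigma_i\cap S_1$ combined with the vanishing curvature shows that the connected component of $\hat\Sigma_i$ through $\hat x_i$ converges in $C^2$ on any fixed ball to the tangent plane $P$ of $S_1$ at $\hat x_\infty$.

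The gap you identify in your third step is genuine, and your proposed fix does not close it. The pointwise identity $\bar h(\hat\Sigma_i)=\epsilon_i\,\bar g+o(1)$ only tells you that each connected piece of $\hat\Sigma_i\cap\overline{B}_R(\hat x_\infty)$ is $C^2$-close to \emph{some} sphere of radius $\approx 1/\epsilon_i$; it does not tell you that two distinct pieces lie on the \emph{same} such sphere. To pass from local to global near-roundness you would have to propagate the estimate along a path in $\hat\Sigma_i$ joining the two pieces, but such a path may have length of order $1/\epsilon_i$, and the $o(1)$ curvature error integrated over that distance need not be small. The scale-invariant $L^2$-bound on $\hbarcirc$ does not help either, since two parallel flat sheets carry no traceless second fundamental form.

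The paper resolves this differently and more simply. If $\hat\Sigma_i\cap B_r(0)$ has a second component for infinitely many $i$, that component also converges to a portion of a flat plane passing through $B_r(0)$, and the combined area of the two nearly-planar sheets forces $\liminf_i r^{-2}\,|\hat\Sigma_i\cap B_{2r}(0)|_{\bar g}>4\pi$. Feeding this area-density lower bound into Simon's monotonicity identity~\eqref{simon equation} (applied with $x=x_i$ and letting $t\to\infty$) yields $\liminf_i\int_{\hat\Sigma_i}\bar H^2\,\mathrm{d}\bar\mu>16\pi$, contradicting Lemma~\ref{large blowdown}. So contrary to your remark that ``the Willmore-energy pigeonhole is unavailable here since planar pieces carry no Willmore energy,'' it is precisely the Willmore energy bound---via Simon's area-density inequality, which controls the density at \emph{every} scale---that rules out the extra sheet.
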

\begin{proof} 
	Let $\hat\Sigma_i=\rho_i^{-1}\,\Sigma_i$. By Proposition \ref{curv est prop} and Lemma \ref{basic expansions}, we have	
	\begin{align} \label{rho higher order}
		\bar h(\hat\Sigma_i)=o(1)
	\end{align} 
	locally uniformly in $\mathbb{R}^3$. Let $x_i\in \hat\Sigma_i$ with |$x_i|=1$. Given $r>1$, let $\hat \Sigma^r_i$ be the connected component of $\hat\Sigma_i\cap B_r(0)$ containing $x_i$. Using \eqref{rho higher order}, it follows that, passing to a subsequence, $\hat \Sigma^{2\,r}_i$ converges to a bounded subset of a flat plane with unit distance to the origin in $C^2$. If  $\hat\Sigma_i\cap B_r(0)$ has more than one connected component for infinitely many $i$, then, passing to a further subsequence,
	$\hat\Sigma_i\cap B_{2\,r}(0)$ has  a second component that passes through $B_{r}(0)$ and converges  to a bounded subset of a flat plane. In particular,
	$$
	\liminf_{i\to\infty}r^{-2}\,|\hat\Sigma_i\cap B_{2\,r}(0)|_{\bar g}>4\,\pi.
	$$ 
	Applying \eqref{simon equation} with $x=x_i$ and letting $t\to\infty$, we conclude that
	$$
	\liminf_{i\to\infty}\int_{\hat\Sigma_i}\bar H^2\,\mathrm{d}\bar\mu>16\pi.
	$$ 
	This estimate is incompatible with Lemma \ref{large blowdown}. \\ \indent 
	The assertion now follows from  taking a suitable diagonal subsequence. 
\end{proof} 
\begin{prop}  \label{intermediate smooth blowodwn}
	Suppose that $\rho_i=o(\lambda_i)$ and that $\{x_i\}_{i=1}^\infty$ is a sequence of points $x_i\in \Sigma_i$ with $\rho_i=o(|x_i|)$ and $x_i=o(\lambda_i)$. A subsequence of $|x_i|^{-1}\,\Sigma_i$  converges to a flat plane passing through the origin in $C^2$ locally  in $\mathbb{R}^3\setminus\{0\}$ as $i\to\infty$.	
\end{prop}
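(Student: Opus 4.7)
The strategy parallels the proofs of Propositions \ref{large smooth blowodwn} and \ref{small smooth blowodwn}. Set $\hat\Sigma_i = |x_i|^{-1}\,\Sigma_i$. First, I would verify that the rescaled Euclidean second fundamental form tends to zero on compact subsets of $\mathbb{R}^3\setminus\{0\}$. By Proposition \ref{curv est prop} together with Lemma \ref{basic expansions}, $\bar h(\Sigma_i)(x) = O(\lambda_i^{-1}) + O((\lambda_i^{-1/2}+\rho_i^{-1})\,|x|^{-1}) + O(|x|^{-2})$ for $x\in\Sigma_i$; rescaling and writing $x=|x_i|\,y$ with $|y|$ bounded away from $0$ and $\infty$ gives
\begin{align*}
\bar h(\hat\Sigma_i)(y) = O(|x_i|/\lambda_i) + O\bigl((\lambda_i^{-1/2}+\rho_i^{-1})\,|y|^{-1}\bigr) + O\bigl(|x_i|^{-1}\,|y|^{-2}\bigr) = o(1),
\end{align*}
using $|x_i|=o(\lambda_i)$, $\rho_i\to\infty$, and $|x_i|\geq\rho_i\to\infty$.

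Second, after a subsequence, $\hat x_i := x_i/|x_i|\to \hat x_\infty$ with $|\hat x_\infty|=1$. For each pair $0<r_1<1<r_2$, let $\hat\Sigma_i^{r_1,r_2}$ denote the connected component of $\hat\Sigma_i\cap (B_{r_2}(0)\setminus\overline{B_{r_1}(0)})$ containing $\hat x_i$. Using the curvature decay and graphical convergence for surfaces with vanishing second fundamental form, I would extract, up to a further subsequence, a $C^2$ limit of $\hat\Sigma_i^{r_1,r_2}$ that is a connected bounded portion of a single affine plane $\Pi$ through $\hat x_\infty$. A diagonal subsequence in $r_1\to 0$ and $r_2\to\infty$ then yields $C^2_{\mathrm{loc}}$ convergence on $\mathbb{R}^3\setminus\{0\}$ of these components to $\Pi$.

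Third, to rule out additional sheets of $\hat\Sigma_i$ collecting elsewhere, I would argue as in the end of the proof of Proposition \ref{small smooth blowodwn}: if $\hat\Sigma_i\setminus\overline{B_{r_1}(0)}$ contained a second connected component intersecting a fixed compact set for infinitely many $i$, then the monotonicity-type identity \eqref{simon equation}, applied at a point of that second component with $t\to\infty$, would force $\liminf_i \int_{\hat\Sigma_i}\bar H^2\,\mathrm{d}\bar\mu > 16\pi$, contradicting Lemma \ref{large blowdown}. This guarantees that the full limit is the single plane $\Pi$.

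Fourth, and this is the essentially new step, I would show $0\in\Pi$. Since $\Sigma_i$ is a connected topological sphere and $|y_i|=\rho_i$ is attained by some $y_i\in\Sigma_i$, there is a continuous path on $\Sigma_i$ joining $x_i$ to $y_i$; by the intermediate value theorem applied to $|\cdot|$ along this path, it meets every sphere $\{|x|=c\,|x_i|\}$ with $\rho_i/|x_i|\leq c\leq 1$. After rescaling, for every fixed $r_1\in(0,1)$ and all sufficiently large $i$, the rescaled path on $\hat\Sigma_i$ has a point on $\{|y|=r_1\}$ lying in $\hat\Sigma_i^{r_1,r_2}$. Passing to the limit, $\Pi$ contains points at Euclidean distance $r_1$ from $0$ for every $r_1\in(0,1)$, forcing $0\in\Pi$. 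The main obstacle is the exclusion of spurious sheets of $\hat\Sigma_i$ in the third step, while the genuinely new content is the intermediate-value argument in the fourth step, which depends essentially on the assumption $\rho_i=o(|x_i|)$.
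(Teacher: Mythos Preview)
Your approach is correct and is exactly the argument the paper has in mind; the authors merely write that the proof is similar to that of Proposition~\ref{small smooth blowodwn} and omit the details. Your fourth step---using the intermediate value theorem along a path to a point realizing the inner radius to force $0\in\Pi$---is the only genuinely new ingredient and is the right idea (once step three gives a single sheet in each annulus, it is enough that $\hat\Sigma_i$ meets every radius in $(\rho_i/|x_i|,1]$, so no further control on the path is needed).
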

\begin{proof} 
	The proof is similar to that of Proposition \ref{small smooth blowodwn}. We omit the formal modifications.
\end{proof} 

\begin{coro}
	There holds	 \label{higher order estimates}
	$$
	|h-\lambda_i^{-1}\, g|_{\Sigma_i}|+|x|\,|\nabla  h|=O((\lambda_i^{-1/2}+\rho_i^{-1})\,|x|^{-1}).
	$$
	Likewise,
	$$
	|\bar h-\lambda_i^{-1}\,\bar g|_{\Sigma_i}|_{\bar g}+|x|\,|\bar\nabla \bar h|_{\bar g}=O((\lambda_i^{-1/2}+\rho_i^{-1})\,|x|^{-1}).
	$$
	
\end{coro}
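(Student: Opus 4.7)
The first inequality, on $|h-\lambda_i^{-1}\,g|_{\Sigma_i}|$, is nothing but the conclusion \eqref{H est} of Proposition \ref{curv est prop}, so the new content of the corollary is the derivative bound $|x|\,|\nabla h|=O((\lambda_i^{-1/2}+\rho_i^{-1})\,|x|^{-1})$ together with the transfer of both estimates to the Euclidean background metric.

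For the derivative bound, the plan is to iterate the integral curvature estimate of Proposition \ref{int curv estimate} from Appendix \ref{int curv est appendix} to one higher order. Those Kuwert--Schätzle-type estimates bound $|h-\lambda_i^{-1}\,g|$ pointwise in terms of scale-invariant $L^2$-integrals of $h-\lambda_i^{-1}\,g$ on a geodesic ball, using the fourth-order area-constrained Willmore equation \eqref{constrained Willmore equation}. Once the $L^\infty$-bounds on $h$ and on $\kappa$ furnished by Proposition \ref{curv est prop} are in hand, the same equation, regarded as a quasilinear elliptic fourth-order PDE for $H$, admits a standard bootstrap to pointwise control on any finite number of covariant derivatives of $h$ on slightly smaller balls. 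Applying the bootstrapped version on a ball of radius comparable to $|x|$ around a given point $x\in \Sigma_i$, and feeding in the bound $\int_{\Sigma_i}|h-\lambda_i^{-1}\,g|^2\,\mathrm{d}\mu=O((\lambda_i^{-1/2}+\rho_i^{-1})^2)$ from Lemma \ref{hcirc int etsimate}, the scale-invariant iteration yields exactly $|\nabla h|(x)=O((\lambda_i^{-1/2}+\rho_i^{-1})\,|x|^{-2})$. A purely geometric alternative proceeds by contradiction and blow-up: should $|x_i|^2\,|\nabla h|(x_i)\,(\lambda_i^{-1/2}+\rho_i^{-1})^{-1}$ diverge along a sequence $x_i\in\Sigma_i$, rescaling $\Sigma_i$ by $|x_i|^{-1}$ and applying whichever of Propositions \ref{large smooth blowodwn}, \ref{small smooth blowodwn}, or \ref{intermediate smooth blowodwn} corresponds to the asymptotic size of $|x_i|$ yields $C^2_{\mathrm{loc}}$-convergence to either a round sphere or a flat plane; Schauder estimates for the rescaled fourth-order equation, with Lagrange parameter controlled by \eqref{kappa est}, then upgrade this to $C^3_{\mathrm{loc}}$-convergence, in which the rescaled $\nabla h$ converges to zero, a contradiction.

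The passage to the background metric is immediate from Lemma \ref{basic expansions}: writing $\bar h=h+O(|x|^{-2})$ and, via the expansion of the Christoffel symbols of $g$ relative to $\bar g$, $\bar\nabla \bar h=\nabla h+O(|x|^{-2})$, and using $|x|\geq\rho_i$ so that $|x|^{-1}\leq\lambda_i^{-1/2}+\rho_i^{-1}$ and hence $O(|x|^{-2})=O((\lambda_i^{-1/2}+\rho_i^{-1})\,|x|^{-1})$, the barred estimates follow from the unbarred ones.

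The main obstacle is the uniform applicability of elliptic regularity for the fourth-order area-constrained Willmore equation along the sequence. This is handled by the scale-invariant framework of Appendix \ref{int curv est appendix}, already set up to prove Proposition \ref{curv est prop}, together with \eqref{kappa est}, which ensures that on the natural scale $|x|$ the Lagrange term is of strictly lower order and therefore a genuine perturbation in the bootstrap.
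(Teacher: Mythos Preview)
Your proposal sketches two routes for the derivative bound, but the second one --- the blow-up contradiction --- does not close. You assume $|x_i|^2\,|\nabla h|(x_i)\,(\lambda_i^{-1/2}+\rho_i^{-1})^{-1}\to\infty$, rescale by $|x_i|^{-1}$, and upgrade the $C^2_{\mathrm{loc}}$-convergence of Propositions \ref{large smooth blowodwn}--\ref{intermediate smooth blowodwn} to $C^3_{\mathrm{loc}}$. That only yields $|x_i|^2\,|\nabla h|(x_i)\to 0$, which is perfectly compatible with your assumption since $\lambda_i^{-1/2}+\rho_i^{-1}\to 0$ as well. No contradiction arises; plain $C^3$-convergence carries no information about the \emph{rate}. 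What you actually need is a quantitative elliptic estimate on the rescaled surface with a right-hand side of size $O(\lambda_i^{-1/2}+\rho_i^{-1})$, and once you do that you are no longer arguing by contradiction.

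This is precisely what the paper does. It takes $x_i$ maximizing $|x|^2\,|\bar\nabla h|$, uses Propositions \ref{large smooth blowodwn}--\ref{intermediate smooth blowodwn} only to ensure that the geometry of $|x_i|^{-1}\,(\Sigma_i\cap B_{3|x_i|/4}(x_i))$ is uniformly bounded (so that elliptic constants are uniform), and then applies interior $L^4$-estimates \emph{directly} --- not by contradiction --- to two \emph{second-order} equations: the area-constrained Willmore equation, viewed as $\bar\Delta H=O((\lambda_i^{-1/2}+\rho_i^{-1})\,|x|^{-2})\,H$ plus lower order, and separately the Simons identity \eqref{first simon} for $\hcirc$. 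Feeding in the pointwise bound on $H-2\,\lambda_i^{-1}$ from Proposition \ref{curv est prop} (not the $L^2$-bound from Lemma \ref{hcirc int etsimate}, which would be too weak here) gives $|x_i|^2\,|\bar\nabla H|(x_i)=O(\lambda_i^{-1/2}+\rho_i^{-1})$ and then the analogous bound for $\bar\nabla\hcirc$. Your first route gestures at this, but treating \eqref{constrained Willmore equation} as a fourth-order PDE for $H$ is not the right framing: it is second-order in $H$, and the second set of derivatives comes from the independent Simons identity for $\hcirc$.

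A minor point on the transfer to $\bar g$: the relation $\bar\nabla\bar h=\nabla h+O(|x|^{-2})$ would not suffice, since $|x|^{-2}$ is not $O((\lambda_i^{-1/2}+\rho_i^{-1})\,|x|^{-2})$. The actual comparison from Lemma \ref{basic expansions} gives an error of order $|x|^{-3}+|x|^{-2}\,|\bar h|+|x|^{-1}\,|\bar\nabla\bar h|$, which after using $|\bar h|=O(\lambda_i^{-1})$, $|x|=O(\lambda_i)$, and $|x|\geq\rho_i$ is indeed $O((\lambda_i^{-1/2}+\rho_i^{-1})\,|x|^{-2})$.
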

\begin{proof} 
	Let $x_i\in \Sigma_i$ be such that
	$$
	|x_i|^2\,|(\bar\nabla h)(x_i)|=\sup_{x\in \Sigma_i}|x|^2\,|\bar\nabla h|. 
	$$
	By Lemma \ref{diameter in terms of area radius}, $x_i=O(\lambda_i)$. 
	Using either Proposition \ref{large smooth blowodwn} if $
	\lambda_i=O(|x_i|)$, Proposition \ref{small smooth blowodwn} if $x_i=O(\rho_i)$, or Proposition \ref{intermediate smooth blowodwn} if  $x_i=o(\lambda_i)$ and $\rho_i=o(|x_i|)$, it follows that $|x_i|^{-1}\,(\Sigma_i\cap B_{3\,|x_i|/4}(x_i))$ converges either to a subset of a round sphere or to a bounded subset of a flat plane in $C^2$. In particular, the geometry of   $|x_i|^{-1}\,(\Sigma_i\cap B_{3\,|x_i|/4}(x_i))$ is uniformly bounded.  \\ \indent  	
	By Lemma \ref{basic expansions} and Proposition \ref{curv est prop},
	$$
	\Delta H=	\bar \Delta H+O(|x|^{-1}\,|\bar \nabla^2 H|)+O(|x|^{-2}\,|\bar \nabla H|).
	$$
	In conjunction with the area-constrained Willmore equation \eqref{constrained Willmore equation} and Proposition \ref{curv est prop}, we conclude that
	$$
	\bar \Delta H+O(|x|^{-1}\,|\bar \nabla^2 H|)+O((|x|^{-2}\,|\bar \nabla H|)=O((\lambda_i^{-1/2}+\rho_i^{-1})\,|x|^{-2}\,H).
	$$
	By interior $L^4$-estimates as  in \cite[Theorem 9.11]{GilbargTrudinger} and the Sobolev embedding theorem, 
	\begin{equation} \label{nabla h} 
		\begin{aligned}  
			&|x_i|^{5/2}\,\bigg(\int_{\Sigma_i\cap B_{|x_i|/2}(x_i)}|\bar\nabla^2 H|_{\bar g}^4\,\mathrm{d}\bar\mu\bigg)^{1/4}+|x_i|^2\,|(\bar\nabla H)(x_i)|_{\bar g}
			\\&\qquad=O(|x_i|^{1/2})\,\bigg(\int_{\Sigma_i\cap B_{3\,|x_i|/4}(x_i)}( H-2\,\lambda_i^{-1})^4\,\mathrm{d}\bar\mu\bigg)^{1/4}\\&\qquad\qquad+O((\lambda_i^{-1/2}+\rho_i^{-1})\,|x_i|^{1/2})\,\bigg(\int_{\Sigma_i\cap B_{3\,|x_i|/4}(x_i)}H^4\,\mathrm{d}\bar\mu\bigg)^{1/4}
			\\&\qquad=O(\lambda_i^{-1/2}+\rho_i^{-1}).
		\end{aligned} 
	\end{equation} 
	We have used Proposition \ref{curv est prop} in the last equation. \\ \indent 
	Applying the same argument to \eqref{first simon}, using also \eqref{nabla h}, we conclude that
	$$
	|x_i|^2\,	|(\bar\nabla \hcirc)(x_i)|=O(\lambda_i^{-1/2}+\rho_i^{-1}).
	$$
	The assertion follows from this and Lemma \ref{basic expansions}.
\end{proof} 

\section{Asymptotic analysis of large area-constrained Willmore spheres} \label{section improved curvature estimates}

We assume that $g$ is a Riemannian metric on $\mathbb{R}^3$ such that, as $x\to\infty$,
\begin{align*} 
g=\left(1+|x|^{-1}\right)^4\,\bar g+\sigma\qquad\text{where}\qquad\partial_J\sigma=O(|x|^{-2-|J|})
\end{align*} 
for every multi-index $J$ with $|J|\leq 4$. \\ \indent 
Let $\xi\in\mathbb{R}^3$ and $\lambda>0$.
Given $u\in C^{\infty}(S_{\lambda}(\lambda\,\xi))$, we define the map
\begin{align*}
\Phi^u_{\xi,\lambda}\,:\,S_{\lambda}(\lambda\,\xi)\to \mathbb{R}^3 \qquad \text{given by} \qquad  \Phi^u_{\xi,\lambda}(x)=x+u(x)\, (\lambda^{-1}\,x-\xi). 
\end{align*}
We denote by \begin{align} \label{Sigma def} \Sigma_{\xi,\lambda}(u)=\Phi^u_{\xi,\lambda}(S_{\lambda}(\lambda\,\xi))\end{align} the Euclidean graph of $u$ over $S_{\lambda}(\lambda\,\xi)$. We tacitly identify functions defined on $\Sigma_{\xi,\lambda}(u)$ with functions defined on $S_{\lambda}(\lambda\,\xi)$ by precomposition with $\Phi^u_{\xi,\lambda}$. 
\\
 \indent We consider a sequence $\{\Sigma_i\}_{i=1}^\infty$  of area-constrained Willmore spheres $\Sigma_i\subset \mathbb{R}^3$ with 
\begin{align} 
m_H(\Sigma_i)\geq0, \qquad \quad
\lim_{i\to\infty}\rho(\Sigma_i)=\infty,\qquad\quad 
\label{slow divergence} 
\rho(\Sigma_i)=o(\lambda(\Sigma_i)).
\end{align} 
We assume that, as $i\to\infty$,
\begin{align} \label{s} 
\log\lambda(\Sigma_i)=o(\rho(\Sigma_i)).
\end{align}
\indent 
The goal of this section is to study the shape of $\Sigma_i$ as $i\to\infty$.
 More precisely, we show that $\Sigma_i$ is a graph over a nearby coordinate sphere, provided $i$ is sufficiently large.
\\ \indent 
We abbreviate $\rho_i=\rho(\Sigma_i)$ and $\lambda_i=\lambda(\Sigma_i)$. \\ \indent 
Passing to a subsequence, we may assume that either $\Sigma_i$  encloses $B_2$ for every $i$ or that the bounded region enclosed by $\Sigma_i$ is disjoint from $B_2$ for every $i$. 
Let $x_i\in \Sigma_i\cap S_{\rho_i}(0)$.  Passing to a further subsequence if necessary, we may assume that there is $\xi\in\mathbb{R}^3$ with $|\xi|=1$ such that
\begin{align} \label{xi}
\lim_{i\to\infty} \rho_i^{-1}\,x_i=-\xi.
\end{align} 
\begin{lem} If $\Sigma_i$ encloses $B_2$ for every $i$, the surfaces \label{cmc blowdown} $\lambda_i^{-1}\,\Sigma_i$ converge to $S_1(\xi)$ in $C^1$. If the bounded region enclosed by $\Sigma_i$ is disjoint from $B_2$ for every $i$, the surfaces $\lambda_i^{-1}\,\Sigma_i$ converge to $S_1(-\xi)$ in $C^1$.
\end{lem}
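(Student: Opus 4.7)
I plan to combine the subsequential $C^2$-convergence at the $\lambda_i$-scale from Proposition \ref{large smooth blowodwn} with the complementary one at the $\rho_i$-scale from Proposition \ref{small smooth blowodwn}, using the former to fix the shape of the limit as a unit sphere passing through the origin, the latter to pin down the direction of its center, and then patching the two pieces of control to obtain $C^1$-convergence in a neighborhood of $S_1(\pm\xi)$.

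First I would extract a subsequence so that $\lambda_i^{-1}\,\Sigma_i$ converges in $C^2_{\mathrm{loc}}(\mathbb{R}^3\setminus\{0\})$ to a round sphere $S_r(p)$. The area identity $|\lambda_i^{-1}\Sigma_i|_{\bar g}=4\pi$ forces $r=1$, and since $\lambda_i^{-1}x_i\to 0$ with $x_i\in \Sigma_i$, the origin lies in the closure of the limit surface, so $0\in S_1(p)$ and $|p|=1$. To identify $p$ with $\pm\xi$, I would observe that the outward unit normal $\nu_i$ to $\Sigma_i$ at the innermost point $x_i$ is parallel to $x_i$. If $\Sigma_i$ encloses $B_2$ (and hence the origin for large $i$), then $\nu_i(x_i)=x_i/|x_i|\to -\xi$; otherwise, $\nu_i(x_i)=-x_i/|x_i|\to \xi$. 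Since normals are invariant under rescaling, these limits also compute the limit of the normals to $\rho_i^{-1}\Sigma_i$ at $\rho_i^{-1}x_i\to -\xi$, which by Proposition \ref{small smooth blowodwn} agrees with the outward unit normal to the flat plane $P$ passing through $-\xi$ at unit distance from the origin. Transferring this information back to the $\lambda_i$-scale identifies the tangent plane of $S_1(p)$ at the origin as having outward normal $\mp\xi$, and matching with $-p$ yields $p=\xi$ in Case 1 and $p=-\xi$ in Case 2.

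For the global $C^1$-convergence, I would use the $C^2_{\mathrm{loc}}(\mathbb{R}^3\setminus\{0\})$-convergence from Proposition \ref{large smooth blowodwn} away from the origin together with the $C^2$-control of $\lambda_i^{-1}\Sigma_i$ on a ball of radius $O(\rho_i/\lambda_i)$ around the origin coming from Proposition \ref{small smooth blowodwn} after rescaling from the $\rho_i$-scale. These two regions cover all of $S_1(\pm\xi)$ and agree in $C^1$ on their overlap, giving $C^1$-convergence on a neighborhood of $S_1(\pm\xi)$. Since the subsequential limit depends only on $\xi$ and on which of the two cases occurs, uniqueness of the limit removes the need for extracting a subsequence. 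The main obstacle is the two-scale bookkeeping at the origin: the $C^2_{\mathrm{loc}}$-convergence from Proposition \ref{large smooth blowodwn} alone is insufficient to determine the sign of $p$, and Proposition \ref{small smooth blowodwn} must be invoked to transport the normal information from the $\rho_i$-scale where the origin sits at unit distance to the $\lambda_i$-scale where it collapses onto the limit sphere.
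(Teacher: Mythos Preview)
There is a genuine gap in the patching step. Proposition~\ref{large smooth blowodwn} controls $\lambda_i^{-1}\Sigma_i$ only on $\mathbb{R}^3\setminus B_\delta(0)$ for each fixed $\delta>0$, while Proposition~\ref{small smooth blowodwn}, transported to the $\lambda_i$-scale, controls it only on $B_{R\rho_i/\lambda_i}(0)$ for each fixed $R>0$. Since $\rho_i/\lambda_i\to 0$, the annulus $B_\delta(0)\setminus B_{R\rho_i/\lambda_i}(0)$ is covered by neither, and nothing you have said prevents the normal from drifting there. Your claim that the two regions ``cover all of $S_1(\pm\xi)$'' is therefore incorrect. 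The same gap undermines the identification of the center: matching the outward normal of the $\rho_i$-scale limit plane with the outward normal of $S_1(p)$ at the origin already presupposes the uniform $C^1$-behaviour across all intermediate scales that you are trying to prove.

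The paper bridges the intermediate scales by a different mechanism. It invokes the uniform pointwise estimate $\bar h-\lambda_i^{-1}\bar g=O\big((\lambda_i^{-1/2}+\rho_i^{-1})\,|x|^{-1}\big)$ from Corollary~\ref{higher order estimates}, writes $\Sigma_i$ as a vertical graph near the innermost point $x_i$, and integrates this bound along a radial path outward. The integral of $|x|^{-1}$ from $|x|\sim\rho_i$ to $|x|\sim\lambda_i$ produces a factor $\log(\lambda_i/\rho_i)$, and the standing hypothesis~\eqref{s}, namely $\log\lambda_i=o(\rho_i)$, is precisely what makes $\log(\lambda_i/\rho_i)\,(\lambda_i^{-1/2}+\rho_i^{-1})=o(1)$ and hence forces the gradient of the graph function to vanish uniformly. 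Your proposal never invokes~\eqref{s}; without it (for instance along a sequence with $\rho_i\sim\log\lambda_i$) this quantity need not tend to zero, and no amount of two-scale bookkeeping can substitute for the missing integration across scales.
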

\begin{proof} 
	This is similar to an argument given in \cite{cmc}. We repeat the argument for the reader's convenience. \\ \indent 
	We first assume that    $\Sigma_i$ encloses $B_2$ for every $i$. \\ \indent 
		We may assume that $\xi=e_3$. Let $a_i\in\mathbb{R}^3$ with $|a_i|=1$ and $a_i\perp x_i,\,e_3$. Let $R_i\in SO(3)$ be the unique rotation with $R(a_i)=a_i$ and $R(x_i)=|x_i|\,e_3$.  
	By \eqref{xi}, $\lim_{i\to\infty}R_i=\operatorname{Id}$. \\ \indent   Let $\gamma_i>0$ be the largest radius such that there is a smooth function  $u_i:\{y\in\mathbb{R}^2:|y|\leq \gamma_i\}\to\mathbb{R}$ with 
	\begin{equation*} 
	\begin{aligned}
	\circ\qquad &|(\bar\nabla u_i)(y)|\leq1\qquad\qquad\qquad\qquad\qquad\qquad\qquad\qquad\qquad\qquad \\[-3.5pt]
	\circ\qquad &(y,\rho_i+u_i(y))\in R_i(\Sigma_i)
	\end{aligned} 
	\end{equation*}
	for all $y\in\mathbb{R}^2$ with $|y|\leq \gamma_i$.
	Clearly, $\gamma_i>0$, $(\bar\nabla u_i)(0)=0$, and $u_i(0)=0$. It follows that 
	\begin{align} \label{phi vs y} 
|y|+\rho_i\leq 	3\,|(y,\rho_i+u_i(y))|\leq6\,(|y|+\rho_i)
	\end{align} 
	and
	\begin{align} \label{first h} 
	|(\bar\nabla^2 u_i)(y)|\leq 8\,|\bar h(R_i(\Sigma_i))((y,\rho_i+u_i(y)))|
	\end{align} 
	for every $y\in \mathbb{R}^2$ with $|y|\leq \gamma_i$. Moreover, by Corollary \ref{higher order estimates},
	\begin{equation} \label{improved grad estimate}  
	\begin{aligned} 
	\bar h(R_i(\Sigma_i))=\lambda_i^{-1}\,\bar g|_{R_i(\Sigma_i)}+O(\lambda_i^{-1/2}+\rho_i^{-1}\,|x|^{-1}).
	\end{aligned} 
	\end{equation} 
	Combining \eqref{first h}, \eqref{improved grad estimate}, and \eqref{phi vs y},  we have
	$$
	|(\bar\nabla^2 u_i)|_y|\leq 16\,\lambda_i^{-1}+O((\lambda_i^{-1/2}+\rho_i^{-1})\,(|y|+\rho_i)^{-1}).
	$$
	Integrating and using \eqref{phi vs y}, Lemma \ref{diameter in terms of area radius}, \eqref{slow divergence}, and \eqref{s}, 
	\begin{equation} \label{grad estimate}
	\begin{aligned}  
	|(\bar\nabla u_i)|_y|\leq 16\,|y|\,\lambda_i^{-1}+O(\log(\rho_i^{-1}\,\lambda_i)\,(\lambda_i^{-1/2}+\rho_i^{-1}))=16\,|y|\,\lambda_i^{-1}+o(1).
	\end{aligned} 
\end{equation} 
	It follows that $32\,\gamma_i\geq \lambda_i$ for all $i$ sufficiently large.  \eqref{grad estimate} also shows that, given $\varepsilon>0$, there is $\delta>0$ such that
	$$
	|\bar\nu(R_i(\Sigma_i))-e_3|\leq \varepsilon\qquad\text{on}\qquad \big\{(y,\rho_i+u_i(y)):y\in\mathbb{R}^2\text{ with }\lambda_i^{-1}\,|y|\leq \delta \big\}.
	$$
	According to Proposition \ref{large smooth blowodwn}, $\lambda_i^{-1}\,R_i(\Sigma_i)$ converges to $S_1(\tilde \xi)$ in $C^2$  locally  in $\mathbb{R}^3\setminus\{0\}$ where $\tilde \xi\in\mathbb{R}^3$. 
	The preceding argument shows that $\tilde \xi=\xi$ and that the convergence is in $C^1$ in $\mathbb{R}^3$. 
	\\   \indent This finishes the proof in the case where each $\Sigma_i$ encloses $B_2$. The case where $B_2$ is disjoint from the bounded region enclosed by $\Sigma_i$ for every $i$ requires only formal modifications. 
\end{proof} 
If $\Sigma_i$ encloses $B_2$, we define
$$
\xi_i=(\lambda_i^{-1}-\rho_i^{-1})\,x_i.
$$
If the bounded region enclosed by $\Sigma_i$ is disjoint from $B_2$, we define
$$
\xi_i=(\lambda_i^{-1}+\rho_i^{-1})\,x_i.
$$ 
Note that, in either case,
\begin{align*}
|1-|\xi_i||=\lambda_i^{-1}\,\rho_i\qquad\text{and}\qquad x_i=\lambda_i\,(1-|\xi_i|^{-1})\,\xi_i\in S_{\lambda_i}(\lambda_i\,\xi_i);
\end{align*}
see Figure \ref{acws sphere Figure}.
	\begin{figure}\centering
	\includegraphics[width=0.7\linewidth]{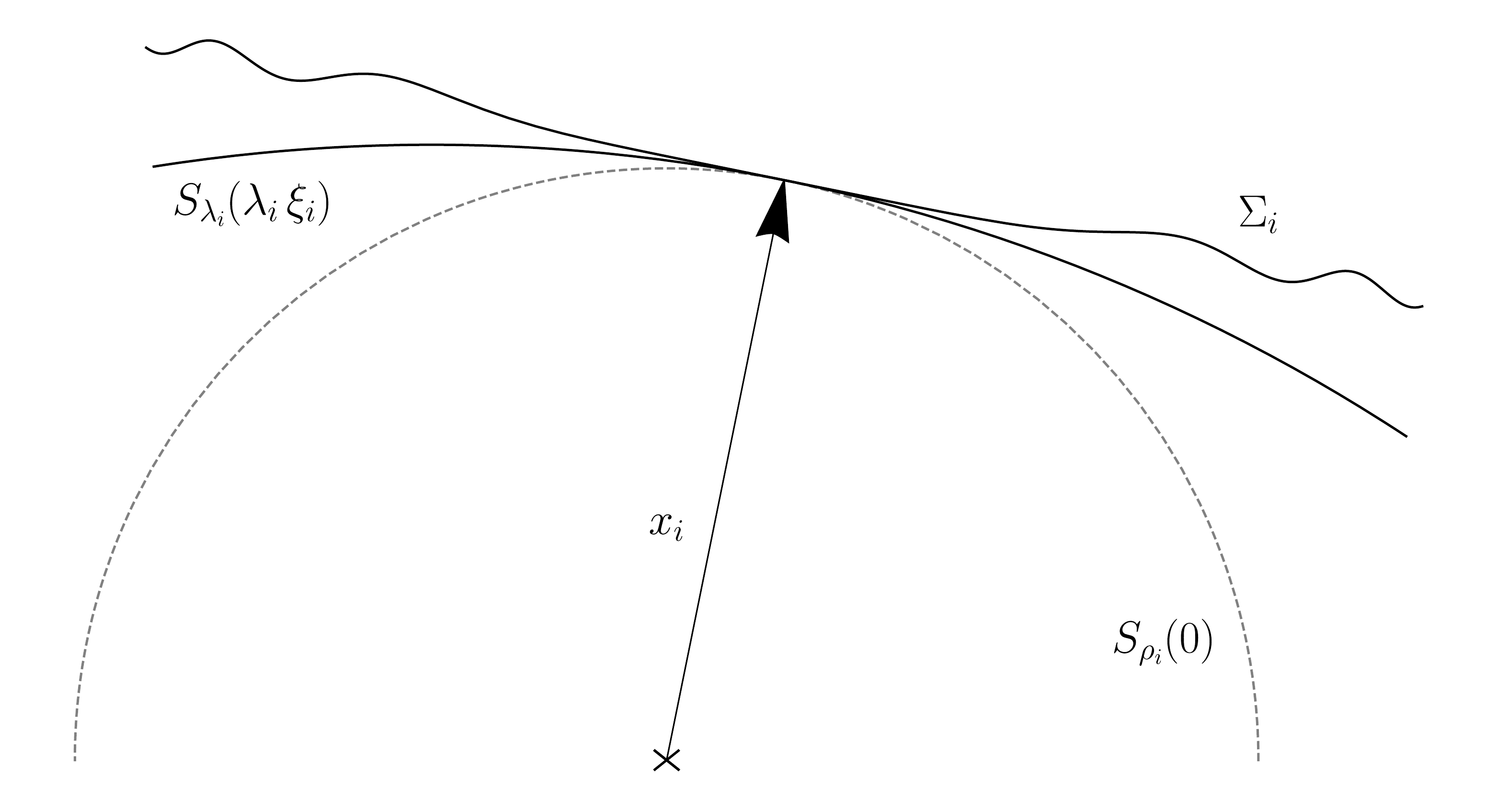}
	\caption{An illustration of $\Sigma_i$ and $S_{\lambda_i}(\lambda_i\,\xi_i)$. The cross marks the origin in the asymptotically flat chart. Here, $\Sigma_i$ encloses $B_2$. }
	\label{acws sphere Figure}
\end{figure}\\
\indent The following lemma is an immediate consequence of Lemma \ref{cmc blowdown}.
\begin{lem} 
	For all  sufficiently large $i$, there are  $u_i\in C^\infty(S_{\lambda_i}(\lambda_i\,\xi_i))$ with the following properties. \label{u initial estimate}  
	\begin{equation*}
	\begin{aligned} 
		&\circ \qquad \Sigma_i=\Sigma_{\xi_i,\lambda_i}(u_i) \\[-2.pt]
		&\circ \qquad u_i(x_i)=0 \\[-2.pt]
		&\circ \qquad (\bar\nabla u_i)(x_i)=0 \\[-2.pt] 
		&\circ \qquad \bar\nabla u_i=o(1)
	\end{aligned} 
	\end{equation*}
\end{lem}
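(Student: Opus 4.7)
The plan is to deduce the lemma directly from the $C^1$-convergence established in Lemma \ref{cmc blowdown}, together with the fact that $\xi_i$ has been chosen so that the coordinate sphere $S_{\lambda_i}(\lambda_i\,\xi_i)$ passes through $x_i$ with the correct tangent plane there. Indeed, unpacking the definition of $\xi_i$, one computes $\lambda_i^{-1}\,x_i - \xi_i = \mp\rho_i^{-1}\,x_i$ (with the sign depending on whether $\Sigma_i$ encloses $B_2$). This is a unit vector, so $|x_i - \lambda_i\,\xi_i| = \lambda_i$ and hence $x_i \in S_{\lambda_i}(\lambda_i\,\xi_i)$. Moreover, since $\lambda_i^{-1}\,x_i \to 0$ and $\rho_i^{-1}\,x_i \to -\xi$, one has $\xi_i \to \pm \xi$, so the rescaled coordinate sphere $\lambda_i^{-1}\,S_{\lambda_i}(\lambda_i\,\xi_i) = S_1(\xi_i)$ converges smoothly to the same unit sphere to which $\lambda_i^{-1}\,\Sigma_i$ converges in $C^1$ by Lemma \ref{cmc blowdown}.

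Since two hypersurfaces that are $C^1$-close to a common smooth limit hypersurface in $\mathbb{R}^3$ can be written one over the other as a normal graph of small $C^1$-norm, we obtain for all sufficiently large $i$ a unique smooth function $\tilde u_i$ on $S_1(\xi_i)$ with $\bar\nabla \tilde u_i = o(1)$ such that $\lambda_i^{-1}\,\Sigma_i$ is the normal graph of $\tilde u_i$ over $S_1(\xi_i)$. Rescaling, $u_i = \lambda_i\,\tilde u_i\circ(\lambda_i^{-1}\,\cdot\,)$ satisfies $\Sigma_i = \Sigma_{\xi_i,\lambda_i}(u_i)$ and $\bar\nabla u_i = o(1)$. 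One minor point worth checking is that in the non-enclosing case the limit sphere passes through the origin, so $\lambda_i^{-1}\,\Sigma_i$ approaches a singular point of the target; however, the uniform gradient control for $\bar h(\Sigma_i)$ from Corollary \ref{higher order estimates} extends the graph representation down to $\lambda_i^{-1}\,x_i\to 0$, which is precisely where the graph is pinned in the next step.

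Both pointwise conditions at $x_i$ then follow from the choice of $\xi_i$. Since $x_i \in \Sigma_i\cap S_{\lambda_i}(\lambda_i\,\xi_i)$, the identity $\Phi^{u_i}_{\xi_i,\lambda_i}(x_i)=x_i$ holds by uniqueness of the $C^1$-small normal graph representation; combined with $\lambda_i^{-1}\,x_i - \xi_i\neq 0$, this forces $u_i(x_i)=0$. For the gradient, observe that $|x|$ attains its minimum $\rho_i$ on $\Sigma_i$ at $x_i$, so the tangent plane of $\Sigma_i$ at $x_i$ is orthogonal to $x_i$; by the calculation above, the outward unit normal of $S_{\lambda_i}(\lambda_i\,\xi_i)$ at $x_i$ is also proportional to $x_i$. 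The two tangent planes therefore coincide, and consequently $(\bar\nabla u_i)(x_i) = 0$. No serious obstacle arises, and the argument is indeed immediate once the compatibility of $\xi_i$ with $x_i$ is verified.
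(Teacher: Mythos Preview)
Your proof is correct and follows the same approach as the paper, which simply records that the lemma is an immediate consequence of Lemma~\ref{cmc blowdown}; you have supplied the details of why the choice of $\xi_i$ forces $x_i\in S_{\lambda_i}(\lambda_i\,\xi_i)$ with matching tangent plane, and hence $u_i(x_i)=0$ and $(\bar\nabla u_i)(x_i)=0$. Your parenthetical concern about a ``singular point'' in the non-enclosing case is unnecessary (and would apply equally to the enclosing case, since $|\xi|=1$ puts the origin on the limit sphere either way): Lemma~\ref{cmc blowdown} already asserts global $C^1$ convergence in~$\mathbb{R}^3$, not merely in $\mathbb{R}^3\setminus\{0\}$, so no separate extension argument is needed.
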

We abbreviate $S_i=S_{\lambda_i}(\lambda_i\,\xi_i)$ and $\Phi_i=\Phi^{u_i}_{\xi_i,\lambda_i}$.
\begin{rema} \label{ui remark} \label{Phi remark}
It follows from Lemma \ref{u initial estimate} that
	$
	u_i(x)=o(|x|)
	$
and
		$
		\Phi_i(x)=x+o(|x|).
		$
\end{rema}
To proceed, we need the following technical lemma.
\begin{lem}
	Let $c\geq 1$ and $\beta:[0,1]\to\mathbb{R}$ be a non-negative, measurable function with
	$$
	\int_0^1 \beta(s)\,\mathrm{d}s\leq \frac1{16}\,c^{-2}\,(1+2\,c)^{-1}\,\exp(-2\,c).
	$$
	Suppose that $\alpha:[0,1]\to\mathbb{R}$ be a differentiable function with absolutely continuous derivative such that \label{ODE lemma} $\alpha(0)=\alpha'(0)=0$ and
	$$
	|\alpha''|\leq c^2\,|\alpha|+c^2\,(\alpha')^2+\beta	
	$$
almost everywhere.	Then
	$$
	|\alpha'|\leq 4\,(1+2\,c)\,\exp(2\,c)\, \int_0^1 \beta(s)\,\mathrm{d}s.
	$$
\end{lem}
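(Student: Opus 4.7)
\textbf{Plan for the proof of Lemma \ref{ODE lemma}.} The difficulty is the nonlinear term $c^{2}\,(\alpha')^{2}$ on the right-hand side of the differential inequality. Linearizing it requires a continuity/bootstrap argument, after which the estimate becomes a routine Gronwall inequality. I plan to introduce the monotone envelope
$$
\Psi(t)=c\,|\alpha(t)|+|\alpha'(t)|,\qquad t\in[0,1],
$$
which is absolutely continuous with $\Psi(0)=0$ and satisfies $\Psi'(t)\le c\,|\alpha'(t)|+|\alpha''(t)|$ for almost every $t$ (sign-changes of $\alpha$ and $\alpha'$ are negligible since $\alpha'$ is absolutely continuous). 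Write $B=\int_{0}^{1}\beta(s)\,\mathrm{d}s$, which by hypothesis is at most $\tfrac{1}{16}c^{-2}(1+2c)^{-1}e^{-2c}$.

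Next, I would define
$$
T=\sup\Big\{\,t\in[0,1]\,:\,|\alpha'(s)|\le c^{-1}\ \text{for all }s\in[0,t]\Big\},
$$
which is positive because $\alpha'(0)=0$ and $\alpha'$ is continuous. On $[0,T]$ one has $c^{2}\,(\alpha')^{2}\le c\,|\alpha'|$, so the hypothesis becomes the linear inequality $|\alpha''|\le c^{2}\,|\alpha|+c\,|\alpha'|+\beta$. Substituting, I obtain
$$
\Psi'\le c\,|\alpha'|+c^{2}\,|\alpha|+c\,|\alpha'|+\beta=2\,c\,\Psi+\beta\qquad\text{a.e.\ on }[0,T].
$$
Since $\Psi(0)=0$, the usual integrating-factor argument gives $\Psi(t)\le e^{2c}\,B$ on $[0,T]$. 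In particular, $|\alpha'(t)|\le\Psi(t)\le e^{2c}\,B$ on $[0,T]$.

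Now I close the bootstrap: by the hypothesis on $B$, one checks (using $c\ge 1$) that
$$
e^{2c}\,B\le\tfrac{1}{16}\,c^{-2}\,(1+2c)^{-1}<c^{-1},
$$
so $|\alpha'(T)|<c^{-1}$ strictly. By continuity of $\alpha'$, the defining property of $T$ would persist in a neighbourhood of $T$ unless $T=1$; hence $T=1$. Consequently $|\alpha'(t)|\le\Psi(t)\le e^{2c}\,B$ for every $t\in[0,1]$, which is stronger than the claimed bound $4\,(1+2c)\,e^{2c}\,B$ since $4\,(1+2c)\ge 1$. The main obstacle in the argument is the handling of the quadratic term $(\alpha')^{2}$, which is resolved by the above continuity argument; once this is accomplished, everything reduces to a Gronwall estimate for the envelope $\Psi$.
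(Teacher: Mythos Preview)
Your proof is correct. The linearization of $c^{2}(\alpha')^{2}$ via the bootstrap threshold $|\alpha'|\le c^{-1}$ is clean, the envelope $\Psi=c|\alpha|+|\alpha'|$ is absolutely continuous with $\Psi'\le c|\alpha'|+|\alpha''|$ a.e.\ (the standard fact that $(|f|)'=\operatorname{sgn}(f)f'$ a.e.\ for absolutely continuous $f$ takes care of sign changes), and the Gronwall step and the closing of the bootstrap both check out. In fact you obtain the sharper bound $|\alpha'|\le e^{2c}B$.

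The paper follows a different route: it builds an explicit comparison function $\omega(t)=4t\,e^{2ct}B$, assumes for contradiction that there is a first time $t_{0}$ with $|\alpha'(t_{0})|=\omega'(t_{0})$, and then integrates the differential inequality on $[0,t_{0}]$ using $|\alpha|<\omega$, $|\alpha'|<\omega'$ together with the smallness $\omega'\le \tfrac14 c^{-2}$ to reach a contradiction. Both arguments are continuity/bootstrap in spirit, but the mechanisms differ: the paper compares against a tailored supersolution and estimates integrals of $\omega$, $\omega'$, $\omega''$ directly, whereas you reduce to a linear Gronwall inequality for a single scalar quantity. Your approach is shorter, requires no guess for $\omega$, and yields a better constant; the paper's approach has the advantage of keeping $\alpha$ and $\alpha'$ separate and of making the role of each term in the differential inequality transparent through the explicit barrier.
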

\begin{proof}
We will assume that $\beta>0$. The general case follows by approximation. \\ \indent 
	Let $\omega:[0,1]\to\mathbb{R}$ be given by
	$$
	\omega(t)=4\, t\,\exp(2\,c\,t)\,\int_0^1\beta(s)\,\mathrm{d}s.
	$$
	We claim that $|\alpha'(t)|<\omega'(t)$ on $[0,1]$.
To see this, note that 
	\begin{align} \label{w zero} 
	4\,\int_0^1\beta(s)\,\mathrm{d}s<\omega'\leq 4\,(1+2\,c)\,\exp(2\,c)\,\int_0^1\beta(s)\,\mathrm{d}s\leq \frac14\,c^{-2} 
	\end{align} 
	and
	\begin{align} \label{w one}
	0\leq4\,c^2\,\omega<\omega''. 
	\end{align} 
 Suppose that there is $t_0\in(0,1]$ such that $|\alpha'(t_0)|=\omega'(t_0)$ and  $|\alpha'(t)|<\omega'(t)$ on $[0,t_0)$. It follows that $|\alpha(t)|<\omega(t)$ on $[0,t_0)$. Consequently,
 \begin{equation} \label{w two}
	\begin{aligned}  
	\omega'(t_0)=|\alpha'(t_0)|\leq\int_0^{t_0}|\alpha''(s)|\,\text{d}s\leq c^2\,\int_0^{t_0} \omega(s)\,\text{d}s+c^2\,\int_0^{t_0} \omega'(s)^2\,\text{d}s+\int_0^1\,\beta(s)\,\mathrm{d}s.
	\end{aligned} 
	\end{equation} 
By \eqref{w one}, $\omega'(t)\leq \omega'(t_0)$ on $[0,t_0)$.	Using this, \eqref{w zero}, and \eqref{w one}, we have 
	\begin{equation*} 
	\begin{aligned} 
	c^2\,\int_0^{t_0} \omega(s)\,\text{d}s+c^2\,\int_0^t \omega'(s)^2\,\text{d}s\leq \frac14\,\int_0^{t_0}\omega''(s)\,\mathrm{d}s+\frac14\,\int_0^{t_0}\,\omega'(s)\,\mathrm{d}s<\frac12\,\omega'(t_0).
	\end{aligned} 
	\end{equation*} 
In conjunction with \eqref{w two}, we conclude that
	$$
	\omega'(t_0)\leq 2\,\int_0^1\beta(s)\,\mathrm{d}s.
	$$
	This is incompatible with \eqref{w zero}.
	\\ \indent
	It follows that $|\alpha'(t)|<\omega'(t)$ on $[0,1]$. The assertion now follows from \eqref{w zero}.
\end{proof}

\begin{lem}
	There holds \label{u initial estimate 3} 
	\begin{equation*}
	\begin{aligned} 
|x|^{-1}\,	|u_i|+|\bar \nabla u_i|+|x|\,|\bar\nabla^2 u_i|
	= O((\log(\rho_i^{-1}\,\lambda_i))^{1/2}\,(\lambda_i^{-1/2}+\rho_i^{-1})).
	\end{aligned}
	\end{equation*} 
\end{lem}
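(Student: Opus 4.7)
The plan is to transfer the pointwise curvature estimate of Corollary~\ref{higher order estimates} into a second-order differential inequality for $u_i$ on $S_i$ via the graph parametrization, then integrate it along great-circle geodesics of $S_i$ by means of Lemma~\ref{ODE lemma}, and finally extract the sharp logarithmic factor by coupling the pointwise estimate with the $L^2$ integral estimate of Lemma~\ref{hcirc int etsimate}.

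The standard expansion of the second fundamental form of a normal graph $\Phi_i(x) = x + u_i(x)(\lambda_i^{-1}x - \xi_i)$ over $S_i$ contains $\bar\nabla^2 u_i$ linearly at leading order; the Gauss curvature $\lambda_i^{-2}$ of $S_i$ contributes a term proportional to $\lambda_i^{-2} u_i$, and the remaining corrections are quadratic in $\bar\nabla u_i / \lambda_i$. Comparing with the bound $\bar h - \lambda_i^{-1}\bar g|_{\Sigma_i} = O((\lambda_i^{-1/2} + \rho_i^{-1}) |x|^{-1})$ from Corollary~\ref{higher order estimates}, this yields
$$|\bar\nabla^2 u_i| \le C\lambda_i^{-2} |u_i| + C\lambda_i^{-1} |\bar\nabla u_i|^2 + C (\lambda_i^{-1/2} + \rho_i^{-1}) |x|^{-1}$$
pointwise on $S_i$.

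Next, I fix a unit-speed geodesic $\gamma:[0,L] \to S_i$ with $\gamma(0) = x_i$ and set $\alpha(t) = u_i(\gamma(t))$. By Lemma~\ref{u initial estimate}, $\alpha(0) = \alpha'(0) = 0$, and the pointwise inequality becomes
$$|\alpha''(t)| \le C\lambda_i^{-2} |\alpha(t)| + C\lambda_i^{-1}(\alpha'(t))^2 + \beta(t), \qquad \beta(t) = C(\lambda_i^{-1/2} + \rho_i^{-1}) |x(t)|^{-1}.$$
After rescaling $s = t/L$ with a suitable $L \le \pi\lambda_i$, this matches the hypothesis of Lemma~\ref{ODE lemma} with an absolute constant $c$, and assumption~\eqref{s} guarantees the required smallness of the rescaled $\int\beta$. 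Lemma~\ref{ODE lemma} then yields $|\alpha'(t)| \le C\int_0^L \beta(s)\,ds$. A direct computation based on $|x(s)| \gtrsim (\rho_i^2 + s^2)^{1/2}$ along $\gamma$ gives $\int_0^L |x(s)|^{-1}\,ds = O(\log(\lambda_i / \rho_i))$, which alone would only yield $|\bar\nabla u_i| = O(\epsilon\log(\lambda_i/\rho_i))$ with $\epsilon = \lambda_i^{-1/2} + \rho_i^{-1}$. To extract the sharp $(\log)^{1/2}$ factor, I couple this with the $L^2$ bound $\int_{S_i} |\bar\nabla^2 u_i|^2\,d\bar\mu = O(\epsilon^2)$ — inherited from Lemma~\ref{hcirc int etsimate} via the graph expansion — using a Cauchy--Schwarz inequality with weight $|x|$ along $\gamma$, together with the asymptotics $\int_{S_i} |x|^{-2}\,d\bar\mu = O(\log(\lambda_i/\rho_i))$. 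This interpolation produces
$$|\bar\nabla u_i| = O\bigl((\log(\rho_i^{-1}\lambda_i))^{1/2}(\lambda_i^{-1/2} + \rho_i^{-1})\bigr).$$

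Integrating $\bar\nabla u_i$ along $\gamma$ from $x_i$ gives $|u_i(\gamma(t))| \le |x(t)| \sup|\bar\nabla u_i|$, using that $t \le |x(t)|$ along $\gamma$ since $|x(t)|^2 \ge \rho_i^2 + t^2$. This yields the stated bound on $|x|^{-1} |u_i|$. Substituting both estimates back into the pointwise differential inequality then controls $|x| |\bar\nabla^2 u_i|$. The main obstacles are the delicate choice of rescaling so that Lemma~\ref{ODE lemma} applies with a bounded constant and a small forcing integral — this uses assumption~\eqref{s} in an essential way — and the interpolation between pointwise and $L^2$ curvature estimates that sharpens a naive $\log$ to the $(\log)^{1/2}$ factor appearing in the statement.
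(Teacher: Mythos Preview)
Your overall strategy---graph expansion, geodesic integration via Lemma~\ref{ODE lemma}, then an appeal to the $L^2$ curvature bound to sharpen $\log$ to $(\log)^{1/2}$---matches the paper's. The gap is in the last step. You invoke only the \emph{global} pointwise bound $|\bar h-\lambda_i^{-1}\bar g|=O(\epsilon\,|x|^{-1})$ from Corollary~\ref{higher order estimates}, which forces $\beta(t)=C\epsilon\,|x(t)|^{-1}$ and $\int_\gamma\beta=O(\epsilon\log)$. Your proposed cure---``Cauchy--Schwarz with weight $|x|$ along $\gamma$, together with $\int_{S_i}|x|^{-2}\,d\bar\mu=O(\log)$''---does not work as written: the bound $\int_{S_i}|\bar\nabla^2 u_i|^2\,d\bar\mu=O(\epsilon^2)$ is a \emph{surface} integral, and there is no mechanism to control the \emph{curve} integral $\int_\gamma |x|\,|\bar\nabla^2 u_i|^2\,ds$ from it. A generic function with small $L^2$ norm on a surface can concentrate along a curve.

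The paper supplies the missing link by using Proposition~\ref{int curv estimate} instead of Corollary~\ref{higher order estimates}: the pointwise value $|h-\lambda_i^{-1}g|(z)$ is bounded by $O(|z|^{-1})\bigl(\int_{B_{|z|/2}(z)}|h-\lambda_i^{-1}g|^2\bigr)^{1/2}$ plus lower order. This makes the forcing term $\beta$ itself carry a local $L^2$ norm. One then partitions $\gamma$ into $k_i=O(\log(\rho_i^{-1}\lambda_i))$ dyadic pieces $\gamma\cap S_{i,\ell}$, on each of which $\int_{\gamma\cap S_{i,\ell}}|z|^{-1}\,ds=O(1)$; discrete Cauchy--Schwarz over the index $\ell$ converts $\sum_\ell(\cdots)^{1/2}$ into $\sqrt{k_i}\,(\sum_\ell\cdots)^{1/2}$, and the last sum telescopes to the global $L^2$ bound of Lemma~\ref{hcirc int etsimate}. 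That is what produces the $(\log)^{1/2}$.

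A minor point: your choice $\alpha(t)=u_i(\gamma(t))$ controls only the directional derivative along $\gamma$; the paper takes $\alpha_i(s)=\int_0^s|\bar\nabla u_i(\gamma(t))|\,dt$ so that $\alpha_i'=|\bar\nabla u_i|$ directly.
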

\begin{proof}
	Let $z_i\in S_i$ and $\gamma_i:[0,1]\to S_i$ be a minimizing geodesic with respect to $\bar g$ such that $\gamma_i(0)=x_i$ and $\gamma_i(1)=z_i$.  Note that
	\begin{align} \label{gamma length estimate} 
	|\dot\gamma_i|\leq \pi\,\lambda_i.
	\end{align} 
Given an integer $\ell$, let
$$
S_{i,\ell}=\{z\in S_i:2^{\ell-1}\,\rho_i\leq |z|<2^\ell\,\rho_i\}.
$$
Note that 
$$
S_i\cap B_{|z|/2}(z)\subset S_{i,\ell-1}\cup S_{i,\ell}\cup  S_{i,\ell+1}
$$
for every $z\in  S_{i,\ell}$ and that
$$
\int_{\gamma\cap S_{i,\ell}}|z|^{-1}\,\mathrm{d}\bar\mu= O(1)
$$	
uniformly for all $\ell$.\\
\indent Let $z\in S_i$. By  Lemma \ref{basic expansions} and Proposition \ref{curv est prop}, 
we have
\begin{align*} 
\bar h(\Sigma_i)(\Phi_i(z))&=h(\Sigma_i)(\Phi_i(z))+O(|(\Phi_i(z))|^{-1}\,|h(\Sigma_i)(\Phi_i(z))|))+O(|\Phi_i(z)|^{-2}) 
\\&=h(\Sigma_i)(\Phi_i(z))+O(|\Phi_i(z)|^{-2})
\end{align*} 
and 
$$
\bar g|_{\Sigma_i}=g|_{\Sigma_i}+O(|\Phi_i(z)|^{-1}).
$$
Using Proposition \ref{int curv estimate} and \eqref{kappa est}, we have
\begin{align*} 
&(h(\Sigma_i)-\lambda_i^{-1}\, g|_{\Sigma_i})(\Phi_i(z))\\&\qquad =O(|\Phi_i(z)|^{-1})\,\bigg(\int_{\Sigma_i\cap B_{|\Phi_i(z)|/4}(\Phi_i(z))}|h-\lambda_i^{-1}\,g|_{\Sigma_i}|^2\,\text{d}\mu\bigg)^{1/2}\\&\qquad\qquad +O(|\Phi_i(z)|^{-2})+O((\lambda_i^{-1/2}+\rho_i^{-1})\,\lambda_i^{-1}).
\end{align*} 
Using Remark \ref{ui remark} and  Lemma \ref{graphical geometric components}, we conclude that 
\begin{equation}  \label{nabla2 eq} 
	\begin{aligned}
(\bar\nabla^2 u_i)(z)=\,& O(|z|^{-1})\,\bigg(\int_{S_i\cap B_{|z|/2}(z)}|h(\Sigma_i)-\lambda_i^{-1}\,g|_{\Sigma_i}|^2\,\text{d}\mu\bigg)^{1/2} \\&\qquad+O(|z|^{-2})+ O((\lambda_i^{-1/2}+\rho_i^{-1})\,\lambda_i^{-1})
  +O(\lambda_i^{-2}\,|u_i(z)|)+O(\lambda_i^{-1}\,|\bar\nabla u_i(z)|^2).
\end{aligned} 
\end{equation} 
\indent We have
\begin{equation} \label{beta one}
\begin{aligned}   
\int_{\gamma}|z|^{-2}+\int_{\gamma}(\lambda_i^{-1/2}+\rho_i^{-1})\,\lambda_i^{-1}=O(\rho_i^{-1})+O(\lambda_i^{-1/2}+\rho_i^{-1})=O(\lambda_i^{-1/2}+\rho_i^{-1}). 
\end{aligned} 
\end{equation} 
Let
$
k_i=\lceil (\log2)^{-1}\,\log(\rho_i^{-1}\,|z_i|)\rceil
$
and note that $k_i=O(\log(\rho_i^{-1}\,\lambda_i))$. We have
\begin{equation} \label{beta two} 
\begin{aligned}
&\int_{\gamma}|z|^{-1}\,\bigg(\int_{S_i\cap B_{|z|/2}(z)}|h(\Sigma_i)-\lambda_i^{-1}\,g|_{\Sigma_i}|^2\,\text{d}\mu\bigg)^{1/2}\\
&\qquad = O(1)\,\sum_{\ell=1}^{k_i}\int_{\gamma\cap S_{i,\ell}}|z|^{-1}\,\mathrm{d}\bar\mu(z)\,\bigg(\int_{S_{i,\ell-1}\cup S_{i,\ell}\cup S_{i,\ell+1}}|h(\Sigma_i)-\lambda_i^{-1}\,g|_{\Sigma_i}|^2\,\text{d}\mu\bigg)^{1/2}\\ 
&\qquad =O(\sqrt{k_i})\,\bigg(\sum_{\ell=1}^{k_i}\,\int_{S_{i,\ell-1}\cup S_{i,\ell}\cup S_{i,\ell+1}}|h(\Sigma_i)-\lambda_i^{-1}\,g|_{\Sigma_i}|^2\,\text{d}\mu\bigg)^{1/2}
\\ 
&\qquad =O((\log(\rho_i^{-1}\,\lambda_i))^{1/2})\,\bigg(\int_{\Sigma_i}|h(\Sigma_i)-\lambda_i^{-1}\,g|_{\Sigma_i}|^2\,\text{d}\mu\bigg)^{1/2}
\\&\qquad=O((\log(\rho_i^{-1}\,\lambda_i))^{1/2}\,(\lambda_i^{-1/2}+\rho_i^{-1})). 
\end{aligned}
\end{equation} 
We have used Lemma \ref{hcirc int etsimate} in the last equation.
\\ \indent 
Let $c\geq 1$ and $\alpha_i,\,\beta_i:[0,1]\to\mathbb{R}$ be given by
$$
\alpha_i(s)=\int_0^s|(\bar\nabla u_i)(\gamma(t))|\,\mathrm{d}t
$$
and  
\begin{align*} 
\beta_i(s)&=c\,|\dot \gamma_i(s)|\,|\gamma_i(s)|^{-1}\,\bigg(\int_{S_i\cap B_{|\gamma_i(s)|/2}(\gamma_i(s))}|h(\Sigma_i)-\lambda_i^{-1}\,g|_{\Sigma_i}|^2\,\text{d}\mu\bigg)^{1/2} \\&\qquad  +c\,|\dot \gamma_i(s)|\,|\gamma_i(s)|^{-2}+c\,|\dot \gamma_i(s)|\,(\lambda_i^{-1/2}+\rho_i^{-1})\,\lambda_i^{-1}.
\end{align*} 
By Lemma \ref{u initial estimate},
$
|u_i(\gamma_i(s))|\leq |\dot\gamma_i(s)|\,\alpha_i(s)
$
for all $s\in[0,1]$.
Moreover, whenever $\alpha_i''(s)$ exists, there holds $|\alpha_i''(s)|\leq |\dot\gamma_i(s)|\,|(\bar\nabla ^2u_i)(\gamma_i(s))|$. In conjunction with \eqref{gamma length estimate} and \eqref{nabla2 eq}, we obtain
$$
|\alpha_i''|\leq c^2\,|\alpha_i|+c^2\,(\alpha_i')^2+\beta_i$$
almost everywhere for all large $i$ provided that $c\geq 1$ is sufficiently large. Clearly, $\alpha_i(0)=0$ and, by Lemma \ref{u initial estimate}, $\alpha_i'(0)=0$.
Moreover, using \eqref{beta one} and \eqref{beta two}, we have
$$
\int_0^1 \beta(s)\,\mathrm{d}s=o(1).
$$
 Using Lemma \ref{ODE lemma} and (\ref{nabla2 eq}-\ref{beta two}), we obtain 
$$
|(\bar\nabla u_i)(z_i)|=\alpha_i'(1)=O((\log(\rho_i^{-1}\,\lambda_i))^{1/2}\,(\lambda_i^{-1/2}+\rho_i^{-1})).
$$
Integrating and using Lemma \ref{u initial estimate}, we have 
$$
|x|^{-1}\,u_i=O((\log(\rho_i^{-1}\,\lambda_i))^{1/2}\,(\lambda_i^{-1/2}+\rho_i^{-1})).
$$
Returning to \eqref{nabla2 eq} and using Lemma \ref{hcirc int etsimate}, we have  
$$
|x|\,\bar\nabla ^2u_i=O((\log(\rho_i^{-1}\,\lambda_i))^{1/2}\,(\lambda_i^{-1/2}+\rho_i^{-1})).
$$
\indent The assertion follows.
\end{proof} 

\section{Asymptotic analysis of the mean curvature} \label{section mean curvature estimates}
We assume that $g$ is a Riemannian metric on $\mathbb{R}^3$ such that, as $x\to\infty$,
\begin{align} \label{asymptotic to Schwarzschild 1.5}
g=\left(1+|x|^{-1}\right)^4\,\bar g+\sigma\qquad\text{where}\qquad\partial_J\sigma=O(|x|^{-2-|J|})
\end{align} 
for every multi-index $J$ with $|J|\leq 4$. Let $\{\Sigma_i\}_{i=1}^\infty$ be a sequence of area-constrained Willmore spheres $\Sigma_i\subset \mathbb{R}^3$ with 
\begin{align} \label{slow divergence 1.5}  
m_H(\Sigma_i)\geq0, \qquad \quad
\lim_{i\to\infty}\rho(\Sigma_i)=\infty,\qquad\quad 
\rho(\Sigma_i)=o(\lambda(\Sigma_i))
\end{align} 
and assume that, as $i\to\infty$,
\begin{align} \label{s 1.5} 
\log\lambda(\Sigma_i)=o(\rho(\Sigma_i)).
\end{align} 
\indent As before, we abbreviate $\lambda_i=\lambda(\Sigma_i)$ and $\rho_i=\rho(\Sigma_i)$. Recall from Lemma \ref{u initial estimate} that, for all $i$ large, $\Sigma_i=\Sigma_{\xi_i,\lambda_i}(u_i)$ is the Euclidean graph of a function $u_i$ over the sphere $S_i=S_{\lambda_i}(\lambda_i\,\xi_i)$. Moreover, recall that $\Phi_i=\Phi^{u_i}_{\xi_i,\lambda_i}$ and that we identify functions defined on $\Sigma_i$ with functions defined on $S_i$ by precomposition with $\Phi_i$.
\\ \indent
The goal of this section is to  obtain an improved estimate for the mean curvature $H(\Sigma_i)$. To this end, we analyze the area-constrained Willmore equation \eqref{constrained Willmore equation}. \\ \indent 
Given  $\xi\in\mathbb{R}^3$ and $\lambda>0$, let $\Lambda_0(S_\lambda(\lambda\,\xi))$ be the space of constant functions on $S_{\lambda}(\lambda\,\xi)$ and $\Lambda_0(S_\lambda(\lambda\,\xi))^\perp$  its orthogonal complement in $C^\infty(S_{\lambda}(\lambda\,\xi))$ with respect to the Euclidean $L^2$-inner product. We use $\operatorname{proj}_{\Lambda_0(S_\lambda(\lambda\,\xi))}$ and $\operatorname{proj}_{\Lambda_0(S_\lambda(\lambda\,\xi))^\perp}$ to denote the $L^2$-projections onto these spaces. \\ \indent We need the following gradient estimate for the Laplace operator.  
\begin{lem}
	There is a constant $c>0$ with the following property. Let $\xi\in\mathbb{R}^3$  and $\lambda>0$. Suppose that $u,\,f\in\Lambda_0(S_\lambda(\lambda\,\xi))^\perp$ are such that \label{PDE lemma 2}
	$
	\bar\Delta u=f.
	$
	Then 
			\begin{align*} 
	\sup_{x\in S_{\lambda}(\lambda\,\xi)}|x|\,|\bar\nabla u(x)|\leq c\,\bigg(\int_{S_\lambda(\lambda\,\xi)}|f|\,\mathrm{d}\bar\mu+\sup_{x\in S_{\lambda}(\lambda\,\xi)}\,|x|^2\,|f|\bigg).
	\end{align*} 
\end{lem}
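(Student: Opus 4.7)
Since $u,f\in\Lambda_0(S_\lambda(\lambda\,\xi))^\perp$, I would represent $u$ via the Green's function $G$ of $\bar\Delta$ on the Euclidean round sphere $S_\lambda(\lambda\,\xi)$:
\[
u(x_0)=\int_{S_\lambda(\lambda\,\xi)} G(x_0,y)\,f(y)\,\mathrm{d}\bar\mu(y).
\]
By scaling from the unit sphere, where $G(p,q)=-(2\,\pi)^{-1}\log|p-q|_{\mathbb{R}^3}$ modulo a bounded smooth correction, one obtains the $\lambda,\xi$-uniform estimates
\[
|\bar\nabla_x G(x,y)|\leq \frac{c}{|x-y|}\qquad\text{and}\qquad \int_{S_\lambda(\lambda\,\xi)\cap B_r(x_0)}\frac{\mathrm{d}\bar\mu(y)}{|x_0-y|}\leq c\,\min(r,\lambda).
\]
The second bound follows from parametrizing $S_\lambda(\lambda\,\xi)\cap B_r(x_0)$ as a graph over a disk of radius comparable to $\min(r,\lambda)$ in the tangent plane at $x_0$. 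Differentiating under the integral sign yields
\[
|\bar\nabla u(x_0)|\leq c\int_{S_\lambda(\lambda\,\xi)}\frac{|f(y)|}{|x_0-y|}\,\mathrm{d}\bar\mu(y).
\]

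The crux of the argument is to partition $S_\lambda(\lambda\,\xi)$ into three regions according to the size of $|y|$ relative to the potentially small quantity $|x_0|$:
\[
A=\{|y|\leq |x_0|/2\},\qquad B_1=\{|x_0|/2<|y|<2\,|x_0|\},\qquad B_2=\{|y|\geq 2\,|x_0|\}.
\]
On $A$, the reverse triangle inequality gives $|x_0-y|\geq |x_0|/2$, so the contribution is bounded by $(2/|x_0|)\int_{S_\lambda(\lambda\,\xi)}|f|\,\mathrm{d}\bar\mu$. On $B_2$, the estimate $|x_0-y|\geq |y|/2\geq |x_0|$ gives a contribution of at most $|x_0|^{-1}\int_{S_\lambda(\lambda\,\xi)}|f|\,\mathrm{d}\bar\mu$. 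On the intermediate region $B_1$, the pointwise bound $|f(y)|\leq 4\,\sup_{S_\lambda(\lambda\,\xi)}(|x|^2|f|)/|x_0|^2$, the inclusion $B_1\subset B_{3\,|x_0|}(x_0)$, and the local area estimate above combine to yield a contribution of order $|x_0|^{-1}\,\sup_{S_\lambda(\lambda\,\xi)}(|x|^2|f|)$.

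Summing the three contributions and multiplying through by $|x_0|$ gives the assertion with a universal constant. The main obstacle here is conceptual rather than computational: one must recognize that the weight $|x|$ is exactly what is needed to compensate for the fact that $S_\lambda(\lambda\,\xi)$ may pass arbitrarily close to the origin (indeed, through it when $|\xi|=1$), so that the standard kernel estimate $|\bar\nabla_xG|\leq c/|x-y|$ must be handled in three pieces instead of one. Once the tripartition is in place the individual estimates are elementary.
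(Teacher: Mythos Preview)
Your argument is correct and follows the same overall strategy as the paper: represent $\bar\nabla u$ via the Green's function, use the kernel bound $|\bar\nabla_x G(x,y)|\le c\,|x-y|^{-1}$, and then split the domain so that the $L^1$ norm of $f$ controls the ``far'' contribution while the weighted sup norm $|x|^2|f|$ controls the ``near'' one.

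The one noteworthy difference is the decomposition itself. You partition by the size of $|y|$ into three annular regions $A$, $B_1$, $B_2$, whereas the paper partitions into only two pieces according to the size of $|x_0-y|$, namely $\{2\,|y-x_0|\ge |x_0|\}$ and $\{2\,|y-x_0|\le |x_0|\}$. On the first piece the kernel is bounded by $2/|x_0|$ and the $L^1$ norm suffices; on the second piece one has $|y|\ge |x_0|/2$, so $|f(y)|\le 4\,|x_0|^{-2}\sup|x|^2|f|$, and the remaining integral $\int_{\{2|y-x_0|\le|x_0|\}}|x_0-y|^{-1}\,\mathrm{d}\bar\mu$ is $O(|x_0|)$ exactly as in your local area estimate. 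The paper's bipartition thus absorbs your regions $A$ and $B_2$ into a single ``far'' piece, which is marginally cleaner; your tripartition is equally valid and makes the role of the weight $|x|$ perhaps a bit more transparent.
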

\begin{proof}
			By scaling, we may assume that $\lambda=1$ and
				$$
			\int_{S_1(\xi)}|f|\,\mathrm{d}\bar\mu+\sup_{x\in S_1(\xi)}|x|^2\,|f|=1.
			$$  
\indent Recall from, e.g.,~\cite[\S A.1]{Beltran}, that the Green's function of  $\bar\Delta:\Lambda_0(S_1(0))^\perp\to \Lambda_0(S_1(0))^\perp$ is given by
	$$
	G(x,y)=\frac{1}{2\,\pi}\,\log|x-y|.
	$$
	It follows that
	\begin{align} 
	 (\bar\nabla u)(x)=\,&\int_{S_1(\xi)}(\bar\nabla G)(x,y)\,f(y)\,\mathrm{d}\bar\mu(y) \label{tocompute2}
	\end{align} 
	where differentiation is with respect to $x$. 
	\begin{figure}\centering
		\includegraphics[width=0.4\linewidth]{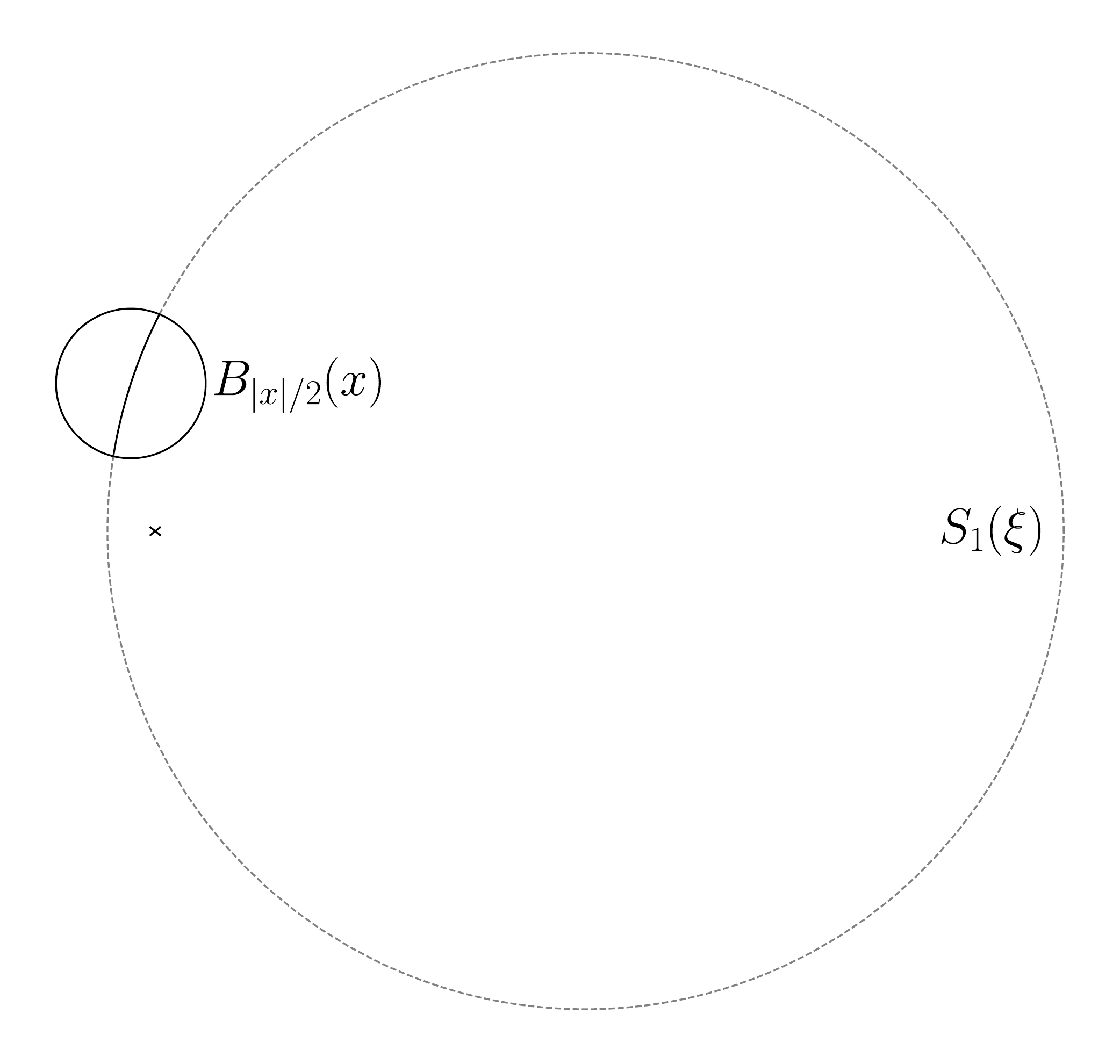}
		\caption{An illustration of the partition \eqref{illustration} for $|\xi|\approx 1$ and $|x|\approx |1-\xi|$. The cross marks the origin of $\mathbb{R}^3$. The gradient of the Green's function $G$ is large within the part of $S_1(\xi)$ illustrated by the black line while $f$ may be large within the part of $S_1(\xi)$ illustrated by the dashed, gray line.  }
		\label{pde lemma figure}
	\end{figure}
Note that 
	$$
	(\bar \nabla G)(x,y)= O(1)\,|x-y|^{-1}
	$$
	for all $x,y\in\mathbb{R}^3$ with $x\neq y$.
	\\ \indent 
	Let $x\in S_1(\xi)$. We may assume that $x\neq 0$.
	 We estimate the integral \eqref{tocompute2} over the regions
	\begin{align} \label{illustration}  
	\{y\in S_1(\xi)\,:\,2\,|y-x|\geq|x|\}\qquad\text{and}\qquad \{y\in S_1(\xi)\,:\,2\,|y-x|\leq|x|\}
	\end{align} 
	separately;	see Figure \ref{pde lemma figure}. 	We have
	\begin{align*}
	\int_{\{y\in S_1(\xi)\,:\,2\,|y-x|\geq|x|\}}\,|x-y|^{-1}\,|f|\,\mathrm{d}\bar\mu(y)
	\leq O(1)\,|x|^{-1}\,\int_{S_1(\xi)}|f|\,\mathrm{d}\bar\mu(y)
	 \leq  O(1)\,|x|^{-1}.
	\end{align*}
	Likewise, 
	\begin{align*}
	\int_{\{y\in S_1(\xi)\,:\,2\,|y-x|\leq|x|\}}\,|x-y|^{-1}\,|f|\,\mathrm{d}\bar\mu(y)
	& \leq \int_{\{y\in S_1(\xi)\,:\,2\,|y-x|\leq|x|\}}\,|x-y|^{-1}\,|y|^{-2}\,\mathrm{d}\bar\mu(y)
	\\&\leq O(1)\,|x|^{-2}\,\int_{\{y\in S_1(\xi)\,:\,2\,|y-x|\leq|x|\}}|x-y|^{-1}\,\mathrm{d}\bar \mu(y)
	\\&\leq O(1)\,|x|^{-1}.
	\end{align*}
\indent 	The assertion follows from these estimates.
\end{proof}
\begin{rema}  Let $\xi\in\mathbb{R}^3$ with $1/2<|\xi|<1$ or $1<|\xi|<3/2$ and $\lambda>0$. \label{green remark}
	By Lemma \ref{integral lemma}, there is a constant $c>0$ with the following properties.
			\begin{align*} 
	&\circ\qquad 	\int_{S_\lambda(\lambda\,\xi)}|f|\,\mathrm{d}\bar\mu\leq c\,\lambda^2\,\sup_{x\in S_{\lambda}(\lambda\,\xi)}\,|f|\\[-2.pt]
	&\circ\qquad \int_{S_\lambda(\lambda\,\xi)}|f|\,\mathrm{d}\bar\mu\leq c\,|\log|1-|\xi|||\,\sup_{x\in S_{\lambda}(\lambda\,\xi)}\,|x|^2\,|f|\\[-2.pt]
	&\circ\qquad 	\int_{S_\lambda(\lambda\,\xi)}|f|\,\mathrm{d}\bar\mu\leq c\,|1-|\xi||^{-1}\,\lambda^{-1}\,\sup_{x\in S_{\lambda}(\lambda\,\xi)}\,|x|^3\,|f|
	\end{align*}
\end{rema}
 \begin{lem}
As $i\to\infty$, there holds
$$
\operatorname{proj}_{\Lambda_0(S_i)}H(\Sigma_i)=\frac{1}{4\,\pi}\,\lambda_i^{-2}\,\int_{S_i}H(\Sigma_i)\,\mathrm{d}\bar\mu=2\,\lambda_i^{-1}+o(\lambda_i^{-1})
$$
and
\begin{align*}
\kappa(\Sigma_i)=o(\lambda_i^{-2}\,\rho_i^{-1}).
\end{align*} 
Moreover, \label{improved H estimate}
\begin{align*}
\operatorname{proj}_{\Lambda_0(S_i)^\perp}H(\Sigma_i)=-4\,\lambda_i^{-1}\,|x|^{-1}+o(\lambda_i^{-1}\,\rho_i^{-1}).
\end{align*}
\end{lem}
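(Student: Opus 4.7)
The plan is to establish the three claims in order, using Lemma \ref{potential function} to recast the area-constrained Willmore equation in a form compatible with Lemma \ref{PDE lemma 2} and Remark \ref{green remark}.

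For the first claim, I would start from Lemma \ref{hcirc int etsimate}, which implies
$$\int_{\Sigma_i}(H-2\,\lambda_i^{-1})^2\,\mathrm{d}\mu=O((\lambda_i^{-1/2}+\rho_i^{-1})^2).$$
Cauchy--Schwarz together with $|\Sigma_i|=4\,\pi\,\lambda_i^2$ then gives
$$\int_{\Sigma_i}|H-2\,\lambda_i^{-1}|\,\mathrm{d}\mu=O(\lambda_i^{1/2})+O(\lambda_i\,\rho_i^{-1})=o(\lambda_i),$$
since $\rho_i\to\infty$. Transferring the integral from $\Sigma_i$ to $S_i$ via $\Phi_i$, using Lemma \ref{basic expansions} to control the change of measure and Lemma \ref{u initial estimate 3} to control $u_i$ and its derivatives, and then dividing by $|S_i|=4\,\pi\,\lambda_i^2$, yields the claimed identity $\operatorname{proj}_{\Lambda_0(S_i)}H(\Sigma_i)=2\,\lambda_i^{-1}+o(\lambda_i^{-1})$.

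For the second and third claims, I would express the area-constrained Willmore equation \eqref{constrained Willmore equation} as an equation of the form $\bar\Delta_{S_i}H=F_i$ on $S_i$, where $F_i$ collects the curvature terms $-(|\hcirc|^2+\operatorname{Ric}(\nu,\nu)+\kappa)\,H$ together with the errors introduced by passing from the intrinsic Laplace--Beltrami operator of $\Sigma_i$ to the Euclidean Laplacian of $S_i$. The reformulation in Lemma \ref{potential function}, obtained by dividing by the Schwarzschild potential $1+|x|^{-1}$, is used precisely to absorb what would otherwise be prohibitive $|x|^{-1}$-type error contributions. To isolate $\kappa$, I would integrate over $S_i$, killing the $\bar\Delta_{S_i}H$ term by the divergence theorem and producing an identity schematically of the form
$$\kappa\int_{S_i}H\,\mathrm{d}\bar\mu=-\int_{S_i}|\hcirc|^2\,H\,\mathrm{d}\bar\mu-\int_{S_i}\operatorname{Ric}(\nu,\nu)\,H\,\mathrm{d}\bar\mu+(\textrm{controlled error}).$$
The first integral on the right is controlled by the pointwise bound $|\hcirc|^2=O((\lambda_i^{-1/2}+\rho_i^{-1})^2|x|^{-2})$ from Proposition \ref{curv est prop} together with Remark \ref{green remark}; the Ricci integral is evaluated by inserting the Schwarzschild expression and using $\bar{\operatorname{div}}(|x|^{-3}x)=0$ as in the proof of Lemma \ref{hcirc int etsimate}, which produces a leading contribution of order $o(\lambda_i^{-1}\rho_i^{-1})$ under the hypothesis \eqref{s 1.5}. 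Dividing by $\int_{S_i}H\,\mathrm{d}\bar\mu=8\,\pi\,\lambda_i(1+o(1))$ from the first claim delivers $\kappa(\Sigma_i)=o(\lambda_i^{-2}\,\rho_i^{-1})$.

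For the third claim, I would project the equation $\bar\Delta_{S_i}H=F_i$ onto $\Lambda_0(S_i)^\perp$. With $\kappa$ now of lower order, the leading non-constant contribution on the right comes from the Schwarzschild Ricci tensor, whose orthogonal projection on $S_i$ reduces, up to small errors, to a multiple of $|x|^{-3}$ contracted with the normal. Inverting $\bar\Delta$ via Lemma \ref{PDE lemma 2} and estimating the source using the three bounds of Remark \ref{green remark} yields $\operatorname{proj}_{\Lambda_0(S_i)^\perp}H(\Sigma_i)=-4\,\lambda_i^{-1}\,|x|^{-1}+o(\lambda_i^{-1}\,\rho_i^{-1})$.

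The main obstacle will be the bookkeeping needed to ensure every error remains $o(\lambda_i^{-1}\rho_i^{-1})$. The graph estimates of Lemma \ref{u initial estimate 3} carry a factor $(\log(\rho_i^{-1}\lambda_i))^{1/2}$, and the hypothesis \eqref{s 1.5} is exactly what is needed to absorb it. Moreover, $|1-|\xi_i||=\lambda_i^{-1}\rho_i$ is small, so Lemma \ref{PDE lemma 2} and Remark \ref{green remark} must be applied carefully, and here the specific form of the equation provided by Lemma \ref{potential function} is essential to avoid spurious losses of a $\log\lambda_i$ factor that would break the conclusion.
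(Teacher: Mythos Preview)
Your treatment of the first claim is fine and essentially matches the paper.

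For the second and third claims, however, you have the logic inverted, and this creates a genuine gap. You propose to integrate the equation first to extract $\kappa(\Sigma_i)=o(\lambda_i^{-2}\rho_i^{-1})$, and only afterwards invert $\bar\Delta$ via Lemma~\ref{PDE lemma 2}. But the integration step needs pointwise control $H=O(\lambda_i^{-1})$ that you do not yet have. Concretely, at this stage Proposition~\ref{curv est prop} only gives $H-2\lambda_i^{-1}=O((\lambda_i^{-1/2}+\rho_i^{-1})|x|^{-1})$, so for instance
\[
\int_{\Sigma_i}|\hcirc|^2\,(H-2\lambda_i^{-1})\,\mathrm{d}\mu
=O\big((\lambda_i^{-1/2}+\rho_i^{-1})^3\big)\int_{\Sigma_i}|x|^{-3}\,\mathrm{d}\mu
=O\big((\lambda_i^{-1/2}+\rho_i^{-1})^3\,\rho_i^{-1}\big),
\]
and this is \emph{not} $o(\lambda_i^{-1}\rho_i^{-1})$ when $\rho_i\ll\lambda_i^{1/3}$ (which is permitted by \eqref{s 1.5}; e.g.\ $\rho_i=(\log\lambda_i)^2$). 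The same obstruction hits the cross term $\int\operatorname{Ric}(\nu,\nu)(H-2\lambda_i^{-1})\,\mathrm{d}\mu$. The paper avoids this by a bootstrap in the opposite order: set $F_i=N^{-1}H$ with $N=(1-|x|^{-1})/(1+|x|^{-1})$ (not $1+|x|^{-1}$), use Lemma~\ref{support function lemma} so that the Schwarzschild Ricci contribution \emph{cancels} in $\Delta F_i$, apply Lemma~\ref{PDE lemma 2} with the \emph{crude} bound \eqref{kappa est} to deduce $\sup|F_i|=O(\lambda_i^{-1})$, and \emph{then} integrate to improve $\kappa$; finally feed the improved $\kappa$ back into the gradient estimate.

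There is a second, related misconception: the non-constant piece $-4\lambda_i^{-1}|x|^{-1}$ does not arise by inverting $\bar\Delta$ on the Schwarzschild Ricci term. With the potential trick, that term is gone from the equation; one shows $\operatorname{proj}_{\Lambda_0(S_i)^\perp}F_i=o(\lambda_i^{-1}\rho_i^{-1})$ and then \emph{reads off} the $-4\lambda_i^{-1}|x|^{-1}$ from the algebraic relation $H=N\,F_i=(1-2|x|^{-1}+o(|x|^{-1}))F_i$ combined with $\operatorname{proj}_{\Lambda_0(S_i)}F_i=2\lambda_i^{-1}+o(\lambda_i^{-1})$.
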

\begin{proof}
	By Proposition \ref{curv est prop} and Remark \ref{Phi remark},
	\begin{align} \label{H initial}  
	H(\Sigma_i)=2\,\lambda_i^{-1}+O((\lambda_i^{-1/2}+\rho_i^{-1})\,|x|^{-1}).
	\end{align} 
Using Lemma \ref{integral lemma}, we have
	\begin{equation*}
	\begin{aligned} 
	\int_{S_i} H(\Sigma_i)\,\mathrm{d}\bar\mu=8\,\pi\,\lambda_i+O(\lambda_i^{-1/2}+\rho_i^{-1})\,\int_{S_i}|x|^{-1}\,\mathrm{d}\bar\mu=8\,\pi\,\lambda_i+O((\lambda_i^{-1/2}+\rho_i^{-1})\,\lambda_i)
	\end{aligned} 
	\end{equation*} 
and 
\begin{align} \label{h L1 est} 
	\int_{S_i}|H(\Sigma_i)|\,\mathrm{d}\bar\mu=O(\lambda_i).
\end{align}
	Consequently,
	\begin{align} \label{proj 0 He stimate}
	\operatorname{proj}_{\Lambda_0(S_i)}H(\Sigma_i)=2\,\lambda_i^{-1}+O((\lambda_i^{-1/2}+\rho_i^{-1})\,\lambda_i^{-1}).
	\end{align}
	\indent 
	We define the function $F_i:\Sigma_i\to\mathbb{R}$ by $F_i=N^{-1}\,H(\Sigma_i)$ where 
	$$
	N:\mathbb{R}^3\setminus\{0\}\to\mathbb{R} \qquad\text{is given by}\qquad N(x)=(1+|x|^{-1})^{-1}\,(1-|x|^{-1})
	$$
	is the potential function of Schwarzschild; see \eqref{potential function}.
	By  Remark \ref{Phi remark},
	\begin{equation} \label{derivative comparison}
	\begin{aligned}
	F_i=\,&(1+2\,|x|^{-1}+o(|x|^{-1}))\,H(\Sigma_i).
	\end{aligned}
	\end{equation}
It follows that 
$$
\int_{S_i}F_i\,\mathrm{d}\bar\mu=\int_{S_i} H(\Sigma_i)\,\mathrm{d}\bar\mu+o(1)\,\int_{S_i}|H(\Sigma_i)|\,\mathrm{d}\bar\mu=\int_{S_i}H(\Sigma_i)\,\mathrm{d}\bar\mu+o(\lambda_i).
$$
In conjunction with \eqref{proj 0 He stimate}, we obtain
\begin{align} \label{proj F_i} 
\operatorname{proj}_{\Lambda_0(S_i)}F_i=2\,\lambda_i^{-1}+o(\lambda_i^{-1}).
\end{align} 
By Lemma \ref{support function lemma}, Lemma \ref{ambient expansions}, Lemma \ref{basic expansions}, and  Proposition \ref{curv est prop},  we have
\begin{equation} \label{- one} 
	\begin{aligned} 
		\Delta_{\Sigma_i} F_i=\,&-(|\hcirc(\Sigma_i)|^2+\kappa(\Sigma_i))F_i\\&\qquad+O(|x|^{-4}+\lambda_i^{-1}\,|x|^{-2}+(\lambda_i^{-1/2}+\rho_i^{-1})\,|x|^{-3})\,|F_i|
		 +O(|x|^{-3})\,|x|\,|\nabla F_i|.
	\end{aligned}
\end{equation} 
Using Lemma \ref{basic expansions}, Proposition \ref{curv est prop}, and Remark \ref{Phi remark}, we have
$$
\bar \Delta_{\Sigma_i}F_i=(1+O(|x|^{-1}))\,\Delta_{\Sigma_i}F_i+O(|x|^{-3})\,(|x|\,|\bar\nabla F_i|+|x|^2\,|\bar\nabla ^2F_i|).
$$
In conjunction with \eqref{- one} and Proposition \ref{curv est prop}, we conclude that
\begin{equation} \label{zero} 
\begin{aligned} 
\bar\Delta_{\Sigma_i}F_i=\,&-(|\hcirc(\Sigma_i)|^2+\kappa(\Sigma_i))\,F_i
\\&\qquad +O(\lambda_i^{-1}\,|x|^{-2}+(\lambda_i^{-1/2}+\rho_i^{-1})\,|x|^{-3})\,|F_i|
 +O(|x|^{-3})\,(|x|\,|\bar\nabla F_i|+|x|^2\,|\bar\nabla^2 F_i|).
\end{aligned} 
\end{equation}
Using Lemma \ref{second derivative estimate},  Lemma \ref{u initial estimate 3}, and Lemma \ref{diameter in terms of area radius}, we have
\begin{equation*}
\begin{aligned} 
\bar \Delta_{\Sigma_i}F_i=(1+o(1))\,	\bar \Delta_{S_i}F_i
   +O(\log(\rho_i^{-1}\,\lambda_i)\,(\lambda_i^{-1/2}+\rho_i^{-1})^2\,|x|^{-2})\,(|x|\,|\bar \nabla F_i|+|x|^2\,|\bar \nabla^2 F_i|).
\end{aligned} 
\end{equation*}	
In conjunction with \eqref{zero}, \eqref{slow divergence 1.5}, and \eqref{s 1.5}, we  obtain
\begin{equation}
\label{second}  
\begin{aligned} 
&\bar\Delta_{S_i}F_i=O(|\hcirc(\Sigma_i)|^2+|\kappa(\Sigma_i)|)\,|F_i|
\\&\qquad\qquad \qquad +O(\lambda_i^{-1}\,|x|^{-2}+(\lambda_i^{-1/2}+\rho_i^{-1})\,|x|^{-3})\,|F_i|
\\&\qquad\qquad\qquad  +O((\lambda_i^{-1/2}+\rho_i^{-1})\,|x|^{-2})\,(|x|\,|\bar \nabla F_i|+|x|^2\,|\bar \nabla^2 F_i|).
\end{aligned} 
\end{equation}
\indent
According to Proposition \ref{curv est prop} and Remark \ref{Phi remark}, 
\begin{align*} 
\sup_{x\in S_i}|x|^2\,(|\hcirc(\Sigma_i)|^2+|\kappa(\Sigma_i)|+\lambda_i^{-1}\,|x|^{-2}+(\lambda_i^{{-1/2}}+\rho_i^{-1})\,|x|^{-3}) =O((\lambda_i^{-1/2}+\rho_i^{-1})^2+\lambda_i^2\,|\kappa(\Sigma_i)|).
\end{align*} 
Using Lemma \ref{basic expansions}, Remark \ref{Phi remark}, and Lemma \ref{u initial estimate}, we have
\begin{align} \label{area element} 
\mathrm{d}\mu(\Sigma_i)=(1+o(1))\,\mathrm{d}\bar\mu(\Sigma_i)=(1+o(1))\,\mathrm{d}\bar\mu(S_i).
\end{align} 
In conjunction with Lemma \ref{hcirc int etsimate} and Lemma \ref{integral lemma}, we see that 
\begin{align*} 
&\int_{S_i}(|\hcirc(\Sigma_i)|^2+|\kappa(\Sigma_i)|+\lambda_i^{-1}\,|x|^{-2}+(\lambda_i^{{-1/2}}+\rho_i^{-1})\,|x|^{-3})\,\mathrm{d}\bar\mu\\&\qquad =O((\lambda_i^{-1/2}+\rho_i^{-1})^2+\lambda_i^2\,|\kappa(\Sigma_i)|+\log(\rho_i^{-1}\,\lambda_i)\,\lambda_i^{-1}).
\end{align*} 
Likewise, 
$$
\int_{S_i}(\lambda_i^{-1/2}+\rho_i^{-1})\,|x|^{-2}\,\mathrm{d}\bar\mu=O(\log(\rho_i^{-1}\,\lambda_i)\,(\lambda_i^{-1/2}+\rho_i^{-1})).
$$
 Using \eqref{second} and  Lemma \ref{PDE lemma 2},  we conclude that
 \begin{equation} \label{optimal}
 \begin{aligned} 
 \sup_{x\in S_i}|x|\,|\bar\nabla F_i|&=O((\lambda_i^{-1/2}+\rho_i^{-1})^2+\lambda_i^2\,|\kappa(\Sigma_i)|+\log(\rho_i^{-1}\,\lambda_i)\,\lambda_i^{-1})\,\sup_{x\in S_i} |F_i|
 \\&\qquad +O(\log(\rho_i^{-1}\,\lambda_i)\,(\lambda_i^{-1/2}+\rho_i^{-1}))\,(\operatorname{sup}_{x\in S_i}|x|\,|\bar\nabla F_i|+\operatorname{sup}_{x\in S_i}|x|^2\,|\bar\nabla^2 F_i|).
 \end{aligned} 
 \end{equation}
By standard elliptic theory, 
\begin{equation}\label{higher order} 
 \begin{aligned} 
 \sup_{x\in S_i}|x|^2\,|\bar\nabla ^2 F_i|=O(1)\,\sup_{x\in S_i}|x|\,|\bar\nabla F_i|+O((\lambda_i^{-1/2}+\rho_i^{-1})^2+\lambda_i^2\,|\kappa(\Sigma_i)|)\,\sup_{x\in S_i}|F_i|;
 \end{aligned}
\end{equation} 
 see Remark \ref{elliptic estimate}.
 Using \eqref{slow divergence 1.5} and \eqref{s 1.5} and absorbing, we conclude that 
  \begin{equation} \label{three} 
 \begin{aligned} 
 \sup_{x\in S_i}|x|\,|\bar\nabla F_i|+\sup_{x\in S_i}|x|^2\,|\bar\nabla F_i|^2
 =O((\lambda_i^{-1/2}+\rho_i^{-1})^2+\lambda_i^2\,|\kappa(\Sigma_i)|+\log(\rho_i^{-1}\,\lambda_i)\,\lambda_i^{-1})\,\sup_{x\in S_i} |F_i|.
 \end{aligned} 
 \end{equation}
 Note that there is $z\in S_i$ with $(\operatorname{proj}_{\Lambda_0(S_i)^\perp}F_i)(z)=0$. Integrating, we find
 $$
 \sup_{x\in S_i}|\operatorname{proj}_{\Lambda_0(S_i)^\perp}F_i|=O(\log(\rho_i^{-1}\,\lambda_i))\,\sup_{x\in S_i}|x|\,|\bar\nabla F_i|.
 $$
 Using \eqref{kappa est}, \eqref{slow divergence 1.5}, and \eqref{s 1.5}, we have
 $$
 (\lambda_i^{-1/2}\,+\rho_i^{-1})^2+\lambda_i^2\,|\kappa(\Sigma_i)|+\log(\rho_i^{-1}\,\lambda_i)\,\lambda_i^{-1}=o((\log(\rho_i^{-1}\,\lambda_i))^{-1}).
 $$
 Returning to \eqref{three}  and absorbing,  we obtain
 \begin{equation}  \label{four}
  \begin{aligned} 
 &(\log(\rho_i^{-1}\,\lambda_i))^{-1}\,\sup_{x\in S_i}|\operatorname{proj}_{\Lambda_0(S_i)^\perp}F_i|+\sup_{x\in S_i}|x|\,|\bar\nabla F_i|+\sup_{x\in S_i}|x|^2\,|\bar\nabla F_i|^2
 \\&\qquad =O((\lambda_i^{-1/2}\,+\rho_i^{-1})^2+\lambda_i^2\,|\kappa(\Sigma_i)|+\log(\rho_i^{-1}\,\lambda_i)\,\lambda_i^{-1})\,|\operatorname{proj}_{\Lambda_0(S_i)}F_i|.
 \\&\qquad =O((\lambda_i^{-1/2}\,+\rho_i^{-1})^2+\lambda_i^2\,|\kappa(\Sigma_i)|+\log(\rho_i^{-1}\,\lambda_i)\,\lambda_i^{-1})\,\lambda_i^{-1}.
 \end{aligned} 
 \end{equation}
 We have used \eqref{proj F_i} in the last equation.
 In particular, using \eqref{kappa est},
 \begin{align} \label{five}
 F_i=\operatorname{proj}_{\Lambda_0(S_i)}F_i+\operatorname{proj}_{\Lambda_0(S_i)^\perp}F_i=O(\lambda_i^{-1}).
 \end{align} 
 Using \eqref{zero}, \eqref{four}, \eqref{five}, \eqref{slow divergence 1.5}, and \eqref{s 1.5}, we have
 \begin{align*} 
 \bar\Delta_{\Sigma_i}F_i&=-\kappa(\Sigma_i)\,F_i+O(\lambda_i\,|x|^{-3}\,|\kappa(\Sigma_i)|)+O(\lambda_i^{-1}\,|\hcirc(\Sigma_i)|^2)\\&\qquad+O(\lambda_i^{-2}\,|x|^{-2})+O((\lambda_i^{-1/2}+\rho_i^{-1})\,\lambda_i^{-1}\,|x|^{-3}).
 \end{align*} 
Integrating and using \eqref{proj F_i}, \eqref{area element},  Lemma \ref{integral lemma}, and Lemma \ref{hcirc int etsimate}, we obtain
\begin{align} \label{kappa} 
\kappa(\Sigma_i)=O(\lambda_i^{-2}\,\rho_i^{-2})+O(\log(\rho_i^{-1}\,\lambda_i)\,\lambda_i^{-3}).
\end{align}
Returning to \eqref{four}, we conclude that
\begin{align} \label{proj 0} 
\operatorname{proj}_{\Lambda_0(S_i)^\perp }F_i=o(\lambda_i^{-1}\,\rho_i^{-1}).
\end{align} 
Moreover, by \eqref{kappa}, $\kappa(\Sigma_i)=o(\lambda_i^{-2}\,\rho_i^{-1})$. 
We have used \eqref{slow divergence 1.5} and \eqref{s 1.5} in both of these estimates.\\
\indent
 By \eqref{derivative comparison}, \eqref{five}, and \eqref{proj 0}, we have
\begin{align*} 
\operatorname{proj}_{\Lambda_0(S_i)^{\perp}}H(\Sigma_i)=\,&\operatorname{proj}_{\Lambda_0(S_i)^{\perp}}F_i+\operatorname{proj}_{\Lambda_0(S_i)^{\perp}}O(\rho_i^{-1})\,H(\Sigma_i)\\=\,&o(\lambda_i^{-1}\,\rho_i^{-1})+O(\rho_i^{-1})\,\sup_{x\in S_i}|H(\Sigma_i)|
\\=\,&o(\lambda_i^{-1}\,\rho_i^{-1})+O(\rho_i^{-1})\,\sup_{x\in S_i}|F_i|
\\=\,&o(\lambda_i^{-1}).
\end{align*} 
In conjunction with \eqref{proj 0 He stimate} and \eqref{derivative comparison}, we obtain
$$
F_i=H(\Sigma_i)+4\,|x|^{-1}\,\lambda_i^{-1}+o(\rho_i^{-1}\,\lambda_i^{-1}).
$$
By Lemma \ref{integral lemma},
$$
\operatorname{proj}_{\Lambda_0(S_i)}|x|^{-1}=O(\lambda_i^{-1})=o(\rho_i^{-1}).
$$
Using \eqref{proj 0}, we conclude that
$$
\operatorname{proj}_{\Lambda_0(S_i)^\perp}H(\Sigma_i)=-4\,|x|^{-1}\,\lambda_i^{-1}+o(\lambda_i^{-1}\,\rho_i^{-1}).
$$

\indent The assertion follows.
\end{proof}
\begin{rema} \label{elliptic estimate}
	We provide additional details on how to obtain \eqref{higher order}. \\ \indent  Let $z_i\in S_i$ and $a_i\in T_{z_i}S_i$ with $|a_i|=1$. The estimates below are independent of these choices. \\	\indent Let $X_i=a_i^\top$. By interior $L^4$-estimates as  in \cite[Theorem 9.11]{GilbargTrudinger} and the Sobolev embedding theorem, using also Lemma \ref{basic expansions}, Lemma \ref{second derivative estimate}, Proposition \ref{curv est prop}, and Lemma \ref{u initial estimate 3}, we have
	\begin{equation*}
	\begin{aligned} 
|z_i|^2\,|a_i\lrcorner(\bar\nabla^2 F_i)(z_i)| =O(1)\,\sup_{x\in S_i}|x|\,|\bar\nabla F_i|+O(|z_i|^{5/2})\,	\bigg(\int_{S_i\cap B_{|z_i|/2}(z_i)}(\Delta\nabla_{X_i}F_i)^4\,\mathrm{d}\bar\mu\bigg)^{1/4}.
	\end{aligned}
	\end{equation*}
	Note that
	$$
		\Delta \nabla_{X_i} F_i=\nabla _{X_i}\Delta F_i+K^{\Sigma}\,g({X_i},\nabla F_i)+2\,g(\nabla {X_i},\nabla^2 F_i)+g(\operatorname{tr}\nabla^2{X_i},\nabla F_i)
		$$
		where $K^\Sigma$ is the Gauss curvature of $\Sigma$. 
	Using this, Lemma \ref{support function lemma}, \eqref{asymptotic to Schwarzschild 1.5}, \eqref{kappa est}, and Corollary \ref{higher order estimates},  we obtain
	$$
\Delta \nabla_{X_i} F=	O((\lambda_i^{-1/2}+\rho_i^{-1})^2\,|x|^{-3})\,|F_i|+O(|x|^{-3})\,(|x|\,|\bar\nabla F_i|+|x|^2\,|\bar\nabla^2 F_i|).
	$$
	Consequently,
	\begin{equation*}
	\begin{aligned}
	|z_i|^{5/2}\,	\bigg(\int_{S_i\cap B_{|z_i|/2}(z_i)}(\Delta\nabla_{X_i}F_i)^4\,\mathrm{d}\bar\mu\bigg)^{1/4}
	&=O((\lambda_i^{-1/2}+\rho_i^{-1})^2)\,\sup_{x\in S_i}|F_i|+O(1)\,\sup_{x\in S_i}|x|\,|\bar\nabla F_i|
	\\& \qquad +O(|z_i|^{3/2})\,\bigg(\int_{S_i\cap B_{|z_i|/2}(z_i)}|\bar\nabla ^2 F_i|^4\,\mathrm{d}\bar\mu\bigg)^{1/4}.
	\end{aligned} 
	\end{equation*}  
Finally, by \eqref{- one} and Proposition \ref{curv est prop},
$$
\Delta F_i=O(((\lambda_i^{-1/2}+\rho_i^{-1})^2+\lambda_i^2\,|\kappa(\Sigma_i)|)\,|x|^{-2})\,|F_i|+O(|x|^{-3})\,|x|\,|\bar\nabla F_i|.
$$
By interior $L^4$-estimates, we conclude that
\begin{equation*}
\begin{aligned}
&|z_i|^{3/2}\,\bigg(\int_{S_i\cap B_{|z_i|/2}(z_i)}|\bar\nabla ^2 F_i|^4\,\mathrm{d}\bar\mu\bigg)^{1/4}
\\&\qquad=O(1)\,\sup_{x\in S_i}|x|\,|\bar\nabla F_i|+O((\lambda_i^{-1/2}+\rho_i^{-1})^2+\lambda_i^2\,|\kappa(\Sigma_i)|)\,\sup_{x\in S_i}|F_i|.
\end{aligned}
\end{equation*}
\end{rema}
\section{Variations of the Willmore energy by translations}
We assume that $g$ is a Riemannian metric on $\mathbb{R}^3$ such that, as $x\to\infty$,
\begin{align} \label{asymptotic to schwarzschild 2}
g=\left(1+|x|^{-1}\right)^4\,\bar g+\sigma\qquad\text{where}\qquad\partial_J\sigma=O(|x|^{-2-|J|})
\end{align} 
for every multi-index $J$ with $|J|\leq 4$. Let $\{\Sigma_i\}_{i=1}^\infty$ be a sequence of area-constrained Willmore spheres $\Sigma_i\subset \mathbb{R}^3$ with 
\begin{align} \label{slow divergence 2}  
m_H(\Sigma_i)\geq0, \qquad \quad
\lim_{i\to\infty}\rho(\Sigma_i)=\infty,\qquad\quad 
\rho(\Sigma_i)=o(\lambda(\Sigma_i))
\end{align} 
and assume that, as $i\to\infty$,
\begin{align} \label{s 2} 
\log\lambda(\Sigma_i)=o(\rho(\Sigma_i)).
\end{align} 
\indent As before, we abbreviate $\lambda_i=\lambda(\Sigma_i)$ and $\rho_i=\rho(\Sigma_i)$. Recall from Lemma \ref{u initial estimate} that, for all $i$ large, $\Sigma_i=\Sigma_{\xi_i,\lambda_i}(u_i)$ is the Euclidean graph of a function $u_i$ over the sphere $S_i=S_{\lambda_i}(\lambda_i\,\xi_i)$ where $\xi_i\in\mathbb{R}^3$ satisfies
\begin{align} \label{sphere rho vs lambda 2} 
|1-|\xi_i||=\lambda_i^{-1}\,\rho_i.
\end{align} 
 \indent
In this section, we compute an expansion of the variation of the Willmore energy of $\Sigma_i$ with respect to a translation in direction $\xi_i$.  \\ \indent  
 For the statement of the next lemma, we define the form
$$
\zeta_i:\Gamma(T\Sigma_i)\times \Gamma(T\Sigma_i)\to C^\infty(\mathbb{R})\qquad\text{given by}\qquad \zeta_i(X,Y)=g(D_X\xi_i,Y).
$$

\begin{lem} \label{radial 1}
There holds
\begin{align*}
&\int_{\Sigma_i} g(\xi_i,\nu)\,\big[\Delta H+|\hcirc|^2\,H+\operatorname{Ric}(\nu,\nu)\,H\big]\mathrm{d}\mu\
\\&\qquad= \int_{\Sigma_i} [ g(\operatorname{tr}_{\Sigma_i}D^2 \xi_i,\nu)+\operatorname{Ric}(\xi_i,\nu)\big]\,H\,\mathrm{d}\mu
\\&\qquad\qquad+\frac12\, \int_{\Sigma_i} \left[\operatorname{div}_{\Sigma_i} \xi_i -2\,g(D_\nu \xi_i,\nu)\right]H^2\,\mathrm{d}\mu
+2\, \int_{\Sigma_i} g(\zeta_i, \hcirc)\,H\,\mathrm{d}\mu.
\end{align*}
\end{lem}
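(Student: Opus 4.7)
The plan is to reduce the identity to a pointwise formula for the intrinsic Laplacian of $\phi := g(\xi_i,\nu)$ on $\Sigma_i$, followed by one integration by parts. Since $\Sigma_i$ is closed, Stokes' theorem yields
$$\int_{\Sigma_i} g(\xi_i,\nu)\,\Delta H\,\mathrm{d}\mu = \int_{\Sigma_i} H\,\Delta^{\Sigma_i}\phi\,\mathrm{d}\mu,$$
so the question reduces to expressing $\Delta^{\Sigma_i}\phi$ in terms of the quantities appearing on the right-hand side of the identity.

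To compute $\Delta^{\Sigma_i}\phi$, I fix $p\in\Sigma_i$ and choose an orthonormal tangent frame $\{e_1,e_2\}$ that is parallel at $p$ with respect to the induced connection. Using the Weingarten relation $D_X\nu = h(X,\cdot)^{\sharp}$, the first intrinsic derivative reads
$$\nabla^{\Sigma_i}_X\phi = g(D_X\xi_i,\nu) + h(X,\xi_i^\top).$$
Differentiating once more and tracing combines three standard ingredients: the definition of $D^2\xi_i$, which contributes the $g(\operatorname{tr}_{\Sigma_i}D^2\xi_i,\nu)$ term; the Gauss formula $D_{e_i}e_j = \nabla^{\Sigma_i}_{e_i}e_j - h_{ij}\nu$, which converts iterated ambient derivatives of $\xi_i$ into intrinsic ones and produces the $-H\,g(D_\nu\xi_i,\nu)$, $-|h|^2\phi$, and cross $2\,g(h,\zeta_i)$ contributions; and the traced Codazzi equation $(\operatorname{div}^{\Sigma_i}h)_j = \nabla^{\Sigma_i}_j H + \operatorname{Ric}(e_j,\nu)$, which on contraction with $\xi_i^\top$ yields $g(\nabla^{\Sigma_i}H,\xi_i^\top) + \operatorname{Ric}(\xi_i^\top,\nu)$. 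Writing $\operatorname{Ric}(\xi_i^\top,\nu) = \operatorname{Ric}(\xi_i,\nu) - \phi\,\operatorname{Ric}(\nu,\nu)$ absorbs the $\operatorname{Ric}(\nu,\nu)\phi$ term already present on the left-hand side; the decomposition $h = \hcirc + \tfrac12 H\,g|_{\Sigma_i}$ gives $2\,g(h,\zeta_i) = 2\,g(\hcirc,\zeta_i) + H\,\operatorname{div}_{\Sigma_i}\xi_i$; and $|h|^2 - |\hcirc|^2 = \tfrac12 H^2$ turns the $-|h|^2\phi + |\hcirc|^2\phi$ combination into $-\tfrac12 H^2\phi$. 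Collecting terms yields the pointwise identity
\begin{align*}
\Delta^{\Sigma_i}\phi + (|\hcirc|^2 + \operatorname{Ric}(\nu,\nu))\,\phi
&= g(\operatorname{tr}_{\Sigma_i}D^2\xi_i,\nu) + \operatorname{Ric}(\xi_i,\nu) - H\,g(D_\nu\xi_i,\nu) + 2\,g(\hcirc,\zeta_i) \\
&\quad + H\,\operatorname{div}_{\Sigma_i}\xi_i + g(\nabla^{\Sigma_i}H,\xi_i^\top) - \tfrac12 H^2\,\phi.
\end{align*}

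For the final step I multiply by $H$, integrate over $\Sigma_i$, and integrate the remaining gradient term by parts:
$$\int_{\Sigma_i} H\,g(\nabla^{\Sigma_i} H,\xi_i^\top)\,\mathrm{d}\mu = -\tfrac12 \int_{\Sigma_i} H^2\,\operatorname{div}_{\Sigma_i}\xi_i^\top\,\mathrm{d}\mu.$$
Using $\operatorname{div}_{\Sigma_i}\xi_i^\top = \operatorname{div}_{\Sigma_i}\xi_i - H\,\phi$, the $\tfrac12\int H^3\phi\,\mathrm{d}\mu$ contribution that emerges cancels the $-\tfrac12\int H^3\phi\,\mathrm{d}\mu$ integral coming from the pointwise identity, the two $H^2\,\operatorname{div}_{\Sigma_i}\xi_i$ contributions combine to $\tfrac12\int H^2\,\operatorname{div}_{\Sigma_i}\xi_i\,\mathrm{d}\mu$, and together with the $-\int H^2 g(D_\nu\xi_i,\nu)\,\mathrm{d}\mu$ term this reassembles into the factor $\tfrac12 H^2\,[\operatorname{div}_{\Sigma_i}\xi_i - 2\,g(D_\nu\xi_i,\nu)]$ in the statement. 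The main obstacle is the careful bookkeeping of signs and curvature corrections in the pointwise Laplacian computation, and in particular ensuring that the Codazzi error enters with the sign that makes the $\operatorname{Ric}(\nu,\nu)\phi$ absorption produce $+\operatorname{Ric}(\xi_i,\nu)$ rather than its negative; once the pointwise identity is established, the remaining assembly is routine.
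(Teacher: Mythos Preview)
Your proof is correct and follows essentially the same route as the paper: both compute the intrinsic Laplacian of $g(\xi_i,\nu)$ via the Gauss--Weingarten relations and the traced Codazzi equation, integrate by parts to move $\Delta$ onto $H$, and then handle the residual term $\int_{\Sigma_i} H\,g(\nabla H,\xi_i^\top)\,\mathrm{d}\mu$ by a second integration by parts together with $\operatorname{div}_{\Sigma_i}\xi_i^\top=\operatorname{div}_{\Sigma_i}\xi_i-H\,g(\xi_i,\nu)$. The only cosmetic difference is that the paper records the raw formula for $\Delta(g(\xi_i,\nu))$ first and applies Codazzi and the decomposition $h=\hcirc+\tfrac12 H\,g$ after integrating, whereas you fold those substitutions into the pointwise identity before integrating; the algebra and the final bookkeeping of the $H^2$ and $H^3$ terms are identical.
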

\begin{proof}
	Note that 
	\begin{align*} 
	\Delta (g(\xi_i,\nu))=g(\operatorname{tr}_{\Sigma_i}D^2\xi_i,\nu)-g(D_\nu\xi_i,\nu)\,H+2\,g(\zeta_i,h) +(\operatorname{div}_{\Sigma_i}h)(\xi_i^\top)-g(\xi_i,\nu)\,|h|^2.
	\end{align*} 
Integrating by parts
and using the trace of the Gauss-Codazzi equation,
$$
\operatorname{div}_{\Sigma_i}h=\nabla H+\nu\lrcorner  \operatorname{Rc},
$$
we obtain
\begin{align*} 
\int_{\Sigma_i}g(\xi_i,\nu)\, \Delta H\,\mathrm{d}\mu&=\int_{\Sigma_i}\big[ g(\operatorname{tr}_{\Sigma_i}D^2 \xi_i,\nu)\,H-g(D_\nu \xi_i,\nu)\,H^2+2\,g(\zeta_i, h)\,H
\\&\qquad\qquad+H\,g\,(\xi_i,\nabla H) + \operatorname{Ric}(\xi_i^\top,\nu)\,H-g(\xi_i,\nu)\,|h|^2\,H\big]\,\mathrm{d}\mu.
\end{align*} 
Note that
\begin{align*} 
\int_{\Sigma_i} H\,g(\xi_i,\nabla H)\,\mathrm{d}\mu=\frac12\,\int_{\Sigma_i} g(\xi_i,\nabla H^2)\,\mathrm{d}\mu=\frac12\,\int_{\Sigma_i}[g(\xi_i,\nu)\,H^3-(\operatorname{div}_{\Sigma_i}\xi_i)\,H^2]\,\mathrm{d}\mu
\end{align*} 
where we have integrated by parts in the second equality. Using the decomposition
$$
h=\frac12\,H\,g|_{\Sigma_i}+\hcirc,
$$ 
we see that
$$
2\, g(\zeta_i,h)=(\operatorname{div}_{\Sigma_i}\xi_i)\,H+2\, g(\zeta_i,\hcirc)
$$
and
$$
2\,g(\xi_i,\nu)\,|h|^2=g(\xi_i,\nu)\,H^2+2\,g(\xi_i,\nu)\,|\hcirc|^2.
$$
Using that $\xi_i=\xi_i^\top+\xi_i^\perp$, we obtain 
$$
\operatorname{Ric}(\xi_i^\top,\nu)=\operatorname{Ric}(\xi_i,\nu)-g(\xi_i,\nu)\,\operatorname{Ric}(\nu,\nu).
$$\indent 
The assertion follows from these identities.

\end{proof}
For the proof of Lemma \ref{radial 4} below, let $e_1,\,e_2,\,e_3$ be the standard basis of $\mathbb{R}^3$.
\begin{lem}
	As $i\to\infty$, there holds \label{radial 4}
	\begin{equation*}
	\begin{aligned}
	\int_{\Sigma_i} \big[ g(\operatorname{tr}_{\Sigma_i}D^2 \xi_i,\nu)+\operatorname{Ric}(\xi_i,\nu)\big]\,H\,\mathrm{d}\mu
	=-8\,\pi\,\lambda_i^{-1}\,\rho_i^{-2}+	\lambda_i^{-1}\,\int_{S_{_i}} \bar g(\xi_i,\bar\nu)\,R\,\mathrm{d}\bar \mu+o(\lambda_i^{-1}\,\rho_i^{-2}).
	\end{aligned}
	\end{equation*}
\end{lem}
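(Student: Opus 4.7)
The plan is to substitute the improved mean curvature estimate from Lemma \ref{improved H estimate}, transfer the integration from $\Sigma_i$ to the coordinate sphere $S_i$, and then evaluate the resulting integrals using the asymptotic structure of $g$ near infinity.

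First, I would write $H = 2\,\lambda_i^{-1} - 4\,\lambda_i^{-1}\,|x|^{-1} + o(\lambda_i^{-1}\,\rho_i^{-1})$. By the asymptotic expansion \eqref{asymptotic to schwarzschild 2} together with Lemma \ref{basic expansions} and the standard formula for the Ricci tensor of a conformally flat metric, one has $g(\operatorname{tr}_{\Sigma_i}D^2\xi_i, \nu) + \operatorname{Ric}(\xi_i, \nu) = O(|x|^{-3})$. Since $\int_{\Sigma_i} |x|^{-3}\,\mathrm{d}\mu = O(\rho_i^{-1})$ by Remark \ref{green remark}, the $o(\lambda_i^{-1}\,\rho_i^{-1})$ remainder in $H$ contributes only $o(\lambda_i^{-1}\,\rho_i^{-2})$ after integration. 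Using the graph representation $\Sigma_i = \Sigma_{\xi_i,\lambda_i}(u_i)$ together with the $C^2$-bounds from Lemma \ref{u initial estimate 3}, I would then transfer the remaining integrals onto $S_i$. The assumption $\log\lambda_i = o(\rho_i)$ is precisely what is needed to absorb the resulting displacement errors into $o(\lambda_i^{-1}\,\rho_i^{-2})$.

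Next, decompose $g = \tilde g + \sigma$ with $\tilde g = (1+|x|^{-1})^4\,\bar g$ and split $\operatorname{Ric}(\xi_i, \nu) = \tfrac12\,R\,g(\xi_i, \nu) + G(\xi_i, \nu)$, where $G$ denotes the Einstein tensor. The first piece of this splitting, paired with the leading term $2\,\lambda_i^{-1}$ of $H$ and with $g$, $\mathrm{d}\mu$, $\nu$ replaced by their Euclidean counterparts on $S_i$ at leading order, produces exactly the claimed term $\lambda_i^{-1}\int_{S_i}\bar g(\xi_i, \bar\nu)\,R\,\mathrm{d}\bar\mu$; the replacement errors are of order $|x|^{-1}$ against an $O(|x|^{-3})$-integrand and are therefore negligible by Remark \ref{green remark}. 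It remains to show that $\int_{\Sigma_i}[g(\operatorname{tr}_{\Sigma_i}D^2\xi_i, \nu) + G(\xi_i, \nu)]\,H\,\mathrm{d}\mu$ equals $-8\,\pi\,\lambda_i^{-1}\,\rho_i^{-2} + o(\lambda_i^{-1}\,\rho_i^{-2})$.

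For this Schwarzschild contribution, use $R(\tilde g) = 0$, which gives $G(\tilde g) = \operatorname{Ric}(\tilde g)$. The Christoffel symbols of $\tilde g$ are $\tilde\Gamma^k_{ij} = -2\,\phi^{-1}\,|x|^{-3}(x_i\,\delta^k_j + x_j\,\delta^k_i - x^k\,\delta_{ij})$ with $\phi = 1+|x|^{-1}$, and the Ricci tensor of $\tilde g$ has the explicit leading asymptotics $\operatorname{Ric}(\tilde g)_{ij} = 2\,|x|^{-3}\,\bar g_{ij} - 6\,|x|^{-5}\,x_i\,x_j + O(|x|^{-4})$. Combining these with $\tilde \nu = \bar\nu + O(|x|^{-1})$ on $S_i$, I would pair the resulting expression with $H = 2\,\lambda_i^{-1} - 4\,\lambda_i^{-1}\,|x|^{-1}$ and apply Stokes' theorem on $S_i$; the key is to reduce the integrand modulo a surface divergence to an $|x|^{-4}$-multiple of $\bar g(\xi_i, \bar\nu)$, whose integral on $S_i$ evaluates to $-\pi\,\rho_i^{-2}(1+o(1))$ by localization to the tangent plane at the closest point of $S_i$ to the origin. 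The two pieces $2\,\lambda_i^{-1}\int [\,\cdot\,]\,\mathrm{d}\mu$ and $-4\,\lambda_i^{-1}\int[\,\cdot\,]\,|x|^{-1}\,\mathrm{d}\mu$ then combine to produce the coefficient $-8\,\pi$, which encodes the mass $m=2$ of the Schwarzschild background.

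The main obstacle is precisely the bookkeeping in this last step. The quantities $\operatorname{Ric}(\tilde g)(\xi_i, \tilde\nu)$ and $\tilde g(\operatorname{tr}_{S_i}\tilde D^2\xi_i, \tilde\nu)$ are each individually of order $|x|^{-3}$, generically integrating to $O(\rho_i^{-1})$, so extracting a clean $O(\rho_i^{-2})$-output requires that the leading $|x|^{-3}$-parts cancel as a surface divergence. I expect this cancellation to follow structurally from the contracted Bianchi identity $\operatorname{div} G = 0$ applied to the approximate Killing field $\xi_i$, together with the explicit form of $\mathcal{L}_{\xi_i}\tilde g = -4\,\phi^3\,|x|^{-3}\,\bar g(\xi_i, x)\,\bar g$; pinning down this reduction and verifying that the surviving boundary contributions assemble correctly into the coefficient $-8\,\pi$ is the principal technical step.
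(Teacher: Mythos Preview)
Your overall strategy---insert the improved $H$, transfer to $S_i$, separate the Schwarzschild and $\sigma$-contributions---matches the paper. The Einstein-tensor splitting $\operatorname{Ric}=\tfrac12 Rg+G$ is a legitimate alternative to the paper's direct expansion, and it does isolate the term $\lambda_i^{-1}\int_{S_i}R\,\bar g(\xi_i,\bar\nu)\,\mathrm{d}\bar\mu$ cleanly. But two steps in your outline are genuine gaps.

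First, having split off $\tfrac12 Rg(\xi_i,\nu)$, you treat the remainder $g(\operatorname{tr}_{\Sigma_i}D^2\xi_i,\nu)+G(\xi_i,\nu)$ as a ``Schwarzschild contribution'' and analyse only its $\tilde g$-part. But both $D^2\xi_i$ and $G$ carry $\sigma$-corrections of order $O(|x|^{-4})$, which integrate to $O(\lambda_i^{-1}\rho_i^{-2})$---exactly the scale of the answer. You must show these integrate to $o(\lambda_i^{-1}\rho_i^{-2})$. (They do, and it is equivalent to the paper's two integrations by parts converting the $\sigma$-part of $\operatorname{Ric}$ into $\int R\,\bar g(\xi_i,\bar\nu)\,\mathrm{d}\bar\mu$ and the $\sigma$-part of $D^2\xi_i$ into a surface divergence plus $O(\lambda_i^{-1}\rho_i^{-1})$; but this is work you have not supplied.)

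Second, the key cancellation of the $O(|x|^{-3})$ leading terms is not a consequence of the contracted Bianchi identity in any evident way: Bianchi controls $\int_{\Sigma_i}G(\xi_i,\nu)\,\mathrm{d}\mu$ via a bulk integral of $g(G,D\xi_i)=O(|x|^{-5})$, which is neither obviously small nor coupled to the $D^2\xi_i$-term. The paper's mechanism is much more concrete: the leading Schwarzschild integrand is exactly
\[
4\,|x|^{-3}\,\bar g(\xi_i,\bar\nu)-12\,|x|^{-5}\,\bar g(x,\xi_i)\,\bar g(x,\bar\nu)
=4\,\bar g\big(\bar D\,(\partial_{\xi_i}|x|^{-1}),\,\bar\nu\big),
\]
and since $\partial_{\xi_i}|x|^{-1}$ is harmonic and odd, its flux through $\Sigma_i$ vanishes regardless of whether the origin is enclosed (this is \eqref{div theorem}). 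After this cancellation, the $-8\pi\lambda_i^{-1}\rho_i^{-2}$ arises from the next-order Schwarzschild terms $4|x|^{-4}\bar g(\xi_i,\bar\nu)-8|x|^{-6}\bar g(x,\xi_i)\bar g(x,\bar\nu)$ paired with $H=2\lambda_i^{-1}$, together with the $|x|^{-3}$-terms paired with the correction $-4\lambda_i^{-1}|x|^{-1}$ in $H$; the paper evaluates the resulting $|x|^{-4}$ and $|x|^{-6}$ integrals exactly via Lemmas~\ref{inner product lemma} and~\ref{integral lemma}, not by a tangent-plane heuristic.
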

\begin{proof}
	Using  Lemma \ref{ambient expansions},  Lemma \ref{second a expansion}, and Lemma \ref{basic expansions}, we have
		\begin{equation*} 
	\begin{aligned}
	&\big[ \tilde g(\tilde{\operatorname{tr}}_{\Sigma_i}\tilde D^2 \xi_i,\tilde \nu)+\tilde{\operatorname{Ric}}(\xi_i,\tilde \nu)\big]\,\mathrm{d}\tilde\mu
	\\&\qquad=\bigg[4\,|x|^{-3}\,\bar g(\xi_i,\bar\nu)-12\,|x|^{-5}\,\bar g(x,\xi_i)\,\bar g(x,\bar\nu)
\\&\qquad\qquad	+4\,|x|^{-4}\,\bar g(\xi_i,\bar\nu)-8\,|x|^{-6}\,\bar g(x,\xi_i)\,\bar g(x,\bar\nu)+O(|x|^{-5}) \bigg]\mathrm{d}\bar\mu.
	\end{aligned}
	\end{equation*}
	Using also Lemma \ref{covariant derivative}, we conclude that
	\begin{equation*} 
	\begin{aligned}
	&\big[ g(\operatorname{tr}_{\Sigma_i}D^2 \xi_i,\nu)+\operatorname{Ric}(\xi_i,\nu)\big]\,\mathrm{d}\mu
	\\&\qquad=\bigg[4\,|x|^{-3}\,\bar g(\xi_i,\bar\nu)-12\,|x|^{-5}\,\bar g(x,\xi_i)\,\bar g(x,\bar\nu)
	+4\,|x|^{-4}\,\bar g(\xi_i,\bar\nu)-8\,|x|^{-6}\,\bar g(x,\xi_i)\,\bar g(x,\bar\nu) 
	\\&\qquad\qquad +\frac12\,\sum_{j=1}^3\big[(\bar D^2_{\xi_i,e_j}\sigma)(e_j,\bar\nu)-(\bar D^2_{\xi_i,\bar\nu}\sigma)(e_j,e_j)\big]+\frac12\,\sum_{j=1}^3 (\bar D^2_{\xi_i,e_j}\sigma)(e_j,\bar\nu)- (\bar D^2_{\xi_i,\bar\nu}\sigma)(\bar\nu,\bar\nu)\\&\qquad\qquad+O(|x|^{-5})\bigg]\mathrm{d}\bar\mu.
	\end{aligned}
	\end{equation*} 
By the divergence theorem,
	\begin{align} \label{div theorem} 
	\int_{\Sigma_i}\left[|x|^{-3}\,\bar g(\xi_i,\bar\nu)-3\,|x|^{-5}\,\bar g(x,\xi_i)\,\bar g(x,\bar\nu)\right]\mathrm{d}\bar\mu=0.
	\end{align} 
Note that this holds		independently of whether $\Sigma_i$ encloses the origin or not. 
	In conjunction with Lemma \ref{u initial estimate}, Lemma \ref{improved H estimate}, and  Lemma \ref{hy integral lemma},  we conclude that
	\begin{equation*}
	\begin{aligned}
	&\int_{\Sigma_i} \big[ g(\operatorname{tr}_{\Sigma_i}D^2 \xi_i,\nu)+\operatorname{Ric}(\xi_i,\nu)\big]\,H\,\mathrm{d}\mu 
	\\&\qquad= -8\,\lambda_i^{-1}\,\int_{S_i}\left[|x|^{-4}\,\bar g(\xi_i,\bar\nu)-4\,|x|^{-6}\,\bar g(x,\xi_i)\,\bar g(x,\bar\nu)\right]\mathrm{d}\bar\mu 
	\\&\qquad\qquad+\lambda_i^{-1}\,\int_{S_i}\sum_{j=1}^3\big[(\bar D^2_{\xi_i,e_j}\sigma)(e_j,\bar\nu)-(\bar D^2_{\xi_i,\bar\nu}\sigma)(e_j,e_j)\big]\,\mathrm{d}\bar\mu
	\\&\qquad\qquad+\lambda_i^{-1}\,\int_{S_i}\bigg[\sum_{j=1}^3 (\bar D^2_{\xi_i,e_j}\sigma)(e_j,\bar\nu)- (\bar D^2_{\xi_i,\bar\nu}\sigma)(\bar\nu,\bar\nu)\bigg]\,\mathrm{d}\bar\mu
	\\&\qquad\qquad+o(\lambda_i^{-1}\,\rho_i^{-2}).
	\end{aligned}
	\end{equation*}
	\indent Using Lemma \ref{inner product lemma} and Lemma \ref{integral lemma}, we have 
	\begin{align*} 
	&\lambda_i^{-1}\,\int_{S_i}\left[|x|^{-4}\,\bar g(\xi_i,\bar\nu)-4\,|x|^{-6}\,\bar g(x,\xi_i)\,\bar g(x,\bar\nu)\right]\mathrm{d}\bar\mu \\&\qquad =\pi\,\lambda_i^{-3}\,(1-|\xi_i|)^{-2}+o(\lambda_i^{-3}\,(1-|\xi_i|)^{-2}).
	\end{align*} 
	Using \eqref{sphere rho vs lambda 2}, we conclude that
	$$
	\lambda_i^{-1}\,\int_{S_i}\left[|x|^{-4}\,\bar g(\xi_i,\bar\nu)-4\,|x|^{-6}\,\bar g(x,\xi_i)\,\bar g(x,\bar\nu)\right]\,\mathrm{d}\bar\mu =\pi\,\lambda_i^{-1}\,\rho_i^{-2}+o(\lambda_i^{-1}\,\rho_i^{-2}).
	$$
	\indent 	On the one hand, applying the divergence theorem, commuting derivatives, and applying the divergence theorem again, we obtain
	\begin{align*} 
	\int_{S_i}  \sum_{j=1}^3\big[(\bar D^2_{\xi_i,e_j}\sigma)(e_j,\bar\nu)-(\bar D^2_{\xi_i,\bar\nu}\sigma)(e_j,e_j)\big]\,\mathrm{d}\bar\mu  =\int_{S_i} [\bar{\operatorname{div}}\,\bar{\operatorname{div}}\,\sigma-\bar\Delta\,\bar{\operatorname{tr}}\,\sigma]\,\bar g(\xi_i,\bar\nu)\,\mathrm{d}\bar\mu.
	\end{align*} 
	On the other hand, note that
	\begin{align*} 
	\bar{\operatorname{div}}_{S_i}(\nu\lrcorner\ \bar D_{\xi_i}\sigma) =\sum_{j=1}^3 (\bar D^2_{\xi_i,e_j}\sigma)(e_j,\bar\nu)- (\bar D^2_{\xi_i,\bar\nu}\sigma)(\bar\nu,\bar\nu)+\lambda_i^{-1}\,\bar D_{\xi_i}\operatorname{tr}\sigma-3\,\lambda_i^{-1} (\bar D_{\xi_i}\sigma)(\bar\nu,\bar\nu).
	\end{align*} 
	Consequently,
	$$
	\int_{S_i}\bigg[\sum_{j=1}^3 (\bar D^2_{\xi_i,e_j}\sigma)(e_j,\bar\nu)- (\bar D^2_{\xi_i,\bar\nu}\sigma)(\bar\nu,\bar\nu)\bigg]\,\mathrm{d}\bar\mu=O(\lambda_i^{-1}\,\rho_i^{-1}).
	$$
	In conjunction with Lemma \ref{ambient expansions}, Lemma \ref{hy integral lemma}, and \eqref{slow divergence 2},  we conclude that 
	\begin{align*}
	&\lambda_i^{-1}\,\int_{S_i}\bigg[\sum_{j=1}^3\big[(\bar D^2_{\xi_i,e_j}\sigma)(e_j,\bar\nu)-(\bar D^2_{\xi_i,\bar\nu}\sigma)(e_j,e_j)\big] +\sum_{j=1}^3 (\bar D^2_{\xi_i,e_j}\sigma)(e_j,\bar\nu)- (\bar D^2_{\xi_i,\bar\nu}\sigma)(\bar\nu,\bar\nu)\bigg]\,\mathrm{d}\bar\mu
	\\&\qquad=	\lambda_i^{-1}\,\int_{S_{i}} \bar g(\xi_i,\bar\nu)\,R\,\mathrm{d}\bar \mu+o(\lambda_i^{-1}\,\rho_i^{-2}).
	\end{align*}
	\indent The assertion follows from these estimates.
\end{proof}
For the proof of Lemma \ref{radial 2}, recall that  a tilde indicates that a geometric quantity is computed with respect to the Schwarzschild background metric with mass $2$.
\begin{lem}As $i\to\infty$, there holds\label{radial 2}
	$$
\int_{\Sigma_i}	g(\zeta_i, \hcirc)\,H\,\mathrm{d}\mu
=o(\lambda_i^{-1}\,\rho_i^{-2})
$$
and
$$
	\int_{\Sigma_i}[\operatorname{div}_{\Sigma_i}\xi_i-2\,g(D_\nu \xi_i,\nu)]\,H^2\,\mathrm{d}\mu
=o(\lambda_i^{-1}\,\rho_i^{-2}).
$$
\end{lem}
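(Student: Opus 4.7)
I would exploit an algebraic cancellation that arises because $\xi_i$ is a constant Euclidean vector field and the background Schwarzschild metric $\tilde g = (1+|x|^{-1})^4\,\bar g$ is conformally flat. The conformal-change formula for Christoffel symbols, which may be read off Lemma \ref{ambient expansions} and Lemma \ref{covariant derivative}, implies that the $(0,2)$-tensor $\tilde\zeta_i(X,Y) := \tilde g(\tilde D_X\xi_i, Y)$ admits the identity
\[
\tilde\zeta_i(X,Y) = \bar g(V, \xi_i)\,\tilde g(X, Y) + A_i(X,Y),
\]
where $V = 2\,\bar D \log(1+|x|^{-1})$ is radial of order $O(|x|^{-2})$ and $A_i$ is antisymmetric in $X,Y$. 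Since $\sigma = g - \tilde g = O(|x|^{-2})$ and $\partial\sigma = O(|x|^{-3})$, comparison of Christoffel symbols gives $\zeta_i - \tilde\zeta_i = O(|x|^{-3})$.

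\textbf{First integral.} Only the symmetric $g|_{\Sigma_i}$-traceless part of $\zeta_i$ contributes to $g(\zeta_i, \hcirc)$, because $\hcirc$ is symmetric and $g|_{\Sigma_i}$-traceless. The symmetric part of $\tilde\zeta_i$ is the pure-trace tensor $\bar g(V,\xi_i)\,\tilde g$, whose $g|_{\Sigma_i}$-traceless part is of order $|V|\cdot|\tilde g|_{\Sigma_i} - g|_{\Sigma_i}| = O(|x|^{-4})$. Combined with $\zeta_i - \tilde\zeta_i = O(|x|^{-3})$, this yields $|g(\zeta_i,\hcirc)| = O(|x|^{-3})\,|\hcirc|$. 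Inserting $|\hcirc| = O((\lambda_i^{-1/2} + \rho_i^{-1})\,|x|^{-1})$ and $|H| = O(\lambda_i^{-1})$ from Proposition \ref{curv est prop}, the integrand is of order $(\lambda_i^{-1/2} + \rho_i^{-1})\,\lambda_i^{-1}\,|x|^{-4}$. Applying Remark \ref{green remark} to $f = |x|^{-4}$ together with \eqref{area element} gives $\int_{\Sigma_i}|x|^{-4}\,\mathrm{d}\mu = O(\rho_i^{-2})$, so that
\[
\int_{\Sigma_i} g(\zeta_i,\hcirc)\,H\,\mathrm{d}\mu = O\bigl((\lambda_i^{-1/2} + \rho_i^{-1})\,\lambda_i^{-1}\,\rho_i^{-2}\bigr) = o(\lambda_i^{-1}\,\rho_i^{-2}).
\]

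\textbf{Second integral.} Tracing the identity for $\tilde\zeta_i$ over $T\Sigma_i$ annihilates the antisymmetric part and yields $\tilde{\operatorname{div}}_{\Sigma_i}\xi_i = 2\,\bar g(V,\xi_i)$; evaluating at $X=Y=\nu$ similarly gives $\tilde g(\tilde D_\nu\xi_i, \nu) = \bar g(V,\xi_i)$. Hence $\tilde{\operatorname{div}}_{\Sigma_i}\xi_i - 2\,\tilde g(\tilde D_\nu\xi_i, \nu) = 0$ identically in Schwarzschild. Passing from $\tilde g$ to $g$ introduces an $O(|x|^{-3})$ contribution from $\zeta_i - \tilde\zeta_i$ and an $O(|x|^{-4})$ contribution from $(g^{-1} - \tilde g^{-1})\,\tilde\zeta_i$, so that $\operatorname{div}_{\Sigma_i}\xi_i - 2\,g(D_\nu\xi_i, \nu) = O(|x|^{-3})$. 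Using $H^2 = O(\lambda_i^{-2})$ and $\int_{\Sigma_i}|x|^{-3}\,\mathrm{d}\mu = O(\rho_i^{-1})$ (again Remark \ref{green remark}), the integral is $O(\lambda_i^{-2}\,\rho_i^{-1}) = o(\lambda_i^{-1}\,\rho_i^{-2})$, invoking $\rho_i = o(\lambda_i)$. The main obstacle is to notice the cancellation: the naive bound $|D\xi_i| = O(|x|^{-2})$ loses a factor $\lambda_i^{-1/2} + \rho_i^{-1}$ in the first integral and $\rho_i/\lambda_i$ in the second; the pure-trace-plus-antisymmetric structure of $\tilde D\xi_i$ is precisely what pairs against $\hcirc$ and against the combination $\operatorname{div} - 2\,g(D_\nu\cdot,\nu)$ to produce the extra factor $|x|^{-1}$ that closes the estimate.
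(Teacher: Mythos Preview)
Your approach is essentially the paper's: you correctly identify that $\tilde\zeta_i$ splits as a pure-trace part plus an antisymmetric part (which is exactly what Lemma~\ref{covariant derivative} encodes), so that $\tilde g(\tilde\zeta_i,\htildecirc)=0$ and $\tilde{\operatorname{div}}_{\Sigma_i}\xi_i-2\,\tilde g(\tilde D_{\tilde\nu}\xi_i,\tilde\nu)=0$, and then you control the passage from $\tilde g$ to $g$ by the $O(|x|^{-3})$ Christoffel error.

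One point to fix: you invoke Proposition~\ref{curv est prop} for $|H|=O(\lambda_i^{-1})$, but that proposition only gives $H-2\lambda_i^{-1}=O((\lambda_i^{-1/2}+\rho_i^{-1})\,|x|^{-1})$, which at $|x|\sim\rho_i$ is of order $\rho_i^{-2}$ and need not be $O(\lambda_i^{-1})$ in the full regime $\log\lambda_i=o(\rho_i)$ (think of $\rho_i\sim(\log\lambda_i)^2$). The bound $|H|=O(\lambda_i^{-1})$ is available, but it comes from Lemma~\ref{improved H estimate} (specifically \eqref{five}); this is precisely why the paper cites Lemma~\ref{improved H estimate} in its reduction step. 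With that citation corrected, your argument goes through and matches the paper's.
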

\begin{proof}
Using \eqref{asymptotic to schwarzschild 2}, Lemma \ref{covariant derivative}, Lemma \ref{basic expansions},   Proposition \ref{curv est prop}, Lemma \ref{improved H estimate}, and Lemma \ref{hy integral lemma}, we have
$$
\int_{\Sigma_i}	g(\zeta_i, \hcirc)\,H\,\mathrm{d}\mu
=2\,\lambda_i^{-1}\,\int_{\Sigma_i}	\tilde g(\tilde\zeta_i, \htildecirc)\,\mathrm{d}\tilde \mu+o(\lambda_i^{-1}\,\rho_i^{-2}).
$$
Similarly, using also \eqref{slow divergence 2},
	\begin{align*} 
	\int_{\Sigma_i}[\operatorname{div}_{\Sigma_i}\xi_i-2\,g(D_\nu \xi_i,\nu)]\,H^2\,\mathrm{d}\mu
=4\,\lambda_i^{-2}\,\int_{\Sigma_i}\left[\tilde {\operatorname{div}}_{\Sigma_i}\xi_i-2\,\tilde g(\tilde D_{\tilde \nu} \xi_i,\tilde \nu)\right]\mathrm{d}\tilde\mu+o(\lambda_i^{-1}\,\rho_i^{-2}).
\end{align*}
\indent By Lemma \ref{covariant derivative},
$$
\tilde \zeta_i=2\,(1+|x|^{-1})^{-1}\,|x|^{-3}\,\bar g(x,\xi_i)\,\tilde g|_{\Sigma_i}.
$$
Consequently,
$
\tilde g(\tilde \zeta_i, \htildecirc)=0.
$ Using Lemma \ref{covariant derivative} again, we have
$$
\tilde {\operatorname{div}}_{\Sigma_i}\xi_i=2\,\tilde g(\tilde D_{\tilde \nu} \xi_i,\tilde \nu)=4\,(1+|x|^{-1})^{-1}\,\bar g(x,\xi_i).
$$
\indent The assertion follows.
\end{proof}

\begin{lem}
There holds \label{radial 5}
$$
\kappa(\Sigma_i)\,\int_{\Sigma_i} g(\xi_i,\nu)\,H\,\mathrm{d}\mu=o(\lambda_i^{-1}\,\rho_i^{-2}).
$$
\end{lem}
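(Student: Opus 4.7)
The plan is to use the first variation formula for the area functional on the closed surface $\Sigma_i$ to convert $\int_{\Sigma_i} g(\xi_i,\nu)\,H\,\mathrm{d}\mu$ into a surface divergence integral of the constant vector field $\xi_i$, and then exploit the fact that $\xi_i$ is nearly Killing for the background geometry.  Concretely, since $\Sigma_i$ is closed and $\xi_i$ is a smooth ambient vector field, the first variation formula gives
$$\int_{\Sigma_i} g(\xi_i,\nu)\,H\,\mathrm{d}\mu=-\int_{\Sigma_i}\operatorname{div}_{\Sigma_i}\xi_i\,\mathrm{d}\mu.$$
A naive estimate using $H=O(\lambda_i^{-1})$, $|\xi_i|=O(1)$, and $|\Sigma_i|=O(\lambda_i^2)$ yields only $O(\lambda_i)$, which combined with $\kappa(\Sigma_i)=o(\lambda_i^{-2}\,\rho_i^{-1})$ falls short by a factor of $\rho_i$.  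The passage to $\operatorname{div}_{\Sigma_i}\xi_i$ is what recovers the missing decay.

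Next, the pointwise bound: since $\xi_i\in\mathbb{R}^3$ is parallel in the Euclidean background, $\bar D\xi_i=0$. The asymptotic expansion $g=(1+|x|^{-1})^4\,\bar g+\sigma$ with $\partial\sigma=O(|x|^{-3})$ yields Christoffel symbols $\Gamma^g-\bar\Gamma=O(|x|^{-2})$, so
$$|D\xi_i|_g=O(|x|^{-2})\qquad\text{and hence}\qquad|\operatorname{div}_{\Sigma_i}\xi_i|=O(|x|^{-2}).$$
Integrating and using Lemma~\ref{basic expansions} to replace $\mathrm{d}\mu$ by $\mathrm{d}\bar\mu$, Lemma~\ref{hy integral lemma} and Lemma~\ref{u initial estimate 3} to replace $\Sigma_i$ by $S_i$, and item~ii) of Remark~\ref{green remark} applied to $f=|x|^{-2}$ (where $\sup_{S_i}|x|^2\,|f|=1$ and $|\log|1-|\xi_i|||=\log(\lambda_i\,\rho_i^{-1})$), we obtain
$$\left|\int_{\Sigma_i} g(\xi_i,\nu)\,H\,\mathrm{d}\mu\right|=O\big(\log(\rho_i^{-1}\,\lambda_i)\big).$$

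Finally, combining this with the bound $\kappa(\Sigma_i)=o(\lambda_i^{-2}\,\rho_i^{-1})$ from Lemma~\ref{improved H estimate} yields
$$\kappa(\Sigma_i)\,\int_{\Sigma_i} g(\xi_i,\nu)\,H\,\mathrm{d}\mu=o\big(\lambda_i^{-2}\,\rho_i^{-1}\,\log(\rho_i^{-1}\,\lambda_i)\big).$$
Writing $t_i=\lambda_i\,\rho_i^{-1}\to\infty$, we have $(\rho_i\,\lambda_i^{-1})\,\log(\lambda_i\,\rho_i^{-1})=t_i^{-1}\log t_i\to 0$. This converts the right-hand side into $o(\lambda_i^{-1}\,\rho_i^{-2})$, which is exactly the desired conclusion. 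The only real obstacle is getting the logarithmic (rather than polynomial) bound on the integral, and this is precisely what the divergence identity together with the refined angular integration in Remark~\ref{green remark} provides.
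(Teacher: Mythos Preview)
Your argument is correct and reaches the conclusion, but it follows a genuinely different route from the paper. The paper splits $H(\Sigma_i)$ into its constant part $\operatorname{proj}_{\Lambda_0(S_i)}H$ and the remainder, observes that the Euclidean divergence theorem gives $\int_{\Sigma_i}\bar g(\xi_i,\bar\nu)\,\mathrm{d}\bar\mu=0$ (so the constant part contributes nothing after replacing $g(\xi_i,\nu)\,\mathrm{d}\mu$ by $\bar g(\xi_i,\bar\nu)\,\mathrm{d}\bar\mu$), and bounds the error by $O(\lambda_i^{-1})\int_{\Sigma_i}|x|^{-1}\,\mathrm{d}\bar\mu=O(\lambda_i\,\rho_i^{-1})$. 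Multiplying by $\kappa(\Sigma_i)=o(\lambda_i^{-2}\,\rho_i^{-1})$ then gives the result directly. You instead apply the first variation identity $\int_{\Sigma_i}g(\xi_i,\nu)\,H\,\mathrm{d}\mu=\int_{\Sigma_i}\operatorname{div}_{\Sigma_i}\xi_i\,\mathrm{d}\mu$ in the metric $g$ (your sign is off, but this is harmless) and bound the integrand pointwise by $O(|x|^{-2})$, leading to an intermediate estimate $O(\log(\rho_i^{-1}\lambda_i))$ that is actually sharper than the paper's $O(\lambda_i\,\rho_i^{-1})$; the final step $t_i^{-1}\log t_i\to 0$ then closes the argument. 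Both approaches exploit that $\xi_i$ is Euclidean-parallel, but the paper packages this via the ambient divergence theorem while you package it via the intrinsic first variation formula.

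Two minor points: your citation of Lemma~\ref{hy integral lemma} for the integral $\int|x|^{-2}$ is misplaced, since that lemma requires $q>2$; what you actually need (and what your text describes) is to transfer to $S_i$ via Remark~\ref{Phi remark} and the area element comparison \eqref{area element}, and then apply Remark~\ref{green remark} or Lemma~\ref{integral lemma} on $S_i$. Also, the sign in the first variation identity should be $+$ rather than $-$ with the paper's conventions.
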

\begin{proof}
	Using Lemma \ref{improved H estimate}, Lemma \ref{basic expansions}, and \eqref{asymptotic to schwarzschild 2}, we have
	\begin{align*} 
	\int_{\Sigma_i} g(\xi_i,\nu)\,H\,\mathrm{d}\mu&=\operatorname{proj}_{\Lambda_0(S_i)}H(\Sigma_i)\,\int_{\Sigma_i} \bar g(\xi_i,\bar \nu)\,\mathrm{d}\bar\mu+O(\lambda_i^{-1})\,\int_{\Sigma_i}|x|^{-1}\,\mathrm{d}\bar\mu.
	\end{align*} 
Note that
$$
	\int_{\Sigma_i}|x|^{-1}\,\mathrm{d}\bar\mu= O(\lambda_i^2\,\rho_i^{-1}).
$$
By the divergence theorem,
	$$
	\int_{\Sigma_i} \bar g(\xi_i,\bar\nu)\,\mathrm{d}\bar\mu=0.
	$$
	The assertion follows in conjunction with Lemma \ref{improved H estimate}.
\end{proof} 
\section{Proof of Theorem \ref{main result}}
Suppose, for a contradiction, that there exists a sequence $\{\Sigma_i\}_{i=1}^\infty$ of area-constrained Willmore spheres $\Sigma_i\subset M$ such that \eqref{slow divergence 2} and \eqref{s 2} hold.
 Assembling Lemma \ref{radial 1}, Lemma \ref{radial 4}, Lemma \ref{radial 2},  and Lemma \ref{radial 5}, we have
\begin{equation} 
\begin{aligned} 
0=&\,-\int_{\Sigma_i}g(\xi_i,\nu)\,[\Delta H+(|\hcirc|^2+\operatorname{Ric}(\nu,\nu)+\kappa(\Sigma_i))\,H]\,\mathrm{d}\mu\\=&\,8\,\pi\,\lambda_i^{-1}\,\rho_i^{-2}-	\lambda_i^{-1}\,\int_{S_i} \bar g(\xi_i,\bar\nu)\,R\,\mathrm{d}\bar \mu+o(\lambda_i^{-1}\,\rho_i^{-2}). \label{contradiction} 
\end{aligned}
\end{equation}
Using $R\geq-o(|x|^{-4})$ and Lemma \ref{hy integral lemma},  we find that
$$
-	\int_{S_{i}} \bar g(\xi_i,\bar\nu)\,R\,\mathrm{d}\bar \mu\geq -\int_{\{x\in S_i\,:\,\bar g(\xi_i,\bar\nu)\geq 0\}} \bar g(\xi_i,\bar\nu)\,R\,\mathrm{d}\bar \mu -o(\rho_i^{-2}).
$$ 
Moreover, using \eqref{asymptotic to schwarzschild 2} and \eqref{slow divergence 2},
$$
\int_{\{x\in S_i\,:\,\bar g(\xi_i,\bar\nu)\geq 0\}} \bar g(\xi_i,\bar\nu)\,R\,\mathrm{d}\bar \mu=O(\lambda_i^{-2})=o(\rho_i^{-2}).
$$
These estimates are incompatible with \eqref{contradiction}.

\begin{appendices}

\section{Integral curvature estimates} \label{int curv est appendix}
In \cite{kuwertSchaetzle2}, E.~Kuwert and R.~Schätzle have established integral  curvature estimates for Euclidean Willmore surfaces whose traceless second fundamental form is small in $L^2$. In this section, we adapt their method to establish integral curvature estimates for large area-constrained Willmore spheres   in Riemannian three-manifolds which are asymptotic to Schwarzschild whose curvature is small in $L^2$. \\ \indent  In short, we use integration by parts, the area-constrained Willmore equation, and the $L^2$-estimate \eqref{small curvature} to prove local $W^{2,2}$-bounds for the second fundamental form $h$. In conjunction with the Sobolev inequality, we obtain an $L^\infty$-estimate for $h$. Compared to \cite{kuwertSchaetzle2}, additional curvature terms owing to the non-flat background need to be addressed. \\ \indent  We assume that $g$ is a Riemannian metric on $\mathbb{R}^3$ such that, as $x\to\infty,$
$$
g=\left(1+|x|^{-1}\right)^4\,\bar g+\sigma \qquad\text{where}\qquad \partial_J\sigma=O(|x|^{-2-|J|})
$$ for every multi-index $J$ with $|J|\leq 4$. \\ \indent 
Let $\{\Sigma_i\}_{i=1}^\infty$ be a sequence of spheres $\Sigma_i\subset \mathbb{R}^3$ which satisfy the area-constrained Willmore equation \eqref{constrained Willmore equation} with
\begin{align*}
\lim_{i\to\infty}\rho(\Sigma_i)=\infty,\qquad \rho(\Sigma_i)=O(\lambda(\Sigma_i)),
\end{align*} 
and
\begin{align} \label{small curvature}
	\int_{\Sigma_i}|h-\lambda(\Sigma_i)^{-1}\,g_{\Sigma_i}|^2\,\mathrm{d}\mu=o(1).
\end{align}
Here, $\lambda(\Sigma_i)$ and $\rho(\Sigma_i)$ are the area radius and inner radius of $\Sigma_i$ defined in \eqref{inner and outer radius}.
We abbreviate $\rho_i=\rho(\Sigma_i)$ and $\lambda_i=\lambda(\Sigma_i)$. \\ \indent 
\indent 
Let $X,\,Y,\,Z$ be vector fields tangent to $\Sigma$ and $\operatorname{Rm}$ the Riemann curvature tensor of $(M,g)$.  We recall the Gauss-Codazzi equation
$$
(\nabla_{X}h)(Y,Z)=(\nabla_Yh)(X,Z)+\operatorname{Rm}(X,Y,\nu,Z)
$$
and its trace
\begin{align} \label{gauss codazzi}
\operatorname{div}_{\Sigma_i}h=\nabla H+\nu \lrcorner\ \operatorname{Rc}.
\end{align} 
For the statement of the following Simons-type identities, note that  the contraction of the divergence is with respect to the first entry. Given a covariant tensor $T$, we follow the convention that
$$
X\lrcorner\ (\nabla T)=(\nabla_X T).
$$ 
\begin{lem}[{\cite[Lemma 3.2]{koerber}}]
There holds,  on $\Sigma_i$,
	\begin{align}
\label{first simon}	\Delta\,\hcirc=\,&{\accentset{\circ}{\nabla^2 H}}+\frac12\,H^2\,\hcirc+\hcirc*\hcirc*\hcirc+O(|x|^{-3}\,|h|)+O(|x|^{-4}),\\
	\operatorname{div}_{\Sigma_i}\nabla^2H=\,&\nabla\Delta H+\frac14\,H^2\,\nabla H+\hcirc*\hcirc*\nabla H+O(|x|^{-3}\,|\nabla H|), \label{second simon} \\
\operatorname{div}_{\Sigma_i}\nabla^2\hcirc=\,&\nabla\Delta\hcirc+ h* h*\nabla \hcirc\label{third simon}\\&\qquad \notag+O(|x|^{-3}\,|\hcirc|\,|h|)+O(|x|^{-4}\,|\hcirc|)+O(|x|^{-3}\,|\nabla\hcirc|). 
	\end{align}
\end{lem}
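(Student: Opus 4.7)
The plan is to derive all three identities from the Codazzi-Mainardi equation
$(\nabla_X h)(Y,Z) - (\nabla_Y h)(X,Z) = \operatorname{Rm}(X,Y,\nu,Z)$
together with the Ricci identity for commuting covariant derivatives, substituting for the intrinsic curvature of $\Sigma_i$ by means of the Gauss equation $\operatorname{Rm}_{\Sigma_i} = \operatorname{Rm}|_{\Sigma_i} + h \wedge h$. Throughout, the asymptotics of $g$ give $\operatorname{Rm} = O(|x|^{-3})$ and $\nabla \operatorname{Rm} = O(|x|^{-4})$, and this is what dictates the shape of the error terms; the Weingarten relation $\nabla \nu = -h$ converts tangential differentiations of $\nu$-laden quantities into $O(|h|)$ factors against a matching power of $|x|^{-1}$.

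For \eqref{first simon} I would take the tangential divergence of the Codazzi identity. Commuting $\nabla^i \nabla_j = \nabla_j \nabla^i + [\nabla^i, \nabla_j]$ and invoking the traced Codazzi identity \eqref{gauss codazzi} turns $\nabla_j \nabla^i h_{ik}$ into $\nabla_j \nabla_k H$ modulo $O(|x|^{-3}|h|) + O(|x|^{-4})$, while the commutator $[\nabla^i, \nabla_j]h_{\cdot k}$ produces an $\operatorname{Rm}_{\Sigma_i} * h$ term. The Gauss equation splits this last term into the Euclidean Simons piece plus an $O(|x|^{-3}|h|)$ ambient remainder; the divergence of the Codazzi defect $\operatorname{Rm}(\cdot,\cdot,\nu,\cdot)$ adds another $O(|x|^{-3}|h|) + O(|x|^{-4})$. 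Decomposing $h = \tfrac{1}{2} H g|_{\Sigma_i} + \hcirc$ and taking the traceless part converts the cubic-in-$h$ Simons term into $\tfrac{1}{2}H^2 \hcirc + \hcirc * \hcirc * \hcirc$ and collapses the Hessian into $\accentset{\circ}{\nabla^2 H}$.

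For \eqref{second simon} I would apply the Ricci commutation identity directly to $H$: $g^{ij}\nabla_i \nabla_j \nabla_k H = \nabla_k \Delta H + \operatorname{Rc}_{\Sigma_i}{}^j{}_k \nabla_j H$. In dimension two, $\operatorname{Rc}_{\Sigma_i} = K^{\Sigma_i}\, g|_{\Sigma_i}$, and the scalar Gauss equation gives $K^{\Sigma_i} = \tfrac{1}{4}H^2 - \tfrac{1}{2}|\hcirc|^2 + O(|x|^{-3})$; substituting this yields exactly $\tfrac{1}{4}H^2\, \nabla H + \hcirc * \hcirc * \nabla H + O(|x|^{-3}|\nabla H|)$.

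The third identity \eqref{third simon} is the most delicate. The Ricci commutator for the $(0,2)$-tensor $\hcirc$ produces an $\operatorname{Rc}_{\Sigma_i} * \nabla \hcirc$ term together with two $\operatorname{Rm}_{\Sigma_i} * \nabla \hcirc$ terms, and pulling the divergence through the inner $\nabla$ introduces one further $\nabla \operatorname{Rm}_{\Sigma_i} * \hcirc$ contribution. Gauss handles the first batch: writing $\operatorname{Rm}_{\Sigma_i} = h \wedge h + O(|x|^{-3})$ gives precisely $h * h * \nabla \hcirc$ plus an $O(|x|^{-3}|\nabla \hcirc|)$ remainder. For the $\nabla \operatorname{Rm}_{\Sigma_i} * \hcirc$ piece, I would use the contracted second Bianchi identity on $\Sigma_i$ to replace it by $\nabla \operatorname{Rc}_{\Sigma_i} * \hcirc$ plus an ambient $O(|x|^{-4}|\hcirc|)$ error, and then re-apply Gauss; the resulting $\nabla h * h * \hcirc$ pieces are rewritten via Codazzi using $\operatorname{div} h = \nabla H + \nu \lrcorner \operatorname{Rc}$, which leaves only terms of size $O(|x|^{-3}|h||\hcirc|)$. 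The main obstacle is this last bookkeeping: ensuring that every $\nabla h$ produced by differentiating the $h \wedge h$ part of Gauss either collapses into the admissible $h * h * \nabla \hcirc$ through the symmetry of the specific commutator contraction, or can be rewritten through Codazzi into one of the three ambient-curvature error classes, without leaving any residual contribution of the form $|\nabla h|\,|h|\,|\hcirc|$ that is not covered by the stated remainders.
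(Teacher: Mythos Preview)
The paper does not prove this lemma; it is quoted verbatim from \cite[Lemma 3.2]{koerber}, so there is no argument here to compare yours against. Your outline is the standard derivation of Simons-type identities and is correct in all essentials.

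Regarding the concern you flag at the end of \eqref{third simon}: there is no residual $|\nabla h|\,|h|\,|\hcirc|$ term once you use the two-dimensionality properly. On a surface the intrinsic curvature is $R^{\Sigma}_{ijkl}=K^{\Sigma}(g_{ik}g_{jl}-g_{il}g_{jk})$, so the only $\nabla R^{\Sigma}*\hcirc$ contribution is $\nabla K^{\Sigma}*\hcirc$. By the scalar Gauss equation $K^{\Sigma}=\tfrac14 H^2-\tfrac12|\hcirc|^2+O(|x|^{-3})$, hence
\[
\nabla K^{\Sigma}=\tfrac12\,H\,\nabla H+\hcirc*\nabla\hcirc+O(|x|^{-4})+O(|x|^{-3}|h|).
\]
The second summand already has the required form. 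For the first, the traced Codazzi identity in the form $\nabla H=2\,\operatorname{div}\hcirc+O(|x|^{-3})$ (equivalently, $\nabla H$ is schematically $\nabla\hcirc+O(|x|^{-3})$) converts $H\,\nabla H*\hcirc$ into $h*h*\nabla\hcirc+O(|x|^{-3}|h||\hcirc|)$. So every $\nabla h*h*\hcirc$ piece either lands in $h*h*\nabla\hcirc$ or in one of the three stated error classes; your Codazzi rewriting does exactly this, and no uncontrolled term survives.
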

	Let $\psi\in C^\infty(\mathbb{R})$ with
	\begin{itemize} 
		\item[$\circ$] 	 $0\leq \psi \leq 1$,
		\item[$\circ$] $\psi(1)=1$,
		\item[$\circ$] $\psi(s)=0$ if $s<3/4$ or $s>5/4$,
		\item[$\circ$] $|\psi'|\leq \frac92$.
		\end{itemize} 
	 We fix $x\in\mathbb{R}^3$ with $x\neq 0$ and define $\eta\in C^\infty(\mathbb{R}^3)$ by
\begin{equation} \label{eta def} 
\eta(z)=\psi(|z|\,|x|^{-1}).
\end{equation} 
Note that
\begin{align} \label{eta estimate}
|D \eta|\leq 5\,|x|^{-1}.
\end{align} 
Moreover, by Lemma \ref{basic expansions}, \eqref{area estimate}, and Lemma \ref{large blowdown}, we have, uniformly for all $x\in \Sigma_i$,
\begin{align} \label{eta area estimate}
|\Sigma_i\cap\operatorname{spt}(\eta)| =O(|x|^{2}).
\end{align}
\indent The following lemma is an adaptation of \cite[Lemma 2.2]{kuwertSchaetzle2}.
\begin{lem}
	There holds, 	uniformly for all $x\in\Sigma_i$, \label{lem aid 0}
	\begin{align*} 
\int_{\Sigma_i}\eta^2\,|\nabla \hcirc|^2\,\mathrm{d}\mu&	\leq \frac12\,\int_{\Sigma_i}\eta^2\,H^2\,|\hcirc|^2\,\mathrm{d}\mu+ O(1)\int_{\Sigma_i}\eta^2\,|\hcirc|^4\,\mathrm{d}\mu+O(|x|^{-2})\,\int_{\Sigma_i\cap\operatorname{spt}(\eta)}|h-\lambda_i^{-1}\,g|_{\Sigma_i}|^2\,\mathrm{d}\mu \\&\qquad+O(|x|^{-4})
+O\big(\kappa(\Sigma_i)^2\,|x|^2\big).
	\end{align*}
\end{lem}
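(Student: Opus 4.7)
My plan is to adapt the Kuwert-Schätzle strategy for Willmore surfaces in $\mathbb{R}^3$. The structure of the right-hand side, $\tfrac12 H^2|\hcirc|^2+|\hcirc|^4$, is precisely $|h|^2|\hcirc|^2$ expanded using $|h|^2=\tfrac12 H^2+|\hcirc|^2$, which suggests deriving the estimate from the Simons identity in the $h$-form rather than the $\hcirc$-form, and then dropping a favorable $\int\eta^2 H^2|\hcirc|^2$ contribution from the left-hand side of the resulting identity.

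I would begin with
\[
\int_{\Sigma_i}\eta^2|\nabla\hcirc|^2\,\mathrm{d}\mu=-\int_{\Sigma_i}\eta^2\,\hcirc:\Delta\hcirc\,\mathrm{d}\mu-2\int_{\Sigma_i}\eta\,\hcirc:(\nabla\eta\otimes\nabla\hcirc)\,\mathrm{d}\mu
\]
and use $\hcirc:\Delta\hcirc=\hcirc:\Delta h$ (since $\hcirc$ is traceless) together with the $h$-form of Simons, $\Delta h_{ij}=\nabla_i\nabla_j H+H\,h_i^k h_{kj}-|h|^2 h_{ij}+\mathrm{err}$, and the 2D trace relations $\hcirc:(h\circ h)=H|\hcirc|^2$ and $\hcirc:h=|\hcirc|^2$, to obtain
\[
\hcirc:\Delta\hcirc=\hcirc:\accentset{\circ}{\nabla^2 H}+H^2|\hcirc|^2-|h|^2|\hcirc|^2+\mathrm{err},
\]
where the Simons error is $O(|x|^{-3}|h||\hcirc|)+O(|x|^{-4}|\hcirc|)$. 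Rearranging then gives the identity
\[
\int_{\Sigma_i}\eta^2|\nabla\hcirc|^2\,\mathrm{d}\mu+\int_{\Sigma_i}\eta^2 H^2|\hcirc|^2\,\mathrm{d}\mu=\int_{\Sigma_i}\eta^2|h|^2|\hcirc|^2\,\mathrm{d}\mu-\int_{\Sigma_i}\eta^2\hcirc:\accentset{\circ}{\nabla^2 H}\,\mathrm{d}\mu+\text{cross and error},
\]
and dropping the non-negative $\int\eta^2 H^2|\hcirc|^2$ from the left-hand side, together with $|h|^2|\hcirc|^2=\tfrac12 H^2|\hcirc|^2+|\hcirc|^4$, already produces the $\tfrac12 H^2|\hcirc|^2$ and $|\hcirc|^4$ contributions on the right-hand side of the stated estimate.

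The Hessian term $-\int\eta^2\hcirc:\accentset{\circ}{\nabla^2 H}$ is handled by one integration by parts combined with the Codazzi identity \eqref{gauss codazzi} in the form $\operatorname{div}\hcirc=\tfrac12\nabla H+\nu\lrcorner\operatorname{Rc}$. This produces a cross term $2\int\eta\,\nabla_i\eta\,\hcirc^{ij}\nabla_j H$, the integral $\tfrac12\int\eta^2|\nabla H|^2$, and the ambient contribution $\int\eta^2\operatorname{Rc}(\nu,\nabla H)$, the last of which is controlled by a further integration by parts onto the smooth tensor $\operatorname{Rc}$, whose derivatives decay like $|x|^{-4}$. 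On $\tfrac12\int\eta^2|\nabla H|^2$ I would integrate by parts once more and invoke the area-constrained Willmore equation \eqref{constrained Willmore equation}, $\Delta H=-(|\hcirc|^2+\operatorname{Ric}(\nu,\nu)+\kappa(\Sigma_i))H$; this yields an additional $\int\eta^2 H^2|\hcirc|^2$, which is again absorbed into the $\tfrac12\int\eta^2 H^2|\hcirc|^2$-reservoir on the right-hand side, together with $\operatorname{Ric}$- and $\kappa$-pieces. The $\kappa$-piece $\tfrac12\kappa(\Sigma_i)\int\eta^2 H^2$ is split by Young's inequality $|\kappa H^2|\le\epsilon H^4+(4\epsilon)^{-1}\kappa(\Sigma_i)^2$; the second piece produces the $O(\kappa(\Sigma_i)^2|x|^2)$ term via the area bound \eqref{eta area estimate}, while the first is controlled using $H=O(\lambda_i^{-1})$ from Proposition \ref{curv est prop} and absorbed into the reservoir.

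The remaining cross and gradient-cutoff terms are handled by Young's inequality with small constants: the term $-2\int\eta\,\hcirc:(\nabla\eta\otimes\nabla\hcirc)$ is absorbed into $\tfrac12\int\eta^2|\nabla\hcirc|^2$, while the cross terms involving $\nabla H$ are absorbed into fractions of $\int\eta^2|\nabla H|^2$. Using $|\nabla\eta|\leq 5|x|^{-1}$ from \eqref{eta estimate}, the pointwise identity $|h-\lambda_i^{-1}g|^2=|\hcirc|^2+\tfrac12(H-2\lambda_i^{-1})^2$, the bound $|h|^2\leq 2|h-\lambda_i^{-1}g|^2+4\lambda_i^{-2}$, and the area bound \eqref{eta area estimate}, the resulting $|\nabla\eta|^2|\hcirc|^2$ and $|\nabla\eta|^2(H-2\lambda_i^{-1})^2$ contributions, together with the Simons errors split as $|x|^{-3}|h||\hcirc|\leq\tfrac12|x|^{-2}|\hcirc|^2+\tfrac12|x|^{-4}|h|^2$ and $|x|^{-4}|\hcirc|\leq\tfrac12|x|^{-6}+\tfrac12|x|^{-2}|\hcirc|^2$, combine to produce the $O(|x|^{-2})\int_{\Sigma_i\cap\operatorname{spt}(\eta)}|h-\lambda_i^{-1}g|^2$ and $O(|x|^{-4})$ terms. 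The principal obstacle I foresee is the careful bookkeeping required to avoid circular absorption between $\int\eta^2|\nabla\hcirc|^2$, $\int\eta^2|\nabla H|^2$, and the $H^4$-contribution from the $\kappa$-splitting, which forces one to fix Young's constants small enough to accommodate all the reabsorption steps simultaneously while keeping the curvature estimates of Proposition \ref{curv est prop} available to close the argument.
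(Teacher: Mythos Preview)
Your overall strategy is essentially the paper's: multiply the Simons identity by $\eta^2\hcirc$, integrate by parts, use Codazzi to convert $\hcirc:\accentset{\circ}{\nabla^2 H}$ into $\tfrac12|\nabla H|^2$, and then integrate by parts once more using the area-constrained Willmore equation. Your use of the $h$-form of Simons versus the paper's $\hcirc$-form in \eqref{first simon} is cosmetic; after the trace identities you wrote down, the two computations coincide.

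There is, however, a genuine gap in how you treat $\tfrac12\int_{\Sigma_i}\eta^2|\nabla H|^2\,\mathrm{d}\mu$. You integrate by parts with the unshifted $H$, producing $\tfrac12\kappa(\Sigma_i)\int\eta^2 H^2$ and $\tfrac12\int\eta^2 H^2\operatorname{Ric}(\nu,\nu)$. For the $\kappa$-piece you invoke $H=O(\lambda_i^{-1})$ from Proposition~\ref{curv est prop}, but that proposition depends on the present lemma through the chain Proposition~\ref{curv est prop} $\to$ Proposition~\ref{int curv estimate} $\to$ Lemma~\ref{final integral estimate} $\to$ Lemma~\ref{nabla2 h int estimate} $\to$ Corollary~\ref{coro aid} $\to$ Lemma~\ref{lem aid 0}, so the argument is circular. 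Even granting the pointwise bound, $\epsilon\int\eta^2 H^4=O(\epsilon\,\lambda_i^{-4}|x|^2)$ is not $O(|x|^{-4})$ when $|x|$ is comparable to $\lambda_i$, so it cannot be absorbed into the stated right-hand side; and the ``reservoir'' $\int\eta^2 H^2|\hcirc|^2$ does not control $\int\eta^2 H^4$. The $\operatorname{Ric}$-piece has the same defect: $\int\eta^2 H^2\,O(|x|^{-3})=O(|x|^{-3})$, which is strictly larger than the claimed $O(|x|^{-4})$.

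The paper avoids this by shifting: since $2\lambda_i^{-1}$ is constant, one writes
\[
\int_{\Sigma_i}\eta^2|\nabla H|^2\,\mathrm{d}\mu=-\int_{\Sigma_i}\eta^2\,(H-2\lambda_i^{-1})\,\Delta H\,\mathrm{d}\mu-2\int_{\Sigma_i}\eta\,(H-2\lambda_i^{-1})\,g(\nabla\eta,\nabla H)\,\mathrm{d}\mu.
\]
Now the $\kappa$ and $\operatorname{Ric}$ contributions both carry the factor $(H-2\lambda_i^{-1})$, and Young's inequality against $|H|$ (using only $\int_{\Sigma_i}H^2=O(1)$ and \eqref{small curvature}, not any pointwise bound on $H$) yields exactly $O(|x|^{-2})\int_{\operatorname{spt}(\eta)}(H-2\lambda_i^{-1})^2+O(|x|^{-4})+O(\kappa(\Sigma_i)^2|x|^2)$. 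With this single modification your outline goes through.
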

\begin{proof}
We multiply \eqref{first simon} by $\eta^2\,\hcirc$ and integrate by parts. Using \eqref{gauss codazzi}, \eqref{eta estimate}, and \eqref{eta area estimate}, we obtain
	\begin{align*}
	&\int_{\Sigma_i}\eta^2\,\left[|\nabla \hcirc|^2+\frac12 H^2\,|\hcirc|^2\right]\mathrm{d}\mu\\&\qquad=\frac12\,\int_{\Sigma_i}\eta^2\,|\nabla H|^2\,\mathrm{d}\mu+O(|x|^{-1})\,\int_{\Sigma_i}\eta\,|\hcirc|\,|\nabla\hcirc|\,\mathrm{d}\mu+O(1)\,\int_{\Sigma_i}\eta^2\,|\hcirc|^4\,\mathrm{d}\mu\\&\qquad\qquad+O(|x|^{-4})\,\int_{\Sigma_i}\eta\,|\hcirc|\,\mathrm{d}\mu+O(|x|^{-3})\,\int_{\Sigma_i}\eta^2\,| \hcirc|\,|h|\,\mathrm{d}\mu\\&\qquad\qquad+O(|x|^{-3})\,\int_{\Sigma_i}\eta^2\,|\nabla \hcirc|\,\mathrm{d}\mu+O(|x|^{-6})\,\int_{\Sigma_i}\eta^2\,\mathrm{d}\mu.
	\end{align*}
Note that
	\begin{align*} 
&	O(|x|^{-1})\,\int_{\Sigma_i}\eta\,|\hcirc|\,|\nabla\hcirc|\,\mathrm{d}\mu+O(|x|^{-4})\,\int_{\Sigma_i}\eta\,|\hcirc|\,\mathrm{d}\mu+O(|x|^{-3})\,\int_{\Sigma_i}\eta^2\,|\nabla \hcirc|\,\mathrm{d}\mu\\&\qquad\leq\, \frac12 \int_{\Sigma_i}\eta^2\,|\nabla\hcirc|^2\,\mathrm{d}\mu+O(|x|^{-2})\,\int_{\Sigma_i\cap\operatorname{spt}(\eta)}|\hcirc|^2\,\mathrm{d}\mu+O(|x|^{-6})\,\int_{\Sigma_i}\eta^2\,\mathrm{d}\mu.
	\end{align*}
Likewise, using Lemma \ref{L2 curv est rema}, 
	\begin{align*} 
	O(|x|^{-3})\,\int_{\Sigma_i}\eta^2\,| \hcirc|\,|h|\,\mathrm{d}\mu
	\leq\,O(|x|^{-2})\,\int_{\Sigma_i\cap\operatorname{spt}(\eta)}|\hcirc|^2\,\mathrm{d}\mu+O(|x|^{-4}).
	\end{align*} 
	Integrating by parts we have
	\begin{align*} 
	\int_{\Sigma_i}\eta^2\,|\nabla H|^2\,\mathrm{d}\mu&=-\int_{\Sigma_i}\eta^2\,(H-2\,\lambda_i^{-1})\,\Delta H\,\mathrm{d}\mu -2\,\int_{\Sigma_i}\eta\,(H-2\,\lambda_i^{-1})\,g(\nabla \eta,\nabla H)\,\mathrm{d}\mu.
	\end{align*} 
	Using \eqref{eta estimate}, we estimate
	\begin{align*} 
	-2\,\int_{\Sigma_i}\eta\,(H-2\,\lambda_i^{-1})\,g(\nabla \eta,\nabla H)\,\mathrm{d}\mu\leq \frac1{2}\int_{\Sigma_i}\eta^2\,|\nabla H|^2\,\mathrm{d}\mu+O\left(|x|^{-2}\right)\,\int_{\Sigma_i\cap\operatorname{spt}(\eta)}(H-2\,\lambda_i^{-1})^2\,\mathrm{d}\mu.
	\end{align*} 
	Using \eqref{constrained Willmore equation}, we have
	\begin{align*} 
	&-\int_{\Sigma_i}\eta^2\,(H-2\,\lambda_i^{-1})\,\Delta H\,\mathrm{d}\mu\\&\qquad =\int_{\Sigma_i}\eta^2\,(H-2\,\lambda_i^{-1})\,H\,|\hcirc|^2\,\mathrm{d}\mu+O(|x|^{-3}+|\kappa(\Sigma_i)|)\,\int_{\Sigma_i}\eta^2\,|H-2\,\lambda_i^{-1}|\,|H|\,\mathrm{d}\mu.
	\end{align*}
	Note that
	\begin{align*}
\int_{\Sigma_i}\eta^2\,(H-2\,\lambda_i^{-1})\,H\,|\hcirc|^2\,\mathrm{d}\mu =\int_{\Sigma_i}\eta^2\,H^2\,|\hcirc|^2\,\mathrm{d}\mu+\int_{\Sigma_i}\eta^2\,[\lambda_i^{-1}\,(H-2\,\lambda_i^{-1})+2\,\lambda_i^{-2}]\,|\hcirc|^2\,\mathrm{d}\mu
	\end{align*}
	and 
	\begin{align*} 
	&\int_{\Sigma_i}\eta^2\,[\lambda_i^{-1}\,(H-2\,\lambda_i^{-1})+2\,\lambda_i^{-2}]\,|\hcirc|^2\,\mathrm{d}\mu\\	
	&\qquad\leq O(1)\,\int_{\Sigma_i}\eta^2\,|\hcirc|^4\,\mathrm{d}\mu+O(|x|^{-2})\,\int_{\Sigma_i\cap\operatorname{spt}(\eta)}|\hcirc|^2\,\mathrm{d}\mu+		O(|x|^{-2})\,\int_{\Sigma_i\cap\operatorname{spt}(\eta)}(H-2\,\lambda_i^{-1})^2\,\mathrm{d}\mu.
	\end{align*} 
Moreover, using \eqref{small curvature},
\begin{align*} 
&O(|x|^{-3}+|\kappa(\Sigma_i)|)\,\int_{\Sigma_i}\eta^2\,|H-2\,\lambda_i^{-1}|\,|H|\,\mathrm{d}\mu \\&\qquad \leq O(|x|^{-4})+O(\kappa(\Sigma_i)^2\,|x|^2)+	O(|x|^{-2})\,\int_{\Sigma_i\cap\operatorname{spt}(\eta)}(H-2\,\lambda_i^{-1})^2\,\mathrm{d}\mu.
\end{align*}
The assertion follows from these estimates and \eqref{area estimate}.
\end{proof}  
\begin{coro}
	There holds, uniformly for all $x\in\Sigma_i$, \label{coro aid}
	\begin{align*} 
	\int_{\Sigma_i}\eta^2\,|\nabla H|^2\,\mathrm{d}\mu &		\leq4\,	\int_{\Sigma_i}\eta^2\,H^2\,|\hcirc|^2\,\mathrm{d}\mu+ O(1)\,\int_{\Sigma_i}\eta^2\,|\hcirc|^4\,\mathrm{d}\mu+O(|x|^{-2})\,\int_{\Sigma_i\cap\operatorname{spt}(\eta)}|h-\lambda_i^{-1}\,g|_{\Sigma_i}|^2\,\mathrm{d}\mu \\&\qquad\qquad+O(|x|^{-4})+O(\kappa(\Sigma_i)^2\,|x|^2).
	\end{align*}
\end{coro}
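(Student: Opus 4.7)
The plan is to extract and refine an intermediate estimate that already appears in the proof of Lemma~\ref{lem aid 0}. That earlier proof bounds $\int_{\Sigma_i}\eta^2|\nabla\hcirc|^2\,\mathrm{d}\mu$ by first controlling $\int_{\Sigma_i}\eta^2|\nabla H|^2\,\mathrm{d}\mu$ along the way, so all the needed ingredients are in hand.

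First, I would integrate by parts, using that $2\,\lambda_i^{-1}$ is constant and that $\Delta(H-2\,\lambda_i^{-1}) = \Delta H$:
\begin{equation*}
\int_{\Sigma_i}\eta^2\,|\nabla H|^2\,\mathrm{d}\mu
= -\int_{\Sigma_i}\eta^2\,(H-2\,\lambda_i^{-1})\,\Delta H\,\mathrm{d}\mu
-2\,\int_{\Sigma_i}\eta\,(H-2\,\lambda_i^{-1})\,g(\nabla\eta,\nabla H)\,\mathrm{d}\mu.
\end{equation*}
The cross term is absorbed by Young's inequality together with \eqref{eta estimate}: choosing the $\epsilon$-parameter so that $\tfrac12\int_{\Sigma_i}\eta^2|\nabla H|^2$ moves to the left, this costs only $O(|x|^{-2})\,\int_{\Sigma_i\cap\operatorname{spt}(\eta)}(H-2\,\lambda_i^{-1})^2\,\mathrm{d}\mu$, which is one of the allowed error terms since $(H-2\,\lambda_i^{-1})^2\leq |h-\lambda_i^{-1}\,g|_{\Sigma_i}|^2$.

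Second, I would substitute $\Delta H=-(|\hcirc|^2+\operatorname{Ric}(\nu,\nu)+\kappa(\Sigma_i))\,H$ from the area-constrained Willmore equation \eqref{constrained Willmore equation}, so that the main term becomes
\begin{equation*}
\int_{\Sigma_i}\eta^2\,(H-2\,\lambda_i^{-1})\,H\,|\hcirc|^2\,\mathrm{d}\mu + O(|x|^{-3}+|\kappa(\Sigma_i)|)\,\int_{\Sigma_i}\eta^2\,|H-2\,\lambda_i^{-1}|\,|H|\,\mathrm{d}\mu,
\end{equation*}
using $\operatorname{Ric}=O(|x|^{-3})$ from the Schwarzschild asymptotics. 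Writing $H=(H-2\,\lambda_i^{-1})+2\,\lambda_i^{-1}$ in the first factor and applying Young's inequality one more time produces, up to the accepted error,
$\int_{\Sigma_i}\eta^2\,H^2\,|\hcirc|^2\,\mathrm{d}\mu$ (with a constant that I expect to be $2$, comfortably less than the claimed $4$ once the earlier factor $\tfrac12$ from absorption is inverted), plus $O(1)\int_{\Sigma_i}\eta^2\,|\hcirc|^4\,\mathrm{d}\mu$ and $O(|x|^{-2})\int_{\Sigma_i\cap\operatorname{spt}(\eta)}|h-\lambda_i^{-1}\,g|_{\Sigma_i}|^2\,\mathrm{d}\mu$.

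Third, I would dispose of the lower-order term involving $\kappa(\Sigma_i)$ and $|x|^{-3}$ exactly as in the last step of the proof of Lemma~\ref{lem aid 0}: apply Young's inequality once more to produce a contribution bounded by $O(|x|^{-4})+O(\kappa(\Sigma_i)^2\,|x|^2)$ after invoking \eqref{eta area estimate}, plus a harmless multiple of $\int_{\Sigma_i\cap\operatorname{spt}(\eta)}(H-2\,\lambda_i^{-1})^2\,\mathrm{d}\mu$. There is no substantive obstacle here: the entire argument is a bookkeeping of the same manipulations performed in the proof of Lemma~\ref{lem aid 0}, and the only care needed is in tracking constants so that the coefficient of the main term $\int_{\Sigma_i}\eta^2 H^2|\hcirc|^2\,\mathrm{d}\mu$ is at most $4$.
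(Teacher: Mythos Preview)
Your argument is correct: the intermediate estimate for $\int_{\Sigma_i}\eta^2|\nabla H|^2\,\mathrm{d}\mu$ is indeed established inside the proof of Lemma~\ref{lem aid 0}, and re-running those steps (integration by parts, absorption of the cross term, substitution of the area-constrained Willmore equation, and the Young-inequality splittings) yields the corollary with a leading constant even better than~$4$.

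The paper takes a different, shorter route: it invokes the traced Gauss--Codazzi equation~\eqref{gauss codazzi}, which gives $\tfrac12\nabla H=\operatorname{div}_{\Sigma_i}\hcirc-\nu\lrcorner\operatorname{Rc}$ and hence a pointwise bound $|\nabla H|^2\leq C\,|\nabla\hcirc|^2+O(|x|^{-6})$. Integrating against $\eta^2$ and applying the already-proved Lemma~\ref{lem aid 0} (together with \eqref{eta area estimate} for the $O(|x|^{-6})$ term) immediately produces the stated inequality. So the paper reduces to the \emph{conclusion} of Lemma~\ref{lem aid 0}, whereas you reproduce part of its \emph{proof}. Your approach is self-contained and slightly sharper in the leading constant; the paper's approach is more economical once Lemma~\ref{lem aid 0} is in hand.
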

\begin{proof}
	This follows from Lemma \ref{lem aid 0} and \eqref{gauss codazzi}.
\end{proof}
The next two lemmas follow \cite[Lemma 2.3]{kuwertSchaetzle2}.
\begin{lem}
There holds, uniformly for all $x\in\Sigma_i$, \label{lem aid}
\begin{align*} 
&\int_{\Sigma_i}\eta^4\,H^2\,|\nabla\hcirc|^2\,\mathrm{d}\mu+\int_{\Sigma_i}\eta^4\,H^4\,|\hcirc|^2\,\mathrm{d}\mu\\&\qquad\leq\,2\,\int_{\Sigma_i}\eta^4\,H^2\,|\nabla H|^2\,\mathrm{d}\mu+O(1)\,\int_{\Sigma_i}\eta^4\, H^2\,|\hcirc|^4\,\mathrm{d}\mu\\&\qquad\qquad+ O(1)\,\int_{\Sigma_i}\eta^4\, |\hcirc|^2\,|\nabla\hcirc|^2\,\mathrm{d}\mu +O(|x|^{-4})\,\int_{\Sigma_i\cap\operatorname{spt}(\eta)}|\hcirc|^2\,\mathrm{d}\mu +O(|x|^{-6}).
\end{align*}

\end{lem}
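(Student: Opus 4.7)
The plan is to test the Simons identity \eqref{first simon} against $\eta^4\,H^2\,\hcirc$, exploiting the fact that $\hcirc$ is traceless so that $\langle \hcirc,\accentset{\circ}{\nabla^2 H}\rangle=\langle\hcirc,\nabla^2 H\rangle$. After multiplying \eqref{first simon} by $\eta^4\,H^2\,\hcirc$ and integrating, I would use $\langle \hcirc,\Delta\hcirc\rangle = \frac12\Delta|\hcirc|^2 - |\nabla\hcirc|^2$ and integrate by parts on the Laplacian to produce the term $\int_{\Sigma_i}\eta^4\,H^2\,|\nabla\hcirc|^2\,\mathrm{d}\mu$ on one side, while the $\frac12 H^2\,\hcirc$ term from Simons directly yields $\frac12\int_{\Sigma_i}\eta^4\,H^4\,|\hcirc|^2\,\mathrm{d}\mu$, and the cubic $\hcirc*\hcirc*\hcirc$ contributes $O(1)\int_{\Sigma_i}\eta^4\,H^2\,|\hcirc|^4\,\mathrm{d}\mu$.

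The remaining main term, $\int_{\Sigma_i}\eta^4\,H^2\,\langle\hcirc,\nabla^2 H\rangle\,\mathrm{d}\mu$, I would integrate by parts a second time, converting it into $\int_{\Sigma_i}\langle\nabla(\eta^4\,H^2\,\hcirc),\nabla H\rangle\,\mathrm{d}\mu$. Expanding $\nabla(\eta^4\,H^2)=4\eta^3\,H^2\,\nabla\eta+2\eta^4\,H\,\nabla H$, this produces cross terms that I handle by Young's inequality with carefully chosen weights: the $\eta^4\,H^2\,\langle\nabla\hcirc,\nabla H\rangle$ piece splits as $\frac14\eta^4\,H^2\,|\nabla\hcirc|^2+\eta^4\,H^2\,|\nabla H|^2$; the $\eta^4\,H\,\langle\nabla H\otimes\hcirc,\nabla\hcirc\rangle$ piece splits as $\frac14\eta^4\,H^2\,|\nabla\hcirc|^2+O(1)\,\eta^4\,|\hcirc|^2\,|\nabla H|^2$. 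The latter mixed quantity is then converted to the allowed form $O(1)\int_{\Sigma_i}\eta^4\,|\hcirc|^2\,|\nabla\hcirc|^2\,\mathrm{d}\mu$ via the Codazzi equation \eqref{gauss codazzi}, which gives $|\nabla H|^2\leq O(1)\,|\nabla\hcirc|^2+O(|x|^{-6})$.

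The most delicate cross terms are those involving $\nabla\eta$, in particular $\int_{\Sigma_i}\eta^2\,|\nabla\eta|^2\,H^2\,|\hcirc|^2\,\mathrm{d}\mu$, which cannot simply be bounded by $\int_{\Sigma_i}\eta^4\,H^2\,|\hcirc|^2\,\mathrm{d}\mu$ since that term does not appear on the right-hand side of the lemma. Instead, I apply AM-GM in the form
\begin{equation*}
\eta^2\,|\nabla\eta|^2\,H^2\,|\hcirc|^2\leq\varepsilon\,\eta^4\,H^4\,|\hcirc|^2+\frac{1}{4\varepsilon}\,|\nabla\eta|^4\,|\hcirc|^2,
\end{equation*}
absorbing the first summand into the left-hand side and, using $|\nabla\eta|^4=O(|x|^{-4})$ on $\operatorname{spt}(\eta)$, matching the second summand to the $O(|x|^{-4})\int_{\Sigma_i\cap\operatorname{spt}(\eta)}|\hcirc|^2\,\mathrm{d}\mu$ term in the conclusion. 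Finally, the error terms $O(|x|^{-3}\,|h|)$ and $O(|x|^{-4})$ produced by the curved Simons identity \eqref{first simon} yield, after testing against $\eta^4\,H^2\,\hcirc$ and applying Young, contributions absorbed either into $O(|x|^{-4})\int_{\Sigma_i\cap\operatorname{spt}(\eta)}|\hcirc|^2\,\mathrm{d}\mu$ or into $O(|x|^{-6})$ after using \eqref{eta area estimate}.

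The main obstacle is bookkeeping: ensuring that after all the Young's-inequality splits, exactly $\int_{\Sigma_i}\eta^4\,H^2\,|\nabla\hcirc|^2\,\mathrm{d}\mu$ and $\frac12\int_{\Sigma_i}\eta^4\,H^4\,|\hcirc|^2\,\mathrm{d}\mu$ remain on the left and the constant in front of $\int_{\Sigma_i}\eta^4\,H^2\,|\nabla H|^2\,\mathrm{d}\mu$ works out to at most $1$ before multiplying through by $2$ to obtain the stated factor. The $|\nabla\eta|^2\,H^2\,|\hcirc|^2$ term is the most subtle, since the usual strategy of bounding it by $\eta^2\,H^2\,|\hcirc|^2$ would leave an uncontrolled quantity; the quartic-in-$H$ absorption above is the non-obvious step.
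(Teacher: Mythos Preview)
Your proposal is correct and follows essentially the same route as the paper: multiply \eqref{first simon} by $\eta^4 H^2\hcirc$, integrate by parts on both the $\Delta\hcirc$ and the $\nabla^2 H$ terms (using \eqref{gauss codazzi} for the latter), and then absorb the cross terms by Young's inequality, with the key step being the quartic-in-$H$ splitting $\eta^2|\nabla\eta|^2 H^2|\hcirc|^2\le \varepsilon\,\eta^4 H^4|\hcirc|^2 + C_\varepsilon|\nabla\eta|^4|\hcirc|^2$ that you correctly flag as the non-obvious point. The only minor difference is that the paper applies Codazzi directly to the $\operatorname{div}\hcirc\cdot\nabla H$ piece to obtain $\tfrac12\int\eta^4 H^2|\nabla H|^2$ as a clean main term, whereas you bound it by Cauchy--Schwarz; this costs you a slightly larger constant but is harmless for the stated conclusion.
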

\begin{proof}
We multiply \eqref{first simon} by $\eta^4\,H^2\,\hcirc$ and integrate by parts. Using \eqref{gauss codazzi} and \eqref{eta estimate}, we have	
\begin{align*} 
&\int_{\Sigma_i}\eta^4\,H^2\,|\nabla\hcirc|^2\,\mathrm{d}\mu+\frac12\int_{\Sigma}\eta^4\,H^4\,|\hcirc|^2\,\mathrm{d}\mu\\&\qquad\leq\frac12\int_{\Sigma}\eta^4\,H^2\,|\nabla H|^2\,\mathrm{d}\mu+O(1)\,\int_{\Sigma_i}\eta^4\, H^2\,|\hcirc|^4\,\mathrm{d}\mu
\\&\qquad\qquad +O(|x|^{-1})\,\int_{\Sigma_i}\eta^3\,H^2\,|\hcirc|\,|\nabla\hcirc|\,\mathrm{d}\mu+O(|x|^{-1})\,\int_{\Sigma_i}\eta^3\,H^2\,|\hcirc|\,|\nabla H|\,\mathrm{d}\mu
\\&\qquad\qquad+
 O(1)\,\int_{\Sigma_i}\eta^4\, |H|\,|\hcirc|\,|\nabla H|\,|\nabla\hcirc|\,\mathrm{d}\mu+O(1)\,\int_{\Sigma_i}\eta^4\, |H|\,|\hcirc|\,|\nabla H|^2\,\mathrm{d}\mu
 \\&\qquad\qquad+O(|x|^{-3})\,\int_{\Sigma_i}\eta^4\, H^2\,|\nabla H|\,\mathrm{d}\mu+O(|x|^{-3})\,\int_{\Sigma_i}\eta^4\, H^2\,|\hcirc|\,|h|\,\mathrm{d}\mu\\&\qquad\qquad+O(|x|^{-4})\,\int_{\Sigma_i}\eta^4\, H^2\,|\hcirc|\,\mathrm{d}\mu.
\end{align*} 
Note that
\begin{align*} 
&O(|x|^{-1})\,\int_{\Sigma_i}\eta^3\,H^2\,|\hcirc|\,|\nabla\hcirc|\,\mathrm{d}\mu+O(|x|^{-1})\,\int_{\Sigma_i}\eta^3\,H^2\,|\hcirc|\,|\nabla H|\,\mathrm{d}\mu
\\&\qquad\leq\frac1{16}\,\int_{\Sigma_i}\eta^4\,H^2\,|\nabla H|^2\,\mathrm{d}\mu +\frac1{16}\,\int_{\Sigma_i}\eta^4\,H^2\,|\nabla \hcirc|^2\,\mathrm{d}\mu\\&\qquad\qquad+\frac1{16}\,\int_{\Sigma_i}\eta^4\,H^4\,|\hcirc|^2\,\mathrm{d}\mu
+O(|x|^{-4})\,\int_{\Sigma_i\cap\operatorname{spt}(\eta)} |\hcirc|^2\,\mathrm{d}\mu.
\end{align*} 
Likewise,
\begin{align*}
O(1)\,\int_{\Sigma_i}\eta^4\, H\,|\hcirc|\,|\nabla H|\,|\nabla\hcirc|\,\mathrm{d}\mu\leq \frac1{32}\,\int_{\Sigma_i}\eta^4\,H^2\,|\nabla H|^2\,\mathrm{d}\mu + O(1)\,\int_{\Sigma_i}\eta^4\, |\hcirc|^2\,|\nabla\hcirc|^2\,\mathrm{d}\mu.
\end{align*} 
Moreover, using \eqref{gauss codazzi},
\begin{align*}
&O(1)\,\int_{\Sigma_i}\eta^4\, |H|\,|\hcirc|\,|\nabla H|^2\,\mathrm{d}\mu
\\&\qquad\leq\frac1{32}\,\int_{\Sigma_i}\eta^4\,H^2\,|\nabla H|^2\,\mathrm{d}\mu  + O(1)\,\int_{\Sigma_i}\eta^4\, |\hcirc|^2\,|\nabla\hcirc|^2\,\mathrm{d}\mu+O(|x|^{-6})\,\int_{\Sigma_i\cap\operatorname{spt}(\eta)} |\hcirc|^2\,\mathrm{d}\mu.
\end{align*}
Finally, using  \eqref{small curvature}, 
\begin{align*}
&O(|x|^{-3})\,\int_{\Sigma_i}\eta^4\,H^2\,|\nabla H|\,\mathrm{d}\mu +O(|x|^{-3})\,\int_{\Sigma_i}\eta^4\, H^2\,|\hcirc|\,|h|\,\mathrm{d}\mu+O(|x|^{-4})\,\int_{\Sigma_i}\eta^4\, H^2\,|\hcirc|\,\mathrm{d}\mu\\ &\qquad \qquad \leq \frac1{16}\,\int_{\Sigma_i}\eta^4\,H^2\,|\nabla H|^2\,\mathrm{d}\mu 
+\frac1{16}\,\int_{\Sigma_i}\eta^4\,H^4\,|\hcirc|^2\,\mathrm{d}\mu +O(|x|^{-6})+O(|x|^{-8})\,\int_{\Sigma_i}\eta^4\,\mathrm{d}\mu.
\end{align*}
The assertion follows from these estimates and \eqref{area estimate}.
\end{proof}

\begin{lem}
	There holds, 	 uniformly for all $x\in\Sigma_i$, \label{nabla2 h int estimate}
	\begin{align*}
	&\int_{\Sigma_i}\eta^4\,[|\nabla^2H|^2+|h|^2\,|\nabla h|^2+|h|^4\,|\hcirc|^2]\,\mathrm{d}\mu \\&\qquad\leq O(1)\,\int_{\Sigma_i}\eta^4\,|\hcirc|^2\,|\nabla\hcirc|^2\,\mathrm{d}\mu+O(1)\,\int_{\Sigma_i}\eta^4\,|\hcirc|^6\,\mathrm{d}\mu
	 +O(|x|^{-4})\,\int_{\Sigma_i\cap\operatorname{spt}(\eta)}|h-\lambda_i^{-1}\,g|_{\Sigma_i}|^2\,\mathrm{d}\mu
	\\&\qquad\qquad+O(|x|^{-6})+O(\kappa(\Sigma_i)^2).
	\end{align*}
\end{lem}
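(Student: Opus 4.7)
The strategy is to first bound $\int_{\Sigma_i} \eta^4 |\nabla^2 H|^2 \,\mathrm{d}\mu$ by combining the Simons identity \eqref{second simon} with the area-constrained Willmore equation \eqref{constrained Willmore equation} and Lemma \ref{lem aid}. The bounds on $\int_{\Sigma_i} \eta^4 |h|^2|\nabla h|^2 \,\mathrm{d}\mu$ and $\int_{\Sigma_i} \eta^4 |h|^4|\hcirc|^2 \,\mathrm{d}\mu$ then follow from the two-dimensional orthogonal decompositions $|h|^2 = \tfrac12 H^2 + |\hcirc|^2$ and $|\nabla h|^2 = \tfrac12 |\nabla H|^2 + |\nabla \hcirc|^2$, valid because $\hcirc$ is traceless on a $2$-surface.

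For the central estimate, I would integrate $\int \eta^4 |\nabla^2 H|^2$ by parts to express it as $-\int \eta^4\, g(\nabla H, \operatorname{div}_{\Sigma_i}\nabla^2 H)$ modulo a cutoff term $-4\int \eta^3 \nabla\eta \cdot \nabla^2 H(\nabla H)$ absorbed via Young's inequality and \eqref{eta estimate}. Substituting \eqref{second simon} and integrating the resulting $\nabla\Delta H$ contribution by parts once more produces $\int \eta^4 (\Delta H)^2$; the $\tfrac14 H^2\,\nabla H$ term of \eqref{second simon} contributes the favorable-sign quantity $\tfrac14\int\eta^4 H^2|\nabla H|^2$ to the left-hand side. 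The net identity reads
\begin{equation*}
\tfrac12 \int_{\Sigma_i}\eta^4 |\nabla^2 H|^2\,\mathrm{d}\mu + \tfrac14 \int_{\Sigma_i}\eta^4 H^2|\nabla H|^2\,\mathrm{d}\mu \le \int_{\Sigma_i}\eta^4 (\Delta H)^2\,\mathrm{d}\mu + \text{l.o.t.},
\end{equation*}
with the lower-order terms controlled by \eqref{eta estimate}, \eqref{eta area estimate}, Corollary \ref{coro aid}, and \eqref{small curvature}. Substituting $\Delta H = -(|\hcirc|^2 + \operatorname{Ric}(\nu,\nu) + \kappa(\Sigma_i))H$ and noting that \eqref{small curvature}, Lemma \ref{diameter in terms of area radius}, and \eqref{eta area estimate} yield $\int \eta^4 H^2\,\mathrm{d}\mu = O(1)$, I obtain $\int \eta^4 (\Delta H)^2 \le C \int \eta^4 H^2 |\hcirc|^4 + O(|x|^{-6}) + O(\kappa(\Sigma_i)^2)$; a Young splitting $H^2|\hcirc|^4 \le \varepsilon H^4|\hcirc|^2 + C_\varepsilon |\hcirc|^6$ with small $\varepsilon$ then couples the estimate to $\int \eta^4 H^4|\hcirc|^2$.

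Closure is achieved by pairing the above with Lemma \ref{lem aid}, which controls $\int \eta^4 H^4|\hcirc|^2 + \int \eta^4 H^2|\nabla\hcirc|^2$ from above in terms of $\int \eta^4 H^2|\nabla H|^2$ plus terms already compatible with the right-hand side of the assertion. A linear combination with the Young parameter $\varepsilon$ chosen small enough produces simultaneous bounds on $\int \eta^4 |\nabla^2 H|^2$, $\int \eta^4 H^2|\nabla H|^2$, $\int \eta^4 H^4|\hcirc|^2$, and $\int \eta^4 H^2|\nabla\hcirc|^2$. The orthogonal decompositions
\begin{align*}
|h|^2|\nabla h|^2 &= \tfrac14 H^2|\nabla H|^2 + \tfrac12 H^2|\nabla\hcirc|^2 + \tfrac12 |\hcirc|^2|\nabla H|^2 + |\hcirc|^2|\nabla\hcirc|^2, \\
|h|^4|\hcirc|^2 &= \tfrac14 H^4|\hcirc|^2 + H^2|\hcirc|^4 + |\hcirc|^6
\end{align*}
then reduce the other two left-hand-side integrands of the lemma to the already-controlled quantities, plus $\int \eta^4 |\hcirc|^6$ and the cross term $\int \eta^4 |\hcirc|^2|\nabla H|^2$.

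The cross term $\int \eta^4 |\hcirc|^2|\nabla H|^2$ is the main obstacle, since it cannot be absorbed directly into either $\int \eta^4 H^2|\nabla H|^2$ or $\int \eta^4 |\hcirc|^2|\nabla\hcirc|^2$. I would handle it by a separate integration by parts: rewrite $\int \eta^4 |\hcirc|^2|\nabla H|^2$ as $-\int g(\nabla H, \nabla(\eta^4|\hcirc|^2 H)) + \int \eta^4|\hcirc|^2 H(-\Delta H)$, then invoke \eqref{constrained Willmore equation} to convert $-|\hcirc|^2 H\Delta H$ into $|\hcirc|^4 H^2$ plus lower-order terms, and bound the gradient cross term via $|\nabla|\hcirc|^2|\,|H|\,|\nabla H| \le 2|\hcirc||\nabla\hcirc||H||\nabla H| \le \delta H^2|\nabla H|^2 + C_\delta|\hcirc|^2|\nabla\hcirc|^2$. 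Tuning the Young parameters $\varepsilon$ and $\delta$ so that the composite linear system closes nondegenerately is the principal technical difficulty; once this is achieved, the remaining contributions fit directly into the right-hand side of the assertion.
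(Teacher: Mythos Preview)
Your proposal is correct and follows essentially the same route as the paper: multiply the Simons identity \eqref{second simon} by $\eta^4\,\nabla H$, integrate by parts twice, substitute the area-constrained Willmore equation for $(\Delta H)^2$, split $H^2|\hcirc|^4$ by Young, and close against Lemma \ref{lem aid} and Corollary \ref{coro aid}. The orthogonal decompositions you write down are exactly what is needed to recover the full left-hand side.

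The one place where you diverge from the paper is your treatment of the cross term $\int_{\Sigma_i}\eta^4\,|\hcirc|^2\,|\nabla H|^2\,\mathrm{d}\mu$, which you single out as ``the main obstacle'' and propose to handle by a further integration by parts followed by a delicate tuning of Young parameters. This is unnecessary. The traced Gauss--Codazzi equation \eqref{gauss codazzi} together with $h=\tfrac12 Hg|_{\Sigma_i}+\hcirc$ gives $\nabla H = 2\operatorname{div}_{\Sigma_i}\hcirc - 2\,\nu\lrcorner\operatorname{Rc}$, hence $|\nabla H|\le C|\nabla\hcirc| + O(|x|^{-3})$. It follows immediately that
\[
|\hcirc|^2\,|\nabla H|^2 \le C\,|\hcirc|^2\,|\nabla\hcirc|^2 + O(|x|^{-6})\,|\hcirc|^2,
\]
and both terms on the right already sit inside the asserted bound. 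The paper invokes \eqref{gauss codazzi} in precisely this way, both in its first displayed estimate (to turn the $\hcirc*\hcirc*\nabla H$ contribution of \eqref{second simon} into $O(1)\int\eta^4|\hcirc|^2|\nabla\hcirc|^2$) and again in its closing line. Your integration-by-parts workaround would also close, but Gauss--Codazzi dissolves the ``composite linear system'' you anticipate into a one-line estimate.
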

\begin{proof}
	We multiply \eqref{second simon} by $\eta^4\,\nabla H$ and integrate by parts. Using \eqref{gauss codazzi} and \eqref{eta estimate}, we obtain
\begin{align*}
&\int_{\Sigma_i} \eta^4\,\left[|\nabla^2H|^2+\frac14\,H^2\,|\nabla H|^2\right]\mathrm{d}\mu\\&\qquad \leq\int_{\Sigma_i}\eta^4\,(\Delta H)^2\,\mathrm{d}\mu+40\,|x|^{-1}\,\int_{\Sigma_i}\eta^3\,|\nabla H|\,|\nabla^2H|\,\mathrm{d}\mu
 +O(1)\,\int_{\Sigma_i}\eta^4\,|\hcirc|^2\,|\nabla\hcirc|^2\,\mathrm{d}\mu\\&\qquad\qquad+O(|x|^{-6})\,\int_{\Sigma_i}\eta^4\,|\hcirc|^2\,\mathrm{d}\mu +O(|x|^{-3})\,\int_{\Sigma_i}\eta^4\,|\nabla H|^2\,\mathrm{d}\mu.
\end{align*}
Using \eqref{constrained Willmore equation} and \eqref{small curvature}, we conclude that 
\begin{align*}
&\int_{\Sigma_i} \eta^4\,\left[|\nabla^2H|^2+\frac14\,H^2\,|\nabla H|^2\right]\mathrm{d}\mu\\&\qquad \leq \int_{\Sigma}\eta^4\,H^2\,|\hcirc|^4\,\mathrm{d}\mu+\frac12\,\int_{\Sigma_i}\eta^4\,|\nabla^2 H|^2\,\mathrm{d}\mu+10^3\,|x|^{-2}\,\int_{\Sigma_i}\eta^4\,|\nabla H|^2\,\mathrm{d}\mu \\&\qquad\qquad+O(|x|^{-6})+O(\kappa(\Sigma_i)^2)+O(1)\,\int_{\Sigma_i}\eta^4\,|\hcirc|^2\,|\nabla \hcirc|^2\,\mathrm{d}\mu +O(|x|^{-6})\,\int_{\Sigma_i}\eta^4\,|\hcirc|^2\,\mathrm{d}\mu.
\end{align*}
Note that
\begin{align*}
\int_{\Sigma}\eta^4\,H^2\,|\hcirc|^4\,\mathrm{d}\mu\leq \frac1{32}\int_{\Sigma}\eta^4\,H^4\,|\hcirc|^2\,\mathrm{d}\mu+O(1)\,\int_{\Sigma}\eta^4\,|\hcirc|^6\,\mathrm{d}\mu
\end{align*}
and 
\begin{align*} 
	\int_{\Sigma_i}\eta^2\,H^2\,|\hcirc|^2\,\mathrm{d}\mu\leq \frac{1}{128}\,10^{-3}\,|x|^2\,\int_{\Sigma_i}\eta^4\,H^4\,|\hcirc|^2\,\mathrm{d}\mu +O(|x|^{-2})\,\int_{\Sigma_i\cap\operatorname{spt}(\eta)}|\hcirc|^2\,\mathrm{d}\mu.
\end{align*}
The assertion follows from these estimates, Corollary \ref{coro aid},  Lemma \ref{lem aid}, and \eqref{gauss codazzi}.
\end{proof} 
The following lemma is an adaptation of \cite[Proposition 2.4]{kuwertSchaetzle2}.
\begin{lem}
	There holds, uniformly for all $x\in\Sigma_i$, \label{final integral estimate} 
	\begin{align*}
	&\int_{\Sigma_i}\eta^4\,[|\nabla^2h|^2+|h|^2\,|\nabla h|^2+|h|^4\,|\hcirc|^2]\,\mathrm{d}\mu \\&\qquad \leq
	O(|x|^{-4})\,\int_{\Sigma_i\cap\operatorname{spt}(\eta)}|h-\lambda_i^{-1}\,g|_{\Sigma_i}|^2\,\mathrm{d}\mu
	+O(|x|^{-6})+O(\kappa(\Sigma_i)^2).
	\end{align*}
\end{lem}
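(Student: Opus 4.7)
The goal is to upgrade Lemma \ref{nabla2 h int estimate} in two ways: first, replace $|\nabla^2 H|$ on the left-hand side by the full Hessian $|\nabla^2 h|$; second, eliminate the two ``bad'' terms $\int \eta^4\,|\hcirc|^2\,|\nabla\hcirc|^2$ and $\int \eta^4\,|\hcirc|^6$ on the right by absorbing them using the Michael-Simon Sobolev inequality and the smallness assumption \eqref{small curvature}. The decomposition $h=\tfrac12\,H\,g|_{\Sigma_i}+\hcirc$ shows $|\nabla^2 h|^2\le 2\,|\nabla^2 H|^2+2\,|\nabla^2\hcirc|^2+\text{curvature terms}$, so it suffices to produce a companion bound for $\int\eta^4\,|\nabla^2\hcirc|^2$ alongside the bound of Lemma \ref{nabla2 h int estimate}.

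\textbf{Bound on $|\nabla^2\hcirc|^2$.} First I would multiply the Simons identity \eqref{third simon} by $\eta^4\,\nabla\hcirc$ and integrate by parts, mirroring the proof of Lemma \ref{nabla2 h int estimate}. The divergence identity \eqref{third simon} yields, after Cauchy--Schwarz applied to every cross-term involving $\nabla\eta$ or a curvature error of order $|x|^{-3}$, an estimate of the shape
\begin{align*}
\int_{\Sigma_i}\eta^4\,|\nabla^2\hcirc|^2\,\mathrm{d}\mu
\le\,& O(1)\,\int_{\Sigma_i}\eta^4\,(\Delta\hcirc)^2\,\mathrm{d}\mu+O(1)\int_{\Sigma_i}\eta^4\,|h|^4\,|\nabla\hcirc|^2\,\mathrm{d}\mu\\
&+O(|x|^{-2})\int_{\Sigma_i}\eta^4\,|\nabla\hcirc|^2\,\mathrm{d}\mu+O(|x|^{-4})\int_{\Sigma_i\cap\operatorname{spt}(\eta)}|\hcirc|^2\,\mathrm{d}\mu+O(|x|^{-6}).
\end{align*}
Using the Simons identity \eqref{first simon} to rewrite $\Delta\hcirc$ as $\accentset{\circ}{\nabla^2 H}+\tfrac12 H^2\hcirc+\hcirc*\hcirc*\hcirc+$ error, the right-hand side is then controlled by the terms already bounded in Lemma \ref{nabla2 h int estimate} together with $\int\eta^4\,H^4\,|\hcirc|^2$, $\int\eta^4\,|\hcirc|^6$, and $\int\eta^4\,|\hcirc|^2\,|\nabla\hcirc|^2$. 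Combining with Lemma \ref{nabla2 h int estimate} produces an intermediate estimate for $\int\eta^4\,(|\nabla^2h|^2+|h|^2|\nabla h|^2+|h|^4|\hcirc|^2)$ with the same right-hand side as Lemma \ref{nabla2 h int estimate}, modulo the curvature error terms we already control via \eqref{small curvature} and Lemma \ref{L2 curv est rema}.

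\textbf{Absorbing the problematic terms.} To dispose of the remaining $\int\eta^4\,|\hcirc|^6$ and $\int\eta^4\,|\hcirc|^2\,|\nabla\hcirc|^2$, I would apply the Michael-Simon Sobolev inequality on $\Sigma_i$ to the functions $f_1=\eta^2\,|\hcirc|^2$ and $f_2=\eta^2\,|\hcirc|\,|\nabla\hcirc|$, obtaining
\begin{align*}
\Bigl(\int_{\Sigma_i}f_j^2\,\mathrm{d}\mu\Bigr)^{1/2}\le C\int_{\Sigma_i}(|\nabla f_j|+|H|\,|f_j|)\,\mathrm{d}\mu.
\end{align*}
By Cauchy--Schwarz, this gives pointwise-in-$x$ estimates of the form
\begin{align*}
\int_{\Sigma_i}\eta^4\,|\hcirc|^6\,\mathrm{d}\mu+\int_{\Sigma_i}\eta^4\,|\hcirc|^2\,|\nabla\hcirc|^2\,\mathrm{d}\mu
\le C\,\Bigl(\int_{\Sigma_i\cap\operatorname{spt}(\eta)}|\hcirc|^2\,\mathrm{d}\mu\Bigr)\,\bigl[\text{RHS of Lemma \ref{nabla2 h int estimate}}+\text{LHS}\bigr],
\end{align*}
up to controlled boundary contributions from $\nabla\eta$ and $|x|^{-k}$ error terms. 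By \eqref{small curvature}, the prefactor $\int|\hcirc|^2\,\mathrm{d}\mu$ is $o(1)$, so these two terms can be absorbed into the left-hand side for $i$ sufficiently large.

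\textbf{Main obstacle.} The delicate step is the absorption: the Michael-Simon inequality introduces $|H|\cdot|\hcirc|$ contributions and $|\nabla\eta|\cdot|\hcirc|$ contributions that must be handled uniformly in $x$ and compatibly with the weights $|x|^{-k}$ that appear throughout. One must keep careful track of how the factor $\int|\hcirc|^2$ enters (as opposed to $\int|h|^2$, which is only $O(1)$ by Lemma \ref{L2 curv est rema}), since only the former is $o(1)$. Combining the estimate for $\int\eta^4\,|\nabla^2\hcirc|^2$ with Lemma \ref{nabla2 h int estimate}, applying the Sobolev-based absorption, and bounding residual cross-terms by Young's inequality then yields the claimed bound.
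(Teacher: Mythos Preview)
Your proposal is correct and follows essentially the same route as the paper: multiply \eqref{third simon} by $\eta^4\,\nabla\hcirc$, integrate by parts, use \eqref{first simon} to estimate $(\Delta\hcirc)^2$, combine with Lemma \ref{nabla2 h int estimate} and Lemma \ref{lem aid 0}, and then absorb the remaining $\int\eta^4\,|\hcirc|^6$ and $\int\eta^4\,|\hcirc|^2\,|\nabla\hcirc|^2$ via the Michael--Simon Sobolev inequality (in the form of \cite[Proposition 5.4]{HuiskenYau}) together with \eqref{small curvature}, exactly as in \cite[Lemma 2.5 and Proposition 2.6]{kuwertSchaetzle2}. One small slip: the term arising from $h*h*\nabla\hcirc$ in \eqref{third simon} contributes $O(1)\int\eta^4\,|h|^2\,|\nabla\hcirc|^2$, not $|h|^4$, and this is precisely what gets controlled by the $|h|^2\,|\nabla h|^2$ term already present on the left of Lemma \ref{nabla2 h int estimate}.
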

\begin{proof}
	We multiply \eqref{third simon} by $\eta^4\,\nabla\hcirc$ and integrate by parts. Using Lemma \ref{L2 curv est rema} and \eqref{eta estimate}, we find that
	\begin{align*} 
	\int_{\Sigma_i}\eta^4|\nabla ^2\hcirc|^2\,\mathrm{d}\mu&\leq \int_{\Sigma_i}\eta^4\,|\Delta\hcirc|^2\,\mathrm{d}\mu+\frac14\,\int_{\Sigma_i}\eta^4\,|\nabla^2\hcirc|^2\,\mathrm{d}\mu\\&\qquad +O(|x|^{-2})\,\int_{\Sigma_i}|\nabla \hcirc|^2\,\mathrm{d}\mu+O(1)\,\int_{\Sigma_i}\eta^4\,|h|^2\,|\nabla h|^2\,\mathrm{d}\mu
	+O(|x|^{-6}).
	\end{align*} 
	Using \eqref{first simon} and Lemma \ref{L2 curv est rema}, we obtain
	\begin{align*}
	\int_{\Sigma_i}\eta^4\,|\Delta\hcirc|^2\,\mathrm{d}\mu&\leq\ \int_{\Sigma_i}\eta^4\,|\nabla^2H|^2\,\mathrm{d}\mu+O(1)\,\int_{\Sigma_i}\eta^4\,H^4\,|\hcirc|^2\,\mathrm{d}\mu\\&\qquad +O(1)\,\int_{\Sigma_i}\eta^4\,|\hcirc|^6\,\mathrm{d}\mu+O(|x|^{-6})+ O(|x|^{-8})\,\int_{\Sigma_i}\eta^2\,\mathrm{d}\mu.
	\end{align*}
Assembling these estimates and using Lemma \ref{nabla2 h int estimate}, Lemma \ref{lem aid 0}, and \eqref{eta area estimate}, we have
	\begin{align*}
&\int_{\Sigma_i}\eta^4\,[|\nabla^2h|^2+|h|^2\,|\nabla h|^2+|h|^4\,|\hcirc|^2]\,\mathrm{d}\mu \\&\qquad\leq O(1)\,\int_{\Sigma_i}\eta^4\,|\hcirc|^2\,|\nabla\hcirc|^2\,\mathrm{d}\mu+O(1)\,\int_{\Sigma_i}\eta^4\,|\hcirc|^6\,\mathrm{d}\mu
\\&\qquad\qquad +O(|x|^{-4})\,\int_{\Sigma_i\cap\operatorname{spt}(\eta)}|h-\lambda_i^{-1}\,g|_{\Sigma_i}|^2\,\mathrm{d}\mu
 +O(|x|^{-6})+O(\kappa(\Sigma_i)^2).
\end{align*}
The argument now concludes as in \cite[Lemma 2.5 and Proposition 2.6]{kuwertSchaetzle2}, using \eqref{small curvature} and the Michael-Simon Sobolev inequality in the form \cite[Proposition 5.4]{HuiskenYau}.
\end{proof}
\begin{prop}
	There holds, uniformly for all $x\in\Sigma_i$, \label{int curv estimate}
		\begin{align*} 
	|h-\lambda_i^{-1}\,g|_{\Sigma_i}|^4=\,&O(|x|^{-4})\,\bigg(\int_{\Sigma_i\cap B_{|x|/4}(x)}|h-\lambda_i^{-1}\,g|_{\Sigma_i}|^2\,\mathrm{d}\mu\bigg)^2	
	\\&+O(|x|^{-8})+O(\kappa(\Sigma_i)^2)\,\int_{\Sigma_i\cap B_{|x|/4}(x)}|h-\lambda_i^{-1}\,g|_{\Sigma_i}|^2\,\mathrm{d}\mu.
	\end{align*} 

\end{prop}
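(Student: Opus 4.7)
The plan is to convert the integral $W^{2,2}$-type bound of Lemma \ref{final integral estimate} into a pointwise estimate by applying a Sobolev embedding in the spirit of \cite[Proposition 2.6]{kuwertSchaetzle2}.

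First, I would replace the annular cutoff $\eta$ of \eqref{eta def} by a cutoff function $\tilde\eta$ adapted to the evaluation point $x$: concretely, $\tilde\eta \equiv 1$ in a small neighborhood of $x$, $\operatorname{spt}(\tilde\eta) \subset B_{|x|/4}(x)$, and $|\bar\nabla^k \tilde\eta| = O(|x|^{-k})$ for $k \leq 2$. Since $|z| \sim |x|$ on $\operatorname{spt}(\tilde\eta)$, the proof of Lemma \ref{final integral estimate} carries over with $\tilde\eta$ in place of $\eta$ without substantial changes, so the $W^{2,2}$-bounds localize to $B_{|x|/4}(x)$.

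Second, I would apply the two-dimensional Michael-Simon Sobolev inequality, in the form of \cite[Proposition 5.4]{HuiskenYau}, to $u = \tilde\eta^{\beta}\, |h - \lambda_i^{-1}\,g|_{\Sigma_i}|^{2}$ for a sufficiently large exponent $\beta$. Expanding $|\nabla^2 u|^2$ by the product rule, the resulting $L^2$-integral is controlled via Lemma \ref{final integral estimate} by
\begin{align*}
O(|x|^{-4}) \int_{\Sigma_i \cap B_{|x|/4}(x)} |h - \lambda_i^{-1}\,g|_{\Sigma_i}|^{2}\,\mathrm{d}\mu \,+\, O(|x|^{-6}) \,+\, O(\kappa(\Sigma_i)^2),
\end{align*}
up to cross terms from cutoff derivatives and lower-order ambient curvature contributions, all absorbable via the smallness condition \eqref{small curvature}.

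Combining this estimate with the $L^2$-factor $\int_{\Sigma_i \cap B_{|x|/4}(x)} |h - \lambda_i^{-1}\,g|_{\Sigma_i}|^{2}$ produced by the Sobolev inequality, and applying Young's inequality to separate the resulting cross terms, yields the stated pointwise bound. Writing $f = |h - \lambda_i^{-1}\,g|_{\Sigma_i}|$: the $O(|x|^{-4})\bigl(\int f^2\bigr)^{2}$ term arises from multiplying the two $L^2$-factors; the $O(|x|^{-8})$ term is produced by Young applied to the cross product $O(|x|^{-6})\cdot \int f^2$; and the $O(\kappa(\Sigma_i)^2)\int f^2$ term emerges directly. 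The main obstacle is the bookkeeping of error terms stemming from the derivatives of the cutoff, the Schwarzschild-type ambient geometry, and the Lagrange parameter $\kappa(\Sigma_i)$, ensuring that each contribution is absorbed into precisely one of the three terms of the final bound rather than generating spurious lower-order corrections.
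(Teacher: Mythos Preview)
Your proposal is correct and follows essentially the same route as the paper: localize via a cutoff, invoke the $W^{2,2}$ bound of Lemma \ref{final integral estimate}, and pass to $L^\infty$ by the Kuwert--Sch\"atzle multiplicative Sobolev inequality (their Lemma 2.8, implemented via \cite[Proposition 5.4]{HuiskenYau}). The paper organizes the last step by treating $\hcirc$ and $H-2\lambda_i^{-1}$ separately and quoting the inequality $\|\eta^2\phi\|_{L^\infty}^4 \le O(1)\,\|\phi\|_{L^2(\operatorname{spt}\eta)}^2\bigl(\int\eta^8(|\nabla^2\phi|^2+H^4\phi^2)\,\mathrm{d}\mu+|x|^{-4}\|\phi\|_{L^2(\operatorname{spt}\eta)}^2\bigr)$ rather than applying Sobolev to $f^2$ directly, but this is a cosmetic difference; your explicit switch to a ball cutoff supported in $B_{|x|/4}(x)$ is exactly what is needed to obtain the stated domain of integration.
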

\begin{proof}
	Repeating the argument that led to \cite[Lemma 2.8]{kuwertSchaetzle2} using \cite[Proposition 5.4]{HuiskenYau}, we find that
	\begin{align*} 
	|\eta^2\,\hcirc|_{L^\infty(\Sigma_i)}^4\leq O(1)\,\int_{\Sigma_i\cap\operatorname{spt}(\eta)}|\hcirc|^2\,\mathrm{d}\mu\,\left[\int_{\Sigma_i}\eta^8\,[|\nabla^2\hcirc|^2+H^4\,|\hcirc|^2]\,\mathrm{d}\mu+|x|^{-4}\,\int_{\Sigma_i\cap\operatorname{spt}(\eta)}|\hcirc|^2\,\mathrm{d}\mu\right]
	\end{align*} 
	and
	\begin{align*} 
	|\eta^2\,(H-2\,\lambda_i^{-1})|_{L^\infty(\Sigma_i)}^4&\leq O(1)\,\int_{\Sigma_i\cap\operatorname{spt}(\eta)}|\hcirc|^2\,\mathrm{d}\mu\,\bigg[\int_{\Sigma_i}\eta^8\,[|\nabla^2H|^2+H^4\,(H-2\,\lambda_i^{-1})^2]\,\mathrm{d}\mu\\&\qquad\qquad\qquad\qquad\,\,+|x|^{-4}\,\int_{\Sigma_i\cap\operatorname{spt}(\eta)}(H-2\,\lambda_i^{-1})^2\,\mathrm{d}\mu\bigg].
	\end{align*} 
	By \eqref{small curvature},
	\begin{align*} 
	\int_{\Sigma_i}\eta^8\,H^4\,(H-2\,\lambda_i^{-1})^2\,\mathrm{d}\mu\leq \,o(1)\,|\eta^2(H-2\,\lambda_i^{-1})|_{L^\infty(\Sigma_i)}^4+O(\lambda_i^{-4})\,\int_{\Sigma_i\cap\operatorname{spt}(\eta)}(H-2\,\lambda_i^{-1})^2\,\mathrm{d}\mu.
	\end{align*} 
By \eqref{eta def},
	$$
	\operatorname{spt}(\eta)\subset \cap B_{|x|/4}(x).
	$$
	The assertion follows from these estimates and Lemma \ref{final integral estimate}.
	\end{proof}
\section{Surfaces with bounded Euclidean Willmore energy}
In this section, we recall  estimates  for closed surfaces in $\mathbb{R}^3$ i	n terms of a bound on their Euclidean Willmore energy.
\begin{lem}[{\cite[Lemma 5.2]{HuiskenYau}}] For every $q>2$ there is a constant $c(q)>0$ such that for every closed surface  $\Sigma\subset\mathbb{R}^3\setminus\{0\}$, \label{hy integral lemma}
	\begin{align*}
		\rho(\Sigma)^{q-2}\,\int_{\Sigma}|x|^{-q}\,\mathrm{d}\bar\mu\leq c(q)\,\int_{\Sigma}\bar H^2\,\mathrm{d}\bar\mu.
	\end{align*}
\end{lem}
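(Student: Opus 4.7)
The plan is to prove the lemma by combining the layer-cake representation of $\int_\Sigma |x|^{-q}\,\mathrm{d}\bar\mu$ with a Simon-type area bound at the origin. The key input is the following consequence of the monotonicity formula: there is a universal constant $C>0$ such that for every closed surface $\Sigma \subset \mathbb{R}^3$ and every $s>0$,
\begin{align*}
|\Sigma \cap B_s(0)|_{\bar g} \leq C\, s^2 \int_\Sigma \bar H^2\, \mathrm{d}\bar\mu.
\end{align*}
For closed $\Sigma$ this is the standard consequence of Simon's monotonicity formula at the center $0$ upon letting the outer radius tend to infinity: since $|\Sigma|<\infty$, the boundary term $|\Sigma \cap B_R(0)|/R^2$ vanishes in the limit, leaving the estimate above. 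This is exactly the area bound used elsewhere in the appendix.

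With this in hand, I will apply the layer-cake identity $|x|^{-q}=q\int_{|x|}^\infty s^{-q-1}\,\mathrm{d}s$ together with Fubini to write
\begin{align*}
\int_\Sigma |x|^{-q}\, \mathrm{d}\bar\mu = q \int_0^\infty s^{-q-1}\, |\Sigma \cap B_s(0)|_{\bar g}\, \mathrm{d}s.
\end{align*}
Since $\Sigma \cap B_{\rho(\Sigma)}(0)=\emptyset$, the integrand vanishes for $s\leq \rho(\Sigma)$. Inserting the area bound and using $q>2$ to ensure integrability gives
\begin{align*}
\int_\Sigma |x|^{-q}\, \mathrm{d}\bar\mu
\leq qC \left(\int_\Sigma \bar H^2\, \mathrm{d}\bar\mu\right) \int_{\rho(\Sigma)}^\infty s^{1-q}\, \mathrm{d}s
= \frac{qC}{q-2}\, \rho(\Sigma)^{2-q} \int_\Sigma \bar H^2\, \mathrm{d}\bar\mu,
\end{align*}
which rearranges to the claim with $c(q)=qC/(q-2)$.

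Both ingredients are elementary, so there is no substantive obstacle. The only points requiring care are (i) verifying that Simon's monotonicity indeed gives the area bound above for arbitrary closed surfaces, which is the standard limit argument, and (ii) tracking the dependence of the resulting constant on $q$ so that it blows up as $q\downarrow 2$, which is forced by the divergence of $\int_{\rho}^\infty s^{1-q}\,\mathrm{d}s$ when $q=2$.
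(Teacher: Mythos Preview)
The paper does not supply its own proof of this lemma; it is quoted verbatim from \cite[Lemma~5.2]{HuiskenYau}. Your argument via the layer-cake representation together with Simon's area bound is correct and is essentially the standard derivation. One minor remark: the area estimate \eqref{area estimate} recorded in the appendix is stated for balls centered at points $x\in\Sigma$, whereas you need it for balls centered at the origin, which need not lie on $\Sigma$. This is harmless, since Simon's monotonicity identity \eqref{simon equation} holds for an arbitrary center, and the same absorption argument given just below \eqref{simon equation} yields $s^{-2}\,|\Sigma\cap B_s(0)|_{\bar g}\le C\int_\Sigma\bar H^2\,\mathrm{d}\bar\mu$ with the same constant. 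With that in hand, your computation
\[
\int_\Sigma |x|^{-q}\,\mathrm{d}\bar\mu
= q\int_{\rho(\Sigma)}^\infty s^{-q-1}\,|\Sigma\cap B_s(0)|_{\bar g}\,\mathrm{d}s
\le \frac{qC}{q-2}\,\rho(\Sigma)^{2-q}\int_\Sigma \bar H^2\,\mathrm{d}\bar\mu
\]
is valid, and the blow-up of $c(q)=qC/(q-2)$ as $q\downarrow 2$ is indeed unavoidable.
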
 
The estimates in the following lemma are stated in \cite{Simon} except for the explicit constants. We revisit the proof in \cite{Simon} and compute explicit constants below. 
\begin{lem}[{\cite[Lemma 1.1 and (1.3)]{Simon}}] 
	Let $\Sigma\subset\mathbb{R}^3$ be a closed surface. Given $x\in \Sigma$ and $r>0$, we have
	\begin{align} \label{area estimate} 
		r^{-2}\,|\Sigma\cap B_r(x)|\leq\frac{3+2\,\sqrt{2}}{16}\, \int_{\Sigma}\bar H^2\,\mathrm{d}\bar\mu.
	\end{align}
	Moreover
	\begin{align} \label{diam estimate}
		\sup\{|y-z|^2:y,z\in\Sigma\}\leq \frac{17^2\,3^4}{2^6\,\pi^2}\, |\Sigma|_{\bar g}\,\int_{\Sigma}\bar H^2\,\mathrm{d}\bar\mu.
	\end{align} 
\end{lem}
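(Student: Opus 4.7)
Both \eqref{area estimate} and \eqref{diam estimate} are classical consequences of L.~Simon's monotonicity formula for closed $2$-surfaces in $\mathbb{R}^3$; the work here is to track constants that \cite{Simon} left implicit. The starting point is the divergence identity
\[
\int_\Sigma \Delta_\Sigma \phi\,\mathrm{d}\bar\mu = -\int_\Sigma \bar H\,\bar g(\bar\nu, \bar D\phi)\,\mathrm{d}\bar\mu
\]
for $\phi \in C^1(\mathbb{R}^3)$, together with $\operatorname{tr}_\Sigma \bar D^2|y - x|^2 = 4$. Testing against smooth radial cutoffs of $|y-x|^2$, integrating on annuli, and applying the co-area formula yields the standard Simon monotonicity inequality
\[
\sigma^{-2}|\Sigma \cap B_\sigma(x)| \leq \rho^{-2}|\Sigma \cap B_\rho(x)| + \frac{1}{2}\int_{\Sigma \cap B_\rho(x)} \bar g(\bar H\bar\nu, y-x)\bigl(\min\{|y-x|,\sigma\}^{-2} - \rho^{-2}\bigr)\,\mathrm{d}\bar\mu
\]
for $0 < \sigma < \rho$.

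For \eqref{area estimate}, fix $x \in \Sigma$ and let $\sigma \downarrow 0$, which forces $\sigma^{-2}|\Sigma \cap B_\sigma(x)| \to \pi$ by smoothness of $\Sigma$ at $x$. Setting $\rho=r$, rearranging, and applying Cauchy-Schwarz with a free parameter $\lambda > 0$ to the cross term produces a quadratic inequality of the form
\[
A \leq a(\lambda)\,B + b(\lambda)\,\sqrt{A B}, \qquad A := r^{-2}|\Sigma \cap B_r(x)|, \quad B := \int_\Sigma \bar H^2\,\mathrm{d}\bar\mu.
\]
Solving as a quadratic in $\sqrt{A}$ and optimizing in $\lambda > 0$ yields the sharp constant $(3+2\sqrt 2)/16 = ((1+\sqrt 2)/4)^2$; the appearance of $1+\sqrt 2$ is exactly what this optimization delivers.

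For \eqref{diam estimate}, I would follow P.~Topping's strategy and reuse Simon monotonicity. Let $d = \sup\{|y-z| : y,z \in \Sigma\}$ and pick $y_0 \in \Sigma$ with $\sup_{z \in \Sigma}|y_0 - z|$ arbitrarily close to $d$, so that $\Sigma \subset \overline{B_r(y_0)}$ for every $r > d$. Applying the monotonicity inequality at $x = y_0$, sending $\sigma \downarrow 0$ and letting $\rho = r \downarrow d$ so that $|\Sigma \cap B_r(y_0)| \to |\Sigma|$, then using Cauchy-Schwarz on the cross term, yields
\[
\pi d^2 \leq |\Sigma| + c_0\, d\, |\Sigma|^{1/2}\Bigl(\int_\Sigma \bar H^2\,\mathrm{d}\bar\mu\Bigr)^{1/2}.
\]
Solving this as a quadratic in $d$ and squaring produces \eqref{diam estimate} with the stated constant $17^2 \cdot 3^4/(2^6\pi^2) = (153/(8\pi))^2$.

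The principal obstacle in both parts is the careful bookkeeping of constants through the successive Cauchy-Schwarz and quadratic-formula steps: any loss of sharpness in an intermediate inequality would fail to recover exactly the values $(3+2\sqrt 2)/16$ and $17^2 \cdot 3^4/(2^6 \pi^2)$ quoted in the statement. The identification of the sharp value of $c_0$ in the diameter step is the decisive arithmetic that pins down the constant $153/(8\pi)$.
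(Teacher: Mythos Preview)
Your approach to the area estimate \eqref{area estimate} has the limit going the wrong way. Sending $\sigma\downarrow 0$ with $\rho=r$ in your monotonicity inequality yields
\[
\pi \;\le\; r^{-2}\,|\Sigma\cap B_r(x)| \;+\; (\text{cross term}),
\]
which is a \emph{lower} bound for $A=r^{-2}|\Sigma\cap B_r(x)|$, not the upper bound you need; no rearrangement turns this into $A\le a(\lambda)B+b(\lambda)\sqrt{AB}$. The paper instead keeps the inner radius equal to $r$ and sends the outer radius $t\to\infty$ in Simon's identity \eqref{simon equation}, so that $t^{-2}|\Sigma\cap B_t(x)|\to 0$ and one is left with
\[
r^{-2}\,|\Sigma\cap B_r(x)| \;\le\; \tfrac{1}{16}\int_{\Sigma}\bar H^2\,\mathrm{d}\bar\mu \;-\; \tfrac12\,r^{-2}\!\int_{\Sigma\cap B_r(x)}\bar H\,\bar g(z-x,\bar\nu)\,\mathrm{d}\bar\mu.
\]
A single Young inequality with parameter $\sqrt{2}-1$ on the last term then allows absorption and produces $(3+2\sqrt 2)/16$ directly; no quadratic in $\sqrt A$ is needed. (Incidentally, the weight in your stated monotonicity should be $\max\{|y-x|,\sigma\}^{-2}$, not $\min$.)

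For the diameter bound \eqref{diam estimate} you follow Topping's route, which is a legitimate alternative, but the paper does something different: it first extracts the explicit constant $C=9/4$ in Simon's intermediate estimate \cite[(1.3)]{Simon},
\[
r^{-2}\,|\Sigma\cap B_r(x)| \;\le\; \tfrac94\Big(t^{-2}\,|\Sigma\cap B_t(x)| + \tfrac14\int_{\Sigma\cap B_t(x)}\bar H^2\,\mathrm{d}\bar\mu\Big),
\]
and then reruns the covering argument of \cite[Lemma~1.1]{Simon} with this value of $C$. The stated constant $17^2\cdot 3^4/(2^6\pi^2)=(9\cdot 17/(8\pi))^2$ visibly encodes $C=9/4$, so it is tied to this specific route; your Topping-style quadratic would in general deliver a different numerical value, and you have not identified $c_0$ or carried out the arithmetic that would match it.
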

\begin{proof} Let $x\in \Sigma$.
	Recall from \cite[(1.2)]{Simon} that, for all $0<r\leq t$, 
	\begin{equation} \label{simon equation}
		\begin{aligned} 
			&	r^{-2}\,|\Sigma\cap B_{r}(x)|_{\bar g}\\&\qquad\leq t^{-2}\,|\Sigma\cap B_{t}(x)|_{\bar g}+\frac1{16}\,\int_{\Sigma\cap B_{t}(x)}\bar H^2\,\mathrm{d}\bar\mu\\&\qquad\qquad+\frac12\,t^{-2}\int_{\Sigma\cap B_{t}(x)}\bar H\,\bar g(z-x,\bar\nu)\,\mathrm{d}\bar\mu(z)-\frac12\,r^{-2}\int_{\Sigma\cap B_{r}(x)}\bar H\,\bar g(z-x,\bar\nu)\,\mathrm{d}\bar\mu(z).
		\end{aligned} 
	\end{equation}
	\indent 
	To show \eqref{diam estimate}, we obtain, using the estimates \begin{align*} 
		\bigg|\int_{\Sigma\cap B_{r}(x)}\bar H\,\bar g(z-x,\bar\nu)\,\mathrm{d}\bar\mu(z)\bigg|\leq \frac38\,r^2\,\int_{\Sigma\cap B_t(x)}\bar H^2\,\mathrm{d}\bar\mu+\frac23\,|\Sigma\cap B_{r}(x)|
	\end{align*} 
	and 
	\begin{align*}  
		\bigg|\int_{\Sigma\cap B_{t}(x)}\bar H\,\bar g(z-x,\bar\nu)\,\mathrm{d}\bar\mu(z)\bigg|\leq \frac14\,t^2\,\int_{\Sigma\cap B_t(x)}\bar H^2\,\mathrm{d}\bar\mu+|\Sigma\cap B_{t}(x)|,
	\end{align*} 
	that 
	\begin{align} \label{instead of 1.3}
		r^{-2}\,|\Sigma\cap B_r(x)|_{\bar g}\leq \frac94\,\left( t^{-2}\,|\Sigma\cap B_t(x)|_{\bar g}+\frac14\,\int_{\Sigma\cap B_t(x)}\bar H^2\,\mathrm{d}\bar\mu\right) .
	\end{align} 
	Revisiting the proof of \cite[Lemma 1.1]{Simon} and using  \cite[(1.3)]{Simon} with the explicit constant $C=9/4$ computed in \eqref{instead of 1.3}, we obtain \eqref{diam estimate}. \\ \indent 
	To obtain \eqref{area estimate}, we  let $t\to\infty$ in \eqref{simon equation} and estimate
	\begin{align*} 
		r^{-2}\,|\Sigma\cap B_{r}(x)|_{\bar g}&\leq \frac1{16}\,\int_{\Sigma}\bar H^2\,\mathrm{d}\bar\mu-\frac12\,r^{-2}\int_{\Sigma\cap B_{r}(x)}\bar H\,\bar g(z-x,\bar\nu)\,\mathrm{d}\bar\mu(z)
		\\&\leq \frac{1}{16}\,\frac{\sqrt{2}}{\sqrt{2}-1}\,\int_{\Sigma}\bar H^2+(\sqrt{2}-1)\,r^{-2}\,|\Sigma\cap B_r(x)|_{\bar g}.
	\end{align*} 
\end{proof}

\section{Geometric identities on round spheres}

\begin{lem} \label{integral lemma}
Let $\xi\in\mathbb{R}^3$. The following hold if $|\xi|<1$.
\begin{align*} 
&\circ\qquad\int_{S_1(\xi)}|x|^{-1}\,\mathrm{d}\bar\mu=\,4\,\pi \\ &\circ\qquad\int_{S_1(\xi)}|x|^{-3}\,\mathrm{d}\bar\mu=\,4\,\pi\,(1-|\xi|^2)^{-1}\qquad\qquad\qquad\quad\,\,\\ &\circ\qquad\int_{S_1(\xi)}|x|^{-5}\,\mathrm{d}\bar\mu=\,\frac{4\,\pi}{3}\,(3+|\xi|^2)\,(1-|\xi|^2)^{-3}
\end{align*} 
The following hold if $|\xi|>1$.
\begin{align*} 
&\circ\qquad\int_{S_1(\xi)}|x|^{-1}\,\mathrm{d}\bar\mu=\,4\,\pi\,|\xi|^{-1} \\ &\circ\qquad\int_{S_1(\xi)}|x|^{-3}\,\mathrm{d}\bar\mu=\,4\,\pi\,|\xi|^{-1}\,(|\xi|^2-1)^{-1}\\ &\circ\qquad\int_{S_1(\xi)}|x|^{-5}\,\mathrm{d}\bar\mu=\,\frac{4\,\pi}{3}\,|\xi|^{-1}\,(1+3\,|\xi|^2)\,(|\xi|^2-1)^{-3}
\end{align*} 
The following hold if $|\xi|\neq0,\,1$.
\begin{align*} 
&\circ\qquad\int_{S_1(\xi)}|x|^{-2}\,\mathrm{d}\bar\mu=\,2\,\pi\,|\xi|^{-1}\,\log\frac{1+|\xi|}{|1-|\xi||}\qquad\qquad\,\,\,\,\\ &\circ\qquad\int_{S_1(\xi)}|x|^{-4}\,\mathrm{d}\bar\mu=\,4\,\pi\,(1-|\xi|^2)^{-2}\\ &\circ\qquad\int_{S_1(\xi)}|x|^{-6}\,\mathrm{d}\bar\mu=\,4\,\pi\,(1+|\xi|^2)\,(1-|\xi|^2)^{-4}
\end{align*} 
\end{lem}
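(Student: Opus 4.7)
The plan is to reduce each integral to a one-dimensional elementary integral via rotational symmetry and a direct change of variables. By rotational invariance of the integrand $|x|^{-k}$, we may assume without loss of generality that $\xi=|\xi|\,e_3$. Parametrize $S_1(\xi)$ as $x=\xi+\omega$ with $\omega=(\sin\theta\cos\phi,\sin\theta\sin\phi,\cos\theta)\in S^2$, so that
\begin{equation*}
|x|^2=1+|\xi|^2+2\,|\xi|\,\cos\theta \qquad\text{and}\qquad\mathrm{d}\bar\mu=\sin\theta\,\mathrm{d}\theta\,\mathrm{d}\phi.
\end{equation*}
Integrating in $\phi$ gives a factor of $2\,\pi$, after which I would apply the substitution $u=1+|\xi|^2+2\,|\xi|\,\cos\theta$, $\mathrm{d}u=-2\,|\xi|\,\sin\theta\,\mathrm{d}\theta$. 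This transforms each integral into
\begin{equation*}
\int_{S_1(\xi)}|x|^{-k}\,\mathrm{d}\bar\mu=\frac{\pi}{|\xi|}\,\int_{(1-|\xi|)^2}^{(1+|\xi|)^2} u^{-k/2}\,\mathrm{d}u.
\end{equation*}

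Evaluating the elementary antiderivative, for $k\neq 2$ I obtain
\begin{equation*}
\int_{S_1(\xi)}|x|^{-k}\,\mathrm{d}\bar\mu=\frac{2\,\pi}{(2-k)\,|\xi|}\,\bigl[(1+|\xi|)^{2-k}-|1-|\xi||^{2-k}\bigr],
\end{equation*}
while for $k=2$ the antiderivative is $\log u$ and produces $\frac{2\,\pi}{|\xi|}\,\log\frac{1+|\xi|}{|1-|\xi||}$. The case distinction between $|\xi|<1$ and $|\xi|>1$ in the statement of the lemma appears only through the sign of $1-|\xi|$ when simplifying the numerator; treating the two cases separately and using $(1+|\xi|)^{2-k}\pm(1-|\xi|)^{2-k}$ formulas (which for odd exponents $k-2=1,3$ reduce via $a^n-b^n=(a-b)\,\sum a^j b^{n-1-j}$ to the claimed rational expressions in $|\xi|^2$), I would read off the tabulated values directly.

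Since the computations for each $k\in\{1,2,3,4,5,6\}$ are  of the same type and entirely elementary, there is no genuine obstacle; the only point requiring care is keeping track of absolute values in $|1-|\xi||$ when $|\xi|>1$ so that the rational simplification of the differences $(1+|\xi|)^{2-k}-|1-|\xi||^{2-k}$ factors correctly through $(1\pm|\xi|^2)^{\text{power}}$ and produces the stated sign and factor of $|\xi|^{-1}$ in the $|\xi|>1$ formulas.
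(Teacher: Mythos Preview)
Your approach is correct and is the standard direct computation. The paper does not actually supply a proof of this lemma; it is stated without argument in the appendix as an elementary identity, so there is nothing to compare against. Your reduction via rotational symmetry and the substitution $u=1+|\xi|^2+2\,|\xi|\,\cos\theta$ yielding
\[
\int_{S_1(\xi)}|x|^{-k}\,\mathrm{d}\bar\mu=\frac{\pi}{|\xi|}\int_{(1-|\xi|)^2}^{(1+|\xi|)^2}u^{-k/2}\,\mathrm{d}u
\]
is exactly the right move, and the remaining algebra is routine. One small wording quibble: in your final paragraph you refer to ``odd exponents $k-2=1,3$'' and invoke the factorization $a^n-b^n=(a-b)\sum a^j b^{n-1-j}$, but for $k=3,5$ the exponent $2-k$ is \emph{negative}, so what you are really doing is combining $(1+|\xi|)^{-1}-|1-|\xi||^{-1}$ over a common denominator rather than factoring a polynomial difference. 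This is purely cosmetic and does not affect the validity of the argument.
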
 
\begin{lem}
	Let $\xi\in\mathbb{R}^3$. The following identities hold on ${{S}_{1}(\xi)}$.
	\label{inner product lemma}
	\begin{align*} 
&\circ\qquad	2\,\bar g(x,\bar\nu)=\,|x|^2+1-|\xi|^2\\[-2pt]
&\circ\qquad	2\,\bar g(x,\xi)=\,|x|^2+|\xi|^2-1\\[-2pt]
&\circ\qquad	2\,\bar g(\xi,\bar\nu)=\,|x|^2-1-|\xi|^2
	\end{align*}
\end{lem}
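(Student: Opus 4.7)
The plan is to use the explicit parametrization of $S_1(\xi)$ and the fact that its outward Euclidean unit normal at a point $x \in S_1(\xi)$ is given by $\bar\nu = x - \xi$, since the sphere has radius one.

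First I would establish the second identity by expanding the defining equation. Since $x \in S_1(\xi)$ means $|x-\xi|^2 = 1$, polarizing yields
\begin{equation*}
|x|^2 - 2\,\bar g(x,\xi) + |\xi|^2 = 1,
\end{equation*}
which is equivalent to $2\,\bar g(x,\xi) = |x|^2 + |\xi|^2 - 1$.

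Next, using $\bar\nu = x - \xi$, I would compute
\begin{equation*}
\bar g(x,\bar\nu) = \bar g(x, x - \xi) = |x|^2 - \bar g(x,\xi),
\end{equation*}
and substitute the formula for $\bar g(x,\xi)$ just derived to obtain $2\,\bar g(x,\bar\nu) = |x|^2 + 1 - |\xi|^2$. Similarly, I would compute
\begin{equation*}
\bar g(\xi,\bar\nu) = \bar g(\xi, x - \xi) = \bar g(x,\xi) - |\xi|^2,
\end{equation*}
and substitute once more to conclude $2\,\bar g(\xi,\bar\nu) = |x|^2 - |\xi|^2 - 1$.

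There is no substantive obstacle here; the only point requiring care is the identification of the outward unit normal with $x - \xi$, which follows directly from the fact that $S_1(\xi)$ is a round sphere of radius one about $\xi$ in Euclidean space.
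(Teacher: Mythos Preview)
Your proof is correct. The paper states this lemma without proof, treating it as an elementary computation; your argument via $|x-\xi|^2=1$ and $\bar\nu=x-\xi$ is exactly the natural verification.
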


\section{Geometric expansions for perturbations of the Euclidean metric}
In this section, we collect some  expansions that relate geometric quantities computed with respect to different background metrics. \\ \indent We assume that $g$ is a Riemannian metric on $\mathbb{R}^3$ such that, as $x\to\infty$,
$$
g=\left(1+|x|^{-1}\right)^4\,\bar g+\sigma \qquad\text{where}\qquad \partial_J\sigma=O(|x|^{-2-|J|})
$$  for every multi-index $J$ with $|J|\leq 4$. We denote by $\tilde g=(1+|x|^{-1})^4\,\bar g$ the  Schwarzschild metric of mass $m=2$. We use a bar for geometric quantities pertaining to $\bar g$ and a tilde for quantities pertaining to $\tilde g$.  \\ \indent 
Recall that $e_1,\,e_2,\,e_3$ denotes the  standard basis of $\mathbb{R}^3$.
\begin{lem} \label{covariant derivative}
		Let $\xi\in\mathbb{R}^3$ and $i,\,j\in\{1,\,2,\,3\}$. There holds
	\begin{align*}  
	\tilde D_{e_i}\xi=2\,(1+|x|^{-1})^{-1}\,|x|^{-3}\,\left(\bar g(\xi,e_i)\,x-\bar g(\xi,x)\,e_i-\bar g(e_i,x)\,\xi\right).
	\end{align*} 
Moreover, as $x\to\infty$, \begin{align*} 
D_{e_i}\xi-\tilde D_{e_i}\xi=\,&O(|x|^{-3}),\\
D^2_{e_i,e_j}\xi-\tilde D^2_{e_i,e_j}\xi=\,&\frac12\,\sum_{k=1}^3\bigg[(\bar D_{e_i,\xi}^2\sigma)(e_j,e_k)+(\bar D_{e_i,e_j}^2\sigma)(\xi,e_k) -(\bar D_{e_i,e_k}^2\sigma)(\xi,e_j)\bigg]e_k+O(|x|^{-5}).
\end{align*} 
\end{lem}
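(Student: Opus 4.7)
The proof is a direct computation in Cartesian coordinates, built on the conformal transformation formula for the Schwarzschild connection and the standard identity for the difference of two Levi-Civita connections.

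For the first identity, I would use that $\tilde g = f^4 \bar g$ with $f = 1 + |x|^{-1}$ is conformally flat, so that the Schwarzschild Christoffel symbols take the explicit form
$$\tilde\Gamma^k_{ij} = 2 f^{-1}\bigl(\delta^k_i \partial_j f + \delta^k_j \partial_i f - \delta_{ij} \delta^{kl} \partial_l f\bigr).$$
Since $\xi$ is a constant vector field in Cartesian coordinates, $\tilde D_{e_i}\xi = \xi^j \tilde\Gamma^k_{ij} e_k$; substituting $\partial_i f = -|x|^{-3}\, x_i$ and regrouping the three terms produces exactly the claimed identity.

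For the two asymptotic statements, I would write $g = \tilde g + \sigma$ and apply the standard tensorial formula
$$\Gamma^k_{ij} - \tilde\Gamma^k_{ij} = \frac{1}{2}\, g^{kl}\bigl(\tilde\nabla_i \sigma_{jl} + \tilde\nabla_j \sigma_{il} - \tilde\nabla_l \sigma_{ij}\bigr).$$
Combining $\tilde g - \bar g = O(|x|^{-1})$, $\tilde\Gamma = O(|x|^{-2})$, and the assumed decay $\partial_J \sigma = O(|x|^{-2-|J|})$, I would deduce first $\tilde\nabla \sigma = \bar\nabla\sigma + O(|x|^{-4})$ and then
$$\Gamma^k_{ij} - \tilde\Gamma^k_{ij} = \frac{1}{2}\bigl(\partial_i \sigma_{jk} + \partial_j \sigma_{ik} - \partial_k \sigma_{ij}\bigr) + O(|x|^{-4}).$$
Contracting with $\xi^j$ and summing over $k$ immediately yields $D_{e_i}\xi - \tilde D_{e_i}\xi = O(|x|^{-3})$. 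For the second-derivative statement, I would expand
$$D^2_{e_i,e_j}\xi = \xi^l (\partial_i \Gamma^k_{jl})\, e_k + \xi^l \Gamma^k_{jl} \Gamma^m_{ik}\, e_m - \xi^l \Gamma^m_{ij} \Gamma^k_{ml}\, e_k$$
and subtract the analogous expansion of $\tilde D^2_{e_i,e_j}\xi$. Since $\tilde\Gamma = O(|x|^{-2})$ and $\Gamma - \tilde\Gamma = O(|x|^{-3})$, both differences of the quadratic-in-$\Gamma$ pieces are $O(|x|^{-5})$. Differentiating the identity for $\Gamma - \tilde\Gamma$ obtained above gives the leading linear contribution
$$\frac{1}{2}\,\xi^l\bigl(\partial_i\partial_j \sigma_{lk} + \partial_i\partial_l \sigma_{jk} - \partial_i\partial_k \sigma_{jl}\bigr)\, e_k + O(|x|^{-5}),$$
which rewrites, upon interpreting each term as a component of $\bar D^2 \sigma$ evaluated on constant vector fields, as exactly the formula in the lemma.

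The main obstacle is bookkeeping: verifying that every product of a Christoffel symbol with a Christoffel difference, as well as the corrections coming from $\tilde\nabla - \bar\nabla$ and from $g^{kl} - \bar g^{kl}$, is indeed absorbed into the stated $O(|x|^{-5})$ error. Because $\tilde\Gamma$ decays one power faster than $\sigma$'s first derivative and two powers faster than $\sigma$'s second derivative, these corrections never compete with the main term, so the overall argument is clean once the indices are aligned carefully.
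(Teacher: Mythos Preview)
The paper states this lemma without proof, so there is nothing to compare against; your direct computation via the conformal Christoffel formula and the standard difference-of-connections identity is correct and is exactly the natural argument. The bookkeeping you outline (in particular that the quadratic $\Gamma$-terms and the $g^{kl}-\bar g^{kl}$, $\tilde\nabla-\bar\nabla$ corrections are all $O(|x|^{-5})$) checks out.
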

\begin{lem}[{\cite[Lemma 37]{acws2}}] There holds
	\begin{align*}
\tilde{\operatorname{Ric}}(e_i,e_j)=2\,(1+|x|^{-1})^{-2}\,|x|^{-3}\left[\bar g(e_i,e_j)-3\,|x|^{-2}\,\bar g(e_i,x)\,\bar g(e_j,x)\right].
	\end{align*}
	Moreover, as $x\to\infty$,
	\label{ambient expansions} 	\begin{align*} 
&\operatorname{Ric}(e_i,e_j)-\tilde{\operatorname{Ric}}(e_i,e_j)\\&\qquad=\,\frac12\,\sum_{k=1}^3\bigg[(\bar D^2_{e_k,e_i}\sigma)(e_k,e_j)+(\bar D^2_{e_k,e_j}\sigma)(e_k,e_i)  -(\bar D^2_{e_k,e_k}\sigma)(e_i,e_j)-(\bar D^2_{e_i,e_j}\sigma)(e_k,e_k)\bigg]
\\&\qquad \qquad+O(|x|^{-5})
	\end{align*}
	and
	$$
	R=\sum_{i,\,j=1}^3\bigg[(\bar D^2_{e_i,e_j}\sigma)(e_i,e_j)-(\bar D^2_{e_i,e_i}\sigma)(e_j,e_j)\bigg]+O(|x|^{-5}).
	$$
\end{lem}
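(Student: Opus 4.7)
The plan is to verify the three identities by direct computation, using three inputs: $\tilde g = \phi^4\bar g$ is conformally flat with conformal factor $\phi = 1 + |x|^{-1}$; $|x|^{-1}$ is harmonic on $\mathbb R^3\setminus\{0\}$, so in particular $\bar\Delta\phi = 0$; and $\partial_J\sigma = O(|x|^{-2-|J|})$ for $|J|\leq 2$. For the closed form of $\tilde{\operatorname{Ric}}$, I would apply the standard conformal change formula for the Ricci tensor in dimension three to $\tilde g = e^{2\omega}\bar g$ with $\omega = 2\log\phi$. Using $\bar{\operatorname{Ric}} = 0$ and $\bar\Delta\phi = 0$, this reduces to
$$\tilde{\operatorname{Ric}}_{ij} = -2\phi^{-1}\bar D^2_{ij}\phi + 6\phi^{-2}\bar D_i\phi\,\bar D_j\phi - 2\phi^{-2}|\bar D\phi|^2\,\bar g_{ij}.$$
Substituting $\bar D_i|x|^{-1} = -|x|^{-3}x_i$ and $\bar D^2_{ij}|x|^{-1} = 3|x|^{-5}x_ix_j - |x|^{-3}\bar g_{ij}$ and collecting terms using the identity $\phi - |x|^{-1} = 1$ yields the claimed closed form $2\phi^{-2}|x|^{-3}[\bar g_{ij} - 3|x|^{-2}x_ix_j]$.

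For the expansion of $\operatorname{Ric} - \tilde{\operatorname{Ric}}$, I would compare the Levi-Civita connections of $g$ and $\tilde g$ via the difference tensor
$$A^k_{ij} = \tfrac12\, g^{kl}\bigl(\tilde\nabla_i\sigma_{jl} + \tilde\nabla_j\sigma_{il} - \tilde\nabla_l\sigma_{ij}\bigr),$$
and invoke the standard Palatini-type identity $\operatorname{Ric}_{ij} - \tilde{\operatorname{Ric}}_{ij} = \tilde\nabla_k A^k_{ij} - \tilde\nabla_i A^k_{kj} + A^k_{kl}A^l_{ij} - A^k_{il}A^l_{kj}$. Since $g^{-1} = \bar g^{-1} + O(|x|^{-1})$ and $(\tilde\nabla - \bar D)\sigma = O(|x|^{-4})$, one has $A^k_{ij} = \tfrac12\,\bar g^{kl}(\bar D_i\sigma_{jl} + \bar D_j\sigma_{il} - \bar D_l\sigma_{ij}) + O(|x|^{-4})$, and the quadratic terms $A*A = O(|x|^{-6})$ are absorbed in the remainder. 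A direct computation then yields $A^k_{kj} = \tfrac12\,\bar D_j\,\bar{\operatorname{tr}}\,\sigma + O(|x|^{-4})$, since two of the three terms cancel on contraction with $\bar g^{kl}$, while the divergence $\bar D_k A^k_{ij}$ produces the three symmetric second-derivative terms in the claim. Taking the difference gives exactly the stated expansion.

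For the scalar curvature expansion, I write $R = g^{ij}\operatorname{Ric}_{ij}$. Tracing the first identity gives $\tilde R = 2\phi^{-2}|x|^{-3}(3 - 3) = 0$, reflecting scalar-flatness of spatial Schwarzschild. Writing $g^{ij} = \bar g^{ij} + O(|x|^{-1})$ and using $\operatorname{Ric} - \tilde{\operatorname{Ric}} = O(|x|^{-4})$ and $\tilde{\operatorname{Ric}} = O(|x|^{-3})$, both substitutions cost only $O(|x|^{-5})$, so $R = \bar g^{ij}(\operatorname{Ric}_{ij} - \tilde{\operatorname{Ric}}_{ij}) + O(|x|^{-5})$. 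Tracing the second identity then gives the claim: the first two sum terms coincide on contraction, and the remaining two combine to produce $\sum_{i,j}[(\bar D^2_{e_i,e_j}\sigma)(e_i,e_j) - (\bar D^2_{e_i,e_i}\sigma)(e_j,e_j)]$. The only real obstacle throughout is bookkeeping of error terms: one must confirm that each replacement of $g^{-1}$ by $\bar g^{-1}$ or of $\tilde\nabla$ by $\bar D$ introduces no worse than $O(|x|^{-5})$, which is routine given the decay on $\sigma$ and the explicit form of $\tilde g$.
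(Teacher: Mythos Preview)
The paper does not supply its own proof of this lemma; it is simply quoted from \cite{acws2}. Your proposal is correct and is the standard derivation: the conformal Ricci formula together with $\bar\Delta\phi=0$ for the first identity, the linearisation of the Ricci tensor via the difference tensor $A = \Gamma(g)-\Gamma(\tilde g)$ for the second, and the trace for the third.

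One point deserves a sentence of care. In the scalar-curvature step you write $g^{ij}=\bar g^{ij}+O(|x|^{-1})$ and $\tilde{\operatorname{Ric}}=O(|x|^{-3})$ and assert the substitution costs $O(|x|^{-5})$; taken at face value this only gives $O(|x|^{-4})$. What saves the estimate is precisely the computation you already did: the $\bar g$-trace of $\tilde{\operatorname{Ric}}$ vanishes \emph{exactly}, so $g^{ij}\tilde{\operatorname{Ric}}_{ij}=(g^{ij}-\tilde g^{ij})\tilde{\operatorname{Ric}}_{ij}+\tilde R$, and now $g^{-1}-\tilde g^{-1}=O(|x|^{-2})$ (from $\sigma=O(|x|^{-2})$) and $\tilde R=0$ give the required $O(|x|^{-5})$. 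State this explicitly and the argument is complete.
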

\begin{lem} Let $\xi\in\mathbb{R}^3$. 
	There holds, as $x\to\infty$, \label{second a expansion}
	\begin{equation*}
	\begin{aligned} 
	 \bar g(\tilde D_{e_1,e_1}^2\xi+\tilde D_{e_2,e_2}^2\xi,e_3)
 	=\tilde{\operatorname{Ric}}(\xi,e_3)+4\,|x|^{-4}\,\bar g(\xi,e_3)-8\,|x|^{-6}\,\bar g(\xi,x)\,\bar g(e_3,x)		+O(|x|^{-5}).		\end{aligned}
	\end{equation*}
\end{lem}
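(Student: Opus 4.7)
The plan is a direct component-wise calculation, starting from the explicit formula
\[
\tilde D_{e_i}\xi = 2\,\phi^{-1}\,|x|^{-3}\bigl(\bar g(\xi,e_i)\,x - \bar g(\xi,x)\,e_i - \bar g(e_i,x)\,\xi\bigr)
\]
of Lemma \ref{covariant derivative}, where $\phi = 1+|x|^{-1}$. Writing $f = 2\phi^{-1}|x|^{-3}$ and $V_i = \tilde D_{e_i}\xi$, and viewing $\tilde D\xi$ as a $(1,1)$-tensor, we have the coordinate expression
\[
(\tilde D^2_{e_i,e_i}\xi)^k = \partial_i V_i^k + \tilde\Gamma^k_{il}\,V_i^l - \tilde\Gamma^l_{ii}\,V_l^k
\]
with implicit sums over $l$, where $\tilde\Gamma^k_{ij} = -2\phi^{-1}|x|^{-3}(\delta^k_i x^j + \delta^k_j x^i - \delta_{ij}x^k)$. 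I would compute each of these three pieces for $k=3$ and $i\in\{1,2\}$ and then collect orders in $|x|^{-1}$.

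First I would Taylor-expand $f = 2|x|^{-3} - 2|x|^{-4} + O(|x|^{-5})$ so as to track the critical $|x|^{-4}$-subleading correction throughout. Restricting to $k = 3$ and $i \in \{1,2\}$ produces substantial simplifications because $\delta^3_i = 0$: for instance $V_i^3 = f(\xi^i x^3 - x^i\xi^3)$, and therefore $\partial_i V_i^3 = (\partial_i f)(\xi^i x^3 - x^i\xi^3) - f\,\xi^3$. After summing over $i = 1,2$ using the identities $\sum_{i=1}^2 x^i\xi^i = \bar g(x,\xi) - x^3\xi^3$ and $\sum_{i=1}^2 (x^i)^2 = |x|^2 - (x^3)^2$, and performing the analogous calculation for the two $\tilde\Gamma$-quadratic pieces (for which only the leading expansion of $\tilde\Gamma$ is needed, since both are of order $|x|^{-4}$ to leading order), one obtains explicit closed-form expressions for all three contributions.

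Assembling everything yields
\[
\sum_{i=1}^2 (\tilde D^2_{e_i,e_i}\xi)^3 = 2\,|x|^{-3}\,\bar g(\xi,e_3) - 6\,|x|^{-5}\,\bar g(\xi,x)\,\bar g(e_3,x) + 4\,|x|^{-6}\,\bar g(\xi,x)\,\bar g(e_3,x) + O(|x|^{-5}),
\]
and the right-hand side of the claim is reduced to precisely the same expression by substituting $\tilde{\operatorname{Ric}}(\xi,e_3) = 2\phi^{-2}|x|^{-3}[\bar g(\xi,e_3) - 3|x|^{-2}\bar g(\xi,x)\,\bar g(e_3,x)]$ from Lemma \ref{ambient expansions} and expanding $\phi^{-2} = 1 - 2|x|^{-1} + O(|x|^{-2})$: the $+4|x|^{-4}\bar g(\xi,e_3)$ correction cancels the $-4|x|^{-4}\bar g(\xi,e_3)$ produced by $\phi^{-2}$, and the various $|x|^{-6}$-type contributions combine to exactly $+4|x|^{-6}\bar g(\xi,x)\,\bar g(e_3,x)$. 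The hard part is purely bookkeeping: one must carry the $-|x|^{-1}$ subleading term in $\phi^{-1}$ consistently through every line, retain the $|x|^{-6}\bar g(\xi,x)\,\bar g(e_3,x)$ pieces (which are genuinely of order $|x|^{-4}$ since $\bar g(\xi,x)\,\bar g(e_3,x) = O(|x|^2)$), and check that all remaining tails are indeed $O(|x|^{-5})$.
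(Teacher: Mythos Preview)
Your proposal is correct and follows essentially the same approach as the paper: the paper's proof consists of the single line ``This follows from Lemma~\ref{covariant derivative} and Lemma~\ref{ambient expansions},'' and your argument is precisely the direct component-wise calculation those two lemmas invite, carried out in detail. Your bookkeeping of the $\phi^{-1}$ and $\phi^{-2}$ subleading corrections and the matching of the $|x|^{-4}$-order terms on both sides is exactly what is needed.
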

\begin{proof} 
	This follows from Lemma \ref{covariant derivative} and Lemma \ref{ambient expansions}.
\end{proof}

\begin{lem}
	Let $\{\Sigma_i\}_{i=1}^\infty$ be a sequence of surfaces $\Sigma_i\subset \mathbb{R}^3$ such that   
$
	\lim_{i\to\infty}\rho(\Sigma_i)=\infty.
$
The following expansions hold. \label{basic expansions} 	 
		\begin{equation*}
	\begin{aligned}  
&\circ\qquad	\tilde \nu=\,(1+|x|^{-1})^{-2}\,\bar\nu\\
&\circ\qquad	\tilde H=\,(1+|x|^{-1})^{-2}\,\bar H-4\,(1+|x|^{-1})^{-3}\,|x|^{-3}\,\bar g(x,\bar\nu) \qquad\qquad\,\,\,\,\\
&\circ\qquad	{\htildecirc}=\,(1+|x|^{-1})^{-2}\,\hbarcirc\\
&\circ\qquad \tilde\nabla \tilde h=\bar \nabla\bar h+O(|x|^{-3})+O(|x|^{-2}\,|\bar h|)+O(|x|^{-1}\,|\bar\nabla \bar h|)\\
&\circ\qquad	\mathrm{d}\tilde \mu=\,(1+|x|^{-1})^{4}\,\mathrm{d}\bar\mu
	\end{aligned} 
	\end{equation*}
	\begin{equation*}
\begin{aligned} 
&\circ\qquad\nu=\,\tilde\nu+O(|x|^{-2})\qquad\qquad\qquad\qquad\qquad\qquad\qquad\qquad\qquad\quad\\
&\circ\qquad H=\,\tilde H+O(|x|^{-3})+O(|x|^{-2}\,|\bar h|)\\
&\circ\qquad \hcirc=\,\htildecirc+O(|x|^{-3}) +O(|x|^{-2}\,|\bar h|)\\
&\circ\qquad \nabla h=\tilde \nabla\tilde h+O(|x|^{-4})+O(|x|^{-3}\,|\bar h|)+O(|x|^{-2}\,|\bar\nabla \bar h|)\\
&\circ\qquad \mathrm{d}\mu=\,[1+O(|x|^{-2})]\,\mathrm{d}\tilde \mu
	\end{aligned}
	\end{equation*} 
Moreover, if $\{u_i\}_{i=1}^\infty$ is a sequence of functions $u_i\in C^\infty(\Sigma_i)$, then
\begin{align*}
&\circ \qquad \tilde \nabla u_i=(1+|x|^{-1})^{-2}\,\bar \nabla  u_i,\\
&\circ \qquad \tilde \Delta u_i=(1+|x|^{-1})^{-4}\,\bar \Delta u_i,\qquad\qquad\qquad\qquad\qquad\,\,\,\,\,\qquad\qquad\quad
\end{align*} 
and
\begin{align*}
&\qquad\circ \qquad \nabla u_i=\tilde \nabla u_i+O(|x|^{-2}\,|\bar\nabla u_i|),\\
&\qquad\circ \qquad \Delta u_i=\tilde \Delta u_i+O(|x|^{-2}\,|\bar\nabla ^2 u_i|)+O(|x|^{-3}\,|\bar\nabla u_i|)+O(|x|^{-2}\,|\bar h|\,|\bar\nabla u_i|).
\end{align*}
\end{lem}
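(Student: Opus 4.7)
The plan is to split the identities into three groups and prove them in order: (a) the comparisons between $\tilde g$-geometry and $\bar g$-geometry, which are pure conformal transformation formulas; (b) the comparisons between $g$-geometry and $\tilde g$-geometry, which form a first-order perturbation analysis with errors controlled by $\sigma$ and $\bar\nabla \sigma$; and (c) the analogous statements for $\nabla u_i$ and $\Delta u_i$, which follow by combining (a) and (b).

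For (a), I would write $\tilde g = e^{2\phi}\,\bar g$ with $\phi = 2\log(1+|x|^{-1})$, so that $\bar\nabla \phi = -2\,(1+|x|^{-1})^{-1}\,|x|^{-3}\,x$ and hence $\bar g(\bar\nabla \phi, \bar\nu) = -2\,(1+|x|^{-1})^{-1}\,|x|^{-3}\,\bar g(x,\bar\nu)$. Standard conformal formulas then yield $\tilde \nu = e^{-\phi}\,\bar\nu$, $\mathrm{d}\tilde\mu = e^{2\phi}\,\mathrm{d}\bar\mu$, $\htildecirc = e^{-\phi}\,\hbarcirc$ (viewed as a $(1,1)$-tensor), and $\tilde H = e^{-\phi}\,(\bar H + 2\,\bar g(\bar\nabla\phi, \bar\nu))$; substituting the value of $\bar g(\bar\nabla\phi, \bar\nu)$ produces the stated formula for $\tilde H$. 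The bound on $\tilde \nabla \tilde h$ comes from differentiating $\tilde h = e^{\phi}\,\bar h + e^{\phi}\,\bar g(\bar\nabla\phi, \bar\nu)\,\bar g|_{\Sigma_i}$ and using $e^\phi = 1 + O(|x|^{-1})$, $\bar\nabla \phi = O(|x|^{-2})$, $\bar\nabla^2 \phi = O(|x|^{-3})$. The two-dimensional conformal identities $|\tilde \nabla u_i|_{\tilde g} = e^{-\phi}\,|\bar\nabla u_i|_{\bar g}$ and $\tilde \Delta u_i = e^{-2\phi}\,\bar\Delta u_i$ (a classical feature of Laplacians on surfaces under conformal change in dimension two) are immediate.

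For (b), I would write $g = \tilde g + \sigma$ with $\partial_J\sigma = O(|x|^{-2-|J|})$ and note that $|x| \geq \rho(\Sigma_i) \to \infty$ on $\Sigma_i$. Solving $g(\nu, X) = 0$ for $X \in T\Sigma_i$ together with $g(\nu,\nu) = 1$, I write $\nu = \tilde\nu + W$ with $W$ perpendicular to $\tilde\nu$ with respect to $\tilde g$; a short calculation gives $|W| = O(|\sigma|) = O(|x|^{-2})$. Since the Christoffel symbols of $g$ and $\tilde g$ differ by $O(|\bar\nabla \sigma|) = O(|x|^{-3})$, the identity $h(X,Y) = g(D_X Y, \nu)$ expands as $h = \tilde h + O(|x|^{-3}) + O(|x|^{-2}\,|\bar h|)$; the expansions for $H$ and $\hcirc$ then follow by trace decomposition. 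Differentiating once more and using that the derivatives of the Christoffel symbols differ by $O(|\bar\nabla^2 \sigma|) = O(|x|^{-4})$ yields the expansion for $\nabla h$. The volume element satisfies $\mathrm{d}\mu = (1 + O(|x|^{-2}))\,\mathrm{d}\tilde\mu$, since $\det(g|_{\Sigma_i}) = \det(\tilde g|_{\Sigma_i})\,(1 + O(|\sigma|))$. The remaining identities in (c) follow in the same way: $g^{-1} = \tilde g^{-1} + O(|\sigma|)$ on $T\Sigma_i$ gives the expansion for $\nabla u_i$, while the changes in the Christoffel symbols and in the second fundamental form entering the Laplace--Beltrami operator on $\Sigma_i$ produce the $O(|x|^{-3}\,|\bar\nabla u_i|)$ and $O(|x|^{-2}\,|\bar h|\,|\bar\nabla u_i|)$ corrections to $\Delta u_i$.

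The entire argument is bookkeeping; no step requires a new idea. The most delicate points are the $O(|x|^{-1}\,|\bar\nabla \bar h|)$ term in the expansion of $\tilde\nabla \tilde h$, where one must commute a tangential derivative past the conformal factor $e^\phi$ without degrading the decay, and the analogous $O(|x|^{-2}\,|\bar h|\,|\bar\nabla u_i|)$ term in the expansion of $\Delta u_i$, which arises precisely because the Laplace--Beltrami operator on $\Sigma_i$ depends on the induced geometry and not only on the ambient metric. Once these two refinements are tracked carefully, all remaining terms are absorbed into the stated remainders.
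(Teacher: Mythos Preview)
The paper states this lemma without proof, treating the identities as standard bookkeeping for conformal changes and small perturbations of the metric; your outline is correct and is exactly the kind of argument the authors have in mind. The one point worth clarifying is the type of $\htildecirc=(1+|x|^{-1})^{-2}\,\hbarcirc$: as written in the paper this is the $(0,2)$-tensor identity $\htildecirc=e^{\phi}\,\hbarcirc$ combined with the normalization $\tilde g=e^{2\phi}\bar g$, so that $|\htildecirc|_{\tilde g}=e^{-\phi}\,|\hbarcirc|_{\bar g}$, rather than a statement about the $(1,1)$-tensor as you wrote; either convention gives the same norm relation and the same downstream estimates.
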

\section{Geometric expansions for graphs over Euclidean spheres}
In this section, we collect some geometric identities for graphs over Euclidean spheres. \\ \indent  
Let  $\xi\in\mathbb{R}^3$, $\lambda>0$, and $u\in C^\infty(S_\lambda(\lambda\,\xi))$. Recall from \eqref{Sigma def} that,  $\Sigma_{\xi,\lambda}(u)$ denotes the Euclidean graph of $u$ over $S_\lambda(\lambda\,\xi)$. 
\begin{lem} \label{graphical geometric components}
	The following identities hold.
	\begin{align*} 
	&\circ \qquad 	\bar g|_{\Sigma_{\xi,\lambda}(u)}=(1+\lambda^{-1}\,u)^2\,\bar g|_{S_{\lambda}(\lambda\,\xi)}+du\otimes du
	\\&\circ\qquad  \bar g|_{\Sigma}^{-1}=(1+\lambda^{-1}\,u)^{-2}\,\left[\bar g|^{-1}_{S_\lambda(\lambda\,\xi)}-((1+\lambda^{-1}\,u)^2+|\bar\nabla u|^2)^{-1}\,\bar\nabla u\otimes \bar\nabla u\right]
	\\&\circ \qquad \bar \nu(\Sigma_{\xi,\lambda}(u))=((1+\lambda^{-1}\,u)^2+|\bar\nabla u|^2)^{-1/2}\,((1+\lambda^{-1}\,u)\,\bar\nu(S_{\lambda}(\lambda\xi))-\bar\nabla u)
	\\&\circ \qquad \bar h(\Sigma_{\xi,\lambda}(u))=((1+\lambda^{-1}\,u)^2+|\bar\nabla u|^2)^{-1/2}
	\\&\qquad\qquad\qquad\qquad\qquad\big(\lambda^{-1}\,(1+\lambda^{-1}\,u)^2\,\bar g|_{S_{\lambda}(\lambda\xi)}+2\,\lambda^{-1}\,du\otimes du 
	-(1+\lambda^{-1}\,u)\,\bar\nabla^2 u\big)
	\end{align*} 
\end{lem}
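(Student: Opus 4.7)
The starting point is to recognise that $\Phi^u_{\xi,\lambda}$ is a normal graph over the sphere. Indeed, the outward unit Euclidean normal of $S_{\lambda}(\lambda\,\xi)$ at $x$ is precisely $\bar\nu(S_{\lambda}(\lambda\,\xi))(x)=\lambda^{-1}\,x-\xi$, so that
$$\Phi^u_{\xi,\lambda}(x)=x+u(x)\,\bar\nu(S_{\lambda}(\lambda\,\xi))(x).$$
The plan is therefore to carry out direct differential-geometric calculations, using repeatedly the Weingarten relation $\bar D_X \bar\nu=\lambda^{-1}\,X$ for the sphere with outward normal.

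The first step is to compute the pushforward: for $X$ tangent to $S_{\lambda}(\lambda\,\xi)$,
$$D\Phi^u_{\xi,\lambda}(X)=(1+\lambda^{-1}\,u)\,X+(Xu)\,\bar\nu(S_{\lambda}(\lambda\,\xi)).$$
Pairing two such vectors under $\bar g$ and using $\bar g(X,\bar\nu)=0$ immediately produces the induced-metric formula. For the inverse metric, I would verify the claim by direct substitution, checking that $\bar g|_\Sigma\cdot \bar g|_\Sigma^{-1}=\operatorname{id}$; this is a rank-one perturbation computation in the spirit of Sherman--Morrison. For the unit normal, I would take the ansatz $N=(1+\lambda^{-1}\,u)\,\bar\nu(S_{\lambda}(\lambda\,\xi))-\bar\nabla u$ and check orthogonality to $D\Phi(X)$: the only nonzero contributions are $(1+\lambda^{-1}u)(Xu)$ from pairing $\bar\nu$ with itself and $-(1+\lambda^{-1}u)(Xu)$ from pairing $-\bar\nabla u$ with $X$, which cancel. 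Normalising by $|N|=((1+\lambda^{-1}u)^2+|\bar\nabla u|^2)^{1/2}$ yields the stated expression.

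The main work is the second fundamental form. I would use the tangent-vector identification via $D\Phi$ and compute
$$\bar h(\Sigma_{\xi,\lambda}(u))(D\Phi(X),D\Phi(Y))=-\bar g\!\left(\bar D_X D\Phi(Y),\bar\nu(\Sigma_{\xi,\lambda}(u))\right)$$
by expanding $\bar D_X D\Phi(Y)$ term by term. Differentiating the formula for the pushforward produces four pieces: a conformal-derivative term $\lambda^{-1}(Xu)\,Y$, the ambient derivative $(1+\lambda^{-1}u)\,\bar D_X Y$ (whose normal component equals $-\lambda^{-1}(1+\lambda^{-1}u)\,\bar g(X,Y)\,\bar\nu$ by the sphere's Gauss formula), a second-derivative term $X(Yu)\,\bar\nu$, and a Weingarten term $\lambda^{-1}(Yu)\,X$. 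Pairing with the unnormalised normal $(1+\lambda^{-1}u)\,\bar\nu-\bar\nabla u$ and using the identity $X(Yu)=\bar\nabla^2 u(X,Y)+(\nabla^{S_\lambda(\lambda\,\xi)}_X Y)u$ to convert the Euclidean second derivative to the intrinsic sphere-Hessian makes the intrinsic-connection contributions cancel between the $\bar D_X Y$-with-$\bar\nabla u$ pairing and the $X(Yu)$ term. The coefficient $2\,\lambda^{-1}$ of $du\otimes du$ emerges as the sum of the contributions from the conformal-derivative term and the Weingarten term, each of which contributes $-\lambda^{-1}(Xu)(Yu)$ after pairing with $-\bar\nabla u$. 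Division by $|N|$ then produces the overall prefactor.

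No conceptual obstacle arises; the main difficulty is bookkeeping of signs and keeping track of intrinsic versus ambient covariant derivatives on the sphere, so that the Hessian $\bar\nabla^2 u$ appearing in the final formula is exactly the intrinsic Hessian on $S_\lambda(\lambda\,\xi)$ rather than a hybrid object.
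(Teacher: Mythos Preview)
Your approach is correct and complete: the identification of $\Phi^u_{\xi,\lambda}$ as a normal graph, the pushforward computation $D\Phi(X)=(1+\lambda^{-1}u)\,X+(Xu)\,\bar\nu$, and the term-by-term expansion of $\bar D_X D\Phi(Y)$ using the sphere's Gauss and Weingarten relations are exactly the standard route, and your bookkeeping of the Hessian cancellation and the two $\lambda^{-1}\,du\otimes du$ contributions is accurate. The paper itself states this lemma without proof, so there is nothing to compare against; your argument is precisely the direct computation one would expect to fill in.
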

\begin{lem} \label{second derivative estimate}
Suppose that
	\begin{align} \label{condition} 
	\lambda^{-1}\,|u|+|\bar\nabla u|\leq 1.
	\end{align} 
	There holds,  for all $f\in C^\infty (S_\lambda(\lambda\,\xi))$,
	\begin{align*} 
	\bar	\Delta_{\Sigma_{\xi,\lambda}(u)}f=\,&(1-2\,\lambda^{-1}\,u)\,\bar \Delta_{S_\lambda(\lambda\,\xi)}f
	\\&\qquad +O(|\bar\nabla f|\,|\bar\nabla u|\,(\lambda^{-2}\,|u|+\lambda^{-1}\,|\bar\nabla u|+|\bar\nabla^2 u|))
 	+O(|\bar\nabla ^2f|\,(\lambda^{-2}\,u^2+|\bar\nabla u|^2)).
	\end{align*} 
	
\end{lem}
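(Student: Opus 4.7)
The plan is to expand the Laplace-Beltrami operator $\bar\Delta_{\Sigma_{\xi,\lambda}(u)}$ in terms of $\bar\Delta_{S_\lambda(\lambda\,\xi)}$ by carefully tracking the metric and Christoffel symbol corrections induced by the graphical parametrization $\Phi^u_{\xi,\lambda}$. Let $\bar\nabla^S$ denote the Levi-Civita connection of $\bar g|_{S_\lambda(\lambda\,\xi)}$ and let indices be raised with $\bar g|_S$.

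From the formula for $\bar g|_\Sigma^{-1}$ in Lemma \ref{graphical geometric components}, expanding $(1+\lambda^{-1}u)^{-2} = 1 - 2\,\lambda^{-1}u + O(\lambda^{-2}u^2)$ and $((1+\lambda^{-1}u)^2 + |\bar\nabla u|^2)^{-1} = 1 + O(\lambda^{-1}|u| + |\bar\nabla u|^2)$ under the smallness assumption \eqref{condition}, I would first establish a tensorial expansion of the form
$$\bar g|_\Sigma^{ij} = (\bar g|_S)^{ij} - 2\,\lambda^{-1}\,u\,(\bar g|_S)^{ij} - (\bar\nabla u)^i\,(\bar\nabla u)^j + E^{ij}$$
with $|E|_{\bar g|_S} = O(\lambda^{-2}u^2 + |\bar\nabla u|^2)$. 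Using the standard decomposition $\bar\Delta_\Sigma f = \bar g|_\Sigma^{ij}\,\bar\nabla^S_{ij}f - \bar g|_\Sigma^{ij}\,C^k_{ij}\,\partial_k f$, where $C^k_{ij} = \Gamma^{\Sigma,k}_{ij} - \Gamma^{S,k}_{ij}$ is the difference tensor of Christoffel symbols, contracting this expansion against $\bar\nabla^S_{ij}f$ immediately gives
$$\bar g|_\Sigma^{ij}\,\bar\nabla^S_{ij}f = (1-2\,\lambda^{-1}u)\,\bar\Delta_S f + O\bigl((\lambda^{-2}u^2 + |\bar\nabla u|^2)\,|\bar\nabla^2 f|\bigr),$$
which recovers the principal term and the $|\bar\nabla^2 f|$ error.

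The main obstacle is to bound the Christoffel contribution $\bar g|_\Sigma^{ij}\,C^k_{ij}\,\partial_k f$ by $O(|\bar\nabla f|\,|\bar\nabla u|\,(\lambda^{-2}|u| + \lambda^{-1}|\bar\nabla u| + |\bar\nabla^2 u|))$. Writing $h = \bar g|_\Sigma - \bar g|_S = \phi\,\bar g|_S + du\otimes du$ with $\phi = 2\lambda^{-1}u + \lambda^{-2}u^2$ and using the Koszul formula
$$C^k_{ij} = \tfrac{1}{2}\,\bar g|_\Sigma^{kl}\,\bigl(\bar\nabla^S_i h_{jl} + \bar\nabla^S_j h_{il} - \bar\nabla^S_l h_{ij}\bigr),$$
the $du\otimes du$ part of $h$ reduces, after invoking symmetry of the Hessian on scalars, to $\bar g|_\Sigma^{kl}\,(\bar\nabla^S_{ij}u)\,\partial_l u$, which trivially contributes $O(|\bar\nabla u|\,|\bar\nabla^2 u|)$ once traced with $\bar g|_\Sigma^{ij}$. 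The $\phi\,\bar g|_S$ part is more delicate: a naive estimate gives only $O(\lambda^{-1}|\bar\nabla u|)$, one factor of $|\bar\nabla u|$ short of the claim. The crucial observation is that when $\bar g|_\Sigma^{ij}$ is traced against the symmetrized derivatives of $\phi\,\bar g|_S$, the leading $\partial^k\phi$ contributions from $\bar\nabla^S_i h_{jl} + \bar\nabla^S_j h_{il}$ and from the trace term $\bar\nabla^S_l h_{ij}$ cancel exactly at highest order, leaving a remainder of size $O(|\partial\phi|\,(\lambda^{-1}|u| + |\bar\nabla u|^2)) = O(|\bar\nabla u|\,(\lambda^{-2}|u| + \lambda^{-1}|\bar\nabla u|))$ under \eqref{condition}. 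Combining this with the previous step yields the lemma.
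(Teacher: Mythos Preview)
The paper states this lemma without proof, so there is no argument of the authors to compare against; your outline is a correct way to establish it. The decomposition $\bar\Delta_\Sigma f = \bar g|_\Sigma^{ij}\,\bar\nabla^S_{ij}f - \bar g|_\Sigma^{ij}\,C^k_{ij}\,\partial_k f$ together with the inverse-metric expansion from Lemma~\ref{graphical geometric components} yields the principal term and the $|\bar\nabla^2 f|$ error exactly as you describe. For the Christoffel contribution, the $du\otimes du$ part of $h$ indeed collapses to $2\,(\bar\nabla^S_{ij}u)\,\partial_l u$ after using symmetry of the Hessian, and tracing against $\bar g|_\Sigma^{ij}$ gives the $O(|\bar\nabla u|\,|\bar\nabla^2 u|)$ term. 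For the conformal part $\phi\,\bar g|_S$ your cancellation is correct and is, in fact, forced: on a two-dimensional base one has $\bar\Delta_{e^{2\psi}\bar g|_S} = e^{-2\psi}\,\bar\Delta_S$ exactly, so a pure conformal perturbation contributes no first-order term to the Laplacian; the only remainder arises from replacing $\bar g|_\Sigma^{ij}$ by $(\bar g|_S)^{ij}$ in the trace, which is $O(|\partial\phi|\,(\lambda^{-1}|u|+|\bar\nabla u|^2)) = O(|\bar\nabla u|\,(\lambda^{-2}|u|+\lambda^{-1}|\bar\nabla u|))$ under \eqref{condition}. Combining the two contributions gives the asserted $|\bar\nabla f|$ error.
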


\section{The potential function}
In this section, we collect some facts about the potential function of the spatial Schwarzschild manifold. \\ \indent We assume that $g$ is a Riemannian metric on $\mathbb{R}^3$. We denote by $\tilde g=(1+|x|^{-1})^4\,\bar g$ the  Schwarzschild metric of mass $m=2$. We use a bar for geometric quantities pertaining to $\bar g$ and a tilde for quantities pertaining to $\tilde g$. \\  
\indent Recall from \cite[\S 2]{Corvino} that the potential function  $N:\mathbb{R}^3\setminus\{0\}\to\mathbb{R}$ of the spatial Schwarzschild manifold is given by
\begin{align} \label{potential function} 
N(x)=(1+|x|^{-1})^{-1}\,(1-|x|^{-1}).
\end{align}
Moreover, recall that $N$ satisfies the static metric equation
\begin{align}  \label{static} 
\tilde D^2N=N\,\tilde{\operatorname{Rc}}.
\end{align} 
\indent
Let $\Sigma\subset M$ be a closed, two-sided surface with  outward normal $\nu$, mean curvature $H$ with respect to $\nu$, second fundamental form $h$, and non-positive Laplace-Beltrami operator $\Delta$  such that
$$
\Delta H+(|\hcirc|^2+\operatorname{Rc}(\nu,\nu)+\kappa)\,H=0
$$
for some $\kappa\in\mathbb{R}$. 
\begin{lem}
	Let $F:\Sigma\to\mathbb{R}$ be given by $F=N^{-1}\,H(\Sigma)$ and suppose that $X\in \Gamma(T\Sigma)$. 
	There holds \label{support function lemma}
	\begin{equation} \label{first willmore} 
	\begin{aligned}  
	\Delta F=\,&-\big(|\hcirc|^2+\kappa+\operatorname{Rc}(\nu,\nu)-\tilde{\operatorname{Rc}}(\tilde\nu,\tilde\nu)+N^{-1}\,\Delta N-N^{-1}\,\tilde \Delta N\\&\qquad-\tilde g(\tilde \nu,\tilde DN)\,\tilde H\big)\,F-2\,N^{-1}\,g(\nabla F,\nabla N).
	\end{aligned} 
	\end{equation} 
\end{lem}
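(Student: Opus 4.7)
The plan is to prove the identity by a direct algebraic computation, combining the area-constrained Willmore equation, the Leibniz rule for the Laplace-Beltrami operator on $\Sigma$, and the static equation \eqref{static} satisfied by the Schwarzschild potential $N$. No analytic input is needed; the argument amounts to careful bookkeeping.

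First I would use $H=NF$ and the product rule $\Delta(NF)=N\,\Delta F+F\,\Delta N+2\,g(\nabla N,\nabla F)$ to express
\begin{align*}
\Delta F=N^{-1}\,\Delta H-N^{-1}\,F\,\Delta N-2\,N^{-1}\,g(\nabla N,\nabla F).
\end{align*}
Substituting the area-constrained Willmore equation $\Delta H=-(|\hcirc|^2+\operatorname{Rc}(\nu,\nu)+\kappa)\,H$ and using $H=NF$, this collapses to the base formula
\begin{align*}
\Delta F=-\big(|\hcirc|^2+\operatorname{Rc}(\nu,\nu)+\kappa\big)\,F-N^{-1}\,\Delta N\cdot F-2\,N^{-1}\,g(\nabla N,\nabla F).
\end{align*}

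The remaining step is to rewrite the term $N^{-1}\,\Delta N$ so that only the difference $N^{-1}(\Delta N-\tilde\Delta N)$ and ambient Schwarzschild curvature quantities appear. I would insert $\pm N^{-1}\,\tilde\Delta N$ and $\pm\tilde{\operatorname{Rc}}(\tilde\nu,\tilde\nu)$ and then evaluate $N^{-1}\,\tilde\Delta N$ explicitly. To do this I would apply the standard ambient-to-surface identity
\begin{align*}
\tilde\Delta N=\tilde{\operatorname{tr}}_\Sigma \tilde D^2N-\tilde H\,\tilde g(\tilde DN,\tilde\nu)=\tilde\Delta_M N-\tilde D^2N(\tilde\nu,\tilde\nu)-\tilde H\,\tilde g(\tilde DN,\tilde\nu),
\end{align*}
and then use \eqref{static}, which yields $\tilde D^2N(\tilde\nu,\tilde\nu)=N\,\tilde{\operatorname{Rc}}(\tilde\nu,\tilde\nu)$ and, by tracing, $\tilde\Delta_M N=N\,\tilde R=0$ (recall that the spatial Schwarzschild metric is scalar-flat). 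This gives the key Schwarzschild reduction
\begin{align*}
N^{-1}\,\tilde\Delta N=-\tilde{\operatorname{Rc}}(\tilde\nu,\tilde\nu)-N^{-1}\,\tilde H\,\tilde g(\tilde DN,\tilde\nu).
\end{align*}

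Substituting the Schwarzschild reduction into the base formula and regrouping all coefficient terms of $F$ under a single bracket produces the claimed identity. There is no real obstacle in the argument; the only points that require attention are (a) distinguishing the surface Laplacian $\tilde\Delta$ from the ambient one $\tilde\Delta_M$, which differ by $\tilde D^2N(\tilde\nu,\tilde\nu)$ plus a mean-curvature term, and (b) consistently tracking the factor $N^{-1}$ coming out of the static equation. Once these are handled, the displayed formula follows by pure rearrangement.
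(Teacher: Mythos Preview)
Your proposal is correct and follows essentially the same approach as the paper: both arguments combine the product rule for $\Delta(NF)$, the area-constrained Willmore equation, and the reduction of $\tilde\Delta N$ via the static equation \eqref{static} together with $\tilde R=0$. Your presentation is in fact slightly cleaner, since you apply the product rule to $H=NF$ directly rather than routing through $\nabla H=N\,\nabla F+F\,\nabla N$ as the paper does.
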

\begin{proof}
	Note that
	$$
	\Delta F=-N^{-1}\,F\,\Delta N+2\,N^{-2}\,F\,g(\nabla N,\nabla N)-2\,N^{-2}\,g(\nabla H,\nabla N)+N^{-1}\,\Delta H.
	$$
	Using \eqref{static} and that $\tilde R=0$, we obtain
	$$
	\tilde	\Delta N=-N^{-1}\,\tilde{\operatorname{Rc}}(\tilde\nu,\tilde\nu)-\tilde g(\tilde \nu,\tilde D N)\,\tilde H.
	$$
	Clearly,
	$$
	\nabla H=N\,\nabla F+F\,\nabla N.
	$$
The assertion follows from these identities.
\end{proof}
\end{appendices}
\begin{bibdiv}
	\begin{biblist}
		
		\bib{bartniklocal}{article}{
			author={Bartnik, Robert},
			title={New definition of quasilocal mass},
			date={1989},
			ISSN={0031-9007},
			journal={Phys. Rev. Lett.},
			volume={62},
			number={20},
			pages={2346\ndash 2348},
			url={https://doi.org/10.1103/PhysRevLett.62.2346},
			review={\MR{996396}},
		}
		
		\bib{Beltran}{article}{
			author={Beltr\'{a}n, Carlos},
			author={Corral, Nuria},
			author={Criado~del Rey, Juan~G.},
			title={Discrete and continuous {G}reen energy on compact manifolds},
			date={2019},
			ISSN={0021-9045},
			journal={J. Approx. Theory},
			volume={237},
			pages={160\ndash 185},
			url={https://doi.org/10.1016/j.jat.2018.09.004},
			review={\MR{3868631}},
		}
		
		\bib{brendle2013constant}{article}{
			author={Brendle, Simon},
			title={Constant mean curvature surfaces in warped product manifolds},
			date={2013},
			ISSN={0073-8301},
			journal={Publ. Math. Inst. Hautes \'{E}tudes Sci.},
			volume={117},
			pages={247\ndash 269},
			url={https://doi.org/10.1007/s10240-012-0047-5},
			review={\MR{3090261}},
		}
		
		\bib{chodosh2017global}{article}{
			author={Chodosh, Otis},
			author={Eichmair, Michael},
			title={Global uniqueness of large stable {CMC} spheres in asymptotically
				flat {R}iemannian 3-manifolds},
			date={2022},
			ISSN={0012-7094},
			journal={Duke Math. J.},
			volume={171},
			number={1},
			pages={1\ndash 31},
			url={https://doi.org/10.1215/00127094-2021-0043},
			review={\MR{4364730}},
		}
		
		\bib{CESH}{article}{
			author={Chodosh, Otis},
			author={Eichmair, Michael},
			author={Shi, Yuguang},
			author={Yu, Haobin},
			title={Isoperimetry, scalar curvature, and mass in asymptotically flat
				{R}iemannian 3-manifolds},
			date={2021},
			ISSN={0010-3640},
			journal={Comm. Pure Appl. Math.},
			volume={74},
			number={4},
			pages={865\ndash 905},
			url={https://doi.org/10.1002/cpa.21981},
			review={\MR{4221936}},
		}
		
		\bib{christodoulou71some}{incollection}{
			author={Christodoulou, Demetrios},
			author={Yau, Shing-Tung},
			title={Some remarks on the quasi-local mass},
			date={1988},
			booktitle={Mathematics and general relativity ({S}anta {C}ruz, {CA}, 1986)},
			series={Contemp. Math.},
			volume={71},
			publisher={Amer. Math. Soc., Providence, RI},
			pages={9\ndash 14},
			url={https://doi.org/10.1090/conm/071/954405},
			review={\MR{954405}},
		}
		
		\bib{Corvino}{article}{
			author={Corvino, Justin},
			title={Scalar curvature deformation and a gluing construction for the
				{E}instein constraint equations},
			date={2000},
			ISSN={0010-3616},
			journal={Comm. Math. Phys.},
			volume={214},
			number={1},
			pages={137\ndash 189},
			url={https://doi.org/10.1007/PL00005533},
			review={\MR{1794269}},
		}
		
		\bib{dLM}{article}{
			author={De~Lellis, Camillo},
			author={M\"{u}ller, Stefan},
			title={Optimal rigidity estimates for nearly umbilical surfaces},
			date={2005},
			ISSN={0022-040X},
			journal={J. Differential Geom.},
			volume={69},
			number={1},
			pages={75\ndash 110},
			url={https://doi.org/10.4310/jdg/1121540340},
			review={\MR{2169583}},
		}
		
		\bib{acws}{article}{
			author={Eichmair, Michael},
			author={Koerber, Thomas},
			title={Large area-constrained {W}illmore surfaces in asymptotically
				{S}chwarzschild 3-manifolds},
			date={2021},
			journal={arXiv preprint arXiv:2101.12665},
			url={https://arxiv.org/abs/2101.12665},
			note={to appear in {J}. {D}ifferential {G}eom},
		}
		
		\bib{cmc}{article}{
			author={Eichmair, Michael},
			author={Koerber, Thomas},
			title={Foliations of asymptotically flat manifolds by stable constant
				mean curvature spheres},
			date={2022},
			journal={arXiv preprint arXiv:2201.12081},
			url={https://arxiv.org/abs/2201.12081},
		}
		
		\bib{acws2}{article}{
			author={Eichmair, Michael},
			author={Koerber, Thomas},
			title={The {W}illmore center of mass of initial data sets},
			date={2022},
			journal={Comm. Math. Phys.},
		}
		
		\bib{GilbargTrudinger}{book}{
			author={Gilbarg, David},
			author={Trudinger, Neil~S.},
			title={Elliptic partial differential equations of second order},
			series={Classics in Mathematics},
			publisher={Springer-Verlag, Berlin},
			date={2001},
			ISBN={3-540-41160-7},
			note={Reprint of the 1998 edition},
			review={\MR{1814364}},
		}
		
		\bib{hawking1968gravitational}{article}{
			author={Hawking, Stephen},
			title={Gravitational radiation in an expanding universe},
			date={1968},
			ISSN={0022-2488},
			journal={J. Mathematical Phys.},
			volume={9},
			number={4},
			pages={598\ndash 604},
			url={https://doi.org/10.1063/1.1664615},
			review={\MR{3960907}},
		}
		
		\bib{Huang}{article}{
			author={Huang, Lan-Hsuan},
			title={Foliations by stable spheres with constant mean curvature for
				isolated systems with general asymptotics},
			date={2010},
			ISSN={0010-3616},
			journal={Comm. Math. Phys.},
			volume={300},
			number={2},
			pages={331\ndash 373},
			url={https://doi.org/10.1007/s00220-010-1100-1},
			review={\MR{2728728}},
		}
		
		\bib{HI}{article}{
			author={Huisken, Gerhard},
			author={Ilmanen, Tom},
			title={The inverse mean curvature flow and the {R}iemannian {P}enrose
				inequality},
			date={2001},
			ISSN={0022-040X},
			journal={J. Differential Geom.},
			volume={59},
			number={3},
			pages={353\ndash 437},
			url={http://projecteuclid.org/euclid.jdg/1090349447},
			review={\MR{1916951}},
		}
		
		\bib{HuiskenYau}{article}{
			author={Huisken, Gerhard},
			author={Yau, Shing-Tung},
			title={Definition of center of mass for isolated physical systems and
				unique foliations by stable spheres with constant mean curvature},
			date={1996},
			ISSN={0020-9910},
			journal={Invent. Math.},
			volume={124},
			number={1-3},
			pages={281\ndash 311},
			url={https://doi.org/10.1007/s002220050054},
			review={\MR{1369419}},
		}
		
		\bib{Mondino}{article}{
			author={Ikoma, Norihisa},
			author={Malchiodi, Andrea},
			author={Mondino, Andrea},
			title={Embedded area-constrained {W}illmore tori of small area in
				{R}iemannian three-manifolds {I}: minimization},
			date={2017},
			ISSN={0024-6115},
			journal={Proc. Lond. Math. Soc. (3)},
			volume={115},
			number={3},
			pages={502\ndash 544},
			url={https://doi.org/10.1112/plms.12047},
			review={\MR{3694292}},
		}
		
		\bib{koerber}{article}{
			author={Koerber, Thomas},
			title={The {A}rea {P}reserving {W}illmore {F}low and {L}ocal
				{M}aximizers of the {H}awking {M}ass in {A}symptotically {S}chwarzschild
				{M}anifolds},
			date={2021},
			ISSN={1050-6926},
			journal={J. Geom. Anal.},
			volume={31},
			number={4},
			pages={3455\ndash 3497},
			url={https://doi.org/10.1007/s12220-020-00401-6},
			review={\MR{4236532}},
		}
		
		\bib{kuwertSchaetzle2}{article}{
			author={Kuwert, Ernst},
			author={Sch\"{a}tzle, Reiner},
			title={The {W}illmore flow with small initial energy},
			date={2001},
			ISSN={0022-040X},
			journal={J. Differential Geom.},
			volume={57},
			number={3},
			pages={409\ndash 441},
			url={http://projecteuclid.org/euclid.jdg/1090348128},
			review={\MR{1882663}},
		}
		
		\bib{lamm2011foliations}{article}{
			author={Lamm, Tobias},
			author={Metzger, Jan},
			author={Schulze, Felix},
			title={Foliations of asymptotically flat manifolds by surfaces of
				{W}illmore type},
			date={2011},
			ISSN={0025-5831},
			journal={Math. Ann.},
			volume={350},
			number={1},
			pages={1\ndash 78},
			url={https://doi.org/10.1007/s00208-010-0550-2},
			review={\MR{2785762}},
		}
		
		\bib{Ma}{article}{
			author={Ma, Shiguang},
			title={On the radius pinching estimate and uniqueness of the {CMC}
				foliation in asymptotically flat 3-manifolds},
			date={2016},
			ISSN={0001-8708},
			journal={Adv. Math.},
			volume={288},
			pages={942\ndash 984},
			url={https://doi.org/10.1016/j.aim.2015.11.009},
			review={\MR{3436403}},
		}
		
		\bib{QingTian}{article}{
			author={Qing, Jie},
			author={Tian, Gang},
			title={On the uniqueness of the foliation of spheres of constant mean
				curvature in asymptotically flat 3-manifolds},
			date={2007},
			ISSN={0894-0347},
			journal={J. Amer. Math. Soc.},
			volume={20},
			number={4},
			pages={1091\ndash 1110},
			url={https://doi.org/10.1090/S0894-0347-07-00560-7},
			review={\MR{2328717}},
		}
		
		\bib{Simon}{article}{
			author={Simon, Leon},
			title={Existence of surfaces minimizing the {W}illmore functional},
			date={1993},
			ISSN={1019-8385},
			journal={Comm. Anal. Geom.},
			volume={1},
			number={2},
			pages={281\ndash 326},
			url={https://doi.org/10.4310/CAG.1993.v1.n2.a4},
			review={\MR{1243525}},
		}
		
		\bib{Willmore}{book}{
			author={Willmore, Thomas},
			title={Total curvature in {R}iemannian geometry},
			series={Ellis Horwood Series: Mathematics and its Applications},
			publisher={Ellis Horwood Ltd., Chichester; Halsted Press [John Wiley \&
				Sons, Inc.], New York},
			date={1982},
			ISBN={0-85312-267-9},
			review={\MR{686105}},
		}
		
	\end{biblist}
\end{bibdiv}
\end{document}